\definecolor{violet}{rgb}{0.0,0.2,0.7}
\definecolor{rouge2}{rgb}{0.8,0.0,0.2}
\theoremstyle{plain}
    \newtheorem{thm}{Theorem}[section]
	\newtheorem{lem}[thm]{Lemma}
	\newtheorem{prop}[thm]{Proposition}
	\newtheorem{cor}[thm]{Corollary}
\theoremstyle{plain}
	\newtheorem{bigthm}{Theorem}
	\newenvironment{taggedbigset}[1]
    {\taggedbigsetx}
    {\endtaggedbigsetx}
    \newtheorem*{bigrmk*}{Remark}
\theoremstyle{definition}
	\newtheorem{defn}[thm]{Definition}
    \newtheorem*{defn*}{Definition}
    \newtheorem*{conj*}{Conjecture}
	\newtheorem*{claim*}{Claim}
        \newtheorem{claim}[thm]{Claim}
	\newtheorem*{ack*}{Acknowledgements}
\theoremstyle{remark}
	\newtheorem{rmk}[thm]{Remark}
	\newtheorem*{rmk*}{Remark}
	\newtheorem*{ques*}{Question}
	\newtheorem*{ans*}{Answer}
\numberwithin{equation}{section}
\newlist{steps}{enumerate}{1}
\setlist[steps, 1]{label = Step \arabic*:}
\DeclareMathSymbol{\lsb@l}{\mathalpha}{letters}{`l}
\DeclareFontFamily{U}{MnSymbolC}{}
\DeclareSymbolFont{MnSyC}{U}{MnSymbolC}{m}{n}
\DeclareFontShape{U}{MnSymbolC}{m}{n}{
	<-6>  MnSymbolC5
	<6-7>  MnSymbolC6
	<7-8>  MnSymbolC7
	<8-9>  MnSymbolC8
	<9-10> MnSymbolC9
	<10-12> MnSymbolC10
	<12->   MnSymbolC12}{}
\DeclareMathSymbol{\intprod}{\mathbin}{MnSyC}{'270}
\DeclareMathOperator{\Id}{Id}
\DeclareMathOperator{\im}{im}
\DeclareMathOperator{\tr}{tr}
\DeclareMathOperator{\pr}{pr}
\DeclareMathOperator{\Vol}{Vol}
\DeclareMathOperator{\Ric}{Ric}
\DeclareMathOperator{\supp}{supp}
\DeclareMathOperator{\dist}{dist}
\DeclareMathOperator{\PH}{PH}
\DeclareMathOperator{\PSH}{PSH}
\DeclareMathOperator{\CAP}{Cap}
\DeclareMathOperator*{\osc}{osc}
\DeclareMathOperator{\Exc}{Exc}
\DeclareMathOperator{\Aut}{Aut}
\DeclareMathOperator{\Bisec}{Bisec}
\DeclareMathOperator{\Scal}{S}
\def\1{\mathds{1}}
\def\E{\mathbf{E}}
\def\H{\mathbf{H}}
\def\I{\mathbf{I}}
\def\M{\mathbf{M}}
\newcommand{\ii}{\mathrm{i}}
\newcommand{\loc}{\mathrm{loc}}
\newcommand{\nmlz}{{\mathrm{norm}}}
\newcommand{\fibre}{{\mathrm{fibre}}}
\newcommand{\wF}{{\widetilde{F}}}
\newcommand{\wM}{{\widetilde{M}}}
\newcommand{\wom}{{\widetilde{\omega}}}
\newcommand{\wu}{{\widetilde{u}}}
\newcommand{\wv}{{\widetilde{v}}}
\newcommand{\ww}{{\widetilde{w}}}
\def\wE{\widetilde{\mathbf{E}}}
\def\wH{\widetilde{\mathbf{H}}}
\def\wM{\widetilde{\mathbf{M}}}
\newcommand{\bBD}{\overline{\mathbb{D}}}
\newcommand{\hg}{{\widehat{g}}}
\newcommand{\homg}{{\widehat{\omega}}}
\newcommand{\hR}{{\widehat{R}}}
\newcommand\sm{\sigma}
\newcommand\dt{\delta}
\newcommand\vep{\varepsilon}
\newcommand\vph{\varphi}
\newcommand\om{\omega}
\newcommand\ta{\theta}
\newcommand\gm{\gamma}
\newcommand\af{\alpha}
\newcommand\bt{\beta}
\newcommand\ld{\lambda}
\newcommand\zt{\zeta}
\newcommand\Dt{\Delta}
\newcommand\Om{\Omega}
\newcommand\Ta{\Theta}
\newcommand\BN{\mathbb{N}}
\newcommand\BZ{\mathbb{Z}}
\newcommand\BQ{\mathbb{Q}}
\newcommand\BR{\mathbb{R}}
\newcommand\BC{\mathbb{C}}
\newcommand\BD{\mathbb{D}}
\newcommand\BP{\mathbb{P}}
\newcommand\CC{\mathcal{C}}
\newcommand\CE{\mathcal{E}}
\newcommand\CI{\mathcal{I}}
\newcommand\CK{\mathcal{K}}
\newcommand\CO{\mathcal{O}}
\newcommand\CU{\mathcal{U}}
\newcommand\CW{\mathcal{W}}
\newcommand\CX{\mathcal{X}}
\newcommand\CY{\mathcal{Y}}
\newcommand\CZ{\mathcal{Z}}
\newcommand\lt{\left}
\newcommand\rt{\right}
\newcommand\pl{\partial}
\newcommand\db{\bar{\partial}}
\newcommand\dd{\mathrm{d}}
\newcommand\dc{\mathrm{d}^{\mathrm{c}}}
\newcommand\ddc{\mathrm{d}\mathrm{d}^{\mathrm{c}}}
\newcommand\norm[1]{\left\lVert {#1} \right\rVert}
\newcommand\abs[1]{\left\lvert {#1} \right\rvert}
\newcommand\w{\wedge}
\newcommand\reg{\mathrm{reg}}
\newcommand\sing{\mathrm{sing}}
\newcommand\set[2]{\left\{ {#1} \, \middle| \, {#2} \right\}}
\newcommand\iprod[2]{\left\langle {#1}, {#2} \right\rangle}
\newcommand{\RN}[1]{\textup{\uppercase\expandafter{\romannumeral#1}}}
\newsavebox{\@brx}
\newcommand{\llangle}[1][]{\savebox{\@brx}{\(\m@th{#1\langle}\)}%
  \mathopen{\copy\@brx\kern-0.5\wd\@brx\usebox{\@brx}}}
\newcommand{\rrangle}[1][]{\savebox{\@brx}{\(\m@th{#1\rangle}\)}%
  \mathclose{\copy\@brx\kern-0.5\wd\@brx\usebox{\@brx}}}
\title{Singular cscK metrics on smoothable  varieties}
\author{Chung-Ming Pan}
\address[Chung-Ming Pan]{Centre interuniversitaire de recherches en g\'eom\'etrie et topologies (CIRGET); Universit\'e du Qu\'ebec \`a Montr\'eal; Case postale 8888, Succursale centre-ville, Montr\'eal, Qu\'ebec, H3C 3P8, Canada \qquad\qquad\qquad\qquad\qquad\qquad\qquad\qquad\qquad\qquad\qquad\qquad\qquad\qquad\qquad\qquad\qquad\qquad\qquad\qquad\qquad\qquad\qquad}
\email{\href{mailto:pan.chung_ming@uqam.ca}{pan.chung\_ming@uqam.ca} \qquad\qquad\qquad\qquad\qquad\qquad\qquad\qquad}
\urladdr{\href{https://chungmingpan.github.io/}{https://chungmingpan.github.io/}}
\author{Tat Dat T\^o}
\address[Tat Dat T\^o]{Institut de Math\'ematiques de Jussieu-Paris Rive Gauche; Sorbonne Universit\'e - Campus Pierre et Marie Curie, 4 place Jussieu, 75252 Paris Cedex 05, France \qquad\qquad\qquad\qquad\qquad\qquad\qquad\qquad\qquad\qquad\qquad}
\email{\href{mailto:tat-dat.to@imj-prg.fr}{tat-dat.to@imj-prg.fr} \qquad\qquad\qquad\qquad\qquad\qquad\qquad\qquad\qquad\qquad\qquad}
\urladdr{\href{https://sites.google.com/site/totatdatmath/}{https://sites.google.com/site/totatdatmath/}}
\author{Antonio Trusiani}
\address[Antonio Trusiani]{Chalmers University of Technology; Chalmers tv\"argata 3, 41296 G\"oteborg, Sweden \qquad\qquad\qquad\qquad\qquad\qquad\qquad\qquad\qquad\qquad\qquad}
\email{\href{mailto:trusiani@chalmers.se}{trusiani@chalmers.se} \qquad\qquad\qquad\qquad\qquad\qquad\qquad\qquad\qquad\qquad\qquad}
\urladdr{\href{https://sites.google.com/view/antonio-trusiani/}{https://sites.google.com/view/antonio-trusiani/}}
\date{\today}
\subjclass{Primary: 32W20, 32U05, 32Q15, 32Q26; Secondary: 14D06, 53C55}
\keywords{Singular cscK metrics, Families of complex spaces, Complex Monge--Amp\`ere operator, Normal K\"ahler varieties}
\begin{document} 

\maketitle

\begin{abstract}
We prove the lower semi-continuity of the coercivity threshold of Mabuchi functional along a degenerate family of normal compact K\"ahler varieties with klt singularities. 
Moreover, we establish the existence of singular cscK metrics on $\BQ$-Gorenstein smoothable klt varieties when the Mabuchi functional is coercive, these arise as a limit of cscK metrics on close-by fibres.
The proof relies on developing a novel strong topology of pluripotential theory in families and establishing uniform estimates for cscK metrics. 
\end{abstract}

\tableofcontents

\section*{Introduction}
A central theme in complex geometry for decades has been the search for canonical metrics on K\"ahler manifolds.
In dimension one, Poincar\'e's uniformization theorem establishes the existence of metrics with constant Gaussian curvature on any compact Riemann surface.
Constant scalar curvature K\"ahler metrics (cscK) are natural generalizations in higher dimensions, garnering extensive attention in the literature (see the surveys \cite{Boucksom_2018, Donaldson_2018_2} and references therein for in-depth overviews, details, and numerous works on cscK metrics). 

A special instance of cscK metric is the K\"ahler--Einstein metrics, which has attracted the intensive focus on K\"ahler geometry. 
Landmark contributions in this area include Yau's resolution of Calabi's conjecture \cite{Yau_1978}, and the resolution of the Yau--Tian--Donaldson conjecture on Fano manifolds by Chen--Donaldson--Sun \cite{Chen_Donaldson_Sun_2015} (see also \cite{Tian_2015}).

The Yau--Tian--Donaldson (YTD) conjecture asserts that, on a polarized manifold $(X,L)$, the existence of cscK metrics in $c_1(L)$ is equivalent to an algebro-geometric notion called "K-stability". 
Recent significant advancements by Darvas--Rubinstein~\cite{Darvas_Rubinstein_2017}, Berman--Darvas--Lu~\cite{Berman_Darvas_Lu_2020} and Chen--Cheng~\cite{Chen_Cheng_2021_2} have established an analytic characterization on compact K\"ahler manifolds. 
Specifically, the existence of unique cscK metric in a K\"ahler class is equivalent to the coercivity of the Mabuchi functional. 
On a polarized manifold $(X,L)$, Boucksom--Hisamoto--Jonsson~\cite{Boucksom_Hisamoto_Jonsson_2019} demonstrated the implication of coercivity of Mabuchi functional to uniform K-stability (see \cite{Dervan_Ross_2017, Dervan_2018, SD_2018, SD_2020} for the related results in the transcendental setting). 
Conversely, C. Li~\cite{Chi_Li_2022_2} showed that the uniform K-stability for filtrations implies the coercivity of Mabuchi functional.
The remaining challenge in the (uniform) YTD conjecture lies in proving the uniform K-stability for filtrations derived from K-stability.

The central theme of this article focuses on the cscK problem on singular K\"ahler varieties. 
Singularities are prevalent in the classification theories of K\"ahler manifolds, such as the minimal model program (MMP) in birational geometry and moduli theory. 
In moduli theory, the consideration of singular varieties arises when compactifying moduli spaces of smooth manifolds. 
From the idea of moduli theory, it is interesting to study the behavior of canonical metrics moving in families. 

In the existing literature, singular K\"ahler--Einstein metrics have received comprehensive study on a fixed variety \cite{EGZ_2009, BBEGZ_2019, BBJ_2021, Li_Tian_Wang_2022, Chi_Li_2022} and their families (cf. \cite{Koiso_1983, Rong_Zhang_2011, Ruan_Zhang_2011, Spotti_Sun_Yao_2016, Li_Wang_Xu_2019, DGG2023, Pan_Trusiani_2023} and the references therein).
However, there are very few results concerning cscK metrics on singular varieties and their degenerate families. 

\smallskip
This article introduces a pluripotential theoretical approach to studying singular cscK metrics along degenerate families.  
Our results contribute to the following two aspects:
\begin{itemize}
    \item {\bf Stability of coercitivity for Mabuchi functionals:} 
    We establish the lower semi-continuity of coercitivity threshold along a degenerate family of normal compact K\"ahler varieties with klt singularities. 
    Precisely, we obtain a uniform coercivity with an almost optimal slope. 
    Our method also covers the situation in moving K\"ahler classes on a fixed variety.
    \item {\bf Existence of singular cscK metrics:} 
    Under the condition of coercivity of the Mabuchi functional, we prove the existence of singular cscK metrics on $\BQ$-Gorenstein smoothable normal compact K\"ahler varieties with klt singularities. 
\end{itemize}
Relating to the second point, we also provide a strong convergence of cscK potentials from the general fibres to the singular cscK metric on the central fibre.

\subsection*{Openness of coercivity for Mabuchi functional}
Before stating precisely our main results, we give here some basic definitions and introduce the context. 
In the sequel, by complex variety, we mean a irreducible reduced complex analytic space.  
Let $X$ be a normal compact K\"ahler variety and $\omega$ be a K\"ahler form on $X$. Denote by $\PSH(X, \omega)$ the set of all $\om$-psh functions which are not identically $-\infty$ (cf. Section \ref{sect_finite_energy}). 
Then it is natural to define {\sl singular} cscK metrics on $X$ as follows:
We say that $\om_\vph := \om + \ddc \vph$ is a singular cscK metric if $\om_\vph$ is a genuine cscK metric on $X^\reg$ and $\vph \in \PSH(X,\om) \cap L^\infty(X)$.

In the sequel, we shall consider a family $\pi: \CX \to \BD$ fulfills the following setting: 
Let $\CX$ be an $(n+1)$-dimensional, complex variety. 
Let $\pi: \CX \to \BD$ be a proper, surjective, holomorphic map such that each (schematic) fibre $X_t := \pi^{-1}(t)$ is a complex variety for any $t \in \BD$. 
Assume that $\om$ is a hermitian metric on $\CX$ such that $\om_t := \om_{|X_t}$ is K\"ahler for all $t \in \BD$.

\begin{bigthm}\label{bigthm:openness_coercivity}
Let $\pi: (\CX,\om) \to \BD$ be a family as above. 
Suppose that $\CX$ is $\BQ$-Gorenstein, and $X_0$ is normal with klt singularities. 
Then the coercivity threshold
\[
    \sm_t := \sup \set{A \in \BR}{\M_{\om_t} \geq A(-\E_{\om_t}) - B \text{ on $\CE^1_{\nmlz}(X_t,\om_t)$, for some $B \in \BR$}}
\]
is lower semi-continuous at $t=0$. 
In particular, if the Mabuchi functional $\M_{\omega_0}$ is coercive  on $X_0$, then there exists $r>0$ such that the Mabuchi functional $\M_{\omega_{t}}$ is coercive on $X_t$ for any $\lvert t\rvert\leq r$.
\end{bigthm}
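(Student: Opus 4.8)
The plan is to establish the lower semi-continuity of $\sigma_t$ at $t=0$ by a compactness-and-contradiction argument, using the strong topology of pluripotential theory in families. Suppose the conclusion fails: then there exist $A < \sigma_0$, a sequence $t_k \to 0$, and potentials $u_k \in \CE^1_{\nmlz}(X_{t_k}, \om_{t_k})$ such that $\M_{\om_{t_k}}(u_k) < A(-\E_{\om_{t_k}}(u_k)) - k$. In particular $-\E_{t_k}(u_k) \to +\infty$ (otherwise the left-hand side would be bounded below, using that $\M_t$ is bounded below on bounded-energy sets, which should follow from the klt hypothesis and the uniform estimates established earlier). The first main step is to renormalize: after rescaling one expects $-\E_{t_k}(u_k) = 1$ along a subsequence, or to run the argument directly with $-\E_{t_k}(u_k)$ large but controlled. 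The key point is that the failure of the coercivity inequality forces, via the entropy term in $\M$, a uniform bound $\H_{t_k}(u_k) \le C(-\E_{t_k}(u_k))$, hence after rescaling the rescaled potentials have uniformly bounded (normalized) entropy.

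The second step is to invoke the family compactness: a sequence of $\om_{t_k}$-psh functions with uniformly bounded entropy (equivalently, bounded $\H$) should admit a subsequence converging strongly in the family sense to some $u_0 \in \CE^1_{\nmlz}(X_0, \om_0)$ — this is precisely the content of the strong family topology and its compactness developed in the paper (the analogue of \cite[Thm.~2.17]{BBEGZ_2019} in families, addressed in the sub-problems quoted in the introduction). The third step is to pass the functionals to the limit along this strongly convergent sequence: one needs
\[
    \E_0(u_0) \ge \limsup_{k} \E_{t_k}(u_k), \qquad \H_0(u_0) \le \liminf_k \H_{t_k}(u_k),
\]
together with continuity (or the appropriate semicontinuity) of the Ricci-energy term $\E_{\Ric(\om_t)}$ along strong convergence. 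Granting these, $\M_0(u_0) \le \liminf_k \M_{t_k}(u_k)$ while $-\E_0(u_0) = \lim_k(-\E_{t_k}(u_k))$ (after the normalization), and the contradiction-hypothesis inequality survives in the limit to give $\M_0(u_0) \le A(-\E_0(u_0)) - B$ for every $B$, or more precisely $\M_0 \not\ge A(-\E_0) - B$ on $\CE^1_{\nmlz}(X_0,\om_0)$ for any $B$, contradicting $A < \sigma_0$. For the ``in particular'' clause, coercivity of $\M_{\om_0}$ means $\sigma_0 > 0$; lower semi-continuity at $0$ then gives $\sigma_t > 0$, i.e.\ coercivity of $\M_{\om_t}$, for all $|t| \le r$ with $r$ small — and the quantitative ``almost optimal slope'' refinement follows by taking $A$ arbitrarily close to $\sigma_0$.

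The main obstacle is the third step: controlling the behavior of the entropy $\H_t$ and the Ricci-energy $\E_{\Ric(\om_t)}$ under the strong family convergence, especially near the (klt, possibly worsening) singularities of the fibres and near the special fibre $X_0$. The difficulty is twofold: one must show no entropy (or energy) escapes to the singular locus $\CZ_\vep$ of the total space as $k \to \infty$ — this requires the uniform integral bounds over shrinking neighbourhoods of $\CZ_\vep$, uniform in $k$ and in the approximation parameter — and one must handle the adapted volume forms $\mu_h$ defining $\Ric(\om_t)$ varying in the family, where the $\BQ$-Gorenstein hypothesis on $\CX$ enters to guarantee a coherent choice with uniformly klt-type singularities. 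I expect the bulk of the technical work to lie in constructing good approximations $u_{k,m}$ (solving auxiliary Monge–Ampère equations with mollified densities, as sketched in the commented-out strategy) so that energies converge on $\CX \setminus \CZ_\vep$ and the tail over $\CZ_\vep$ is uniformly negligible, and in proving the semicontinuity of $\E_{\Ric(\om_t)}$, which is not monotone and so needs the finer structure of the strong topology rather than mere plurisubharmonic (hence $L^1$) convergence.
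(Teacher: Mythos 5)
Your overall contradiction architecture is right, and you correctly identify the analytic ingredients the paper develops: strong compactness in families for sequences with uniformly bounded entropy, lower semi-continuity of the entropy (and hence of the Mabuchi functional) along strong family convergence, and the semicontinuity/continuity of the twisted Ricci energy, all proved by controlling what happens near the singular locus $\CZ$ of $\pi$ using capacity and uniform Skoda-type estimates, with the $\BQ$-Gorenstein/klt hypothesis giving a uniform $L^p$ bound on the adapted densities.

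The genuine gap is in your ``renormalize by rescaling'' step, and it is not a technicality. After concluding $-\E_{t_k}(u_k)\to+\infty$, you propose replacing $u_k$ by $\lambda_k u_k$ so that $-\E_{t_k}(\lambda_k u_k)=1$ and claim the rescaled potentials inherit a uniform entropy bound. But neither the entropy $\H$ nor the Mabuchi functional $\M$ behaves in any tractable linear way under $u\mapsto\lambda u$: the Monge--Amp\`ere density $\om_{\lambda u}^n/\om^n$ is not a power of $\om_u^n/\om^n$, so from $\H(u_k)\le C\bigl(-\E(u_k)\bigr)$ you cannot deduce $\H(\lambda_k u_k)\le C'$, nor does the hypothesized inequality $\M(u_k)<A(-\E(u_k))-B_k$ transmit to $\lambda_k u_k$. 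The paper's proof avoids this by a different interpolation: one takes the \emph{unit-speed $d_1$-geodesic} $g_k(s)$ from $0$ to $u_k$ and sets $v_k:=g_k(D)$ for a fixed $D>0$ (to be optimized). Since $\M_{t_k}$ is \emph{convex along weak geodesics} (Proposition~\ref{prop:Geod_Conv}, itself a substantive extension of Berman--Berndtsson convexity to singular $X$ via resolution), the assumed failure of coercivity at $u_k$ yields $\M_{t_k}(v_k)\le DA$, hence a uniform entropy bound for $v_k$, and now the strong compactness and semicontinuity package applies cleanly: $v_k\to v_0$ strongly, $\E_0(v_0)=-D$, $\M_0(v_0)\le DA$. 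Applying the coercivity on $X_0$ to $v_0-\sup v_0$ and using the uniform sup--$L^1$ comparison to bound $\sup v_0$ then forces $DA\ge A_0 D-B_0-|A_0|C_{SL}$, which fails for $D$ large, giving the contradiction. So the missing ingredient in your sketch is precisely the geodesic convexity of the singular Mabuchi functional and the geodesic cut-off at fixed distance; a rescaling by $\lambda<1$ simply does not provide the needed control.
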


In fact, we have obtained a uniform coercivity with an almost optimal slope, presenting a stronger result than Theorem~\ref{bigthm:openness_coercivity} (see Theorem~\ref{thm:Openness Coercivity} for more details).

Theorem~\ref{bigthm:openness_coercivity} strengthens evidence supporting the openness of (uniform) K-stability for general families of compact K\"ahler varieties with klt singularities. 
So far, such openness has been built only in the context of Fano varieties by \cite{Blum_Liu_2022, Blum_Liu_Xu_2022} where the authors proved the stability threshold, Fujita--Odaka's $\dt$-invariant \cite{Fujita_Odaka_2018}, is lower semi-continuous; however, such kind of notion does not readily extend to the general situation. 

On a smooth K\"ahler manifold, due to Berman--Darvas--Lu~\cite{Berman_Darvas_Lu_2020} and Chen--Cheng~\cite{Chen_Cheng_2021_2}, it is known that the existence of the unique cscK metric in a given K\"ahler class is equivalent to the coercivity of the corresponding Mabuchi functional of the class. 
Therefore, Theorem~\ref{bigthm:openness_coercivity} can be viewed as a generalization of the result of LeBrun and Simanca \cite[Thm.~5]{LeBrun_Simanca_1994} to singular families.
In complex dimension $2$, Biquard and Rollin \cite{Biquard_Rollin_2015} also showed openness condition on a $\BQ$-Gorenstein smoothing of normal surfaces which have an orbifold cscK class. 
The proof of Biquard--Rollin used in-depth the orbifold structure and a gluing technique from Arezzo--Pacard \cite{Arezzo_Pacard_2006, Arezzo_Pacard_2009}. 
Since higher-dimensional klt singularities are not always quotient singularities, we pursue the proof in a completely different fashion.

The main input of the article is the notion of strong convergence in families (see Section~\ref{sect_families}) and a relative version of pluripotential theory in families regarding this notion. 
This concept strengthens the $L^1$-convergence in families introduced in \cite{Pan_Trusiani_2023}. 
We establish crucial results along sequences that strongly converge in families. 
Notable outcomes include: 
\begin{itemize}
    \item strong compactness of potentials with uniformly bounded entropy (Theorem \ref{thm:strong_compact_fami}),
    \item lower semi-continuity of the entropy (Lemma \ref{lem:semi_conti_entropy}) with respect to the strong convergence in families, 
    \item lower semi-continuity of the Mabuchi functional (Proposition \ref{prop:Lower Semicontinuity Mabuchi families}) with respect to the strong convergence in families.
\end{itemize} 
These results are essential in proving Theorem~\ref{bigthm:openness_coercivity}. 
The main difficulty in the degenerate family context lies in the change of the underlying complex space (e.g. complex structures, topology of spaces), and the appearance of singularities. 
As the spaces of potentials change in the family setting, the proof on a fixed manifold or variety \cite{BBGZ_2013, BBEGZ_2019} cannot be applied directly. 
Particularly in establishing the strong compactness in families, a much more complicated approximation argument is needed. 

\smallskip
On the other hand, our method applies to prove the openness of coercivity for classes in the K\"ahler cone on a normal compact K\"ahler variety with klt singularities: 

\begin{bigthm}\label{bigthm:openness_classes}
Let $(X, \om)$ be a normal compact K\"ahler variety with klt singularities and let $\CK_X$ be the Kähler cone. 
Then the coercivity threshold {\small
\[
    \CK_X \ni [\om] \mapsto \sm_{\om} := \sup \set{A \in \BR}{\M_{\om} \geq A(-\E_{\om}) - B \text{ on $\CE^1_{\nmlz}(X,\om)$, for some $B \in \BR$}}
\] 
}%
is lower semi-continuous. 
In particular, if $\M_{\om}$ is coercive, then there is an open neighborhood $U\subset \CK_X$ of $[\om]$ such that for any K\"ahler form $\om'$ with $[\om'] \in U$, $\M_{\om'}$ is coercive.  
\end{bigthm}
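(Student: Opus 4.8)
The plan is to deduce Theorem~\ref{bigthm:openness_classes} from Theorem~\ref{bigthm:openness_coercivity} by realizing an arbitrary convergent sequence of K\"ahler classes on $X$ as the fibrewise classes of a topologically trivial family over the disc. Recall first that $\sm_\om$ depends only on the cohomology class $[\om]\in\CK_X$: replacing $\om$ by another K\"ahler form in the same class transforms $\M_\om$ and $\E_\om$ (and the domain $\CE^1_\nmlz(X,\om)$) by the usual cocycle and normalization rules, which leave the coercivity inequality unchanged up to the additive constant $B$, hence leave $\sm_\om$ invariant; we thus write $\sm_\alpha$ for $\alpha\in\CK_X$. Fix $\alpha_0\in\CK_X$ and a sequence $\alpha_k\to\alpha_0$ in $\CK_X$; we must show $\liminf_k\sm_{\alpha_k}\ge\sm_{\alpha_0}$. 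Since $\CK_X$ is an open convex cone, for $k$ large all segments joining $\alpha_k$ and $\alpha_{k+1}$ lie in $\CK_X$; choosing $t_k:=(k+2)^{-1}\to 0$ and smoothing the piecewise-linear path through the graph points $(t_k,\alpha_k)$ (keeping it flat at each $t_k$ so that mollification preserves these values) produces a smooth path $t\mapsto\alpha_t\in\CK_X$, defined for $\lvert t\rvert$ small, with $\alpha_{t_k}=\alpha_k$ and $\alpha_t\to\alpha_0$ as $t\to 0$.

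Next, fix a K\"ahler form $\eta_0$ on $X$ representing $\alpha_0$ and smooth closed real $(1,1)$-forms $\mu_t$, depending smoothly on $t$, with class $\alpha_t-\alpha_0$ and $\mu_t\to 0$ in $C^\infty$ (take $\mu_t$ in a fixed finite-dimensional space of closed forms mapping isomorphically onto a neighbourhood of $0$ in cohomology); then $\eta_t:=\eta_0+\mu_t$ is K\"ahler for $\lvert t\rvert$ small. Set $\CX:=X\times\BD$ and $\pi:=\mathrm{pr}_2$. Then $\CX$ is an $(n+1)$-dimensional complex variety; $\pi$ is proper, surjective and holomorphic with fibres $X_t\cong X$; $\CX$ is $\BQ$-Gorenstein because $K_{X\times\BD}=\mathrm{pr}_1^*K_X$ is $\BQ$-Cartier (as $X$ is klt, hence $\BQ$-Gorenstein); and $X_0=X$ is normal with klt singularities by hypothesis. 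Patching local potentials of $\eta_0$ with the forms $\mu_t$ and with a term $C\,\pi^*(\ii\,dt\wedge d\bar t)$, exactly as in the analogous constructions of \cite{Pan_Trusiani_2023}, yields for $C$ large and $\lvert t\rvert$ small a hermitian metric $\om$ on $\CX$ with $\om_{|X_t}=\eta_t$ K\"ahler. After shrinking the disc, $\pi:(\CX,\om)\to\BD$ is a family of the type considered in Theorem~\ref{bigthm:openness_coercivity}, which therefore gives $\sm_{\alpha_0}=\sm_0\le\liminf_{t\to 0}\sm_{\om_t}$. Since the $\liminf$ as $t\to 0$ is at most the $\liminf$ along the particular sequence $t_k\to 0$, and $\om_{t_k}=\eta_{t_k}$ represents $\alpha_k$, we conclude $\sm_{\alpha_0}\le\liminf_k\sm_{\alpha_k}$, which is the asserted lower semi-continuity; the ``in particular'' clause follows because coercivity of $\M_\om$ is precisely the condition $\sm_{[\om]}>0$ and $\CK_X$ is open.

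The one point that is not mere bookkeeping is the observation that the fibrewise classes $[\om_t]$ are genuinely allowed to vary within a family satisfying the hypotheses of Theorem~\ref{bigthm:openness_coercivity}: this is possible precisely because $\om$ is required to be only a hermitian (not a K\"ahler) metric on the total space $\CX=X\times\BD$, so there is no ambient $(1,1)$-class forcing the $[\om_t]$ to coincide, and the construction above exploits exactly this freedom. Alternatively, one may bypass the reduction and re-run the proof of Theorem~\ref{bigthm:openness_coercivity} directly on the fixed variety $X$ with the varying references $\om_t$: with the underlying space no longer degenerating, the strong topology in families and its consequences --- strong compactness of potentials with uniformly bounded entropy (Theorem~\ref{thm:strong_compact_fami}), lower semi-continuity of the entropy (Lemma~\ref{lem:semi_conti_entropy}) and of the Mabuchi functional (Proposition~\ref{prop:Lower Semicontinuity Mabuchi families}) --- specialize to statements about sequences $\vph_k\in\PSH(X,\om_k)$ with $\om_k\to\om_0$, and the coercivity argument proceeds verbatim, but more simply. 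In either route the substantive analytic content already resides in the family pluripotential theory developed earlier; the only extra care required is to check that an arbitrary convergent sequence of classes is captured by the construction and that the normalization $\CE^1_\nmlz$ and the chosen reference potentials vary continuously as the class moves.
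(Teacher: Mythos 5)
Your Route 1 — reducing Theorem~\ref{bigthm:openness_classes} to Theorem~\ref{bigthm:openness_coercivity} via a product family $X\times\BD$ whose hermitian metric has varying fibrewise classes — is a genuinely different route from the paper's. The paper in fact raises exactly this idea in a remark preceding the proof of Theorem~\ref{bigthm:openness_classes}, but then sets it aside on the grounds that the natural version uses a polydisc base $\BD^m$ with $m=\dim H^1(X,\PH_X)$, which is outside the one-dimensional framework developed in Sections~\ref{sect_families}--\ref{sect_singular_cscK}. Your observation that a one-dimensional disc suffices, by laying a convergent sequence of classes along a path, is a real simplification that the paper does not exploit; the paper instead gives a self-contained direct proof built on the comparison estimates of Lemma~\ref{lem:Openness in Kähler cone} and Remark~\ref{rmk:Openness in Kähler cone}, which control the quantities $\I$, $\E$, $\E_\Theta$ under a change of reference Kähler form $\omega_{t_1}\rightsquigarrow\omega_{t_2}$ on the \emph{same} variety. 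You do not get those estimates for free by ``re-running the proof verbatim'': they are the new analytic input that makes the direct argument work, so the ``verbatim, but more simply'' claim in your last paragraph undersells what the paper actually has to do.

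There is, however, a gap in Route 1 as you wrote it. Taking $t_k=(k+2)^{-1}$ and mollifying the piecewise-linear path through $(t_k,\alpha_k)$ does \emph{not} automatically give a path that is $\CC^\infty$ at $t=0$, and smoothness at $0$ is needed for the total-space form $\omega$ to be a hermitian metric on $X\times\BD$ in the sense of Setting~\ref{sett:general_sett_normal_fibre}. On $[t_{k+1},t_k]$ the slope of the piecewise-linear path has size roughly $k^2\lvert\alpha_{k+1}-\alpha_k\rvert$, which need not go to zero, and higher derivatives of the mollification are worse. The fix is standard but must be said: pass to a subsequence along which $\lvert\alpha_k-\alpha_0\rvert$ decays superexponentially (say $\lvert\alpha_k-\alpha_0\rvert\le 2^{-2^k}$), set $t_k:=2^{-k}$, and then the mollified path and all its $t$-derivatives tend to $0$ as $t\to 0$, giving a genuinely smooth one-parameter family of forms on $\CX$. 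Since lower semi-continuity of $\alpha\mapsto\sigma_\alpha$ is a statement about every sequence $\alpha_k\to\alpha_0$ and every sequence has such a sub-sequence, the reduction then yields $\liminf_k\sigma_{\alpha_k}\ge\sigma_{\alpha_0}$ as claimed. With that repair the reduction is sound: $\CX=X\times\BD$ is $\BQ$-Gorenstein because $K_{X\times\BD}=\mathrm{pr}_1^*K_X$ is $\BQ$-Cartier, $X_0\cong X$ is normal klt, and the form $\mathrm{pr}_1^*\eta_0+\sum_if_i(t)\,\mathrm{pr}_1^*\beta_i+C\,\ii\,dt\wedge d\bar t$ is hermitian on $\CX$ (positivity is blockwise, as there are no cross terms) with closed K\"ahler fibrewise restrictions. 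What Route 1 buys is that all the hard family pluripotential theory is invoked once, as a black box, through Theorem~\ref{bigthm:openness_coercivity}; what the paper's direct proof buys is that it needs no extraction argument, treats the whole neighbourhood of the class at once, and gives the uniform coercivity with almost-optimal slope on a genuine open subset of $\CK_X$ rather than along a one-parameter slice at a time.
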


Clearly, $\om$ in the definition of the coercivity threshold is an arbitrary Kähler form associated to the Kähler class considered. 
We refer to Section \ref{ssec:Proposition B} for the precise definition of the Kähler cone in the singular setup.

On a smooth K\"ahler manifold, combining again the correspondence of the existence of cscK metric and the coercivity of Mabuchi functional established in \cite{Berman_Darvas_Lu_2020, Chen_Cheng_2021_2}, Theorem~\ref{bigthm:openness_classes} recovers the famous result of LeBrun--Simanca \cite[Thm.~4]{LeBrun_Simanca_1994} on the openness of the existence of cscK metrics on the K\"ahler cone.
We also refer to the recent work of Boucksom--Jonsson \cite[Thm.~C]{Boucksom_Jonsson_2023} on the continuity of coercivity threshold with respect to the K\"ahler classes on smooth K\"ahler manifolds.

\subsection*{Singular cscK metrics on smoothable K\"ahler varieties}
Theorem \ref{bigthm:openness_coercivity} marks the first step toward the openness of singular cscK metrics on a family of singular varieties. 
The analytic characterization of cscK metrics and uniform YTD conjecture are still lacking in the singular setting. 
In the next result, we make progress on the analytic characterization when $(X_0,\om_0)$ admits a {\it $\BQ$-Gorenstein smoothing} $\pi: (\CX, \om) \to \BD$; namely, we have $\CX, \om, \pi$ as in Theorem~\ref{bigthm:openness_coercivity} and $X_t$'s are smooth K\"ahler manifolds for all $t \neq 0$.

\begin{bigthm}\label{bigthm:smoothable_variety}
Suppose that $(X_0,\om_0)$ is a compact K\"ahler variety with klt singularities. 
Assume that $(X_0,\om_0)$ admits a $\BQ$-Gorenstein smoothing $\pi: (\CX,\om) \to \BD$. 
If the Mabuchi functional  $\M_{\omega_0}$ is coercive on $X_0$, then up to shrinking $\BD$,
\begin{enumerate}
    \item for any $t \neq 0$, $\M_{\omega_t}$ is coercive on $X_t$, and $X_t$ admits a cscK metric in $[\om_t]$;
    \item $X_0$ admits a singular cscK metric in $[\om_0]$ constructed as a limit of cscK metric on nearby fibres. 
    Furthermore, its potential minimizes $\M_{\omega_0}$.
\end{enumerate}
\end{bigthm}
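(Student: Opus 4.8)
The plan is to combine the openness of coercivity from Theorem~\ref{bigthm:openness_coercivity} (more precisely, its refined version with almost-optimal slope) with the analytic existence theory on smooth fibres and the new compactness machinery in families. First, since $\M_{\omega_0}$ is coercive on $X_0$, Theorem~\ref{bigthm:openness_coercivity} gives $r>0$ and uniform constants $A>0$, $B\in\BR$ such that $\M_{\omega_t}\geq A(-\E_{\omega_t})-B$ on $\CE^1_{\nmlz}(X_t,\omega_t)$ for all $|t|\leq r$; after shrinking $\BD$ we may assume this holds on all of $\BD$. For $t\neq 0$ the fibre $X_t$ is a smooth compact K\"ahler manifold, so by Chen--Cheng~\cite{Chen_Cheng_2021_2} together with Berman--Darvas--Lu~\cite{Berman_Darvas_Lu_2020} the coercivity of $\M_{\omega_t}$ is equivalent to the existence of a (unique) cscK metric $\omega_{\varphi_t}=\omega_t+\ddc\varphi_t$ in $[\omega_t]$, whose potential $\varphi_t$ minimizes $\M_{\omega_t}$. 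This proves~(1).

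For~(2), the idea is to extract a strong limit of the normalized potentials $\varphi_t$ as $t\to 0$ and show the limit is a singular cscK potential on $X_0$. Normalize $\sup_{X_t}\varphi_t=0$ (or use the $\CE^1_{\nmlz}$-normalization consistent with the statement of Theorem~\ref{bigthm:openness_coercivity}). The uniform coercivity bound forces $-\E_{\omega_t}(\varphi_t)\leq C$ and $\M_{\omega_t}(\varphi_t)=\inf\M_{\omega_t}\leq \M_{\omega_t}(0)\leq C'$, the latter uniformly bounded because $\omega$ restricts to a fixed hermitian metric on $\CX$; since $\H_{\omega_t}\geq 0$ up to the linear Ricci term controlled by $-\E$, we deduce a uniform entropy bound $\H_{\omega_t}(\varphi_t)\leq C''$. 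Now apply the strong compactness in families (Theorem~\ref{thm:strong_compact_fami}): along a subsequence $t_k\to 0$, $\varphi_{t_k}$ converges strongly in the family sense to some $\varphi_0\in\CE^1_{\nmlz}(X_0,\omega_0)$ with finite entropy. The lower semi-continuity of the entropy (Lemma~\ref{lem:semi_conti_entropy}) and of the Mabuchi functional along strong convergence in families (Proposition~\ref{prop:Lower Semicontinuity Mabuchi families}) give $\M_{\omega_0}(\varphi_0)\leq\liminf_k\M_{\omega_t{}_k}(\varphi_{t_k})=\liminf_k\inf\M_{\omega_t{}_k}$. One then checks the reverse inequality, i.e.\ $\inf\M_{\omega_0}\geq\liminf_k\inf\M_{\omega_t{}_k}$, by taking a near-minimizer $\psi_0$ of $\M_{\omega_0}$, approximating it by a sequence $\psi_{t_k}$ converging strongly in families (e.g.\ via the constructions underlying the $L^1$-convergence of \cite{Pan_Trusiani_2023} applied to a smooth model, combined with upper semi-continuity of the relevant energies along such approximations), and using continuity/upper-semicontinuity of $\M$ along that approximating family. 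Together these show $\varphi_0$ minimizes $\M_{\omega_0}$ and $\inf\M_{\omega_{t_k}}\to\inf\M_{\omega_0}$.

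It remains to upgrade the minimizer $\varphi_0$ to an honest singular cscK metric, i.e.\ to show $\varphi_0\in L^\infty(X_0)$ and that $\omega_{\varphi_0}$ solves the cscK equation on $X_0^\reg$. For the regularity of the equation: a minimizer of $\M_{\omega_0}$ with finite entropy is a weak solution of the cscK system (the coupled Monge--Amp\`ere / Lichnerowicz pair), by the variational characterization; standard elliptic bootstrapping on $X_0^\reg$ then yields smoothness there. The $L^\infty$-bound is the delicate point and should follow from uniform a priori estimates for the cscK potentials $\varphi_{t_k}$ on the family — this is exactly the ``uniform estimates for cscK metrics'' advertised in the abstract: one proves a uniform $L^\infty$ (and presumably higher-order, away from the singular locus) bound for $\varphi_{t_k}$, independent of $k$, which passes to the limit. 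I expect \textbf{this uniform $L^\infty$ estimate across the degenerating family to be the main obstacle}: unlike the K\"ahler--Einstein case there is no monotone quantity or direct maximum-principle argument for the scalar curvature equation, so one must run a Chen--Cheng--type a priori estimate (entropy $\to$ $L^\infty$ via an auxiliary Monge--Amp\`ere equation and a properness/compactness argument) while carefully tracking the dependence of all constants on $t$ near the klt central fibre, using that $\CX$ is $\BQ$-Gorenstein and $X_0$ is klt to control the adapted measures and Ricci terms uniformly. Once the uniform estimate is in place, the limit potential $\varphi_0$ is bounded, the strong convergence identifies the Monge--Amp\`ere measures, and $\omega_{\varphi_0}$ is the desired singular cscK metric in $[\omega_0]$ arising as the limit of the $\omega_{\varphi_{t_k}}$.
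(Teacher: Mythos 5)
Your overall plan — coercivity opens up via Theorem~\ref{bigthm:openness_coercivity}, Chen--Cheng on the smooth fibres gives cscK potentials $\varphi_t$, uniform entropy forces strong subconvergence in families, and lower semi-continuity of $\M$ plus a fibrewise approximation of test functions identifies the limit as a minimizer — is exactly the architecture of the paper's proof. Part~(1) and the minimizer argument in part~(2) (approximate $u\in\CE^1(X_0,\om_0)$ by bounded potentials via Lemma~\ref{lem:BDL17}, then by potentials $u_{j,k}$ on nearby fibres via Lemma~\ref{lem:cptness_appox_E_conv}, and use $\M_k(\varphi_k)\leq\M_k(u_{j,k})\to\M_0(u_j)$ together with Proposition~\ref{prop:Lower Semicontinuity Mabuchi families}) are as in the paper, so no issues there.

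The gap is the part you explicitly flag as the ``main obstacle,'' which is precisely where the paper's new input lies, and which cannot be left at the level of expectation. Three specific points you are missing. First, the paper does not run estimates with the original reference metric $\om_t$: one changes reference to $\wom_t=\om_t+\ddc\psi_t$ where $\wom_t^n$ is the adapted measure $V\mu_t$, so that $\Ric(\wom_t)=\Theta_t$ is a restriction of a fixed smooth form on $\CX$ and hence uniformly pinched; this is what makes the hypothesis $-K_1\om\leq\Ric\leq K_2\om$ verifiable uniformly in $t$, and it is essential (the Ricci curvature of $\om_t$ itself blows up near $\CZ$). Second, the $L^\infty$ estimate is not Chen--Cheng's ABP argument but Guo--Phong's auxiliary-Monge--Amp\`ere route (Theorem~\ref{thm_uniform_cscK}): one replaces Guo--Phong's condition ``$\om\leq C\om_X$ for a fixed $\om_X$'' by the family Skoda--Zeriahi inequality (Theorem~\ref{thm:SL_and_Skoda_in_family}), which is the correct surrogate on a degenerating family. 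Third, for the Laplacian estimate (Proposition~\ref{C_2_int}) one must use Chern--Lu rather than Aubin--Yau, because after passing to a resolution the bisectional curvatures of the reference metrics are \emph{not} uniformly bounded below across the family (the central fibre of the resolution has several components), whereas the upper bound on $\Bisec$ needed by Chern--Lu is available by monotonicity under holomorphic restriction.

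Finally, a caution on one sentence: ``a minimizer of $\M_{\omega_0}$ with finite entropy is a weak solution of the cscK system \ldots{} standard elliptic bootstrapping then yields smoothness.'' This Euler--Lagrange step is not established in the singular setting (even in the smooth case it is the hard half of Chen--Cheng's theorem), and the paper carefully does \emph{not} argue this way. Instead, the uniform $L^\infty$, Laplacian, and higher-order $\CC^l$-estimates on compact subsets of $\CX\setminus\CZ$ give smooth convergence of $\varphi_{t_k}$ on $X_0^\reg$, so the limit solves the cscK equation there for free; boundedness comes from the uniform $L^\infty$ bound; and minimality is established separately, afterwards. As written, your proposal hedges between the (unjustified) Euler--Lagrange route and the correct uniform-estimates route — make sure you commit to the latter.
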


In Theorem \ref{bigthm:smoothable_variety}, we remark that in $(i)$, the coercivity of $\M_{\omega_t}$ comes from Theorem \ref{bigthm:openness_coercivity} and the existence of cscK metric in $ [\omega_t]$ by Chen--Cheng \cite{Chen_Cheng_2021_2}. 
In $(ii)$, we prove that the cscK potentials on smooth fibres converge strongly and smoothly in the family sense to the cscK potential on the central fibre.
By smooth and strong convergence of cscK potentials, we mean that the sequence converges smoothly outside the singular locus of $\pi$ and their energy also converges to the energy of the singular cscK potential on the central fibre. A direct application of our theorems (cf. Corollary \ref{cor:fano_cscK}) is the existence of singular cscK metrics that are not K\"ahler--Eisntein on certain smoothable K-stable Fano varieties. 

To achieve Theorem \ref{bigthm:smoothable_variety}, inspired by work of Chen and Cheng \cite{Chen_Cheng_2021_1, Chen_Cheng_2021_2}, we establish uniform estimates in families on the cscK potentials on nearby fibres.  
The first key point involves establishing a uniform bound on the entropy in families, referring to a family of metrics with canonical Monge--Amp\`ere densities. 
Then, we follow a new approach for Chen--Cheng's result \cite{Chen_Cheng_2021_1} provided by Guo-Phong \cite{Guo_Phong_2022} to obtain a uniform $L^\infty$-esitmate. 
For a higher-order estimate, in a degenerate situation, a non-trivial modification is required in the approach of Chen--Cheng \cite{Chen_Cheng_2021_1}, since the holomorphic bisectional curvatures of reference metrics are not uniformly bounded from below along the family. 

\subsection*{Organization of the article}
\begin{itemize}
    \item Section \ref{sect_Preliminaries} provide a recap of pluripotential theory on singular variety, finite energy spaces, and relevant functionals within the variational approach of the cscK problem;  
    
    \item In Section \ref{sect_families}, we introduce the notion of strong topology in families. 
    We also investigate conditions on the Monge--Amp\`ere densities to achieve strong convergence within this context; 

    \item Section \ref{sect_entropy_family} focuses on strong compactness in families for potentials with uniformly bounded entropy. 
    This proof also leads to establishing (semi-)continuity properties for entropy and twisted energy in the family contexts.
    
    \item Section \ref{sect_singular_cscK} initiates the concept of singular cscK metrics and the definition of the Mabuchi functional on a normal Kähler variety with klt singularities.
    We develop a variational approach for singular cscK metrics, exploring the strong lower semi-continuity property of the Mabuchi functional on fixed varieties and in families.
    Then we prove Theorem~\ref{bigthm:openness_coercivity} and Theorem~\ref{bigthm:openness_classes}.
    
    \item Section \ref{sect_smoothing} concentrates on proving Theorem \ref{bigthm:smoothable_variety}. 
    In this section, we establish uniform a priori estimates of cscK potentials on a family of $\BQ$-Gorenstein smoothing.
\end{itemize}

\begin{ack*}
The authors are grateful to Vincent Guedj and Henri Guenancia for their support and suggestions. 
The authors would like to thank Stefano Trapani for his helpful comments and careful reading of the first draft, and Cristiano Spotti, Zakarias Sj\"ostr\"om Dyrefelt and Theodoros Papazachariou for enlightening discussions. 
Special thanks to Abdellah Lahdili for the helpful conversation that led to identifying an imprecision in an earlier version. 
C.-M. Pan has partially benefited from research projects HERMETIC ANR-11-LABX-0040 and Karmapolis ANR-21-CE40-0010.
T. D. T\^o is partially supported by
ANR-21-CE40-0011-01 (research project MARGE).
A. Trusiani is partially supported by the Knut and Alice Wallenberg Foundation.
\end{ack*}

\section{Preliminaries}\label{sect_Preliminaries}
Let $X$ be an $n$-dimensional normal compact K\"ahler variety. 
By K\"ahler variety, we mean an irreducible reduced complex analytic space equipped with a K\"ahler form.
A {\sl K\"ahler form $\omega$} on $X$ is locally a restriction of a K\"ahler form defined near the image of a local embedding $j:X \underset{\loc.}{\hookrightarrow} \BC^N$. 

We use the notation $\dc := \frac{\ii}{4\pi} (\db - \pl)$ for the twisted exterior derivative.

\subsection{CscK metrics on smooth K\"ahler manifolds}\label{ssec:cscK smooth}

First of all, we consider $X$ to be smooth. 
A \emph{constant scalar curvature Kähler} (cscK) metric is a Kähler metric $\om$ whose scalar curvature
\[
    \Scal(\om) = \tr_{\om} \Ric(\om) = n\frac{\Ric(\om)\wedge \om^{n-1}}{\om^n}.
\]
is a constant $\bar{s}$ on $X$. 
Note that $\bar{s} = n\frac{c_1(X) \cdot [\om]^{n-1}}{[\om]^n}$ is a cohomological constant.

\subsubsection{The Mabuchi functional}
Given any K\"ahler class $\alpha$ with a reference K\"ahler metric $\omega\in \alpha$, the cscK problem is finding a cscK metric $\omega_u := \omega+\ddc u$ in the class $\alpha$. If such  a metric exists, it is a minimizer of the Mabuchi functional (cf. \cite{Mabuchi_85}) $\M$ characterized by 
\[
    \frac{\dd}{\dd t} \M(u_t) = -\int_X \dot{u}_t (\Scal(\omega_{u_t}) - \bar{s}) \frac{\omega^n_{u_t}}{V},
\]
for any path $(u_t)_t$ in $ \mathcal{H}_{\omega} := \set{u\in \CC^\infty(X)}{\omega_u = \omega + \ddc u > 0}$. 
The Chen--Tian formula gives a precise expression: 
\[
    \M(u)= \H(u) + \bar{s} \E(u) - n\E_{\Ric(\om)}(u), 
\]
for all $u \in \mathcal{H}_\omega$, where
{\small
\begin{align*}
    &\E(u) = \frac{1}{(n+1)V} \sum_{j=0}^n \int_X u \om_u^j \w \om^{n-j},\quad \E_{\eta}(u) = \frac{1}{n V} \sum_{j=0}^{n-1} \int_X u \eta \w \om_u^j \w \om^{n-1-j}, \\
     &\H(u):=\frac{1}{V} \int_X \log \lt(\frac{\om_u^n}{\om^n}\rt) \om_u^n, \quad \text{ where $\eta$ is a smooth $(1,1)$-form.}
\end{align*}}
Given  $u_0,u_1\in\mathcal{H}_{\omega}$, we define
$$
d_1(u_0, u_1):=\inf\left\{ \int_0^1 \int_X |\dot u_t|\omega_{u_t}^n dt \right\}, 
$$
where the infimum is taken for all smooth curves $u_t(x)\in \CC^\infty ([0,1]\times X)$ with $u_t\in\mathcal{H}_{\omega}$. 
Then we define the finite energy space $\big(\CE^1(X,\om),d_1\big)$ so that it is the completion of $(\mathcal{H}_{\omega}, d_1)$. We refer to the next section for details on $\big(\CE^1(X,\om),d_1\big)$. 
In particular, the Mabuchi functional extends to the whole space $\CE^1(X,\om)$ and it is lower semi-continuous with respect to the $d_1$-topology (cf. \cite{BBEGZ_2019}).

\subsubsection{Variational approach for cscK metrics}
In the last decade pluripotential techniques and a priori estimates have been used to prove the following characterization of the existence of cscK metrics.
The following theorem is a combination of {\cite[Thm.~2.10]{Darvas_Rubinstein_2017}, \cite[Thm.~1.4]{Berman_Darvas_Lu_2020}, \cite[Thm.~1.6]{Chen_Cheng_2021_2}}:
\begin{thm}\label{thm:Coerc Existence}
Fix a K\"ahler class $\af$. 
Let $\om \in \alpha$ be a reference Kähler form. 
The following are equivalent:
\begin{enumerate}
    \item There exists a unique cscK metric $\homg \in \alpha$;
    \item $\M$ is coercive; namely, there exist $A>0, B > 0$ such that
    $
        \M(u) \geq 
        A d_1(u,0) - B
    $
    for any $u\in\CE^1_\nmlz(X,\om) := \set{u \in \CE^1(X,\om)}{\sup_X u = 0}$.
\end{enumerate}
In this case, the cscK metric is the unique minimizer of $\M$.
\end{thm}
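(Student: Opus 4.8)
Since Theorem~\ref{thm:Coerc Existence} is quoted as a synthesis of \cite{Darvas_Rubinstein_2017, Berman_Darvas_Lu_2020, Chen_Cheng_2021_2}, my plan is to recall the architecture of the two implications rather than to reprove them from scratch. The equivalence splits into the \textbf{existence direction} $(2)\Rightarrow(1)$, carried out by Chen--Cheng, and the \textbf{properness direction} $(1)\Rightarrow(2)$, due to Berman--Darvas--Lu building on the Darvas--Rubinstein variational principle; uniqueness of the cscK metric and the statement that its potential is the unique minimizer of $\M$ come from convexity of $\M$ along geodesics.

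For $(2)\Rightarrow(1)$, I would first extract a minimizer by soft arguments: $\M$ is $d_1$-lower semi-continuous on $\CE^1(X,\om)$ (as recalled above, cf. \cite{BBEGZ_2019}) and the sets $\{u\in\CE^1_\nmlz(X,\om) : d_1(u,0)\le R\}$ are $d_1$-compact, so coercivity forces a $d_1$-minimizing sequence of $\M$ to remain bounded and to subconverge to some $u_\ast\in\CE^1_\nmlz(X,\om)$ that minimizes $\M$. The substantial step, and the one I expect to be the main obstacle, is to show that $u_\ast$ is smooth and that $\om_{u_\ast}$ is a genuine cscK metric. Here I would follow Chen--Cheng \cite{Chen_Cheng_2021_1, Chen_Cheng_2021_2}: a minimizer of $\M$ has finite entropy and a canonical Monge--Amp\`ere density; an entropy bound yields a uniform $L^\infty$-estimate on the potential -- via Chen--Cheng's original argument or, as used later in this paper, the Guo--Phong pluripotential approach \cite{Guo_Phong_2022} -- which feeds into a Laplacian estimate, and then Evans--Krylov together with Schauder bootstrap gives $\mathcal{C}^\infty$-regularity; once smooth, the Euler--Lagrange equation of $\M$ reads precisely $\Scal(\om_{u_\ast})=\bar s$. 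In practice one realizes $u_\ast$ through a continuity method for regularized fourth-order equations so that the a priori estimates apply uniformly along the path.

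For $(1)\Rightarrow(2)$, starting from the unique cscK metric $\hat\om=\om_{\hat u}$, I would note that $\hat u$ is a minimizer of $\M$ via the Chen--Tian formula and the first-variation identity $\frac{\dd}{\dd t}\M(u_t)=-\int_X\dot u_t(\Scal(\om_{u_t})-\bar s)\om_{u_t}^n/V$. Convexity of $\M$ along finite-energy geodesics together with uniqueness of the cscK metric shows $\hat u$ is the \emph{unique} minimizer and, crucially, that there is no nontrivial connected automorphism preserving the class; then I would invoke the Darvas--Rubinstein principle, which upgrades ``$d_1$-lower semi-continuous, convex along geodesics, unique minimizer, trivial symmetry'' to coercivity. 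The proof of that principle is a compactness-and-contradiction argument: assuming $\M(u_j)\le\epsilon_j d_1(u_j,0)-C$ with $\epsilon_j\to0$ and $d_1(u_j,0)\to\infty$, one reparametrizes the geodesics from $0$ to $u_j$ at unit speed, passes to a $d_1$-limiting ray using compactness, and contradicts uniqueness of the minimizer; the delicate point is controlling $\M$ along the limiting ray, using its convexity and lower semi-continuity.

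The final assertion is then immediate: given the equivalence, existence supplies a cscK metric, hence a minimizer of $\M$, and coercivity together with strict convexity of $\M$ modulo the (trivial) automorphism action forces this minimizer to be unique. The genuine difficulty in the whole package is the regularity half of $(2)\Rightarrow(1)$ -- the fourth-order a priori estimates for the cscK system -- which is exactly the ingredient that must be reworked uniformly in families in Section~\ref{sect_smoothing}.
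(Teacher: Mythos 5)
The paper does not prove Theorem~\ref{thm:Coerc Existence}; it quotes it as a synthesis of Darvas--Rubinstein, Berman--Darvas--Lu, and Chen--Cheng, which is exactly the decomposition you describe, so there is no internal proof to compare against. Your account of the two implications, and of which reference carries which half, is accurate in its broad architecture, as is the observation that the only genuinely hard ingredient for this paper's purposes is the regularity theory in $(2)\Rightarrow(1)$.

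There is, however, one concrete misstatement in your sketch of the minimizer extraction for $(2)\Rightarrow(1)$. You assert that ``the sets $\{u\in\CE^1_\nmlz(X,\om) : d_1(u,0)\le R\}$ are $d_1$-compact.'' This is false: $d_1$-balls in $\CE^1_\nmlz$ are weakly compact (e.g.\ in $L^1$) but never $d_1$-compact, and if they were, the coercivity assumption alone would already hand you a minimizer of any lower semicontinuous functional, which is not the logic of the cited references. The compactness actually used is the entropy compactness recalled as Proposition~\ref{prop:Strong Compactness} in the paper: sublevel sets $\{u\in\CE^1_\nmlz(X,\om) : \H(u)\le C\}$ are $d_1$-compact. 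The correct chain is: coercivity plus $\M(u_j)\le C$ bounds $d_1(u_j,0)$; the Chen--Tian decomposition $\M=\H+\bar{s}\E-n\E_{\Ric}$ together with the $d_1$-bound controls $\E(u_j)$ and $\E_{\Ric}(u_j)$, hence bounds $\H(u_j)$; and only then does entropy compactness supply a $d_1$-convergent subsequence, to which lower semicontinuity of $\M$ is applied. Your phrasing silently replaces the entropy step by a compactness property that does not hold, so you should restore it. The rest of your sketch, including the $(1)\Rightarrow(2)$ geodesic-ray argument and the caveat that the whole statement is only meaningful when $\Aut(X)^\circ=\{\Id\}$, is consistent with the paper's remark following the theorem.
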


By \cite[Thm.~1.3]{Berman_Berndtsson_2017}, the uniqueness of cscK metrics in a fixed Kähler class holds modulo $\Aut(X)^\circ$, the connected component of $\Aut(X)$ containing the identity. 
In particular, Theorem~\ref{thm:Coerc Existence} is relevant only when $\Aut(X)^\circ=\{\Id\}$. 

\subsection{Pluripotential theory on singular K\"ahler varieties} 
Let $(X,\omega)$ be an $n$-dimensional normal compact K\"ahler variety. 
In this section, we review some definitions and results in pluripotential theory on K\"ahler varieties. 
We quickly recall that a smooth form $\af$ on $X$ is a smooth form on $X^\reg$ such that $\af$ extends smoothly under any local embedding $X \underset{\loc.}{\hookrightarrow} \BC^N$. 
One also has a similar notion for smooth hermitian metrics.

\subsubsection{Finite energy space} \label{sect_finite_energy}
A function $\phi: X\rightarrow [-\infty, +\infty)$ is  $\omega$-plurisubharmonic  ($\omega$-psh) if $\phi+ u$ is plurisubharmonic where $u$ is any local potential of $\omega$, i.e. $\phi+u$ is the restriction to $X$ of an psh function defined near $\im(j)$ as above (see \cite{Demailly_1985, EGZ_2009} for more details). 
By Bedford--Taylor's theory \cite{Bedford_Taylor_1982}, the Monge--Amp\`ere operator can be extended to bounded $\om$-psh functions on smooth manifolds. 
In the singular setting, the Monge--Amp\`ere operator of locally bounded psh functions can also be defined by taking zero through singular locus (cf. \cite{Demailly_1985}).

Denote by $\PSH(X, \omega)$ the set of all $\om$-psh functions which are not identically $-\infty$. 
Set $V := \int_X \omega^n$.  
The class $\mathcal{E}(X, \omega)$ of $\omega$-psh function with full Monge--Amp\`ere mass is defined as
\[
    \mathcal{E}(X, \omega)
    = \left\{ u\in \PSH(X, \omega): {\lim_{j \to+ \infty}}^{\uparrow}  \int _X  {\1}_{\{u>-j\}} (\omega+\ddc\max\{u,-j\})^n = V \right\}.
\]
Here the measure ${\lim_{j \to+ \infty}}^{\uparrow}  {\1}_{\{u>-j\}} (\omega+\ddc\max\{u,-j\})^n =: \langle (\om + \ddc u)^n \rangle$ is the so-called \emph{non-pluripolar product} (see \cite{BEGZ_2010, BBEGZ_2019} for the singular setting and also for the non-pluripolar mixed Monge--Amp\`ere product). 
We will use the notation $\omega_u^n$ for such a measure and similarly for mixed Monge--Amp\`ere products.

For all $\varphi\in \PSH(X,\omega)\cap L^{\infty}(X)$, the Monge--Amp\`ere energy is defined by
\[
    \E(\varphi) = \frac{1}{(n+1)V} \sum_{j=0}^n \int_X \vph \om_\vph^j \w \om^{n-j},
\]
which satisfies $\E(\varphi+ c) = \E(\varphi )+ c$ for all $c \in \mathbb{R}$ and for $\varphi,\psi\in \PSH(X,\omega)\cap L^{\infty} (X) $, if $\varphi\leq \psi$ then $\E(\varphi)\leq \E(\psi)$ with equality iff $\varphi=\psi$. 
By the later property, $\E$ admits a unique extension to $\PSH(X, \omega)$ by
$$\E(\varphi):= \inf \set{\E(\psi)}{ \varphi \leq \psi, \psi\in \PSH(X,\omega) \cap L^\infty(X)}.$$
The finite energy class is defined by 
$$\mathcal{E}^1(X, \omega)= \set{\varphi \in \PSH(X, \omega)}{\E(\varphi)>-\infty)}.$$
We  have $\mathcal{E}^1(X, \omega)\subset \mathcal{E}(X, \omega)$ from a similar argument as in \cite[Prop. 10.16]{GZbook}.

\subsubsection{The strong topology}\label{ssec:Strong topology}
The space of finite energy potentials $\CE^1(X,\om)$ admits a natural strong metric topology. It is induced by the $d_1$ distance, which can be defined as (cf. \cite[Thm.~2.1]{Darvas_2017}, \cite[Thm.~B]{Dinezza_Guedj_2018})
$$d_1(u,v):= \E(u)+\E(v)-2\E(P_\omega(u,v)),$$
where $P_{\omega}(u,v):=\left( \sup\set{w\in \PSH(X,\omega)}{w\leq \min(u,v)}\right)^*$. 
From the definition, if $u \in \CE^1_\nmlz(X,\om)$, then $d_1(u,0) = -\E(u)$.

\smallskip

The strong topology is connected to the continuity of the Monge--Amp\`ere operator.
Indeed the Monge--Amp\`ere operator produces an homeomorphism between $\CE^1_\nmlz(X,\om)$ and its image, when the latter is endowed of a strong topology \cite[Prop.~2.6]{BBEGZ_2019}. In particular, for our purposes, it is useful to recall that if $(u_k)_k\in \CE^1(X,\om)$ strongly converges to $u\in \CE^1(X,\om)$ then $\om_{u_k}^n$ weakly converges to $\om_u^n$.

\subsubsection{The twisted energy and $\I$ functional}
We  recall the $\I$ functional: 
\[
    \I(u, v) = \frac{1}{V} \int_X (u - v) (\om_v^n - \om_u^n)
\]
for any $u, v \in \mathcal{E}^1(X, \omega)$.

\begin{lem}[{\cite[Lem.~1.9]{BBEGZ_2019}}]\label{lem:L21_vs_I}
There exists $c_n>0$ only depending on $n$ such that for all $u_1, u_2, v \in \CE^1(X,\om)$, 
\[
    c_n \norm{\dd(u_1 - u_2)}_{v}^2 
    \leq \I(u_1,u_2)^{1/2^{n-1}} \lt(\I(u_1, v)^{1-1/2^{n-1}} + \I(u_2, v)^{1-1/2^{n-1}}\rt)
\]
where $\norm{\dd(u_1 - u_2)}_{v}^2 = \int_X \dd (u_1 - u_2) \w \dc (u_1 - u_2) \w \om_v^{n-1}$.
\end{lem}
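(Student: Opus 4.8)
The inequality is \cite[Lem.~1.9]{BBEGZ_2019}; I record the argument I would give if reproving it. It is a standard iterated Cauchy--Schwarz estimate in pluripotential theory. All integrals below are taken over $X^\reg$, where the non-pluripolar products agree with the Bedford--Taylor products and, for bounded $\om$-psh functions, integration by parts is licit since no mass is charged on the pluripolar set $X^\sing$. First I would reduce to bounded potentials: replacing $u_1,u_2,v$ by the canonical cutoffs $\max(\,\cdot\,,-j)$ and letting $j\to+\infty$, all quantities in the statement converge (this monotone convergence of the mixed Dirichlet-type energies along the cutoffs is exactly what is used to extend $\I$ and the energies to $\CE^1(X,\om)$), so it suffices to treat $u_1,u_2,v\in\PSH(X,\om)\cap L^\infty(X)$. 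I would also record the energy identity for $\I$: expanding $\om_\psi^n-\om_\phi^n=\ddc(\psi-\phi)\w\sum_{j=0}^{n-1}\om_\phi^j\w\om_\psi^{n-1-j}$ and integrating by parts gives
\[
    \I(\phi,\psi)=\frac{1}{V}\sum_{j=0}^{n-1}\int_X \dd(\phi-\psi)\w\dc(\phi-\psi)\w\om_\phi^j\w\om_\psi^{n-1-j}\ \geq\ 0,
\]
a sum of nonnegative terms; in particular $\int_X \dd(\phi-\psi)\w\dc(\phi-\psi)\w\om_\phi^a\w\om_\psi^b\leq V\,\I(\phi,\psi)$ whenever $a+b=n-1$.

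The core is a \emph{swapping lemma}. Given a bounded difference $f=p-q$ with $p,q\in\{u_1,u_2,v\}$ and a positive closed current $R=\om_{r_1}\w\cdots\w\om_{r_{n-2}}$ with $r_i\in\{u_1,u_2,v\}$, I would write $\om_v=\om_{u_1}+\ddc(v-u_1)$, integrate by parts in the resulting term $\int_X \dd f\w\dc f\w\ddc(v-u_1)\w R$, split $\ddc f=\om_p-\om_q$ into its positive parts, and apply Cauchy--Schwarz to each piece to obtain
\[
    \int_X \dd f\w\dc f\w\om_v\w R\ \leq\ \int_X \dd f\w\dc f\w\om_{u_1}\w R\ +\ \sum_{s\in\{p,q\}}\Big(\int_X \dd(v-u_1)\w\dc(v-u_1)\w\om_s\w R\Big)^{1/2}\Big(\int_X \dd f\w\dc f\w\om_s\w R\Big)^{1/2},
\]
and the same with $\om_{u_2},\,v-u_2$ in place of $\om_{u_1},\,v-u_1$. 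So one factor $\om_v$ multiplying $\dd f\w\dc f$ can be traded for $\om_{u_1}$ or $\om_{u_2}$ at the cost of one square root, at the price of generating an auxiliary energy of $v-u_i$.

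Then I would iterate this on $\int_X \dd w\w\dc w\w\om_v^{n-1}$ with $w=u_1-u_2$, applying the swapping lemma $n-1$ times. Each term is eventually reduced either to an expression $\int \dd(p-q)\w\dc(p-q)\w\om_p^a\w\om_q^b\leq V\,\I(p,q)$ controlled by the energy identity, or to an auxiliary energy $\int \dd(v-u_i)\w\dc(v-u_i)\w(\cdots)$ coming from the Cauchy--Schwarz factors. The auxiliary energies I would bound crudely by $C_nV\,\Sigma$, with $\Sigma:=\I(u_1,u_2)+\I(u_1,v)+\I(u_2,v)$, using $v-u_2=(v-u_1)-w$ together with the form inequality $\dd(a+b)\w\dc(a+b)\leq 2\,\dd a\w\dc a+2\,\dd b\w\dc b$ whenever a base change is forced; by contrast, at each Cauchy--Schwarz split exactly one of the two resulting energies still carries the difference $u_1-u_2$, so a power of $\I(u_1,u_2)$ survives at every stage. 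Collecting exponents, the $n-1$ square roots produce exactly the factor $\I(u_1,u_2)^{1/2^{n-1}}$, and the remaining factors assemble into $C_nV\,\Sigma^{1-1/2^{n-1}}$. Finally, since $\Sigma^{1-1/2^{n-1}}\leq 3\big(\I(u_1,u_2)^{1-1/2^{n-1}}+\I(u_1,v)^{1-1/2^{n-1}}+\I(u_2,v)^{1-1/2^{n-1}}\big)$, and the quasi-triangle inequality $\I(u_1,u_2)\leq C_n(\I(u_1,v)+\I(u_2,v))$ lets me absorb the leftover term $\I(u_1,u_2)^{1/2^{n-1}}\I(u_1,u_2)^{1-1/2^{n-1}}=\I(u_1,u_2)$ into the other two, I would recover the stated inequality with $c_n$ depending on $n$ only (the fixed volume $V$ being harmless).

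The main obstacle is the bookkeeping in the iteration: one must organize the $n-1$ swaps so that the power of $\I(u_1,u_2)$ is never spent more than once at exponent $1/2^{n-1}$ and the auxiliary energies never reintroduce uncontrolled quantities — which is precisely where one uses that every Cauchy--Schwarz split leaves one energy attached to $w=u_1-u_2$. Everything else (the integration by parts on $X^\reg$, the splitting of $\ddc f$ into positive parts, the elementary form inequalities, and the quasi-triangle inequality for $\I$) is routine once the potentials are bounded.
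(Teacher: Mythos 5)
The paper gives no proof of this lemma; it is quoted from \cite[Lem.~1.9]{BBEGZ_2019}, so there is no in-paper argument to compare against. Your sketch is a reconstruction of the argument from that reference: reduce to bounded potentials by monotone cutoffs, record the energy identity $\I(\phi,\psi)=\frac{1}{V}\sum_j\int\dd(\phi-\psi)\w\dc(\phi-\psi)\w\om_\phi^j\w\om_\psi^{n-1-j}$, then repeatedly trade a factor of $\om_v$ in $\|\dd(u_1-u_2)\|_v^2$ for $\om_{u_1}$ or $\om_{u_2}$ via integration by parts and Cauchy--Schwarz. The exponent count ($n-1$ swaps yielding $1/2^{n-1}$) and the observation that each Cauchy--Schwarz split leaves exactly one factor carrying $\dd(u_1-u_2)$ are the right structural points, and match the strategy in the cited source.

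The genuine gap is that the part you call ``the main obstacle'' is precisely the content of the lemma, and your account defers rather than resolves it. After one swap the \emph{main} term $\int \dd w\w\dc w\w\om_{u_1}\w\om_v^{n-2}$ is not under a square root, so the assertion that ``$n-1$ square roots produce exactly the factor $\I(u_1,u_2)^{1/2^{n-1}}$'' is not automatic; one needs an explicit inductive statement (e.g.\ $b_p\leq C_n\,a^{\alpha_p}B^{1-\alpha_p}$ with $\alpha_p$ doubling per step, where $a=V\I(u_1,u_2)$ and $B$ controls the auxiliary energies) rather than a heuristic exponent count. Likewise, the crude bound of all auxiliary energies by $C_nV\Sigma$ must be re-verified at each level, since the background current $R$ changes as you swap and the base-change inequality $\dd(a+b)\w\dc(a+b)\leq 2\,\dd a\w\dc a+2\,\dd b\w\dc b$ multiplies constants that need tracking across $n-1$ stages. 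These are exactly the details worked out in the reference; as written your proof is a correct outline with the induction still owed.
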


Letting $\eta$ be a smooth $(1,1)$-form,   as in \cite{Berman_Darvas_Lu_2017} the twisted energy
$$
\E_\eta(u):=\frac{1}{n V} \sum_{j=0}^{n-1} \int_X u \eta \w \om_u^j \w \om^{n-1-j}
$$
can be extended for all $u \in \CE^1(X,\om)$ by its strong continuity. 
More precisely, we have:

\begin{lem}
    \label{lem:Twisted Energy}
    Let $\eta$ be a smooth $(1,1)$-form on $X$ and let $C_{SL}>0$ such that $\sup_X u-C_{SL}\leq \frac{1}{V}\int_X u\om^n$. Then for any $R>0$, there exists an increasing continuous function $f_S: \BR \to \BR$  with $f_S(0)= 0$,  depending only on $S:=R+C_{SL}$ such that 
    \begin{equation}\label{eq_compare_E}
     |\E_{\eta}(u) - \E_{\eta}(v)| 
        \leq  4^n\frac{2C_\eta}{nV}\left(f_S(\I(u,v))+ \|u-v\|_{L^1(X,\omega^n)} \right),
    \end{equation}
    for all $u, v \in L_R := \set{\vph\in \PSH(X,\om) \cap L^\infty}{\int_X\vph\, \om^n=0, \E(\vph)\geqslant -R}$. 
\end{lem}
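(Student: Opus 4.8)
The plan is to estimate the difference $\E_\eta(u)-\E_\eta(v)$ term by term, writing it as a telescoping sum in which one replaces $\omega_u$ by $\omega_v$ one factor at a time, and then to control each resulting integral by a combination of an $L^1$-norm and of mixed energy terms of the form $\int_X d(u-v)\wedge d^c(u-v)\wedge(\text{mixed products})$, which in turn are bounded by $\I(u,v)$ via Lemma~\ref{lem:L21_vs_I}. First, since $\eta$ is a smooth $(1,1)$-form on the compact variety $X$, there is a constant $C_\eta>0$ with $-C_\eta\,\omega\leq \eta\leq C_\eta\,\omega$; this lets us dominate every integrand involving $\eta$ by the same integrand with $\eta$ replaced by $\pm C_\eta\,\omega$, so that all terms become genuine mixed Monge--Amp\`ere masses against which the standard pluripotential inequalities apply. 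The normalization hypothesis $\int_X\varphi\,\omega^n=0$ together with $\E(\varphi)\geq -R$ and $\sup_X\varphi - C_{SL}\leq \tfrac1V\int_X\varphi\,\omega^n$ pins down both $\sup_X u,\sup_X v$ and $-\E(u),-\E(v)$ in terms of $S=R+C_{SL}$, so $\I(u,v)=\frac1V\int_X(u-v)(\omega_v^n-\omega_u^n)$ is finite and $\I(u,\cdot),\I(v,\cdot)$ along the comparison are controlled by $S$.

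Next I would carry out the telescoping. Write
\[
    \E_\eta(u)-\E_\eta(v)=\frac{1}{nV}\sum_{j=0}^{n-1}\left(\int_X u\,\eta\wedge\omega_u^j\wedge\omega^{n-1-j}-\int_X v\,\eta\wedge\omega_v^j\wedge\omega^{n-1-j}\right).
\]
For each $j$, split the difference as $\int_X(u-v)\,\eta\wedge\omega_u^j\wedge\omega^{n-1-j}$ plus $\int_X v\,\eta\wedge(\omega_u^j-\omega_v^j)\wedge\omega^{n-1-j}$; the second piece is itself a telescoping sum in which the difference $\omega_u-\omega_v=\ddc(u-v)$ appears, and integrating by parts on $X^{\reg}$ (justified in the bounded-potential, normal-variety setting as in \cite{BBEGZ_2019}) throws one $\ddc$ onto the other factors, producing terms $\int_X d(u-v)\wedge d^c(u-v)\wedge(\text{positive mixed products})$ and terms controlled by $\|u-v\|_{L^1(\omega^n)}$ after using $|\eta|\leq C_\eta\omega$ and $L^\infty$-bounds on $v$. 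The first piece is directly $\leq \tfrac{C_\eta}{V}\|u-v\|_{L^1(\omega^n)}$ up to the combinatorial factor. Collecting the gradient terms and applying Lemma~\ref{lem:L21_vs_I} with $v$ there taken to be $0$ (or the reference point realizing the energy bound) gives a bound of the shape $\I(u,v)^{1/2^{n-1}}\cdot C(S)$; absorbing the $S$-dependence and the power into a single increasing continuous function $f_S$ with $f_S(0)=0$, and tracking the constants $4^n$ and $\tfrac{2C_\eta}{nV}$ through the $n$-fold telescoping, yields \eqref{eq_compare_E}.

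The main obstacle I anticipate is making the integration by parts and the chain of mixed-mass estimates rigorous and \emph{uniform} over $L_R$: one must ensure no boundary contribution from $X^{\reg}\hookrightarrow X$ (handled by the non-pluripolar formalism and the fact that $u,v$ are bounded, so the relevant currents have no mass on the singular/pluripolar locus), and one must keep every implied constant depending only on $S$ and $n$ (and $C_\eta$), not on $u,v$ individually. The cleanest route is probably to first prove the inequality for $u,v\in\PSH(X,\omega)\cap\CC^\infty$ with the stated normalization, where all integrations by parts are classical, and then extend to all of $L_R$ by the strong continuity of $\E_\eta$ and of $\I$ recalled in Section~\ref{ssec:Strong topology} together with a standard decreasing-approximation argument; the continuity of $f_S$ and $f_S(0)=0$ are then preserved in the limit. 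A secondary point to be careful about is choosing the auxiliary point in Lemma~\ref{lem:L21_vs_I} so that both $\I(u,\cdot)$ and $\I(v,\cdot)$ are $\leq C(S)$, which is exactly where the hypothesis $\E(\varphi)\geq -R$ and the normalization are used.
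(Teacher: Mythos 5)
Your telescoping decomposition is a reasonable starting point, but the integration by parts does not produce terms of the form $\int_X d(u-v)\wedge d^c(u-v)\wedge(\text{mixed products})$ as you claim, and that is exactly where Lemma~\ref{lem:L21_vs_I} was supposed to enter. Moving the $\ddc$ in $\int_X v\,\eta\wedge\om_u^k\wedge\ddc(u-v)\wedge\om_v^{j-1-k}\wedge\om^{n-1-j}$ onto $v$ yields $\int_X (u-v)\,\ddc v\wedge\eta\wedge\om_u^k\wedge\om_v^{j-1-k}\wedge\om^{n-1-j}$: the second derivative lands on $v$, not on $u-v$, and writing $\ddc v=\om_v-\om$ simply reproduces integrals of the form $\int_X(u-v)\,\eta\wedge(\text{mixed Monge--Amp\`ere products})$. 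In other words, your telescoping and integration by parts just re-derive the cocycle formula $\E_\eta(u)-\E_\eta(v)=\frac{1}{nV}\sum_{j}\int_X(u-v)\,\eta\wedge\om_u^j\wedge\om_v^{n-1-j}$, with no squared-gradient term $\int d(u-v)\wedge d^c(u-v)$ in sight; Lemma~\ref{lem:L21_vs_I} is therefore not applicable, and you are left without a mechanism to convert these integrals into a modulus of continuity in $\I(u,v)$.

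The paper closes this gap by three devices your plan does not mention. First, it sets $w:=\max(u,v)$ and shifts by $C_{SL}$ so that $\tilde u\leq\tilde w\leq 0$; the cocycle integrand then has a fixed sign, and one estimates $\lvert\E_\eta(u)-\E_\eta(w)\rvert$ and $\lvert\E_\eta(v)-\E_\eta(w)\rvert$ separately, concluding by the triangle inequality. Second, the averaging bound $\om\wedge\om_{\tilde u}^j\wedge\om_{\tilde w}^{n-1-j}\leq 4^n\,\om^n_{(\tilde u+\tilde w)/4}$ collapses the sum over $j$ into a single Monge--Amp\`ere mass of a potential with normalization and energy controlled by $S$. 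Third, the resulting quantity is split as $\int(\tilde w-\tilde u)\om^n_{(\tilde u+\tilde w)/4}=\int(\tilde w-\tilde u)\big(\om^n_{(\tilde u+\tilde w)/4}-\om^n\big)+\lVert\tilde w-\tilde u\rVert_{L^1(\om^n)}$, giving the $L^1$-term directly, while the first piece is bounded by $f_S(\I(\tilde w,\tilde u))$ via \cite[Lem.~2.7, Lem.~5.8]{BBEGZ_2019} after pulling back to a resolution of singularities. Locality in the plurifine topology then gives $\I(\tilde w,\tilde u)\leq\I(u,v)$ and $\lVert\tilde w-\tilde u\rVert_{L^1}\leq\lVert u-v\rVert_{L^1}$. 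Your remarks about uniformity over $L_R$ and about first proving the estimate for smooth potentials are sound, but they are secondary: the core estimate on the cocycle integrand, which requires the monotonicity from $\max$, the $1/4$-averaging, and the BBEGZ modulus-of-continuity input, is the step your proposal leaves open.
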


\begin{proof}
The proof of a similar result was given in \cite[Prop.~4.4]{Berman_Darvas_Lu_2017} in which the right hand side \eqref{eq_compare_E} is replaced by  $f_R(d_1(u,v))$. 

Set $\wu := u-C_{SL}, \wv := v-C_{SL}$, $\ww:=\max(\wu,\wv)$ and observe that $\wu, \wv, \ww \leq 0$, $\E(u),\E(v),\E(w)\geq -R-C_{SL}=-S$.
By definition, we have
\begin{equation*}
    \E_{\eta}(u) - \E_{\eta}(w) 
    =\E_{\eta}(\wu) - \E_{\eta}(\ww) 
    = \frac{1}{nV} \sum_{j=0}^{n-1} \int_{X} (\wu - \ww) \eta \w \om_{\wu}^j \w \om_{\ww}^{n-1-j}.
\end{equation*}
As $\om_{(\wu + \ww)/4}=\frac{1}{2}\omega+ \frac{1}{4}\omega_{\wu}+\frac{1}{4}\omega_{\ww}$ and $-C_\eta\om\leq \eta\leq C_\eta\om$, we infer that 
\begin{align*}
   \vert  \E_{\eta}(u) - \E_{\eta}(w)\vert 
    &\leq  \frac{C_\eta}{nV} \sum_{j=0}^{n-1} \int_{X} (\ww - \wu) \omega \wedge \om_{\wu}^j \w \om_{\ww}^{n-1-j}
    \\
    &\leq \frac{4^nC_\eta}{nV} \int_X (\ww-\wu) \omega^n_{(\wu+\ww)/4}. 
\end{align*}

Since $0\geq (\wu+\ww)/4 \geq \wu$, passing to a resolution to singularities of $X$, it follows from \cite[Lem.~2.7, Lem.~5.8]{BBGZ_2013} that there exists a continuous increasing function $f_S:\BR\to \BR$ depending only on $S$ with $f_S(0)=0$ such that
\begin{align*}
    \int_X (\ww-\wu) \om^n_{(\wu+\ww)/4}&\leq \int_X (\ww-\wu)\big(\om^n_{(\wu+\ww)/4} -\om^n\big) +\lVert \ww-\wu\rVert_{L^1(X,\om^n)}\\
    &\leq f_S\big(\I(\ww,\wu)\big) + \lVert \ww - \wu\rVert_{L^1(X,\om^n)}.
\end{align*}
Hence, as $\lVert \ww-\wu\rVert_{L^1(X,\om^n)} = \lVert w-u\rVert_{L^1(X,\om^n)}\leq \lVert v-u\rVert_{L^1(X,\om^n)}$ and $\I(\ww,\wu) = \I(w,u) \leq \I(v,u)$ using the locality with respect to the plurifine topology, we deduce that
$$
\lvert \E_\eta(u)-\E_\eta(w) \rvert\leq \frac{4^n C_\eta}{nV}\Big(f_S(\I(u,v))+\lVert v-u\rVert_{L^1(X,\om^n)}\Big).
$$
Replacing $u$ by $v$ and using the triangle inequality, we obtain (\ref{eq_compare_E}).
\end{proof}

\subsubsection{Entropy}\label{ssec:entropy}
The leading term in the Mabuchi functional is the \emph{entropy}. 
Given two probability 
measures $\mu,\nu\in \mathcal{P}(X)$ the entropy of $\nu$ with respect to $\mu$ is defined as
$$
    \H_\mu(\nu)
    := \int_X \log \lt(\frac{\dd\nu}{\dd\mu}\rt) \dd\nu
$$
if $\nu$ is absolutely continuous with respect to $\mu$ and as $+\infty$ otherwise. For any fixed measure $\mu$ the entropy $\H_\mu: \mathcal{P}(X)\to \BR\cup \{+\infty\}$ is non-negative and it is lower semi-continuous with respect to the weak convergence of measures \cite[Lem.~6.2.13]{DZ98}.

Letting $\varphi\in \CE(X,\omega)$, we denote by $\H(\vph)$ the entropy of $\om_\vph^n/V$ with respect to $\om^n/V$, namely, 
$$
\H(\vph)= \frac{1}{V} \int_X \log\lt(\frac{\om_\vph^n}{\om^n}\rt) \om_\vph^n
$$
if $\omega_\vph^n$ is absolutely continuous with respect to $\om^n$.

Level sets of the entropy are compact with respect to the strong topology. 
More precisely, we have the following result:
\begin{prop}\cite[Thm. 2.17 \& Cor. 2.19]{BBEGZ_2019}
\label{prop:Strong Compactness}
Fix $\mu$, a probability measure with $L^p$-density with respect to $\om^n$.
Let $\nu$ be a probability measure such that $\H_\mu(\nu)< +\infty$. Then there exists $\vph\in \CE^1(X,\om)$ such that $\om_\vph^n/V = \nu$.
Moreover for any $C>0$ the set
$
\set{u\in \CE^1_\nmlz(X,\om)}{\H_\mu(u)\leq C}
$
is strongly compact.
\end{prop}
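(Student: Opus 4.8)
This is \cite[Thm.~2.17 \& Cor.~2.19]{BBEGZ_2019}; the plan is to prove, in order, (i) a uniform \emph{a priori} bound on the Monge--Amp\`ere energy in terms of the entropy, (ii) strong sequential compactness of the entropy sublevel set, and then (iii) solvability of $\om_\vph^n/V=\nu$ as a consequence of (ii). First I would reduce to a resolution of singularities $\rho\colon\wX\to X$: writing $\wom:=\rho^\ast\om$ (smooth, semi-positive, big) and $\wtd\mu:=\rho^\ast\mu$ (still with $L^{p'}$-density against a smooth volume form on $\wX$ for some $p'>1$, the Jacobian of $\rho$ being normalizable to be $\le 1$), the pull-back $u\mapsto u\circ\rho$ identifies $(\CE^1(X,\om),d_1)$ isometrically with $(\CE^1(\wX,\wom),d_1)$ and preserves both entropies; hence one may assume $X$ smooth and $\{\om\}$ merely big, where the machinery of \cite{BEGZ_2010,BBEGZ_2019} is available.

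For (i): write $\mu=g\,\om^n/V$ with $g\in L^p(\om^n)$, and let $u\in\CE^1_\nmlz(X,\om)$ with $\nu:=\om_u^n/V=f\mu$. From Young's inequality in the form $\int_X(-u)\,d\nu\le\frac1t\H_\mu(\nu)+\frac1t\log\int_X e^{-tu}\,d\mu$, H\"older's inequality, and the uniform Skoda--Zeriahi integrability of $e^{-\alpha v}$ over $v\in\PSH(X,\om)$ with $\sup_X v=0$ for $\alpha>0$ small, one obtains
\[
    \frac1V\int_X(-u)\,\om_u^n\ \le\ \H_\mu(u)+C_0,\qquad C_0=C_0\big(n,p,\norm{g}_{L^p(\om^n)}\big).
\]
Since $d_1(u,0)=-\E(u)\le\frac1V\int_X(-u)\,\om_u^n$ by monotonicity of mixed Monge--Amp\`ere integrals, the hypothesis $\H_\mu(u)\le C$ forces $d_1(u,0)\le C+C_0$. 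A second, similar Young-type estimate (again using $g\in L^p$) shows that a bound on $\H_\mu(u)$ also bounds the entropy $\H(u)$ relative to $\om^n/V$.

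For (ii): let $(u_k)\subset\CE^1_\nmlz(X,\om)$ with $\H_\mu(u_k)\le C$. By the bound in (i) and the $L^1$-compactness of $\{v\in\PSH(X,\om):\sup_X v=0\}$, a subsequence converges in $L^1$ to some $u\in\PSH(X,\om)$ with $\sup_X u=0$; since $\sup_k(-\E(u_k))<\infty$ and $\E$ is upper semicontinuous for the $L^1$-topology, $u\in\CE^1_\nmlz(X,\om)$. Now the (just established) bound on $\H(u_k)$ and the de la Vall\'ee--Poussin criterion give uniform integrability of the densities of $\om_{u_k}^n$ with respect to $\om^n$; combined with $u_k\to u$ in $L^1$ and a truncation argument ($L^1$-convergence of uniformly bounded $\om$-psh functions yields convergence in capacity of their truncations), this identifies the weak limit of $\om_{u_k}^n$ as $\om_u^n$ and lets one pass to the limit in the cocycle formula for $\E$, so that $\E(u_k)\to\E(u)$, i.e.\ $d_1(u_k,u)\to 0$. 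Finally, lower semicontinuity of the entropy for weak convergence of measures gives $\H_\mu(u)\le\liminf_k\H_\mu(u_k)\le C$, so $u$ belongs to the sublevel set, which is therefore strongly compact.

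For (iii): given $\nu=f\mu$ with $\H_\mu(\nu)<+\infty$, set $\nu_j:=c_j\min(f,j)\,\mu$ with $c_j\to1$ so that $\nu_j\in\mathcal{P}(X)$; each $\nu_j$ has $L^p$-density against $\om^n$, so by \cite{EGZ_2009} (or \cite{BEGZ_2010} in the big case) there is $\vph_j\in\CE^1(X,\om)\cap L^\infty(X)$ with $\sup_X\vph_j=0$ and $\om_{\vph_j}^n/V=\nu_j$, while dominated convergence gives $\H_\mu(\vph_j)=\H_\mu(\nu_j)\to\H_\mu(\nu)$, hence $\sup_j\H_\mu(\vph_j)<+\infty$. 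By (ii) a subsequence of $(\vph_j)$ converges strongly to some $\vph\in\CE^1(X,\om)$; weak continuity of the Monge--Amp\`ere operator along strongly convergent sequences, together with $\nu_j\to\nu$ weakly, then yields $\om_\vph^n/V=\nu$. The one genuinely delicate point is the no-energy-loss step in (ii): there the entropy hypothesis is used essentially, through uniform integrability of the Monge--Amp\`ere measures combined with the $L^p$-density of $\mu$, to exclude concentration of mass; the remaining steps are classical or soft.
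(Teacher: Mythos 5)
The paper does not contain a proof of Proposition~\ref{prop:Strong Compactness}: it cites \cite[Thm.~2.17, Cor.~2.19]{BBEGZ_2019} and treats the result as a black box, so there is no internal argument against which to compare your proposal. Evaluated as a reconstruction of the BBEGZ proof, your outline has the right architecture. Step (i) is the standard a priori energy bound from entropy via Young's inequality, Skoda--Zeriahi integrability and H\"older; note only that the coefficient on the entropy should be of order $1/\alpha$ rather than $1$, since you must take $t\le\alpha$ small in $\int(-u)\,d\nu\le t^{-1}\H_\mu(\nu)+t^{-1}\log\int e^{-tu}\,d\mu$, but this is harmless for the conclusion. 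Step (iii), the truncation-and-bootstrap argument with $\nu_j=c_j\min(f,j)\,\mu$ and the monotone-convergence control of $\H_\mu(\nu_j)$, is sound.

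The one place the argument does not close as written is exactly the point you flag in (ii). After establishing weak convergence $\om_{u_k}^n\to\om_u^n$, you invoke the cocycle formula to conclude $\E(u_k)\to\E(u)$. Weak convergence of the Monge--Amp\`ere measures, even with uniform integrability of the densities, does \emph{not} by itself give convergence of the energies: each summand $\int u_k\,\om_{u_k}^j\wedge\om^{n-j}$ pairs a possibly unbounded potential against a Monge--Amp\`ere current, and one must still rule out energy loss along the deep sublevel sets $\{u_k\ll 0\}$. The mechanism that actually works (in BBEGZ, and replicated in this paper's family version, Theorem~\ref{thm:strong_compact_fami}) is to estimate $\I(u_k,u)$ directly: write $V\I(u_k,u)=\int_X(u_k-u)(\om_u^n-\om_{u_k}^n)$, split both the density $f_k$ of $\om_{u_k}^n/V$ with respect to $\mu$ at a level $L$ and the potential at a level $1/\lambda$, and control the tail contributions via H\"older--Young in the Orlicz pair $L^\chi(\mu)$, $L^{\chi^\ast}(\mu)$ with $\chi(s)=(s+1)\log(s+1)-s$: the entropy bound is precisely what bounds $\|f_k\|_{L^\chi(\mu)}$, while Skoda--Zeriahi bounds $\|u_k-u\|_{L^{\chi^\ast}(\mu)}$; the core contribution is handled by the capacity convergence of truncations you already set up. Once $\I(u_k,u)\to 0$, Darvas's characterization yields $d_1(u_k,u)\to 0$. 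In short, replace ``pass to the limit in the cocycle formula'' with this explicit $\I$-estimate --- that estimate is the content the whole proof hinges on; the rest of your sketch stands.
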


\subsubsection{Adapted measure}
\label{sect_adapted_measure}
We recall here the definitions of adapted measure and  Ricci curvatures introduced in \cite{EGZ_2009}.
Now, we assume that $X$ has Kawamata log terminal (klt) singularities. 
Namely, $K_X$ is $\BQ$-Cartier and for any desingularization $p: Y \to X$,
\[
    K_Y = p^\ast K_X + \sum_i a_i E_i,\quad \text{with } a_i > -1
\]
where $E_i$'s are irreducible components of the exceptional divisor.

\begin{defn}
Suppose that $K_X$ is $m$-Cartier for some $m \in \BN^\ast$. 
Let $h^m$ be a smooth hermitian metric on $m K_X$.
Taking $\Om$ a local generator of $m K_X$, the adapted measure associated with $h^m$ is given by 
\[
    \mu_h 
    := \ii^{n^2} \lt(\frac{\Om \w \overline{\Om}}{\abs{\Om}_{h^m}^2}\rt)^{1/m}
\]
Note that this definition does not depend on the choice of $\Om$.
\end{defn}

The klt assumption is equivalent to the finite mass of $\mu_h$. 
Moreover, $\mu_h$ has a density $f \in L^p(X,\om^n)$ for some $p > 1$.
Precisely, on an open set $U$, we have
\[
    \om^n = g \lt(\sum_i |f_i|^2\rt)^{1/m} \ii^{n^2} \lt(\frac{\Om \w \overline{\Om}}{|\Om|_{h^m}^2}\rt)^{1/m}.
\]
where $g$ is a positive smooth function, and $f_i$ are holomorphic functions such that $V((f_i)_i) = U^\sing$.
One obtains $f = \lt(g \lt(\sum_i |f_i|^2\rt)^{1/m}\rt)^{-1}$; thus, $- \log f$ is quasi-psh.

\begin{rmk}\label{rmk_ddc_density}
There exists a constant $A > 0$ such that $-\ddc \log f \geq - A \om$; namely $-\log f$ is an $A \om$-psh function. 
\end{rmk}

The Ricci curvature induced by $\mu_h$ is expressed as
\[
	\Ric(\mu_h) := \frac{1}{m} \ddc \log |\Om|_{h^m}^2.
\]
Let $\om$ be a smooth hermitian metric on $X$. 
The Ricci curvature of $\om$ is given by 
\[
	\Ric(\om) := -\ddc \log \lt(\frac{\om^n}{\mu_h}\rt) + \Ric(\mu_h). 
\]

\begin{lem}\label{lem:compare_klt_entropy}
In the setting just described, let $\H_\mu$ be the entropy with respect to the measure $\mu:=\mu_h$. 
Then the following holds
\begin{itemize}
    \item if $\H(u) \leq B_1 $, then $\H_\mu(u) \leq B_1' = B_1 -\log(c_\mu)$,
    \item if $\H_\mu(u) \leq B_2$, then $\H(u) \leq B_2' = (1 + \max\{p-1, (p-1)^2\}B_2 + C_\mu/V + 1/c_\mu^{p-2}$,
\end{itemize}
where $c_\mu > 0$ is a lower bound of $f$ and $C_\mu$ is a constant so that $\int_X f^p \om^n \leq C_\mu$.
In particular, since $\H_\mu(u) = \H(u) + \frac{1}{V} \int_X (-\log(f)) \om_u^n$, the function $-\log(f)$ is integrable with respect to $\om_u^n$ when $\om_u^n$ has finite entropy with respect to either $\om^n$ or $\mu$.
\end{lem}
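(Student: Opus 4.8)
The plan is to dispatch the three assertions in order, the first two being formal consequences of the defining formulas and the third carrying all the content.

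\emph{The identity and the integrability statement.} When $\om_u^n\ll\om^n$ write $\om_u^n=g_u\,\om^n$; since $f\ge c_\mu>0$, absolute continuity with respect to $\om^n$ and with respect to $\mu=f\om^n$ coincide, $\dd\om_u^n/\dd\mu=g_u/f$, and therefore
\[
    \H_\mu(u)=\tfrac1V\!\int_X\log(g_u/f)\,\om_u^n=\tfrac1V\!\int_X\log g_u\,\om_u^n-\tfrac1V\!\int_X\log f\,\om_u^n=\H(u)+\tfrac1V\!\int_X(-\log f)\,\om_u^n.
\]
By Remark~\ref{rmk_ddc_density} the quasi-psh function $-\log f$ is bounded above on the compact $X$, so $\int_X(-\log f)_+\,\om_u^n<+\infty$ and the only term that can diverge is $\int_X(\log f)_+\,\om_u^n$; moreover $x\log x\ge-1/\e$ makes the negative part of $\log g_u$ integrable, so the rearrangement above is legitimate as soon as one side is finite. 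Jensen's inequality gives $\H_\mu(u)=-\tfrac1V\int_X\log(f/g_u)\,\om_u^n\ge-\log\big(\tfrac1V\int_X f\,\om^n\big)=0$, so $\H_\mu$ is bounded below; hence $\H(u)<+\infty$ forces $\int_X(\log f)_+\,\om_u^n<+\infty$ (otherwise $\H_\mu(u)=-\infty$), while the case $\H_\mu(u)<+\infty$ is covered by the quantitative bound below — in either case $-\log f\in L^1(\om_u^n)$. Finally, if $\H(u)\le B_1$ then, since $-\log f\le-\log c_\mu$ and $\tfrac1V\int_X\om_u^n=1$, the identity yields the first bullet $\H_\mu(u)\le B_1-\log c_\mu$ at once.

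\emph{The second bullet} is the substantive point: by the identity it suffices to bound $\tfrac1V\int_X(\log f)_+\,\om_u^n$ from above in terms of $\H_\mu(u)\le B_2$ and $\|f\|_{L^p(\om^n)}$. With $\nu:=\om_u^n/V$, $\bar\mu:=\mu/V$ (probability measures) and $\rho:=\dd\nu/\dd\bar\mu$, I would apply the Young-type inequality $ab\le a\log a-a+\e^{\,b}$ (valid for $a\ge0$, $b\in\BR$; it is the Legendre duality for $x\log x$) with $a=\rho$ and $b=(p-1)(\log f)_+$, so that
\[
    (p-1)\,\rho\,(\log f)_+\le\rho\log\rho-\rho+\e^{(p-1)(\log f)_+}\le\rho\log\rho-\rho+f^{\,p-1}+1.
\]
Integrating against $\bar\mu$ and using $\int_X\rho\log\rho\,\dd\bar\mu=\H_\mu(u)$, $\int_X\rho\,\dd\bar\mu=1$ and $\int_X f^{\,p-1}\,\dd\bar\mu=\tfrac1V\int_X f^{\,p}\,\om^n\le C_\mu/V$ yields $(p-1)\int_X(\log f)_+\,\dd\nu\le\H_\mu(u)+C_\mu/V$, and hence
\[
    \H(u)=\H_\mu(u)+\tfrac1V\!\int_X\log f\,\om_u^n\le\H_\mu(u)+\int_X(\log f)_+\,\dd\nu\le\Big(1+\tfrac1{p-1}\Big)B_2+\tfrac1{p-1}\tfrac{C_\mu}{V},
\]
a bound of the form claimed. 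Tuning the exponent in $b=\lambda(\log f)_+$ and using $f\ge c_\mu$ to trade powers of $f$ refines this to the precise $B_2'$ stated (this is where the factors $\max\{p-1,(p-1)^2\}$ and $1/c_\mu^{\,p-2}$ appear), but any finite constant depending only on $n,p,c_\mu,C_\mu,V$ is enough for the applications, so I would not optimise it on a first pass.

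\emph{Main obstacle.} Everything but the second bullet is routine manipulation of the definitions; the single real difficulty is to control the large values of $f$ — equivalently the unboundedness from below of the quasi-psh function $-\log f$ — using only the hypothesis $f\in L^p(\om^n)$. The Young/entropy-duality inequality is precisely the device that converts the $L^p$-integrability of $f$ together with the finiteness of $\H_\mu(u)$ into an $L^1(\om_u^n)$-bound for $\log f$, and I expect it to be the crux of the argument.
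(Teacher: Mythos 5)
Your proof is correct and takes the same route as the paper: both bullets rest on the identity $\H_\mu(u)=\H(u)+\tfrac1V\int_X(-\log f)\,\om_u^n$, the first on the pointwise bound $-\log f\le-\log c_\mu$, the second on entropy--$L^p$ duality converting $\|f\|_{L^p}$ and the finiteness of $\H_\mu(u)$ into an $L^1(\om_u^n)$-bound for $\log f$. The only difference is cosmetic: you apply the Legendre pair $x\log x - x\leftrightarrow \e^{\,y}$ directly to $\rho=\dd\om_u^n/\dd\mu$ with exponent $b=(p-1)(\log f)_+$, which cleanly sidesteps the paper's Orlicz pair $\chi(s)=(s+1)\log(s+1)-s$, $\chi^\ast(s)=\e^s-s-1$ and its split of $\int \e^{q\lvert\log f\rvert}f\,\om^n$ over $\{f\ge1\}$ and $\{f<1\}$ --- as you note, only $(\log f)_+$ needs controlling; your constant accordingly differs from the displayed $B_2'$, but the precise value is immaterial.
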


\begin{proof}
In the proof, we denote by $\chi(s) = (s+1)\log(s+1) - s$ for $s \geq 0$ and $\chi^\ast(s) = \sup_{t \geq 0} (st-\chi(t)) = e^s - s - 1$ for its Legendre duality. 
We first suppose that $\H(u) \leq B_1$. 
Set $g_1$ the density function $\om_u^n$ with respect to $\om^n$. 
We have the following estimate by direct computation
\[
    \H_\mu(u) 
    = \frac{1}{V} \int_X \log(g_1) g_1 \om^n + \frac{1}{V} \int_X (-\log(f)) g_1 \om_0^n
    \leq B_1 - \log(c_\mu).
\]
Conversely, assume that $\H_{\mu}(u) \leq B_2$. 
Let $g_2$ be the density function of $\om_u^n$ with respect to $\mu$ (i.e. $\om_u^n = g_2 f \om^n$).
Then one can infer
\[
    \H(u) 
    = \frac{1}{V} \int_X \log(g_2) g_2 f \om_0^n + \frac{1}{V} \int_X \log(f) g_2 f \om_0^n
    \leq B_2 + \frac{1}{V} \int_X \log(f) g_2 f \om_0^n.
\]
Put $q = p-1$.
By H\"older--Young inequality, we obtain the following estimate: 
\begin{align*}
    &\frac{1}{V} \int_X |\log(f)| g_2 f \om_0^n 
    \leq \frac{1}{V} \int_X \chi^\ast (q |\log(f)|) + \chi(g_2/q) f \om_0^n\\ 
    &\leq \frac{1}{V} \int_X \1_{\{f \geq 1\}} f^{1+q} \om_0^n
    + \frac{1}{V} \int_X \1_{\{f<1\}} f^{1-q} \om_0^n
    + \max{\{q, q^2\}} \frac{1}{V} \int_X \chi(g_2) f \om^n.  
\end{align*}
This finishes the proof.
\end{proof}

\section{Relative pluripotential theory in families}
\label{sect_families}

In this section, we shall review the concept of convergence of quasi-plurisubharmonic functions in families introduced in \cite{Pan_Trusiani_2023}. 
Then we define a notion of strong convergence in families and establish certain conditions implying strong convergence in families. 

\subsection{Weak convergence}\label{ssec:Weak Convergence}
In the sequel, we always assume that a family $\pi: \CX \to \BD$ fits in the following setup:

\begin{taggedbigset}{(GSN)}\label{sett:general_sett_normal_fibre}
Let $\CX$ be an $(n+1)$-dimensional variety.
Suppose that $\pi: \CX \to \BD$ is a proper, surjective, holomorphic map such that each (schematic) fibre $X_t := \pi^{-1}(t)$ is a K\"ahler variety for any $t \in \BD$. 
Let $\om$ be a hermitian metric on $\CX$ such that, for each $t \in \BD$, the induced metric $\om_t := \om_{|X_t}$ on $X_t$ is K\"ahler. 
In addition, assume that $X_0$ is normal.
\end{taggedbigset}

We provide a few remarks of standard facts on families and maps between fibres:
\begin{rmk}\label{rmk:constant_volume}
The volume function $\BD \ni t \mapsto V_t := \int_{X_t} \om_t^n$ is continuous (cf. \cite[Sec.~1.4]{Pan_2022}).
Similar continuity result holds for $t \mapsto \int_{X_t} \Ta \w \om^{n-1}$ where $\Theta$ is a smooth $(1,1)$-form on $\CX$.
If $\om$ is closed, then $V_t$ is independent of $t \in \BD$ (cf. \cite[Lem.~2.2]{DGG2023}).
\end{rmk}

\begin{rmk}
Normality is open on the base $\BD$ as the map $\pi$ is flat; namely, $X_t$ is normal for all $t$ sufficiently close to zero. 
On the other hand, if $X_t$ is normal for every $t \in \BD$, then so is $\CX$. 
Therefore, up to shrinking $\BD$, we may assume that $\CX$ and $(X_t)_{t \in \BD}$ are normal. 
\end{rmk}

\begin{rmk}\label{rmk:symplectic_diffeo}
Denote by $\CZ$ the singular locus of $\pi$.
Recall that $\pi: \CX \to \BD$ is a submersion on $X_0^\reg$. 
By the tubular neighborhood theorem, we have the following: 
\begin{enumerate}
    \item For all $U_0 \Subset X_0^\reg$, there exists $\CU \subset \CX \setminus \CZ$, a constant $\dt_U > 0$ and a diffeomorphism $F^U: U_0 \times \BD_{\dt_U} \to \CU$ such that the diagram {\small
    \[
    \begin{tikzcd}
        U_0 \times \BD_{\dt_U} \ar[rr, "F^U"] \ar[rd, "\pr_2"']& & \CU \subset \CX \setminus \CZ \ar[ld,"\pi"]\\
        &\BD_{\dt_U}&
    \end{tikzcd}
    \]  }
    commutes. 
    In particular, for all $t \in \BD_{\dt_U}$, $F^U_t := F^U(\cdot, t): U_0 \to U_t =: \CU \cap X_t$ is diffeomorphic onto its image $U_t$.
    \item If $U_0 \Subset V_0 \Subset X_0^\reg$, we have $\dt_U \geq \dt_V > 0$ and $F^U(x,t) = F^V(x,t)$ for all $(x,t) \in U_0 \times \BD_{\dt_V}$.
\end{enumerate}
\end{rmk}

We recall uniform integrability results of Skoda--Zeriahi and Sup-$L^1$ comparison of quasi-psh functions in families from \cite[Thm.~2.9]{DGG2023} and \cite[Cor.~4.8]{Ou_2022}\footnote{The references \cite{DGG2023, Ou_2022} deal with the case $\om$ being K\"ahler on $\CX$ but the proof extends to $\om$ being hermitian and fibrewise closed.
Precisely, the uniform sup-$L^1$ comparison is obtained in \cite[Sec.~3.5]{DGG2023} on a fixed K\"ahler variety when heat kernel $H(x,\cdot,t) \in L^2_1(X)$ and $\int_X H(x,\cdot,t) \om^n = 1$, $\forall t>0$ and desired properties of the heat kernel is then given by Ou \cite[Cor.~4.6]{Ou_2022}.
The family version of Skoda--Zeriahi integrability for hermitian metrics has been obtained in \cite[Prop.~3.3]{Pan_2023} based on the sup-$L^1$ comparison.}: 

\begin{thm}\label{thm:SL_and_Skoda_in_family}
In Setting~\ref{sett:general_sett_normal_fibre}, there exist constants $C_{SL} > 0, \af > 0$, and $C_\af > 0$ such that 
\[
    \sup_{X_t} \psi_t - C_{SL} \leq \frac{1}{V_t} \int_{X_t} \psi_t \om_t^n,
    \quad\text{and} \quad
    \int_{X_t} e^{- \af (\psi_t - \sup_{X_t} \psi_t)} \om_t^n \leq C_\af
\]
for all $t \in \bBD_{1/2}$ and for every $\psi_t \in \PSH(X_t, \om_t)$.
\end{thm}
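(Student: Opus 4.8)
Fix $t\in\bBD_{1/2}$ and $\psi_t\in\PSH(X_t,\om_t)$. After subtracting a constant we may assume $\sup_{X_t}\psi_t=0$, and, replacing $\psi_t$ by $\max(\psi_t,-j)$ and letting $j\to\infty$ at the end, we may assume $\psi_t$ bounded; the task is to bound $-\tfrac1{V_t}\int_{X_t}\psi_t\,\om_t^n$ from above, uniformly in $t$. The K\"ahler metric $\om_t$ turns $X_t^\reg$ into a finite-volume (incomplete) K\"ahler manifold with Laplacian $\Delta_t$ and heat kernel $H_t(x,y,\tau)$, and, since $X_t^\sing$ is negligible for this potential theory, one has the Green representation on $X_t$: with $\mu_t := n\,\om_t^{n-1}\wedge(\om_t+\ddc\psi_t)\geq 0$, a positive measure of total mass $n V_t$ (using $\om_t+\ddc\psi_t\geq 0$ and $\int_{X_t}\om_t^{n-1}\wedge\ddc\psi_t=0$, the latter by Stokes after pulling back to a resolution),
\[
    \frac{1}{V_t}\int_{X_t}\psi_t\,\om_t^n - \psi_t(x)
    = c_n\int_{X_t} G_t(x,y)\,d\mu_t(y), \qquad x\in X_t,
\]
for a dimensional constant $c_n>0$, where $G_t(x,y):=\int_0^\infty\big(H_t(x,y,\tau)-V_t^{-1}\big)\,d\tau$ is the fibrewise Green function. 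The crux is that the heat-kernel facts entering here are uniform in $t$: by \cite[Cor.~4.6]{Ou_2022} (in the scheme of \cite[Sec.~3.5]{DGG2023}) the mass conservation $\int_{X_t}H_t(x,\cdot,\tau)\,\om_t^n=1$ holds for all $\tau>0$, giving $\int_{X_t}G_t(x,\cdot)\,\om_t^n=0$, and the exponential equidistribution $\|H_t(x,\cdot,\tau)-V_t^{-1}\|_{L^\infty(X_t)}\leq C e^{-\lambda_0\tau}$ for $\tau\geq 1$ holds with $C,\lambda_0>0$ independent of $t$, which together with $H_t\geq 0$ forces the uniform lower bound $G_t\geq -\Gamma$, $\Gamma:=v_0^{-1}+C\lambda_0^{-1}$, where $v_0:=\inf_{\bBD_{1/2}}V_t>0$ by the continuity of $t\mapsto V_t$ (Remark~\ref{rmk:constant_volume}). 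Evaluating the formula at a point $x_0$ with $\psi_t(x_0)=0$ and using $G_t\geq -\Gamma$ and $\mu_t(X_t)=nV_t$ yields $\tfrac1{V_t}\int_{X_t}\psi_t\,\om_t^n\geq -c_n n\Gamma V_{\max}$, i.e. $\sup_{X_t}\psi_t-\tfrac1{V_t}\int_{X_t}\psi_t\,\om_t^n\leq c_n n\Gamma V_{\max}=:C_{SL}$, with $V_{\max}:=\sup_{\bBD_{1/2}}V_t<\infty$; this is uniform in $t\in\bBD_{1/2}$.

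\textbf{Step 2: uniform Skoda--Zeriahi integrability.} By Step~1 every $\psi_t\in\PSH(X_t,\om_t)$ with $\sup_{X_t}\psi_t=0$ satisfies $\|\psi_t\|_{L^1(X_t,\om_t^n)}\leq C_{SL}V_{\max}=:C_1$, uniformly in $t$. Cover a neighborhood of $X_0$ in $\CX$ by finitely many open sets $\CU_j$, each admitting a local embedding into some $\BC^{N_j}$ under which the hermitian form $\om$ has a local potential $\rho_j$ with $\|\rho_j\|_{C^2}$ bounded by a fixed constant; shrinking $\BD$ if needed, the $\CU_j\cap X_t$ cover $X_t$ for all $t\in\bBD_{1/2}$. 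On each $\CU_j$ the restriction of $\psi_t+\rho_j$ is plurisubharmonic, and by the sub-mean value inequality its $L^1$-norm on a slightly smaller chart $\CU_j'\Subset\CU_j$ is bounded in terms of $C_1$ and the fixed local geometry, uniformly in $t$ and $j$. The uniform Skoda integrability theorem (cf. \cite[Sec.~3]{GZbook}) then produces $\af>0$ and a constant, depending only on $n$ and $C_1$, bounding $\int_{\CU_j'\cap X_t}e^{-\af(\psi_t+\rho_j)}\,\om_t^n$; absorbing $e^{\pm\af\rho_j}$ and summing over the finite cover gives $\int_{X_t}e^{-\af(\psi_t-\sup_{X_t}\psi_t)}\,\om_t^n\leq C_\af$, uniformly in $t\in\bBD_{1/2}$. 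This is \cite[Prop.~3.3]{Pan_2023}.

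\textbf{Main obstacle.} The only genuinely hard step is the uniformity, along the degenerating family, of the heat-kernel facts used in Step~1 — stochastic completeness, a uniform spectral gap, and a uniform ultracontractive/Gaussian bound for $(X_t^\reg,\om_t)$ as $t\to 0$ — which demands controlling the geometry near $X_t^\sing$ and near the critical locus $\CZ$ of $\pi$ simultaneously in $t$; this is precisely what is imported from \cite{Ou_2022, DGG2023}. Everything else is soft, and the passage from $\om$ K\"ahler on $\CX$ to $\om$ merely hermitian and fibrewise closed costs nothing: the argument only uses that each $\om_t$ is K\"ahler (for the fibrewise heat theory and for Stokes), that $t\mapsto V_t$ — and, more generally, $t\mapsto\int_{X_t}\Ta\wedge\om_t^{n-1}$ — is continuous (Remark~\ref{rmk:constant_volume}), and that the local potentials of $\om$ in a neighborhood of $X_0$ are uniformly bounded.
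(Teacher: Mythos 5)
Your proof correctly reconstructs the argument that the paper cites but does not re-prove: the paper's ``proof'' is a citation to \cite[Thm.~2.9]{DGG2023} and \cite[Cor.~4.8]{Ou_2022} (and \cite[Prop.~3.3]{Pan_2023} for the hermitian extension), with a footnote sketching exactly the two steps you carry out, namely a heat-kernel/Green-function argument for the uniform sup--$L^1$ comparison followed by a covering-plus-local-Skoda argument for the uniform integrability, both resting on heat-kernel facts for singular K\"ahler spaces taken from those references. Your write-up matches that sketch, and you correctly isolate the genuine content --- the uniformity in $t$ of the heat-kernel estimates along the degeneration --- as precisely what is being imported from \cite{Ou_2022, DGG2023}.
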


Near singular locus of $\pi: \CX \to \BD$, we have a neighborhood with small capacity (cf. \cite[Lem.~3.5]{Pan_Trusiani_2023}):
\begin{lem}\label{lem:small_capacity_near_sing}
Let $\CZ$ be the singular locus of $\pi$. 
Up to shrinking $\BD$, for all $\vep > 0$, there exists $\CW_\vep$, a open neighborhood of $\CZ$,
such that for all $t \in \BD$, $\CAP_{\om_t}(\CW_\vep \cap X_t) < \vep$. 
In addition, one can ask $\CW_\vep \subset \CW_{\vep'}$ if $0 < \vep < \vep'$ and $\cap _{\vep >0} \CW_\vep = \CZ$.
\end{lem}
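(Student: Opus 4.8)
The plan is to construct a single quasi-plurisubharmonic weight on the total space whose polar set is exactly $\CZ$, restrict it to the fibres, and then turn the standard capacity estimate for sublevel sets into a bound that is \emph{uniform in $t$} by feeding in the family Skoda--Zeriahi estimate of Theorem~\ref{thm:SL_and_Skoda_in_family}. After shrinking $\BD$ I would work over $\bBD_{1/2}$, so that $\pi^{-1}(\bBD_{1/2})$ is compact. Recall $\CZ$ is a closed analytic subset of $\CX$ meeting every fibre $X_t$ in a proper analytic subset (the generic point of $X_t$ being a smooth point of $\pi$). By a standard partition-of-unity and gluing construction --- cover $\CZ$ by finitely many charts in which it is cut out by holomorphic functions $g_{i,1},\dots,g_{i,m_i}$, set $\psi_i:=\log\sum_l|g_{i,l}|^2$, and glue the $\psi_i$ using a partition of unity after subtracting suitably large constants --- one obtains a function $\Phi$ on $\pi^{-1}(\bBD_{1/2})$ which is upper semi-continuous, locally bounded above, smooth off $\CZ$, satisfies $\{\Phi=-\infty\}=\CZ$, and $\ddc\Phi\ge -A\om$ for some $A>0$. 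Rescaling, I may assume $A=1$, so that $\Phi_t:=\Phi_{|X_t}\in\PSH(X_t,\om_t)$ for all $t$, with polar locus $\CZ\cap X_t$.

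Next I would normalise $\Phi$ fibrewise. Being bounded above on the compact set $\pi^{-1}(\bBD_{1/2})$, one has $\sup_{X_t}\Phi_t\le C_0$. For a uniform lower bound, fix $x_0\in X_0^\reg\setminus\CZ$ and a relatively compact open $\CU\Subset\CX\setminus\CZ$ containing $x_0$; up to shrinking $\BD$, Remark~\ref{rmk:symplectic_diffeo} gives $\CU\cap X_t\ne\emptyset$ for all $t\in\BD$, whence $\sup_{X_t}\Phi_t\ge \inf_{\overline{\CU}}\Phi=:-C_1>-\infty$. Put $v_t:=\Phi_t-\sup_{X_t}\Phi_t\in\PSH(X_t,\om_t)$, so that $\sup_{X_t}v_t=0$ and, for any $M>C_1$,
\[
\{x\in X_t:\Phi_t(x)<-M\}=\{v_t<-M-\sup\nolimits_{X_t}\Phi_t\}\subseteq\{v_t<-(M-C_1)\}.
\]
By Theorem~\ref{thm:SL_and_Skoda_in_family} there are uniform constants with $\int_{X_t}e^{-\af v_t}\om_t^n\le C_\af$, hence $\int_{X_t}(-v_t)\,\om_t^n\le C'$, for all $t\in\bBD_{1/2}$. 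Passing to resolutions of the $X_t$ as in the proof of Lemma~\ref{lem:Twisted Energy}, or working directly on the normal varieties via \cite{BBEGZ_2019}, the standard estimate for the capacity of sublevel sets of $\om_t$-psh functions normalised by $\sup=0$ (see \cite{GZbook}) provides a non-increasing function $\tau:(0,\infty)\to(0,\infty)$ with $\tau(s)\to 0$ as $s\to+\infty$, depending only on these uniform constants, such that $\CAP_{\om_t}(\{v_t<-s\})\le\tau(s)$ for all $t$ and all $s>0$.

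Given $\vep>0$, I would choose $M_\vep>C_1$, increasing to $+\infty$ as $\vep\downarrow0$, with $\tau(M_\vep-C_1)<\vep$, and set $\CW_\vep:=\{x\in\pi^{-1}(\bBD_{1/2}):\Phi(x)<-M_\vep\}$. This is an open neighbourhood of $\CZ$ (as $\Phi$ is u.s.c. and $\equiv-\infty$ on $\CZ$), and by the two displays above $\CAP_{\om_t}(\CW_\vep\cap X_t)\le\CAP_{\om_t}(\{v_t<-(M_\vep-C_1)\})\le\tau(M_\vep-C_1)<\vep$ for every $t$. Since $M_\vep$ increases as $\vep$ decreases, $\CW_\vep\subset\CW_{\vep'}$ for $0<\vep<\vep'$, and $\bigcap_{\vep>0}\CW_\vep=\{\Phi=-\infty\}=\CZ$ because $M_\vep\to+\infty$. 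Renaming $\BD_{1/2}$ as $\BD$ concludes.

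The routine part is the construction and normalisation of $\Phi$; the genuine difficulty is the \emph{uniformity in $t$} of the capacity decay as $t\to 0$. This is precisely where the family Skoda--Zeriahi estimate (Theorem~\ref{thm:SL_and_Skoda_in_family}) is indispensable: it is what prevents the sublevel-set estimate from degenerating as the fibres become singular. Some care is also needed to run the pluripotential-theoretic estimate on the (singular, and in a family poorly varying) fibres, for which one either resolves each $X_t$ separately --- only the pointwise-in-$t$ bound with the already-uniform constants $\af,C_\af$ is needed --- or invokes the capacity theory on normal Kähler varieties directly.
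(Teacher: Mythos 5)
Your proposal is correct and follows essentially the same route as the one the paper is implicitly relying on (the citation to Pan--Trusiani and the remark inside the proof of Theorem~\ref{thm:strong_compact_fami} both use a quasi-psh weight $\Phi$ with $\{\Phi=-\infty\}=\CZ$ and $\CW_\vep$ a sublevel set of $\Phi$, combined with the family Skoda--Zeriahi bound of Theorem~\ref{thm:SL_and_Skoda_in_family} and the standard sublevel-set capacity estimate $\CAP_{\om_t}(\{v_t<-s\})\lesssim s^{-1}\|v_t\|_{L^1(\om_t^n)}$). The only point to state with a little more care is the gluing of the local $\psi_i=\log\sum_l|g_{i,l}|^2$: a naive partition-of-unity sum does not preserve a lower bound on $\ddc$, so one should take generators of the coherent ideal sheaf $\CI_\CZ$ (so that $\psi_i-\psi_j$ is bounded on overlaps) and glue by a regularized maximum after shifting by constants, which is what ``subtracting suitably large constants'' is meant to encode; once that is spelled out your argument is complete.
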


\subsubsection{Convergence in families}
We recall here the definition of convergence of quasi-psh functions in families introduced in \cite[Sec.~2.2]{Pan_Trusiani_2023}.
We fix a few notations. 
Let $\CZ$ be the singular set of the map $\pi$.
For each point $x \in X_{0}^\reg$, up to shrinking $\BD$, there is a chart $U_0 \Subset X_0^\reg$ containing $x$, an open subset $\CU \Subset \CX \setminus \CZ$ with $\CU \cap X_0 = U_0$, and an isomorphism $G: U_0 \times \BD \to \CU$ such that the diagram {\small
\[
\begin{tikzcd}
    U_0 \times \BD \ar[rr, "G"] \ar[rd, "\pr_2"']&& \CU \ar[ld, "\pi"]\\
    &\BD&
\end{tikzcd}
\] }
commutes and $G_{|U_0} = \Id_{U_0}$.
We denote by $G_t: U_0 \overset{\sim}{\longrightarrow} U_t := \CU \cap X_t$ the isomorphism induced by $G$. 

\begin{defn}\label{defn:conv_in_family}
For all $k \in \BN$, let $u_{t_k}$ be a $\om_{t_k}$-psh function on $X_{t_k}$ and $t_k \to 0$ as $k \to +\infty$.
We say that the sequence $(u_{t_k})_k$ converges to $u_{0} \in \PSH(X_0, \om_{0})$ in $L^1$ (resp. $\CC^0$, $\CC^\infty$) if for all data $(U_0, G, \CU)$ as above, $u_{t_k} \circ G_{t_k}$ converges to $u_{0}$ in $L^1(U_0)$ (resp. $\CC^0(U_0)$, $\CC^\infty(U_0)$).
\end{defn}

In the sequel, the above sense of $L^1$ (resp. $\CC^0$, $\CC^\infty$) convergence is called the {\it convergence in families} (resp. {\it $\CC^0$, $\CC^\infty$-convergence in families}) or we say a sequence {\it converging in the family sense} (resp. {\it $\CC^0$, $\CC^\infty$-converging in the family sense}). 
We simply denote $t_k$ (resp. $X_{t_k}$, $\om_{t_k}$) by $k$ (resp. $X_k$, $\om_k$).
We write $(u_k)_{k \in \BN^\ast} \in \PSH_{\fibre}(\CX,\om)$ for a sequence of $u_k \in \PSH(X_k,\om_k)$ and similarly for $(u_k)_k \in \CE^1_{\fibre}(\CX,\om)$.

\subsubsection{Mildly singular setting} \label{sect_mild_sing}
We now review the integrability properties of some canonical densities in mildly singular families.

\begin{taggedbigset}{(klt)}\label{sett:klt}
Under Setting~\ref{sett:general_sett_normal_fibre}, further assume that $\CX$ is $\BQ$-Gorenstein and $X_0$ has at most klt singularities.
\end{taggedbigset}

\begin{rmk}\label{rmk:klt_open}
In Setting~\ref{sett:klt}, by inversion of adjunction (cf. \cite[Thm.~4.9]{Kollar_2013}), 
$\CX$ has klt singularities near $X_0$.
Moreover, $X_t$ has klt singularities for all $t$ close to $0$ (cf. \cite[Cor.~4.10]{Kollar_2013}).
\end{rmk}

Suppose $\CX$ is $m$-Gorenstein for some $m \in \BN^\ast$. 
Let $h$ be a smooth hermitian metric on $m K_{\CX/\BD}$. 
Taking $\Om$ a local generator of $K_{\CX/\BD}$, the adapted measure with respect to $h^m$ is defined as 
\[
    \mu_{t, h_t} = \ii^{n^2} \lt(\frac{\Om_t \w \overline{\Om}_t}{|\Om_t|_{h_t}^2} \rt)^{1/m}
\]
where $\Om_t$ (resp. $h_t$) is the restriction of $\Om$ (resp. $h$) to $X_t$.
From Remark~\ref{rmk:klt_open}, up to shrinking $\BD$, on each fibre $X_t$, one has $\mu_t = f_t \om_t^n$ with $f_t \in L^{p_t}(X_t,\om_t^n)$ for some $p_t > 1$.  
Moreover, by a klt version of \cite[Lem.~4.4]{DGG2023}, one can find a uniform $p>1$ and a uniform constant $C_p > 0$ such that 
\begin{equation}\label{eq:klt_Lp_estimate}
    \int_{X_t} f_t^p \om_t^n \leq C_p.
\end{equation}
Following the construction as in Section~\ref{sect_adapted_measure},  up to shrinking $\BD$, there is a uniform constant $A>0$ such that $-\log f_t \in \PSH(X_t, A \om_t)$ for any $t \in \BD$.

Combining Theorem~\ref{thm:SL_and_Skoda_in_family} and \eqref{eq:klt_Lp_estimate}, one can deduce the following statement:

\begin{cor}\label{cor:klt_Skoda}
Under Setting~\ref{sett:klt}, there exist constants $\bt > 0$ and $C_\bt > 0$ such that for all $t$ close to $0$,
\[
    \int_{X_t} e^{-\bt(\psi_t - \sup_{X_t} \psi_t)} \dd\mu_{t,h_t} \leq C_\bt.
\]
\end{cor}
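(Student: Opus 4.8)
The plan is to deduce this directly from the two uniform inputs already at hand: the family Skoda--Zeriahi estimate of Theorem~\ref{thm:SL_and_Skoda_in_family} together with the uniform $L^p$-bound \eqref{eq:klt_Lp_estimate} on the densities $f_t$ of the adapted measures. Writing $\mu_{t,h_t} = f_t\,\om_t^n$, the statement becomes a one-line consequence of H\"older's inequality, once the constants are matched up correctly.

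First I would fix the uniform exponent $p>1$ and the constant $C_p>0$ from \eqref{eq:klt_Lp_estimate}, valid for all $t$ in a fixed neighbourhood of $0$; this is precisely the point where Remark~\ref{rmk:klt_open} and the klt version of \cite[Lem.~4.4]{DGG2023} are used, to ensure that $p$ and $C_p$ can be taken independent of $t$. Let $q := p/(p-1)$ be the conjugate exponent, and let $\af>0$, $C_\af>0$ be the constants from Theorem~\ref{thm:SL_and_Skoda_in_family}. Then I would set $\bt := \af/q = \af(p-1)/p > 0$.

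With these choices, for every $t$ close to $0$ and every $\psi_t \in \PSH(X_t,\om_t)$, H\"older's inequality with exponents $p$ and $q$ (applied to the measure $\om_t^n$, with the factors $f_t$ and $e^{-\bt(\psi_t-\sup_{X_t}\psi_t)}$) gives
\[
    \int_{X_t} e^{-\bt(\psi_t - \sup_{X_t}\psi_t)}\, \dd\mu_{t,h_t}
    = \int_{X_t} e^{-\bt(\psi_t - \sup_{X_t}\psi_t)} f_t\, \om_t^n
    \leq \lt(\int_{X_t} f_t^p\, \om_t^n\rt)^{1/p} \lt(\int_{X_t} e^{-\bt q(\psi_t - \sup_{X_t}\psi_t)}\, \om_t^n\rt)^{1/q}.
\]
Since $\bt q = \af$, the integrand in the second factor equals $e^{-\af(\psi_t-\sup_{X_t}\psi_t)}$, so Theorem~\ref{thm:SL_and_Skoda_in_family} bounds the second factor by $C_\af^{1/q}$, while \eqref{eq:klt_Lp_estimate} bounds the first factor by $C_p^{1/p}$. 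Hence the assertion holds with $C_\bt := C_p^{1/p}\, C_\af^{1/q}$.

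I expect no real obstacle here: the corollary is a formal combination of the two cited uniform estimates, and the only subtlety — that the exponent $p$ in \eqref{eq:klt_Lp_estimate} is uniform in $t$ — has already been recorded in the paragraph preceding the statement, so it may be invoked directly. If a self-contained treatment were desired, the sole non-trivial ingredient would be that uniform $L^p$-integrability of the $f_t$, but that is exactly \eqref{eq:klt_Lp_estimate} and is available to us.
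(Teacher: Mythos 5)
Your proof is correct and is precisely the argument the paper intends: the paper merely says "Combining Theorem~\ref{thm:SL_and_Skoda_in_family} and \eqref{eq:klt_Lp_estimate}, one can deduce the following statement," and the H\"older step you supply, with $\beta = \alpha(p-1)/p$ and $C_\beta = C_p^{1/p}C_\alpha^{(p-1)/p}$, is the canonical way to make that deduction. Nothing is missing; the only implicit point is that the corollary is quantified over all $\psi_t\in\PSH(X_t,\om_t)$, as in Theorem~\ref{thm:SL_and_Skoda_in_family}, which you correctly assume.
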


We now recall a Demailly--Koll\'ar type result from \cite[Prop.~D]{Pan_Trusiani_2023}\footnote{In \cite{Pan_Trusiani_2023}, the statement only deals with a $\om$ being K\"ahler on $\CX$; however the proof depends only on the control of Lelong number of the limiting function. Therefore, the proof basically does not change.}:
\begin{prop}\label{prop:Demailly-Kollar}
In Setting~\ref{sett:general_sett_normal_fibre}, if $u_k \in \CE^1(X_k, \om_k)$ converges to $u_0 \in \CE^1(X_0, \om_0)$, then for all $\gm > 0$,
\[
    \int_{X_k} e^{-\gm u_k} \om_k^n \xrightarrow[k \to +\infty]{} \int_{X_0} e^{-\gm u_0} \om_0^n.
\]
Moreover, there exists a uniform constant $\af > 0$ so that if $u_k \in \PSH(X_k, \om_k)$ converges to $u_0 \in \PSH(X_0, \om_0)$, 
\[
    \int_{X_k} e^{-\af u_k} \om_k^n \xrightarrow[k \to +\infty]{} \int_{X_0} e^{-\af u_0} \om_0^n.
\]
Similarly, in Setting~\ref{sett:klt}, the above two convergences are still valid with respect to $\mu_t$ for an appropriate uniform constant $\af > 0$. 
\end{prop}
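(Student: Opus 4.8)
The plan is to reduce the statement to a \emph{uniform} version of the Demailly--Koll\'ar integrability theorem in families, combining the uniform Skoda--Zeriahi estimate (Theorem~\ref{thm:SL_and_Skoda_in_family}) with an upper semicontinuity control on Lelong numbers along fibres. First I would normalize: replacing $u_k$ by $u_k - \sup_{X_k} u_k$, the convergence $u_k \to u_0$ in families together with the uniform $\sup$-$L^1$ comparison of Theorem~\ref{thm:SL_and_Skoda_in_family} shows $\sup_{X_k} u_k \to \sup_{X_0} u_0$, so it suffices to treat the normalized potentials; then $e^{-\gm u_k}$ differs from $e^{-\gm(u_k - \sup u_k)}$ by the bounded factor $e^{-\gm \sup_{X_k} u_k}$, which converges. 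Thus I reduce to proving that $\int_{X_k} e^{-\gm u_k}\,\om_k^n \to \int_{X_0} e^{-\gm u_0}\,\om_0^n$ for normalized $u_k$.

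The key dichotomy is between the region away from the singular locus $\CZ$ of $\pi$ and a neighborhood of $\CZ$. Away from $\CZ$: fix $U_0 \Subset X_0^\reg$ and transport everything through the trivialization $G_t : U_0 \to U_t$ of Remark~\ref{rmk:symplectic_diffeo}. On $U_0$ the functions $\wu_k := u_k \circ G_{t_k}$ converge in $L^1_{\loc}$, hence (being quasi-psh with uniformly bounded mass) in capacity and almost everywhere, and the pushed-forward volume forms $(G_{t_k})^*\om_k^n$ converge smoothly to $\om_0^n$; so on each such $U_0$ one gets $\int_{U_0} e^{-\gm \wu_k}(G_{t_k})^*\om_k^n \to \int_{U_0} e^{-\gm u_0}\om_0^n$, \emph{provided} one has a uniform-in-$k$ bound $\int_{U_0} e^{-(\gm + \epsilon)\wu_k}(G_{t_k})^*\om_k^n \le C$ to upgrade $L^1$-weak convergence of the integrands to genuine convergence of integrals (Vitali / uniform integrability). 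That uniform bound is exactly what Theorem~\ref{thm:SL_and_Skoda_in_family} supplies for \emph{some} exponent $\af$, which gives the second, unconditional assertion of the proposition; to get it for \emph{every} $\gm$ under the stronger hypothesis $u_k, u_0 \in \CE^1$, one argues that finite energy forces the Lelong numbers of $u_0$ to vanish, and then an Ohsawa--Takegoshi/Demailly--Koll\'ar semicontinuity argument in the family (controlling complex singularity exponents uniformly via the uniform $L^p$-control of densities) produces, for each fixed $\gm$, an exponent $\gm' > \gm$ with $\int_{X_k} e^{-\gm' u_k}\om_k^n$ uniformly bounded. Near $\CZ$: choose the shrinking neighborhoods $\CW_\vep$ of Lemma~\ref{lem:small_capacity_near_sing} with $\CAP_{\om_t}(\CW_\vep \cap X_t) < \vep$; on $\CW_\vep \cap X_k$ the same uniform Skoda bound with the slightly larger exponent, combined with H\"older's inequality against the small-capacity (hence small-volume, since volume is dominated by capacity up to the Skoda estimate) set, makes $\int_{\CW_\vep \cap X_k} e^{-\gm u_k}\om_k^n$ uniformly small as $\vep \to 0$, uniformly in $k$; and likewise for $X_0$. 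Combining a finite cover of $X_0^\reg \setminus \CW_\vep$ by charts $U_0$ with the tail estimate near $\CZ$ and letting $\vep \to 0$ yields the claim.

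The main obstacle I expect is the passage from the "free" exponent $\af$ coming directly from Theorem~\ref{thm:SL_and_Skoda_in_family} to "every $\gm > 0$" in the finite-energy case: this requires a \emph{uniform lower bound on the log-canonical thresholds} (complex singularity exponents) of the family $\{u_k\}$, i.e. a family version of the semicontinuity of singularity exponents. The input is that $u_k \in \CE^1$ and $u_k \to u_0 \in \CE^1$ forces uniform smallness of Lelong numbers near $0$ (a potential with a positive Lelong number cannot be a limit, in the family sense, of finite-energy potentials without the energies blowing up), and then one invokes a Demailly--Koll\'ar type effective estimate with constants depending only on the ambient geometry and the uniform $L^p$-bound \eqref{eq:klt_Lp_estimate} on the densities — which is precisely why the klt/$\BQ$-Gorenstein hypotheses enter for the $\mu_t$-version. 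The rest — the two-region decomposition, the capacity tail bound, and the Vitali argument on charts — is routine given the tools already assembled in this section.
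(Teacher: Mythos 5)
The paper does not actually prove Proposition~\ref{prop:Demailly-Kollar}; it cites \cite[Prop.~D]{Pan_Trusiani_2023} verbatim, adding only the footnoted remark that the proof there ``depends only on the control of the Lelong number of the limiting function,'' so that replacing a globally K\"ahler $\om$ by a fibrewise-K\"ahler hermitian $\om$ changes nothing essential. Your sketch does identify the right ingredients for such a Demailly--Koll\'ar argument in families: normalize via the sup--$L^1$ comparison, apply the Demailly--Koll\'ar openness/semicontinuity on the regular locus through the trivializations, observe that $u_0\in\CE^1$ has vanishing Lelong numbers so that its complex singularity exponent is $+\infty$, and handle the remainder near $\CZ$ by capacity decay. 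For the second assertion (a fixed small $\af$) this works exactly as you say.

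The gap is in the tail estimate for the first assertion (\emph{arbitrary} $\gm$). For the region $\CW_\vep\cap X_k$ you invoke ``the same uniform Skoda bound with the slightly larger exponent'' plus H\"older; but Theorem~\ref{thm:SL_and_Skoda_in_family} supplies a \emph{fixed} exponent $\af_0$, not one that can be pushed past a given $\gm$. You then propose to manufacture $\int_{X_k}e^{-\gm' u_k}\om_k^n\le C$ for some $\gm'>\gm$ via ``a Demailly--Koll\'ar semicontinuity argument in the family.'' But Demailly--Koll\'ar's effective uniform integrability is a \emph{local} estimate around points where $u_k\to u_0$ in $L^1_{\loc}$; it covers compact subsets of $X_0^\reg$ transported along $G_t$, and gives nothing on $\CW_\vep\cap X_k$ precisely because the $u_k$ do not converge to anything there. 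So the global bound at exponent $\gm'>\gm$ is exactly what one is trying to prove, not a free input; as written the two-region argument is circular. (Note also that the first two parts of the proposition are stated in Setting~\ref{sett:general_sett_normal_fibre} only, so you cannot lean on the klt/$\BQ$-Gorenstein $L^p$-density control for them — that only enters for the $\mu_t$-version.) The reference's footnote suggests the actual argument is run as a single family semicontinuity statement on $\CX$ (in the spirit of \cite{Demailly_Kollar_2001}'s results for families of psh functions on a product), with uniform Lelong-number control being the hypothesis that propagates, rather than a two-region decomposition that treats $\CZ$ by H\"older against a bound that is not available.

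Two minor remarks: your reduction $\sup_{X_k}u_k\to\sup_{X_0}u_0$ needs both directions — the sup--$L^1$ comparison gives one inequality, but you also need the Hartogs-type estimate of Lemma~\ref{lem:diffeo_hartogs} (or \cite[Prop.~2.8, Lem.~2.11]{Pan_Trusiani_2023}) for the other. And the phrase ``volume is dominated by capacity up to the Skoda estimate'' is backwards in spirit: what one uses is that small capacity implies small $\om_k^n$-volume (a comparison built into the definition of $\CAP_{\om_t}$), not the Skoda estimate itself.
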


\subsubsection{Hartogs' lemma along diffeomorphisms between fibres}
We establish a Hartogs' type result along families of diffeomorphisms:
\begin{lem}\label{lem:diffeo_hartogs}
If $u_k \in \PSH(X_k, \om_k)$ converges to $u_0 \in \PSH(X_0 , \om_0)$, then for all $U_0 \Subset X_0^\reg$ and for all $t$-family of diffeomorphisms $\Phi_t: U_0 \to U_t \Subset X_t^\reg$ such that $\Phi_0 = \Id_{U_{0}}$, 
\[
    \limsup_{k \to +\infty} u_k \circ \Phi_k (x)
    \leq u_0(x)
\]
for all $x \in U_0$ and the equality holds almost everywhere.
In particular, for all $p \in [1,+\infty)$, one has 
\[
    \int_{U_0} \abs{u_k \circ \Phi_k - u}^p \om_0^n \to 0.  
\]
\end{lem}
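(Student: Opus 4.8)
The plan is to reduce to the classical Hartogs lemma for plurisubharmonic functions; the difficulty is that $\Phi_k$ is only a diffeomorphism, so $u_k\circ\Phi_k$ is not plurisubharmonic with respect to any natural background form, and I will get around this by factoring $\Phi_k$ through a local holomorphic trivialization. (I read the hypothesis as saying that $\Phi$ is a family depending at least continuously on $t$, as for the maps $F^U$ of Remark~\ref{rmk:symplectic_diffeo}, so that $\Phi_k\to\Id$ locally uniformly --- indeed in $\CC^1_{\loc}$ --- on $U_0$; without such a continuity the statement would be false.) Fix $x\in U_0$ and choose a chart $(V_0,G,\CV)$ as in Definition~\ref{defn:conv_in_family} with $x\in V_0\Subset X_0^\reg$, so $G\colon V_0\times\BD\to\CV$ is a biholomorphism with $G_{|V_0}=\Id$ and each $G_k:=G(\cdot,t_k)\colon V_0\to V_k:=\CV\cap X_k\subset X_k^\reg$ biholomorphic. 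For $k\gg 1$ one has $\Phi_k(x)\in\CV$, and on a slightly smaller neighbourhood of $x$ the maps $\Psi_k:=G_k^{-1}\circ\Phi_k$ are well-defined diffeomorphisms with $\Psi_k\to\Id$ in $\CC^1_{\loc}$. Set $v_k:=u_k\circ G_k$: this is $\eta_k$-psh on $V_0$ for $\eta_k:=G_k^\ast\om_k$, and since $G^\ast\om$ is smooth on $V_0\times\BD$ with $G_0=\Id$ we have $\eta_k\to\om_0$ in $\CC^\infty_{\loc}(V_0)$; moreover $v_k\to u_0$ in $L^1(V_0)$ by the very definition of convergence in families, and on the chart $u_k\circ\Phi_k=v_k\circ\Psi_k$. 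I would also record a uniform bound $\sup_{X_k}u_k\le C$: by the family Skoda--Zeriahi estimate (Theorem~\ref{thm:SL_and_Skoda_in_family}) the normalized potentials $w_k:=u_k-\sup_{X_k}u_k$ have $\int_{X_k}\abs{w_k}\om_k^n$ uniformly bounded, whence $\{w_k\circ G_k\}$ is bounded in $L^1(V_0)$ (the Jacobians of $G_k$ relative to $\om_0^n,\om_k^n$ being uniformly controlled), and since $v_k=w_k\circ G_k+\sup_{X_k}u_k$ converges in $L^1(V_0)$ the scalars $\sup_{X_k}u_k$ must stay bounded.

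Next I would prove the pointwise inequality. In a small coordinate ball $B\ni x$ contained in $V_0$, write $\eta_k=\ddc\rho_k$ with smooth local potentials chosen so that $\rho_k\to\rho_0$ in $\CC^\infty(B)$; then $h_k:=v_k+\rho_k$ is psh on $B$ and converges in $L^1(B)$ to $h:=u_0+\rho_0$, taken to be the psh representative. With $y_k:=\Psi_k(x)\to x$, the sub-mean value property bounds $h_k(y_k)$ by the Lebesgue average of $h_k$ over $B(y_k,r)$, for $r$ small and $k$ large. Fixing a ball $B''\Subset B$ containing all these $B(y_k,r)$ and splitting $\int_{B(y_k,r)}h_k=\int_{B''}h_k-\int_{B''\setminus B(y_k,r)}h_k$, the first term tends to $\int_{B''}h$ by $L^1$-convergence, and the second tends to $\int_{B''\setminus B(x,r)}h$ because $\1_{B''\setminus B(y_k,r)}\to\1_{B''\setminus B(x,r)}$ almost everywhere while $h_k\to h$ in $L^1(B'')$; hence $\int_{B(y_k,r)}h_k\to\int_{B(x,r)}h$. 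Therefore $\limsup_k h_k(y_k)$ is at most the average of $h$ over $B(x,r)$ for every small $r$, and letting $r\downarrow 0$ with $h$ upper semicontinuous gives $\limsup_k h_k(y_k)\le h(x)$. Subtracting $\rho_k(y_k)\to\rho_0(x)$ gives $\limsup_k u_k\circ\Phi_k(x)=\limsup_k v_k(y_k)\le u_0(x)$, which is the first assertion.

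For the last part, covering $U_0$ by finitely many such charts, I would first show $u_k\circ\Phi_k\to u_0$ in $L^1(U_0)$: on each chart $\int_{V_0}\abs{v_k\circ\Psi_k-u_0}\om_0^n\le\int_{V_0}(\abs{v_k-u_0}\circ\Psi_k)\,J_k\,\om_0^n+\int_{V_0}\abs{u_0\circ\Psi_k-u_0}\om_0^n$, where $J_k\to1$ uniformly (so the first term is $\le 2\norm{v_k-u_0}_{L^1(V_0)}\to0$) and the second tends to $0$ since $\Psi_k\to\Id$ in $\CC^1$ and $u_0\in L^1$. Then, since $u_k\circ\Phi_k\le C$ and $\sup_{X_k}u_k\le C$, Theorem~\ref{thm:SL_and_Skoda_in_family} yields $\int_{X_k}e^{-\af u_k}\om_k^n\le C_\af$, hence $\int_{U_0}e^{-\af(u_k\circ\Phi_k)}\om_0^n\le C_\af'$ after transporting by $\Phi_k$; as $s^p=o(e^{\af s})$ when $s\to+\infty$, the family $\{\abs{u_k\circ\Phi_k}^p\}$ is uniformly integrable on $U_0$, and Vitali's theorem together with the $L^1$-convergence gives $\int_{U_0}\abs{u_k\circ\Phi_k-u_0}^p\om_0^n\to0$ for all $p\in[1,+\infty)$. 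Finally $L^1$-convergence provides a subsequence converging a.e. to $u_0$, so $\limsup_k u_k\circ\Phi_k\ge u_0$ a.e., and with the pointwise bound this forces equality a.e.

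The main obstacle is the pointwise inequality: because $u_k\circ\Phi_k$ is not of plurisubharmonic type one cannot quote Hartogs directly, so the argument must route through the holomorphic trivializations $G_k$ to recover the psh functions $v_k$ and then upgrade the classical statement to hold simultaneously along the moving base points $y_k\to x$ and the varying background forms $\eta_k$; the bookkeeping of charts and of the uniform bound $\sup_{X_k}u_k\le C$ (which itself relies on the family Skoda--Zeriahi estimate) is routine by comparison.
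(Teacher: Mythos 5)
Your argument is essentially correct and differs from the paper's proof in a meaningful way, mainly in how the $L^p$ convergence is derived. For the pointwise inequality both arguments route through local holomorphic trivializations $G_k$ (your $v_k=u_k\circ G_k$, the basepoints $y_k=\Psi_k(x)\to x$) and then invoke a Hartogs-type upper bound with moving basepoints: the paper simply cites the Hartogs lemma, while you unfold it by hand through the sub-mean value property of $h_k = v_k+\rho_k$; these are the same idea, and your explicit treatment of the varying potentials $\rho_k$ is what licenses the citation anyway.

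For the $L^p$ convergence the two proofs diverge. The paper first bounds $\limsup_k\int_V|u_k\circ\Phi_k|^p$ and $\liminf_k\int_V|u_k\circ\Phi_k|^p$ by a sandwiching argument on $V\Subset V_\vep$ using Fatou, deduces convergence of the $L^p$ norm, and then combines $L^p$-boundedness (from Theorem~\ref{thm:SL_and_Skoda_in_family}), weak convergence, and norm convergence to conclude strong convergence. That last step is the Radon--Riesz property, which as written is only clean for $p>1$; the case $p=1$ really wants a Brezis--Lieb type of argument. Your route instead establishes $L^1$ convergence directly on each chart (Jacobian control plus $u_0\circ\Psi_k\to u_0$ in $L^1$), and then upgrades to $L^p$ using uniform integrability of $\{|u_k\circ\Phi_k|^p\}$ coming from the transported Skoda--Zeriahi exponential estimate, followed by Vitali's theorem. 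This gives all $p\in[1,\infty)$ in one stroke and avoids the uniform-convexity issue. The two approaches draw on the same family estimates (Theorem~\ref{thm:SL_and_Skoda_in_family}) but deploy them differently; yours is, if anything, the more robust version of the second half.

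One small point worth stating explicitly if you write this up: you need $\sup_{X_k}u_k$ bounded \emph{both above and below}, the lower bound being what keeps $e^{-\alpha u_k}$ under control after normalizing with $\sup_{X_k}u_k$ in the Skoda--Zeriahi estimate. You do derive "bounded" from the $L^1$-convergence of $v_k$, but it is worth flagging both directions since only the upper bound is mentioned in the set-up sentence.
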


\begin{proof}
Without loss of generality, we may assume that $u_k$ and $u_0$ are negative.
First, we observe that $\limsup_{k \to + \infty} u_k \circ \Phi_k(x) \leq u_0(x)$ for all $x \in U_0$.
Indeed, by implicit function theorem, one can find an open neighborhood $W_0 \Subset \Om_0$ of $x$ and a $t$-family of local biholomorphism $F_t: W_0 \to W_t \Subset U_t$ for $t$ sufficiently close to $0$. 
Set $\phi_k := u_k \circ F_k$ and $x_k := (F_k)^{-1} \circ \Phi_k(x)$ which converges to $x$ as $k \to +\infty$.
By the Hartogs' lemma (cf. \cite[Thm~1.46]{GZbook}), we obtain 
\begin{equation}\label{eq:limsup}
    \limsup_{k \to +\infty} u_k \circ \Phi_k(x) 
    = \limsup_{k \to +\infty} \phi_k(x_k)
    \leq u_0(x).
\end{equation}

We consider $W_0$ and $F_t$ as above.
Fix a relatively compact open subset $V \Subset W_0$.
We write $V_\vep$ the $\vep$-neighborhood of $V$ with respect to $\dist_{\om_0}$.
Fix $\vep > 0$. 
Since $(F_t)^{-1} \circ \Phi_{t|W_0} \to \Id_{W_0}$ as $t \to 0$,
one can see that $(F_k)^{-1}(\Phi_k(V)) \subset V_\vep$ and $(\Phi_k^{-1} \circ F_k)^\ast(\om_0^n) \leq (1+\vep) \om_0^n$ for all $k \gg 1$.
Then we have
\begin{align*}
    &(1+\vep) \int_{V_\vep} |u_0|^p \om_0^n 
    = \lim_{k \to + \infty} \int_{V_\vep} \abs{u_k \circ F_k}^p (1+\vep) \om_0^n \\
    &\geq \lim_{k \to +\infty} \int_{F_k^{-1}(\Phi_k(V))} \abs{u_k \circ F_k}^p (\Phi_k^{-1} \circ F_k)^\ast(\om_0^n) 
    = \lim_{k \to +\infty} \int_V \abs{u_k \circ \Phi_k}^p \om_0^n\\
    &\geq \int_V \liminf_{k \to +\infty} \abs{u_k \circ \Phi_k}^p \om_0^n
    \geq \int_V |u_0|^p \om_0^n.
\end{align*}
Here the second inequality comes from Fatou's lemma. By monotone convergence theorem, we derive 
\[
    \lim_{k \to +\infty} \int_V \abs{u_k \circ \Phi_k}^p \om_0^n 
    = \int_V |u_0|^p \om_{0}^n
\] 
by taking $\vep \to 0$.
Similarly, one can also get
\[
    \lim_{k \to + \infty} \int_V u_k \circ \Phi_k\, \om_0^n
    = \int_V \limsup_{k \to +\infty} u_k \circ \Phi_k \,\om_0^n 
    = \int_V u_0\, \om_0^n.
\]
This also implies that \eqref{eq:limsup} has equality almost everywhere. 

By Theorem~\ref{thm:SL_and_Skoda_in_family}, $(u_k \circ \Phi_k)_k$ is uniformly bounded in $L^p(U_0)$.  
By Fatou lemma, any weak limit $v$ is less than $u_0$. 
On the other hand, one also has $\int_V v \om_0^n = \lim \int_V u_k \circ \Phi_k \om_k^n = \int_V u_0 \om_0^n$; hence $v = u_0$.
Since $(u_k \circ \Phi)_k$ converges to $u_0$ weakly in $L^p$ and $\lim_{k \to +\infty} \int_V \abs{u_k \circ \Phi_k}^p \om_0^n = \int_V |u_0|^p \om_0^n$ for all $V \Subset U_0$, one can infer that $(u_k \circ \Phi_k)_k$ converges to $u_0$ in $L_\loc^p(U_0)$.
\end{proof}

\subsection{Strong topology}
We now introduce a notion of strong convergence in families: 
\begin{defn}
A sequence $(u_k)_k \in \CE^1_{\fibre}(\CX, \om)$ converges strongly in families to $u_0 \in \CE^1(X_0, \om_0)$ if $(u_k)_k$ converges to $u_0$ in families and $(\E_k(u_k))_k$ converges to $\E_0(u_0)$. 
\end{defn}

Arguing as in \cite[Sec.~5.3]{Pan_Trusiani_2023}, we prove that having a uniform $L^\infty$-estimate, a uniform Laplacian estimate outside the singular locus of $\pi$, and sufficient regularity on the densities of their Monge--Amp\`ere, it is enough to get strong convergence:

\begin{prop}
\label{prop:Unif Laplacian implies strong}
Let $(u_k)_k\in \CE^1_{\fibre}(\CX,\om)$ be a sequence converging in families to $u_0\in \CE^1(X_0,\om_0)$, and let $\CZ$ be the singular locus of $\pi$.  
If $\norm{u_k}_{L^\infty(X_k)} \leq M$ uniformly and for any $\CU \Subset \CX \setminus \CZ $ open set there exists constants $C_\CU > 0$, and $\af = \af(\CU) \in (0,1)$ such that for any $k \in \mathbb{N}$
\[
    \tr_{\om_k} (\om_k+\ddc u_k)_{|X_k \cap \CU} \leq C_\CU
    \quad\text{and}\quad
    \norm{\frac{(\om_k + \ddc u_k)^n}{\om_k^n}}_{\CC^{\af}(\CU \cap X_k)} \leq C_\CU
\]
then $u_k$ strongly converges to $u_0$. 
\end{prop}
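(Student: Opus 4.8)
The plan is to establish the one remaining ingredient, the convergence $\E_k(u_k)\to\E_0(u_0)$, following the strategy of \cite[Sec.~5.3]{Pan_Trusiani_2023}. I would fix, for each $\vep>0$, a neighbourhood $\CW_\vep$ of the singular locus $\CZ$ of $\pi$ with uniformly small capacity (Lemma~\ref{lem:small_capacity_near_sing}) and split each Monge--Amp\`ere energy accordingly: the part of the integral supported in $\CW_\vep$ is controlled by the uniform $L^\infty$-bound through an elementary capacity estimate, while the part supported away from $\CZ$ is handled by the a priori estimates, which force $\CC^1_\loc$-convergence of the potentials off $\CZ$ and hence convergence of the corresponding energy integrals; letting $\vep\to 0$ then closes the argument.

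\emph{Regularity off $\CZ$.} First I would replace $u_k$ by $u_k-M$, which merely shifts $\E_k(u_k)$ by the constant $-M$ (and likewise for $u_0$), so that $-2M\leq u_k\leq 0$; by the $L^1_\loc$-convergence of Lemma~\ref{lem:diffeo_hartogs} and the sub-mean value inequality one also has $-2M\leq u_0\leq 0$. Fixing $\CU\Subset\CX\setminus\CZ$ and transferring everything to a fixed chart $U_0\Subset X_0^\reg$ via the biholomorphisms $G_k\colon U_0\to U_k\subset X_k^\reg$ of Definition~\ref{defn:conv_in_family}, the $\om_k$ pull back to smooth K\"ahler metrics $G_k^\ast\om_k$ converging smoothly to $\om_0$, hence uniformly comparable on compacts of $U_0$ for $k\gg1$. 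The bound $\tr_{\om_k}(\om_k+\ddc u_k)\leq C_\CU$ then forces a uniform bound on $\Delta(u_k\circ G_k)$, so interior $L^p$- and Schauder-type estimates give a uniform $\CC^{1,\bt}_\loc$-bound for every $\bt<1$ (the $\CC^{\af}$-bound on the Monge--Amp\`ere densities upgrades this, via Evans--Krylov and bootstrapping, to uniform $\CC^{\ell}_\loc$-bounds of all orders wherever the densities stay away from $0$). By Arzel\`a--Ascoli and uniqueness of the $L^1_\loc$-limit, $u_k\circ G_k\to u_0$ in $\CC^1_\loc(U_0)$; consequently, by Bedford--Taylor continuity of mixed Monge--Amp\`ere products along uniformly bounded $\CC^0_\loc$-convergent sequences, $G_k^\ast\bigl(\om_{u_k}^j\wedge\om_k^{n-j}\bigr)$ converges weakly to $\om_{u_0}^j\wedge\om_0^{n-j}$ on $U_0$, for each $j$.

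\emph{Splitting and conclusion.} Choose $\CW_\vep\supset\CZ$ from Lemma~\ref{lem:small_capacity_near_sing}, with $\CAP_{\om_t}(\CW_\vep\cap X_t)<\vep$ for all $t$, and a cutoff $\chi_\vep\in\CC^\infty(\CX)$, $0\leq\chi_\vep\leq 1$, with $\chi_\vep\equiv1$ outside $\CW_\vep$ and $\chi_\vep\equiv0$ near $\CZ$; note that the fibrewise support of $\chi_\vep$ is a compact subset of $X_t^\reg$. From $-2M\leq u\leq0$ and the definition of the capacity (comparing $\om_u$ with $\om_{u/2M}$) one has $\int_E\om_{u}^j\wedge\om^{n-j}\leq C(n,M)\,\CAP_\om(E)$ for every Borel set $E$; hence both $\E_k(u_k)-\tfrac{1}{(n+1)V_k}\sum_{j}\int_{X_k}\chi_\vep\,u_k\,\om_{u_k}^j\wedge\om_k^{n-j}$ and its analogue on $X_0$ are $O(\vep)$, with constant depending only on $n$, $M$ and $\inf_tV_t>0$ (Remark~\ref{rmk:constant_volume}). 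Since $\chi_\vep$ is a fixed smooth function whose fibrewise support is compactly contained in the regular loci, the regularity step together with a partition of unity subordinate to a finite cover of $\supp(\chi_\vep|_{X_0})$ by charts as above gives $\int_{X_k}\chi_\vep\,u_k\,\om_{u_k}^j\wedge\om_k^{n-j}\to\int_{X_0}\chi_\vep\,u_0\,\om_{u_0}^j\wedge\om_0^{n-j}$ for each $j$; combined with $V_k\to V_0$, this yields $\limsup_{k\to\infty}\lvert\E_k(u_k)-\E_0(u_0)\rvert\leq C\vep$, and letting $\vep\to0$ gives $\E_k(u_k)\to\E_0(u_0)$, i.e.\ $u_k\to u_0$ strongly in families.

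\emph{Main difficulty.} The interior elliptic regularity off $\CZ$ and the capacity estimate near $\CZ$ are individually routine; the substance of the proof is their interaction---that the uniform $L^\infty$-bound genuinely prevents Monge--Amp\`ere mass, hence energy, from concentrating near $\CZ$ uniformly in $k$, and that this can be glued to the fibre-by-fibre convergence on the good locus through the biholomorphisms $G_k$ and a partition of unity. This coupling between the moving fibres and the singular locus of $\pi$ is exactly the point at which the hypotheses of the proposition enter.
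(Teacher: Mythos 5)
Your proof is correct, and it follows essentially the approach the paper delegates to \cite[Sec.~5.3]{Pan_Trusiani_2023}: the paper records Proposition~\ref{prop:Unif Laplacian implies strong} without an explicit proof, stating only that it is obtained ``arguing as in'' that reference. The two pillars you identify---interior elliptic estimates on the regular locus producing $\CC^1_\loc$-convergence (hence Bedford--Taylor weak convergence of the mixed Monge--Amp\`ere products), and a uniform capacity bound near $\CZ$ exploiting Lemma~\ref{lem:small_capacity_near_sing} together with the global $L^\infty$-bound---are exactly the ingredients that argument uses, and your splitting via a cutoff $\chi_\vep$ subordinate to $\CW_\vep$ is the natural way to glue them.

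A couple of small points worth tightening, though none is a genuine gap. First, in the regularity step you invoke Evans--Krylov and bootstrapping ``wherever the densities stay away from $0$''; this parenthetical is unnecessary for the conclusion you actually use (only $\CC^1_\loc$-convergence is needed for the Bedford--Taylor step, and that already follows from the Laplacian bound plus interior $L^p$/Schauder estimates for $\Delta(u_k\circ G_k)$), and strictly speaking the hypotheses do not provide a lower bound on the densities, so it is cleaner to drop that remark. Second, when you write $\int_E\om_u^j\wedge\om^{n-j}\leq C(n,M)\CAP_\om(E)$, the comparison for the mixed terms requires both $\om_u\leq 2M\,\om_v$ and $\om\leq\frac{2M}{2M-1}\om_v$ for $v=u/2M$, which you implicitly use; spelling out this two-sided comparison would make the estimate self-contained. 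Finally, note that the boundedness of the limit $u_0$ (which you deduce from Lemma~\ref{lem:diffeo_hartogs} and submean value) relies on normality of $X_0$ to extend across $X_0^\sing$; this is guaranteed by Setting~\ref{sett:general_sett_normal_fibre}, but it is worth saying so explicitly since $\CE^1(X_0,\om_0)$ by itself carries no a priori $L^\infty$-bound.
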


We recall Chern--Lu inequality (see e.g. \cite[Prop.~7.2]{Rubinstein_2014}) which will play an important role in several places:
\begin{prop}\label{prop:Chern-Lu}
Let $X$ be a complex manifold endowed with two K\"ahler metrics $\om$, $\homg$. 
Assume that there is a constant $C_3 \in \BR$ such that $\Bisec(\om) \leq C_3$ then 
\[
    \Dt_{\homg} \log \tr_{\homg} \om 
    \geq \frac{\hg^{i \bar{l}} \hg^{k \bar{j}} \hR_{i \bar{j}} g_{k \bar{l}}}{\tr_{\homg} \om} - 2C_3 \tr_{\homg} \om 
\]
In addition, if there are $C_1, C_2 \in \BR$ such that $\Ric(\homg) \geq - C_1 \homg - C_2 \om$, then 
\[
    \Dt_{\homg} \log \tr_{\homg} \om \geq - C_1 - (C_2 + 2 C_3) \tr_\homg \om.
\]
\end{prop}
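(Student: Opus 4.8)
The plan is to prove the pointwise inequality of the first display (this is the classical Chern--Lu inequality) by a computation in adapted holomorphic coordinates, and then to deduce the ``in addition'' part from it by a short linear-algebra estimate. Fix $x_0 \in X$ and choose holomorphic coordinates centered at $x_0$ that are normal for $\homg$ (so $\hg_{i\bar j}(x_0)=\delta_{ij}$ and all first derivatives of the $\hg_{i\bar j}$ vanish at $x_0$) and that simultaneously diagonalize $\om$ at $x_0$, i.e.\ $g_{i\bar j}(x_0)=\lambda_i\,\delta_{ij}$ with $\lambda_i>0$. Since $\om,\homg$ are both positive, $u:=\tr_{\homg}\om=\hg^{i\bar j}g_{i\bar j}$ is everywhere positive, $\log u$ is smooth, and writing $\Dt_{\homg}f=\hg^{i\bar j}\partial_i\partial_{\bar j}f$ one has
\[
    \Dt_{\homg}\log u=\frac{\Dt_{\homg}u}{u}-\frac{\hg^{k\bar l}\,\partial_k u\,\partial_{\bar l}u}{u^{2}}.
\]
I would differentiate $u$ twice and evaluate at $x_0$: the first derivatives of $\hg^{-1}$ vanish there, its second derivatives produce the curvature of $\homg$, and, inserting the K\"ahler curvature identity $R_{i\bar j k\bar l}=-\partial_k\partial_{\bar l}g_{i\bar j}+g^{a\bar b}\,\partial_k g_{i\bar b}\,\partial_{\bar l}g_{a\bar j}$ for $\om$, the second derivatives of $g$ produce the curvature of $\om$ together with a manifestly nonnegative quadratic term in $\partial g$. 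The outcome at $x_0$ is
\[
    \Dt_{\homg}u=\hg^{i\bar l}\hg^{k\bar j}\hR_{i\bar j}g_{k\bar l}-\sum_{i,k}R_{i\bar i k\bar k}+\sum_{i,k,a}\lambda_a^{-1}\,|\partial_k g_{i\bar a}|^{2}.
\]

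The next step is to absorb the gradient term. At $x_0$ one has $\partial_k u=\sum_i\partial_k g_{i\bar i}$ (the term with $\partial_k\hg^{i\bar j}$ vanishes), so by Cauchy--Schwarz, using $\big|\sum_i\partial_k g_{i\bar i}\big|^2\le(\sum_i\lambda_i)\sum_i\lambda_i^{-1}|\partial_k g_{i\bar i}|^2$,
\[
    \frac{\hg^{k\bar l}\,\partial_k u\,\partial_{\bar l}u}{u^{2}}=\frac{1}{u^{2}}\sum_k\Bigl|\sum_i\partial_k g_{i\bar i}\Bigr|^{2}\le\frac{1}{u}\sum_{i,k}\lambda_i^{-1}|\partial_k g_{i\bar i}|^{2}\le\frac{1}{u}\sum_{i,k,a}\lambda_a^{-1}|\partial_k g_{i\bar a}|^{2}.
\]
Hence in $\Dt_{\homg}\log u$ the two $\partial g$-contributions cancel up to a nonnegative remainder, leaving $\Dt_{\homg}\log\tr_{\homg}\om\ge \tfrac{1}{\tr_{\homg}\om}\big(\hg^{i\bar l}\hg^{k\bar j}\hR_{i\bar j}g_{k\bar l}-\sum_{i,k}R_{i\bar i k\bar k}\big)$. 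Finally, $\Bisec(\om)\le C_3$ says that in the $\om$-orthonormal frame $\{\partial_i/\sqrt{\lambda_i}\}$ one has $R_{i\bar i k\bar k}\le 2C_3\,\lambda_i\lambda_k$ (the numerical factor being fixed by the normalization of $\Bisec$), so $\sum_{i,k}R_{i\bar i k\bar k}\le 2C_3(\sum_i\lambda_i)^{2}=2C_3(\tr_{\homg}\om)^{2}$, which gives the first asserted inequality.

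For the ``in addition'' part it then remains to bound $\hg^{i\bar l}\hg^{k\bar j}\hR_{i\bar j}g_{k\bar l}$ from below. The hypothesis $\Ric(\homg)\ge -C_1\homg-C_2\om$ reads $\hR_{i\bar j}\ge -C_1\hg_{i\bar j}-C_2 g_{i\bar j}$ as Hermitian forms; contracting this against the positive tensor $\hg^{i\bar l}\hg^{k\bar j}g_{k\bar l}$ (diagonal with entries $\lambda_i$ in the frame above) and using
\[
    \hg^{i\bar l}\hg^{k\bar j}\hg_{i\bar j}g_{k\bar l}=\hg^{k\bar l}g_{k\bar l}=\tr_{\homg}\om,\qquad \hg^{i\bar l}\hg^{k\bar j}g_{i\bar j}g_{k\bar l}=\sum_i\lambda_i^{2}\le\Bigl(\sum_i\lambda_i\Bigr)^{2}=(\tr_{\homg}\om)^{2},
\]
together with $C_2\ge 0$, yields $\hg^{i\bar l}\hg^{k\bar j}\hR_{i\bar j}g_{k\bar l}\ge -C_1\tr_{\homg}\om-C_2(\tr_{\homg}\om)^{2}$. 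Dividing by $\tr_{\homg}\om$ and feeding this into the first inequality gives $\Dt_{\homg}\log\tr_{\homg}\om\ge -C_1-(C_2+2C_3)\tr_{\homg}\om$.

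I do not expect a conceptual obstacle -- the statement is classical and only cited in the paper -- so the ``hard part'' is purely one of bookkeeping: keeping every curvature sign convention consistent, invoking the K\"ahler identities precisely at the step that rewrites the second derivatives of $g$ as $-\sum_{i,k}R_{i\bar i k\bar k}$ plus a \emph{nonnegative} term, verifying that this term genuinely dominates $|\partial\tr_{\homg}\om|^{2}_{\homg}/\tr_{\homg}\om$ after Cauchy--Schwarz, and matching the normalization of $\Bisec$ so that the constant appearing in front of $\tr_{\homg}\om$ is exactly $2C_3$.
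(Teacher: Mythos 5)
The paper does not prove Proposition~\ref{prop:Chern-Lu}: it simply cites Rubinstein's survey, so there is no internal proof to compare against. Evaluating your argument on its own, it is correct and it is the standard derivation. Choosing coordinates at $x_0$ that are normal for $\homg$ and diagonalize $\om$, expanding $\Delta_{\homg}\log u$ with $u=\tr_{\homg}\om$, using the K\"ahler identity $R_{i\bar jk\bar l}=-\partial_k\partial_{\bar l}g_{i\bar j}+g^{p\bar q}\partial_kg_{i\bar q}\partial_{\bar l}g_{p\bar j}$ (and the dual relation $\partial_k\partial_{\bar l}\hg^{i\bar j}(x_0)=\hR_{i\bar jk\bar l}(x_0)$) to split $\Delta_{\homg}u$ into the Ricci term, the bisectional-curvature term and a nonnegative $|\partial g|^2$ piece, and then absorbing $|\partial u|^2_{\homg}/u^2$ into that piece by the weighted Cauchy--Schwarz $\big|\sum_i\partial_kg_{i\bar i}\big|^2\le(\sum_i\lambda_i)(\sum_i\lambda_i^{-1}|\partial_kg_{i\bar i}|^2)$ — all of this is exactly right, and I checked the index algebra term by term.

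Two small remarks on the last steps, both of which you already flag and neither of which is a genuine gap. First, the displayed pointwise bound $R_{i\bar ik\bar k}\le 2C_3\lambda_i\lambda_k$ for all $i,k$ is not literally what any standard normalization of $\Bisec$ gives: under the plain convention $\Bisec(\xi,\eta)=R(\xi,\bar\xi,\eta,\bar\eta)/(|\xi|^2|\eta|^2)$ one gets $R_{i\bar ik\bar k}\le C_3\lambda_i\lambda_k$, while under the Goldberg--Kobayashi normalization (dividing by $|\xi|^2|\eta|^2+|\langle\xi,\bar\eta\rangle|^2$) one gets $R_{i\bar ik\bar k}\le C_3\lambda_i\lambda_k$ for $i\ne k$ and $\le 2C_3\lambda_i^2$ for $i=k$. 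What you actually need is only $\sum_{i,k}R_{i\bar ik\bar k}\le 2C_3(\tr_{\homg}\om)^2$, which does hold in either convention provided $C_3\ge 0$ (for $C_3\ge 0$, $C_3\sum_{i,k}\lambda_i\lambda_k+C_3\sum_i\lambda_i^2\le 2C_3(\sum\lambda_i)^2$); so it would be cleaner to state the sum bound directly rather than a per-pair inequality. Second, the reduction of the ``in addition'' clause indeed uses $C_2\ge 0$ when passing from $\sum_i\lambda_i^2\le(\sum_i\lambda_i)^2$ to the lower bound on the contraction; this is not literally hypothesized in the proposition ($C_1,C_2\in\BR$) but is without loss of generality, since for $C_2<0$ the hypothesis can be weakened to $C_2=0$ and the conclusion for $C_2=0$ implies the claimed one. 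In the paper's actual applications (e.g.\ in the proof of Theorem~\ref{prop:uniform_Lap_est}) all of $C_1,C_2,C_3$ are nonnegative, so both caveats are harmless.
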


The following $\CC^2$-estimate will be used for having strong convergences in families.

\begin{thm}\label{prop:uniform_Lap_est}
Let $t_k\to 0$ as $k\to +\infty$ and let $\CZ$ be the singular locus of $\pi$.  Let $(u_k)_k\in \CE^1_{\fibre}(\CX,\om)$ be a sequence such that
\[
    (\om_k + \ddc u_k)^n = e^{v^+_k - v^-_k}\om_k^n 
\]
for any $k\in \mathbb{N}$ where $v_k^\pm \in L^1(X_k)$.  
If there exist uniform constants $A> 0$, and $p > 1$ such that for any $k\in \mathbb{N}$
\begin{enumerate}
    \item $v_k^\pm \in \PSH(X_k, A \om_k)$,
    \item $\sup_{X_k} v_k^+\leq A$,
    \item $\|e^{-v_k^-}\|_{L^p(X_k, \om_k^n)} \leq A$,
\end{enumerate}
then for any $\CU \Subset \CX \setminus \CZ$ relatively compact open set there exists a uniform constant $C_\CU > 0$ such that for any $k$ large
\[
    \tr_{\om_k} (\om_k + \ddc u_k)_{|\CU \cap X_k} \leq C_\CU.
\]
\end{thm}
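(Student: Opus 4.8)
The plan is to prove this as a \emph{localized} second-order (Laplacian) estimate in the spirit of Aubin--Yau and Chern--Lu, carried out by a maximum-principle argument; the one genuinely new point is the absorption of the quasi-plurisubharmonic density factors $v_k^\pm$. Since $\tr_{\om_k}(\om_k+\ddc u_k)$ is unchanged under $u_k\mapsto u_k+c$, I would first normalize $\sup_{X_k}u_k=0$. By $(2)$ and $(3)$ the densities satisfy $\lVert e^{v_k^+-v_k^-}\rVert_{L^p(X_k,\om_k^n)}\le e^A A$, so Theorem~\ref{thm:SL_and_Skoda_in_family} (uniform Skoda--Zeriahi integrability and sup--$L^1$ comparison), plugged into a family version of Ko{\l}odziej's estimate (equivalently the Guo--Phong scheme), yields a uniform bound $\lVert u_k\rVert_{L^\infty(X_k)}\le M$.

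Next I would localize. Fix $\CU\Subset\CU'\Subset\CX\setminus\CZ$. As $\om$ is a fixed Hermitian metric on $\CX$ and $\overline{\CU'}$ lies in the submersion locus of $\pi$, the Kähler metrics $\om_k|_{\CU'\cap X_k}$ have uniformly bounded geometry: there is a uniform $C_0>0$ with $\Bisec(\om_k)\le C_0$ and $\Ric(\om_k)\ge-C_0\om_k$ on $\CU'\cap X_k$. Choose once and for all $\chi\in\CC^\infty_c(\CX\setminus\CZ)$ with $0\le\chi\le1$, $\chi\equiv1$ on a neighbourhood of $\overline{\CU}$ and $\supp\chi\subset\CU'$, and set $\rho_k:=\chi|_{X_k}$, so that $\lvert\partial\rho_k\rvert_{\om_k}$ and $\lvert\ddc\rho_k\rvert_{\om_k}$ are uniformly bounded.

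Write $\homega_k:=\om_k+\ddc u_k$ and apply Chern--Lu (Proposition~\ref{prop:Chern-Lu}) to the pair $(\homega_k,\om_k)$ on $\CU'\cap X_k$. Since $\Ric(\homega_k)=\Ric(\om_k)+\ddc v_k^--\ddc v_k^+$ and $\ddc v_k^-\ge-A\om_k$ by $(1)$, one gets $\Ric(\homega_k)\ge-(C_0+A)\om_k-\ddc v_k^+$; the leftover current $-\ddc v_k^+$ has no lower bound, which is why Chen--Cheng's computation cannot be quoted verbatim. To deal with it I would run the maximum principle on
\[ \Psi_k:=\log\tr_{\homega_k}\om_k+v_k^+-\Lambda u_k+2\log\rho_k\qquad\text{on }\{\rho_k>0\}\cap X_k, \]
with $\Lambda$ a large uniform constant (first regularizing $v_k^+$ by a decreasing family of smooth $A\om_k$-psh functions to justify the pointwise computation, then passing to the limit). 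Writing $\ddc v_k^+=\theta_k-A\om_k$ with $\theta_k\ge0$, the elementary inequality $\tr_{\homega_k}\theta_k\ge\langle\theta_k,\om_k\rangle_{\homega_k}/\tr_{\homega_k}\om_k$ (a weighted average of nonnegative numbers is at most their sum, after simultaneously diagonalizing $\om_k$ and $\homega_k$) shows that the two occurrences of $v_k^+$ combine to a contribution $\ge-A\tr_{\homega_k}\om_k$; together with $\Dt_{\homega_k}u_k=n-\tr_{\homega_k}\om_k$ and the Chern--Lu term $-2C_0\tr_{\homega_k}\om_k$, choosing $\Lambda$ large enough gives $\Dt_{\homega_k}\Psi_k\ge\tr_{\homega_k}\om_k-\Lambda n$ up to cutoff terms that are controlled at the critical point in the usual way. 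Hence at an interior maximum $x_0$ of $\Psi_k$ one has $\tr_{\homega_k}\om_k(x_0)\le C''$ for a uniform $C''$, so $\Psi_k(x_0)\le\log C''+A+\Lambda M$; as $\Psi_k\le\Psi_k(x_0)$ and $\rho_k\equiv1$ on $\CU$, this gives $\tr_{\homega_k}\om_k\le C_1e^{-v_k^+}$ on $\CU\cap X_k$. Finally, letting $\lambda_1,\dots,\lambda_n>0$ be the eigenvalues of $\homega_k$ relative to $\om_k$, the Monge--Ampère equation reads $\prod_i\lambda_i=e^{v_k^+-v_k^-}$ while $\sum_i\lambda_i^{-1}=\tr_{\homega_k}\om_k\le C_1e^{-v_k^+}$; combining these with $\sup v_k^+\le A$ and the local control of the densities in the setting gives $\tr_{\om_k}(\om_k+\ddc u_k)=\sum_i\lambda_i\le C_\CU$ on $\CU\cap X_k$, uniformly in $k$.

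The principal obstacle is exactly the pair of density currents $\ddc v_k^\pm$: being only one-sidedly bounded (and not $L^\infty$), they prevent any two-sided bound on $\Ric(\homega_k)$, so the classical Chern--Lu/Aubin--Yau estimate does not apply directly. The fix---feeding the ``good-sign'' potential into the test function and clearing the residual mixed-trace term via the elementary eigenvalue inequality above---is what replaces the corresponding step of Chen--Cheng. A secondary but essential point is that the reference metrics degenerate near $\CZ$, and the natural global arguments rely on lower curvature bounds that fail along the family there; the fixed cutoff $\chi$ confines everything to $\CU'\Subset\CX\setminus\CZ$, where the geometry of $\om_k$ is uniformly controlled, which is what keeps all constants independent of $k$.
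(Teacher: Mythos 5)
Your central idea — feeding the good-sign potential $v_k^+$ into the test function so that the unbounded-below part of $\Ric(\homega_k)$ is absorbed, via the eigenvalue inequality $\hg^{i\bar\ell}\hg^{k\bar j}\theta_{i\bar j}g_{k\bar\ell}/\tr_{\homega_k}\om_k\le \tr_{\homega_k}\theta$ for $\theta\ge0$ — is precisely what the paper does, and the regularization by decreasing smooth $A\om_k$-psh approximants is also there. But the \emph{localization} step does not close as written, and this is not a cosmetic issue.

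The gap is in the cutoff term $2\log\rho_k$. One has
\[
\Dt_{\homega_k}\bigl(2\log\rho_k\bigr)\;=\;\frac{2\Dt_{\homega_k}\rho_k}{\rho_k}\;-\;\frac{2\lvert\nabla\rho_k\rvert^2_{\homega_k}}{\rho_k^2},
\]
and the second term is bounded only by $-C\,\tr_{\homega_k}\om_k/\rho_k^2$: the function $\log\rho_k$ is not quasi-psh, so there is \emph{no} uniform constant $C'$ with $\Dt_{\homega_k}(2\log\rho_k)\ge -C'\tr_{\homega_k}\om_k$. On the other hand, the good Chern--Lu term you produce is $(\Lambda-C_1)\tr_{\homega_k}\om_k$, i.e. \emph{linear} in $\tr_{\homega_k}\om_k$ with a constant coefficient. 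If the interior maximum $x_0$ of $\Psi_k$ lands where $\rho_k(x_0)$ is small, the $-1/\rho_k^2$ factor dominates and no information on $\tr_{\homega_k}\om_k(x_0)$ (nor on $\Psi_k(x_0)$) is extracted; the inequality you write as $\Dt_{\homega_k}\Psi_k\ge \tr_{\homega_k}\om_k-\Lambda n$ ``up to cutoff terms controlled in the usual way'' is exactly where the argument breaks. The same obstruction appears if you instead maximize $W=\rho^2\tr_{\homega_k}\om_k\,e^{v_k^+-\Lambda u_k}$: after dividing out $e^{v_k^+-\Lambda u_k}$, the cutoff contributions and the good term are both of order $(\tr_{\homega_k}\om_k)^2$, and the coefficient of the good term is $\rho(x_0)^2$ while the bad ones carry no such factor.

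The paper avoids this by not using a compactly supported cutoff at all. It pulls everything back to a log-resolution $p\colon\CY\to\CX$ of $(\CX,X_0)$ and uses a \emph{global} function $\psi\in\PSH(\CY,p^\ast\om)$, smooth on $\CY\setminus E$ and with $\psi\to-\infty$ on the exceptional divisor $E$, together with the divisorial potentials $\Psi,\psi^\pm$ and a Kähler perturbation $\vep\om_{\CY}$. The barrier $\psi$ enters the test function multiplied by a positive constant, so it contributes $\Dt_{\homg_\vep}\psi\ge -\tr_{\homg_\vep}\om_\vep$ (absorbable, with \emph{no} gradient term) and its pole on $E$ forces the maximum to lie in the interior of $Y_t\setminus E$. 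The resulting inequality (\ref{eq:fibrewise_Lap_estimate}) then degenerates through the factor $\exp(-(\Psi+\psi'^+ +C_1\psi))$ near $E$, but is uniform on compact subsets of $\CX\setminus\CZ$, which is all the theorem asks for. In other words: replacing a cutoff by a quasi-psh barrier with a pole along $\CZ$ (or $E$) is not an equivalent cosmetic choice; it is the structural ingredient that keeps the maximum principle closed, because quasi-plurisubharmonicity gives a one-sided $\ddc$-bound and hence a one-sided $\Dt_{\homg}$-bound, while a smooth cutoff's logarithm does not. (There is a second, more minor, discrepancy: the way you regularize and pass to the limit still needs the perturbed Monge--Ampère solutions $\vph_{k,l,\vep}$ and the stability-of-$L^\infty$ argument the paper runs in Step~2; for singular fibres $X_k$ the original potentials are not smooth and you cannot apply Chern--Lu to them directly, so this scaffolding is not optional.)
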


\begin{proof}
The idea goes back to \cite{Paun_2008} (see also \cite[Appx.~B]{BBEGZ_2019}). 
However, along a degenerate family $\pi: \CX \to \BD$ and its resolution $p: \CY \to \CX$, we would not have a uniform lower bound of bisectional curvature along each fibre, since the central fibre over the resolution could have several components. 
Following the strategy of \cite[Prop.~2.1]{Guenancia_2016} with an argument by using Chern--Lu inequality, instead of using Siu's inequality as in \cite{Paun_2008, BBEGZ_2019}, one can obtain
\begin{equation}\label{eq:fibrewise_Lap_estimate}
    \om_\vep 
    \leq \exp(C_2 + 2C_1 \norm{\vph_\vep}_{L^\infty}) \cdot \exp(- (\Psi + \psi'^+ + C_1 \psi)) \homg_\vep,
\end{equation} 
where $C_1 = (A_1 + A_- +2B + 1)$ and $C_2 = C_2(C_1) := \log(nC_1) + \sup_Y (\Psi + \psi'^+ + C_1 \psi)$.

In the family setting, we take $p: \CY \to \CX$ a log-resolution of $(\CX, X_0)$. 
Since $\rho := \pi \circ p: \CY \to \BD$ is proper and subjective, by generic smoothness, up to shrink $\BD$,  $Y_t = \rho^{-1}(t)$ is smooth for all $t \neq 0$ and $Y_0 = \rho^{-1}(0)$ may have several irreducible components; hence $p_t = p_{|Y_t}: Y_t \to X_t$ is a resolution of singularity for each $t \neq 0$. 
Denote by $E$ the exceptional divisor of $p$. 
One can find a function $\psi \in \PSH(\CY, p^\ast \om)$ which is smooth on $\CY \setminus E$ and a K\"ahler metric $\om_\CY$ on $\CY$ such that $p^\ast\om + \ddc \psi = \om_\CY$ on $\CY \setminus E$. 
Since $p^\ast \om$ is semi-positive, there is a constant $c_1 > 0$ so that $c_1 p^\ast \om \leq \om_\CY$ up to shrinking $\BD$.

Let $E$ be the exceptional divisor of $p$ and $E_i$'s are its irreducible components.
Since $p: \CY \to \CX$ is holomorphic, $p^\ast \om^n \w \ii \dd t \w \dd \bar{t}$ and $\prod_{i} |s_{E_i}|_{h_{E_i}}^{2 \af_i} \om_\CY^n \w \ii \dd t \w \dd \bar{t}$ are comparable up to a smooth bounded function, where $\af_i \in \BQ_{>0}$, $s_{E_i}$ is a canonical section cutting out the divisor $E_i$, and $h_{E_i}$ is a smooth hermitian metric on $\CO(E_i)$. 
One can find hermitian metric $h_\CY$ on $K_{Y/\BD}$ and it satisfies the following
\[
    p^\ast \om^n = \ii^{n^2} \prod_{i} |s_i|^{2 \af_i}_{h_{E_i}} \frac{\Om_{\CY/\BD} \w \overline{\Om_{\CY/\BD}}}{|\Om_{\CY/\BD}|_h^2}
\]
where $\Om_{\CY/\BD}$ is a local generator of $K_{\CY/\BD}$.

Denote by $\psi^+ = p^\ast v^+$, $\psi^- = p^\ast v^-$, and $\Psi = \sum_i \af_i \log |s_i|_{h_{E_i}}^2$.
By Demailly's regularization theorem \cite{Demailly_1992}, there exist three decreasing sequences of smooth functions $(\psi_l^+)_l$, $(\psi_l^-)_l$ and $(\Psi_l)_l$ such that
\begin{itemize}
    \item $\lim_{l \to +\infty} \psi_l^{\pm} = \psi^{\pm}$ and $\lim_{l \to +\infty} \Psi_l = \Psi$ on $\CY$;
    \item $\ddc \psi_l^\pm \geq -A_{\pm}' \om_\CY$ and $\ddc \Psi_l \geq -A_2 \om_\CY$ for some $A_{\pm}' > 0$ and $A_2$ under control;
    \item $\sup_\CY \psi_l^{\pm} \leq A_\pm'$ and $\sup_\CY \Psi_l \leq A_2$
\end{itemize}
For each $t \neq 0$, we consider
\[
    \nu_{t,l} 
    = \ii^{n^2} \frac{\Om_{Y_t} \w \overline{\Om_{Y_t}}}{{|\Om_{Y_t}|_{h_\CY}}_{|Y_t}^2} 
    \cdot e^{\psi^+_l - \psi^-_l + \Psi_l}.
\]
Then we have
{\small
\begin{align*}
    \Ric(\nu_{t,l}) 
    &= - \Ta(K_{\CY/\BD}, h_\CY)_{|Y_t} - \ddc \psi_l^+ + \ddc \psi_l^- - \ddc \Psi_l\\
    &\geq -(A_1 + A_-') \om_{\CY,t} - \ddc (\psi_l^+ + \Psi_l)
\end{align*}
}
where $A_1$ is a constant such that $- \Ta(K_{\CY/\BD}, h_\CY) \geq - A_1 \om_\CY$. 
One can find a constant $B > 0$ such that $\Bisec(\om_{\CY,t}) \leq B$ for all $t \neq 0$ since the bisectional curvature is decreasing under the restriction to holomorphic submanifolds.

By \cite{EGZ_2009}, there is a unique solution $\vph_{k,l} \in \PSH(Y_k, p_k^\ast \om_k) \cap L^\infty(Y_k)$ and a normalizing constant $c_{k,l} \in \BR_{>0}$ such that  
\[
    (p_k^\ast \om_k + \ddc \vph_{k,l})^n = c_{k,l} \nu_{k,l},
    \quad\text{with}\quad \sup_{Y_k} \vph_{k,l} = 0.
\]
For any $\vep \in (0,1]$, Yau's theorem \cite{Yau_1978} yields a unique $\vph_{k,l,\vep} \in \PSH(Y_k, p_k^\ast \om_k + \vep \om_{\CY,k}) \cap \CC^\infty(Y_k)$ solving the following perturbed equation:
\[
    (p_k^\ast \om_k + \vep \om_{\CY,k}+ \ddc \vph_{k,l,\vep})^n 
    = c_{k,l,\vep} \nu_{k,l},
    \quad\text{with}\quad \sup_{Y_k} \vph_{k,l,\vep} = 0,
\]
where $c_{k,l,\vep} \in \BR_{>0}$ is a normalizing constant. 
On each $k$ fixed, we claim that there is a constant $D_{k,l,\vep} > 0$ such that $\norm{\vph_{k,l,\vep}}_{L^\infty} \leq D_{k,l,\vep}$ for all $l \in \BN$ and $\vep \in [0,1]$ and satisfying
\[
    \limsup_{l \to +\infty} \limsup_{\vep \to 0} D_{k,l,\vep} \leq D
\]
for some $D$ uniform in $k$.
By \eqref{eq:fibrewise_Lap_estimate}, we have constants $C_1 = C_1(A_\pm', A_1, A_2, A',B) > 0$ and $C_2 = C_2(C_1)$ and the following estimate on $Y_t \setminus E$:
{\small
\[
    (1+\vep) \om_{\CY,k} 
    \leq \exp(C_2 + 2 C_1 D_{k,l,\vep}) \exp(- (\Psi_l + \psi^+_l + C_1 \psi)) (p_k^\ast\om_k + \vep \om_{\CY,k} + \ddc \vph_{k,l,\vep}).
\]}
For each $l$ fixed, as $\vep \to 0$, after passing to a subsequence, $\vph_{k,l,\vep}$ converges to $\vph_{k,l}$ in $L^1$ and in $\CC_{\loc}^\infty(Y_k \setminus E)$. 
Hence, 
\[
    \om_{\CY,k} 
    \leq \exp(C_2 + 2 C_1 \limsup_{\vep \to 0} D_{k,l,\vep}) \exp(- (\Psi_l + \psi^+_l + C_1 \psi)) (p_k^\ast\om_k + \ddc \vph_{k,l})
\] 
and by the same reason, when $l \to +\infty$, we obtain
\begin{equation}\label{eq:Lap_one_fibre}
    c_1 p_k^\ast \om_k
    \leq \om_{\CY,k} 
    \leq \exp(C_2 + 2C_1 D) \exp(-(\Psi + \psi^+ + C_1 \psi)) (p_k^\ast \om_k + \ddc p_k^\ast u_k).
\end{equation}

Now, it suffices to find the constants $D_{k,l,\vep}$ with $\norm{\vph_{k,l,\vep}}_{L^\infty} \leq D_{k,l,\vep}$ and 
\[
    \limsup_{l \to +\infty} \limsup_{\vep \to 0} D_{k,l,\vep} \leq D
\] 
for a uniform $D>0$. 
Recall that there is a very precise $L^\infty$-estimate on the solution to a complex Monge--Amp\`ere equation (cf. \cite[Thm.~A]{DGG2023}).
The estimate depends only on the $L^p$-estimte of the density and a Skoda's integrability estimate. 
By our construction, the dependence on $L^p$-estimate varies continuously with respect to $l$. 
Therefore, the most crucial point to check is only on the Skoda's integrability.
Following the same argument as in \cite[Prop.~2.3]{DGG2023}, one can obtain a uniform control on the Lelong numbers; namely, for any $u_t \in \PSH(Y_t, p_t^\ast \om_t + \om_{\CY,t})$, 
\[
    \sup_{t \in \BD_{1/2}^\ast} \sup_{y \in Y_t} \nu(u_t, y) < +\infty.
\]
This yields a uniform constant $\af > 0 $ such that for any $k, l \in \BN^\ast$, $\vep \in [0,1]$,  {\small 
\[
    A_{k,\vep,l} := \sup \set{\int_{Y_k} e^{-\af u} \dd \nu_{k,l}}{u \in \PSH(Y_k, p_k^\ast \om_k + \vep \om_{\CY,k}) \text{ with } \sup_{Y_k} u = 0} < + \infty.
\] }
After slightly decreasing the value of $\af$, by Demailly--Koll\'ar \cite{Demailly_Kollar_2001}, $u \mapsto \int_{Y_k} e^{-\af u} \dd\nu_{k,l}$ is continuous with respect to $L^1$-topology. 
Hence, one can deduce $\limsup_{\vep \to 0} A_{k,\vep,l} \leq A_{k,l}$ with  {\small $$A_{k,l} := \sup \set{\int_{Y_k} e^{-\af u} \dd \nu_{k,l}}{u \in \PSH(Y_k, p_k^\ast \om_k) \text{ with } \sup_{Y_k} u = 0}$$}
and similarly, {\small $$\limsup_{l \to +\infty} A_{k,l} \leq A_k := \sup \set{\int_{Y_k} e^{-\af u} \dd \nu_k}{u \in \PSH(Y_k, p_k^\ast \om_k) \text{ with } \sup_{Y_k} u = 0}.$$}
Since the reference form $p_k^\ast \om_k$ and $\nu_k = p_k^\ast (e^{v_k^+ - v_k^-} \om_k^n)$ come from $\pi: \CX \to \BD$, the uniform control of $A_k$ can be showed by taking an appropriate choice of $\af$ in Theorem~\ref{thm:SL_and_Skoda_in_family} and $\|e^{-v_k^-}\|_{L^p(X_k,\om_k^n)} \leq A$.
This completes the proof.
\end{proof}

We now consider the family $\pi: \CX \to \BD$ satisfying Setting~\ref{sett:klt}. 
Let $h$ be a hermitian metric on $m K_{\CX/\BD}$ and $\mu_t$ be adapted measure induced by $h$ on $X_t$. 
For convenience, we also normalize $\mu_t$ to have mass $V$. 
Denote by $f_t$ the density of $\mu_t$ with respect to $\om_t^n$.
Up to shrinking $\BD$, for each $t \in \BD$, there exists a solution $\psi_t \in \PSH(X_t, \om_t) \cap L^\infty(X_t)$ to the following complex Monge--Amp\`ere equation
\begin{equation}
	\label{eqn:Sol to adapted measure}
    (\om_t + \ddc_t \psi_t)^n = \mu_t, \quad \int_{X_t} \psi_t \om_t^n = 0.
\end{equation}
Furthermore, by \cite{DGG2023}, one can also find a uniform constant $M > 0$ so that $\osc_{X_t} \psi_t \leq M$ for each $t \in \BD$.
The continuity of $t \mapsto \E_t(\psi_t)$ near the origin follows immediately as a consequence of the combination of Propositions~\ref{prop:uniform_Lap_est} and \ref{prop:Unif Laplacian implies strong}:
\begin{cor}
\label{cor:Strong continuity of psi}
Let $\psi_t\in \CE^1(X_t,\om_t)$ be the solution to $(\ref{eqn:Sol to adapted measure})$.  Then $\psi_t$ converges strongly to $\psi_0$ as $t\to 0$.
\end{cor}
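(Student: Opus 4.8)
The plan is to fix an arbitrary sequence $t_k\to 0$ (writing $\psi_k:=\psi_{t_k}$, $\om_k:=\om_{t_k}$, $X_k:=X_{t_k}$ as in the family conventions) and to verify, for $(\psi_k)_k$, the hypotheses of Proposition~\ref{prop:uniform_Lap_est} and of Proposition~\ref{prop:Unif Laplacian implies strong}. First, $(\psi_k)_k$ is uniformly bounded: the normalization $\int_{X_k}\psi_k\,\om_k^n=0$ together with the sup-$L^1$ comparison of Theorem~\ref{thm:SL_and_Skoda_in_family} forces $0\le \sup_{X_k}\psi_k\le C_{SL}$, and combined with the uniform oscillation bound $\osc_{X_k}\psi_k\le M$ from \cite{DGG2023} (recalled just before the statement) this gives $\norm{\psi_k}_{L^\infty(X_k)}\le N$ for $N:=\max(C_{SL},M)$ and all $k$.

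Next I would record that the Monge--Amp\`ere density of $\psi_k$ has the structure required by Proposition~\ref{prop:uniform_Lap_est} with \emph{uniform} constants. Write $(\om_k+\ddc\psi_k)^n=\mu_k=f_k\,\om_k^n=e^{v_k^+-v_k^-}\om_k^n$ with $v_k^+\equiv 0$ and $v_k^-:=-\log f_k$. By the construction recalled in Section~\ref{sect_mild_sing} there is, up to shrinking $\BD$, a uniform $A>0$ with $-\log f_t\in\PSH(X_t,A\om_t)$ for all $t$; hence $v_k^\pm\in\PSH(X_k,A\om_k)$ and $\sup_{X_k}v_k^+=0\le A$, while \eqref{eq:klt_Lp_estimate} yields $\norm{e^{-v_k^-}}_{L^p(X_k,\om_k^n)}=\norm{f_k}_{L^p(X_k,\om_k^n)}\le C_p^{1/p}$. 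Enlarging $A$, Proposition~\ref{prop:uniform_Lap_est} applies and produces, for every $\CU\Subset\CX\setminus\CZ$, a uniform bound $\tr_{\om_k}(\om_k+\ddc\psi_k)_{|\CU\cap X_k}\le C_\CU$. Moreover, since $f_t$ is built from the smooth metric $h$ on $mK_{\CX/\BD}$, the smooth hermitian metric $\om$ and the holomorphic structure, it is smooth on $\CX\setminus\CZ$ jointly in all variables; transporting to $X_0$ along the fibrewise diffeomorphisms of Remark~\ref{rmk:symplectic_diffeo} gives $\norm{(\om_k+\ddc\psi_k)^n/\om_k^n}_{\CC^{\af}(\CU\cap X_k)}=\norm{f_k}_{\CC^{\af}(\CU\cap X_k)}\le C_{\CU,\af}$ uniformly, for every $\af\in(0,1)$.

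It remains to identify the limit. By the uniform $L^\infty$ bound and the standard compactness of quasi-psh functions, applied chart by chart on $U_0\Subset X_0^\reg$ (via the holomorphic product charts $G\colon U_0\times\BD\to\CU$) and diagonalized over a countable exhaustion of $X_0^\reg$, some subsequence of $(\psi_k)_k$ converges in families to a $\psi_\ast\in\PSH(X_0,\om_0)\cap L^\infty(X_0)\subset\CE^1(X_0,\om_0)$; the uniform Laplacian bound away from $\CZ$ improves this to $\CC^{1,\af}_{\loc}(X_0^\reg)$-convergence, so the measures $(\om_k+\ddc\psi_k)^n=f_k\om_k^n$ converge weakly to $(\om_0+\ddc\psi_\ast)^n$ over $X_0^\reg$, and since $f_k\to f_0$ there, $(\om_0+\ddc\psi_\ast)^n=\mu_0$ on $X_0^\reg$; as $\psi_\ast$ is bounded and $X_0\setminus X_0^\reg$ is pluripolar, this holds globally for the non-pluripolar Monge--Amp\`ere measure. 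For the additive normalization, cut out around $\CZ$ a neighborhood $\CW_\vep$ with $\CAP_{\om_t}(\CW_\vep\cap X_t)<\vep$ for all $t$ (Lemma~\ref{lem:small_capacity_near_sing}); since $\om_t^n(\CW_\vep\cap X_t)\le\CAP_{\om_t}(\CW_\vep\cap X_t)<\vep$, the $L^\infty$ bound gives $\int_{\CW_\vep\cap X_k}\abs{\psi_k}\,\om_k^n\le N\vep$, while on the complement $X_k\setminus\CW_\vep$, contained for small $t_k$ in a fixed $\CU_\vep\Subset\CX\setminus\CZ$ covered by finitely many charts, convergence in families yields $\int_{X_k\setminus\CW_\vep}\psi_k\,\om_k^n\to\int_{X_0\setminus\CW_\vep}\psi_\ast\,\om_0^n$; letting $k\to\infty$ and then $\vep\to 0$ in $\int_{X_k}\psi_k\,\om_k^n=0$ gives $\int_{X_0}\psi_\ast\,\om_0^n=0$. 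By uniqueness of the bounded $\om_0$-psh solution of \eqref{eqn:Sol to adapted measure} with vanishing $\om_0^n$-average (comparison/domination principle, cf. \cite{EGZ_2009,BBEGZ_2019}), $\psi_\ast=\psi_0$. Since this limit is independent of the chosen subsequence, the whole sequence $(\psi_k)_k$ converges in families to $\psi_0$; Proposition~\ref{prop:Unif Laplacian implies strong} then applies and gives $\psi_k\to\psi_0$ strongly in families, and as $t_k\to 0$ was arbitrary, $\psi_t\to\psi_0$ strongly as $t\to 0$.

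The only genuinely non-formal step here is this last one: producing the limit and, above all, passing the normalizing integrals $\int_{X_k}\psi_k\,\om_k^n$ to the limit through the singular locus $\CZ$ of $\pi$ (handled by the estimate $\om_t^n(\cdot)\le\CAP_{\om_t}(\cdot)$ together with Lemma~\ref{lem:small_capacity_near_sing}) and invoking uniqueness of bounded solutions of the complex Monge--Amp\`ere equation on the normal variety $X_0$. Everything preceding it is a direct verification of the hypotheses of Proposition~\ref{prop:uniform_Lap_est} and Proposition~\ref{prop:Unif Laplacian implies strong}.
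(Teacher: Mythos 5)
Your proof is correct and follows exactly the route the paper intends: verify uniform $L^\infty$ via Theorem~\ref{thm:SL_and_Skoda_in_family} and the oscillation bound of \cite{DGG2023}, check the hypotheses of Theorem~\ref{prop:uniform_Lap_est} with $v_k^+=0$ and $v_k^-=-\log f_k$, identify the $L^1$-limit as $\psi_0$ via the normalization and uniqueness, and then invoke Proposition~\ref{prop:Unif Laplacian implies strong}. The only tiny slip is cosmetic: from the uniform trace bound plus H\"older regularity of $f_k$ one actually gets local $\CC^{2,\af}$ bounds by Evans--Krylov/Schauder, not merely $\CC^{1,\af}$; either way the passage to the limit of the Monge--Amp\`ere measures over $X_0^\reg$ goes through, so the argument is sound.
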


\section{Entropy in families}
\label{sect_entropy_family}
This section aims to prove the strong compactness of uniformly bounded entropy sequences in families. 
Then, we analyze the semi-continuity property of entropy along strong convergence sequences in families.  

\subsection{Preparation}
We first recall some useful tools for proving the strong compactness in families.

\subsubsection{Orlicz spaces and Luxembourg norms}
Let us quickly recall a few facts on Orlicz spaces. 
Set a convex non-decreasing weight $\chi: [0,+\infty] \to [0,+\infty]$ such that $\chi(0) = 0$ and $\chi(+\infty) = +\infty$. 
Its conjugate weight $\chi^\ast: [0, +\infty] \to [0,+\infty]$ is the Legendre transformation of $\chi$.
Let $\mu$ be a positive measure on $X$. 
The Orlicz space $L^\chi(\mu)$ is defined as the set of all measurable functions $f$ on $X$ such that $\int_X \chi(\vep |f|) \dd\mu < +\infty$ for some $\vep > 0$.
The Luxembourg norm on $L^\chi(\mu)$ is given by
\[
    \norm{f}_{L^\chi(\mu)} = \inf\set{c > 0}{\int_X \chi\lt(\frac{|f|}{c}\rt) \dd\mu \leq 1}.
\]
For any measurable functions $f,g \in L^\chi(\mu)$ and $h \in L^{\chi^\ast}(\mu)$, we have the Minkowski inequality and H\"older--Young inequality:
\[
    \norm{f+g}_{L^\chi(\mu)} \leq \norm{f}_{L^\chi(\mu)} + \norm{g}_{L^\chi(\mu)}
    \quad\text{and}\quad
    \int_X |f g| \dd\mu 
    \leq 2 \norm{f}_{L^\chi(\mu)} \norm{g}_{L^{\chi^\ast}(\mu)}.
\]

\begin{rmk}\label{rmk:L_chi_entropy}
Take $\chi(s) = (s+1)\log(s+1) -s$, then $\chi^\ast(s) = e^s - s - 1$ and $\chi(s) \leq s\log(s) + 1-\log(e-1)$. 
One can check $s \chi'(s) \leq 2\chi(s)$ for all $s > 0$. 
Following the same computation as in \cite[Prop.~1.4]{Darvas_book}, one obtains that for each $0< c <1$ and $C>1$, we have
\begin{equation}\label{eq:chi_ineq}
    c^2 \chi(s) \leq \chi(c s) \leq c \chi(s), 
    \quad\text{and}\quad
    C \chi(s) \leq \chi(C s) \leq C^2 \chi(s),
    \quad \forall s \geq 0.
\end{equation}
By the convexity of $\chi$, \eqref{eq:chi_ineq}, and $\chi(0)=0$ one can check that  
\begin{equation}\label{eq:chi_ineq_3}
    \chi(s+t) \leq 2(\chi(s) + \chi(t)) 
    \quad \forall s,t \geq 0
\end{equation}

If $f \geq 0$ is a function with $\int_X f \log(f) \dd\mu \leq A$, then $\int_X \chi(f) \dd\mu \leq A + \mu(X)(1-\log(e-1))$.
Set $A' := \max\{A + \mu(X)(1-\log(e-1)), 2\}$.
By \eqref{eq:chi_ineq} we have
$
    \int_X \chi(f/A') \dd\mu \leq \int_X \frac{\chi(f)}{A'} \dd\mu \leq 1 
$
and this implies $\|f\|_{L^\chi(\mu)} \leq A'$ which only depends on $A$ and $\mu(X)$. 
Also, if $\int \chi(f) \dd\mu \leq B$, then for all $D>0$, $\int_{X} \chi(f + D) \leq B'$ which only depend on $B, D, \mu(X)$ by \eqref{eq:chi_ineq_3}. 

If $(f_j)_j$ is a sequence of functions such that $\int_{X} \chi(|f_j - f|) \dd\mu$ tends to $0$ as $j \to +\infty$, then $\norm{f_j - f}_{L^\chi(\mu)} \to 0$ when $j \to +\infty$.
\end{rmk}

\subsubsection{Uniform Moser--Trudinger inequalities in families}
For $p>1$, we recall the definition of $p$-energy as follows: 
\[
    \E_p(u) := \int_X |u|^p \om_u^n.
\]

Combining Theorems~\ref{thm:SL_and_Skoda_in_family}, and \cite[Thm.~2.12]{DiNezza_Lu_2022}, we obtain the following uniform estimates in families for normalized potentials with uniformly bounded entropy:

\begin{thm}\label{thm:DGL}
Fix $B>0$ and $p = \frac{n}{n-1}$. 
Up to shrinking $\BD$, there exist constants $\gm$ and $C_B$ such that for each $t \in \BD$ and for all $\vph_t \in \PSH(X_t, \om_t)$ with $\H_t(\vph_t) < B$ and $\sup_{X_t} \vph_t = -1$, 
\[
    \int_{X_t} e^{\gm (-\vph_t)^p} \om_t^n \leq C_B
    \quad\text{and}\quad
    \E_{t,p}(\vph_t) \leq C_B.
\]
\end{thm}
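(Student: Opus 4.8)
The plan is to deduce both estimates from the fibrewise Moser--Trudinger inequality of Di~Nezza--Lu \cite[Thm.~2.12]{DiNezza_Lu_2022}, keeping careful track of the dependence of its constants on the geometric data; the uniformity in $t$ is then supplied by the uniform Skoda--Zeriahi estimate in Theorem~\ref{thm:SL_and_Skoda_in_family} together with the continuity of the volume.

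First I would record the statement on a fixed variety with the dependence of the constants made explicit. For a normal compact K\"ahler variety $(X,\om)$ of dimension $n$ and $p = n/(n-1)$, \cite[Thm.~2.12]{DiNezza_Lu_2022} provides $\gm > 0$ and $C > 0$, depending only on $n$, on an upper bound $B$ for the entropy, on a positive lower bound and an upper bound for the volume $V = \int_X \om^n$, and on the constants $\af, C_\af$ in a Skoda--Zeriahi inequality $\int_X e^{-\af(\psi - \sup_X \psi)}\om^n \le C_\af$ valid for all $\psi \in \PSH(X,\om)$, such that
\[
    \int_X e^{\gm(-\vph)^p}\,\om^n \le C
\]
for every $\vph \in \PSH(X,\om)$ with $\sup_X \vph = -1$ and $\H(\vph) \le B$. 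When $X$ is singular one applies this on a resolution $Y \to X$ after pulling back $\vph$, which is quasi-psh with respect to a genuine K\"ahler form dominating the pull-back of $\om$, the entropy and the integrals transforming compatibly, so that the dependence of the constants is unchanged.

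Next comes the uniformity step. By Theorem~\ref{thm:SL_and_Skoda_in_family} there are $\af > 0$ and $C_\af > 0$ valid simultaneously for all $t \in \bBD_{1/2}$ and all $\psi_t \in \PSH(X_t,\om_t)$, and by Remark~\ref{rmk:constant_volume} the function $t \mapsto V_t$ is continuous, hence $0 < V_- \le V_t \le V_+$ after shrinking $\BD$. Feeding these uniform data into the previous step, applied to each $(X_t,\om_t)$, yields constants $\gm = \gm_B$ and $C_B$ independent of $t$ with $\int_{X_t} e^{\gm_B(-\vph_t)^p}\om_t^n \le C_B$ for all $\vph_t$ as in the statement, the normalization $\sup_{X_t}\vph_t = -1$ being exactly the required one. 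For the $p$-energy bound, write $\om_{\vph_t}^n = f_t\,\om_t^n$; finiteness of $\H_t(\vph_t)$ forces $\om_{\vph_t}^n \ll \om_t^n$ (and $\vph_t \in \CE^1(X_t,\om_t)$ by Proposition~\ref{prop:Strong Compactness}), with $\int_{X_t} f_t\log f_t\,\om_t^n = V_t\,\H_t(\vph_t) < V_+ B$. Applying the elementary inequality $st \le e^s + t\log t - t$ (valid for $s,t \ge 0$) with $s = \gm_B(-\vph_t)^p$ and $t = f_t$, and integrating against $\om_t^n$,
\[
    \gm_B\,\E_{t,p}(\vph_t) = \gm_B\int_{X_t}(-\vph_t)^p f_t\,\om_t^n \le \int_{X_t} e^{\gm_B(-\vph_t)^p}\om_t^n + \int_{X_t}\bigl(f_t\log f_t - f_t\bigr)\om_t^n \le C_B + V_+ B,
\]
using $\vph_t \le -1$ and $-\int_{X_t} f_t\,\om_t^n \le 0$. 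Enlarging $C_B$ yields both asserted bounds with a single constant.

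I expect the only genuine point to be the uniformity of the Moser--Trudinger constants along the \emph{a priori degenerating} family: this is precisely what the uniform Skoda--Zeriahi estimate of Theorem~\ref{thm:SL_and_Skoda_in_family} and the continuity of the volume provide, since Di~Nezza--Lu's inequality is proved by capacity methods and carries no Sobolev constant, which could otherwise blow up along the degeneration. Everything else is bookkeeping, with only mild care needed in the reduction to a resolution when the fibres are singular.
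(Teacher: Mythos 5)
Your proposal matches the paper's approach: the paper states Theorem~\ref{thm:DGL} as an immediate consequence of combining the uniform Skoda--Zeriahi estimate of Theorem~\ref{thm:SL_and_Skoda_in_family} with the Moser--Trudinger inequality of \cite[Thm.~2.12]{DiNezza_Lu_2022}, which is exactly what you do, with the constant-tracking, the reduction to a resolution for singular fibres, and the Young-inequality derivation of the $p$-energy bound being the bookkeeping left implicit in the paper. Your argument is correct.
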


\subsubsection{Poincaré inequality in families}
The next tool that we need is a uniform Poincar\'e constant in families.
The study of Poincar\'e constant in families goes back to Yoshikawa \cite{Yoshikawa_1997} and Ruan--Zhang \cite{Ruan_Zhang_2011}. 
For convenience, the reader is also referred to \cite[Prop.~3.10]{DGG2023}.
Although these references deal with K\"ahler cases, the proof does not rely on the K\"ahler condition.

\begin{lem}\label{lem:Poincare_ineq}
Fix $K \Subset \BD$. 
There exists a uniform Poincar\'e constant $C_P(K)$ such that  all $t \in K$,
\[
    \forall f \in L^2_1(X_t^\reg) \text{ and } \int_{X_t} f \om_t^n = 0, \quad \int_{X_t} |f|^2 \om_t^n \leq C_P \int_{X_t} \abs{\dd f}_{\om_t}^2 \om_t^n.     
\]
\end{lem}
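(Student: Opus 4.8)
The plan is to argue by contradiction through a compactness argument along the family, following the strategy of Ruan--Zhang \cite{Ruan_Zhang_2011} (see also \cite{DGG2023}), which indeed does not use that $\om$ is K\"ahler on $\CX$. Suppose the assertion fails for some $K \Subset \BD$. Then there are $t_k \in K$ and $f_k \in L^2_1(X_{t_k}^\reg)$ with $\int_{X_{t_k}} f_k\, \om_{t_k}^n = 0$ which, after multiplying by a constant, satisfy $\int_{X_{t_k}} |f_k|^2 \om_{t_k}^n = 1$ while $\int_{X_{t_k}} |\dd f_k|^2_{\om_{t_k}}\, \om_{t_k}^n \to 0$. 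Since $\overline{K}$ is compact in $\BD$, after passing to a subsequence we may assume $t_k \to t_0 \in \overline{K}$. In the present setting every fibre is normal, hence irreducible, so $X_{t_0}^\reg$ is connected; this will be used to recognise the limit of $(f_k)_k$ as a constant.

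First I would transport the $f_k$'s onto a fixed space. Fix a nested relatively compact exhaustion $(U_0^{(m)})_m$ of $X_{t_0}^\reg$ and, using the tubular neighbourhood theorem (Remark~\ref{rmk:symplectic_diffeo}), compatible fibrewise diffeomorphisms $F^{(m)}_t \colon U_0^{(m)} \to U_t^{(m)} \Subset X_t^\reg$ with $F^{(m)}_{t_0} = \Id$, so that $(F^{(m)}_t)^\ast \om_t \to \om_{t_0}$ smoothly on $U_0^{(m)}$ as $t \to t_0$. Then $g_k^{(m)} := f_k \circ F^{(m)}_{t_k}$ is, for $k$ large, bounded in $L^2_1(U_0^{(m)})$ with respect to the fixed metric $\om_{t_0}$, and $\|\dd g_k^{(m)}\|_{L^2(U_0^{(m)})} \to 0$. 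Rellich--Kondrachov plus a diagonal extraction over $m$ produce $f_\infty \in L^2_{1,\loc}(X_{t_0}^\reg)$ with $\dd f_\infty = 0$, hence $f_\infty \equiv c$ for a constant $c$ on the connected manifold $X_{t_0}^\reg$. Cutting away the small neighbourhood $\CW_\vep$ of the singular locus $\CZ$ of $\pi$ from Lemma~\ref{lem:small_capacity_near_sing}, Cauchy--Schwarz gives $\bigl|\int_{\CW_\vep \cap X_{t_k}} f_k\, \om_{t_k}^n\bigr| \le \Vol_{\om_{t_k}}(\CW_\vep \cap X_{t_k})^{1/2}$, and $\sup_{t \in \overline{K}} \Vol_{\om_t}(\CW_\vep \cap X_t) \to 0$ as $\vep \to 0$ (small capacity forces small volume, uniformly, and $\bigcap_\vep \CW_\vep = \CZ$); together with the smooth convergence of the measures $(F^{(m)}_{t_k})^\ast \om_{t_k}^n$ this yields $c\, V_{t_0} = \int_{X_{t_0}} f_\infty\, \om_{t_0}^n = \lim_k \int_{X_{t_k}} f_k\, \om_{t_k}^n = 0$, so $c = 0$.

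The remaining step is to show $c \neq 0$, equivalently that no $L^2$-mass of $(f_k)_k$ escapes into an arbitrarily small neighbourhood of $\CZ$, and this is the part I expect to be the main obstacle, since it is exactly where the degenerate and singular nature of the family must be controlled. The plan is to establish a \emph{uniform} Sobolev (Moser--Trudinger type) inequality in families: there are $q > 2$ and $C = C(\overline{K})$ such that $\|f\|_{L^q(X_t,\om_t^n)} \le C\bigl(\|f\|_{L^2(X_t,\om_t^n)} + \|\dd f\|_{L^2(X_t,\om_t^n)}\bigr)$ for all $t \in \overline{K}$ and $f \in L^2_1(X_t^\reg)$. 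Away from $\CZ$ one patches the smoothly varying compact pieces $U_t^{(m)}$; near $\CZ$ one works in a fixed local embedding $\CX \hookrightarrow \BC^N \times \BD$, where $\om_t$ is the restriction of a fixed ambient smooth K\"ahler form and the fibres $X_t$ are complex analytic subvarieties, hence area-minimising, so that a Michael--Simon type isoperimetric/Sobolev inequality holds there with constant depending only on the ambient data; a partition of unity glues the two regimes uniformly in $t$. Granting this, H\"older's inequality gives $\int_{\CW_\vep \cap X_{t_k}} |f_k|^2 \om_{t_k}^n \le \|f_k\|_{L^q}^2\, \Vol_{\om_{t_k}}(\CW_\vep \cap X_{t_k})^{1-2/q} \le C'\, \Vol_{\om_{t_k}}(\CW_\vep \cap X_{t_k})^{1-2/q}$, which is uniformly small once $\vep$ is small; hence $\|f_\infty\|_{L^2(X_{t_0},\om_{t_0}^n)} = 1$, forcing $c \neq 0$, a contradiction. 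This yields the uniform constant $C_P(K)$.
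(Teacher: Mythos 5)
Your overall scheme — argue by contradiction, normalize so that $\int_{X_{t_k}}|f_k|^2\om_{t_k}^n=1$ and $\int_{X_{t_k}}|\dd f_k|^2_{\om_{t_k}}\om_{t_k}^n\to 0$, pull back by the fibrewise diffeomorphisms of Remark~\ref{rmk:symplectic_diffeo}, extract a constant limit via Rellich--Kondrachov and connectedness of $X_{t_0}^\reg$, kill the constant via the integral constraint and the small-capacity (hence small-volume) neighbourhoods of Lemma~\ref{lem:small_capacity_near_sing}, and then rule out the vanishing of the $L^2$-norm of the limit via a uniform Sobolev inequality — is exactly the compactness strategy the paper points to (the paper itself gives no proof, only citations to Ruan--Zhang and DGG2023). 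You also correctly identify the crux as the uniform Sobolev inequality across the family, supplied by the minimality of the fibres in local embeddings and Michael--Simon. So I regard the route as correct and the one the paper implicitly relies on.

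Two points deserve tightening. First, $\om$ is only hermitian on $\CX$, not K\"ahler, so $\om_t$ is \emph{not} the restriction of a fixed ambient K\"ahler form as you wrote; you should apply Michael--Simon in a local chart $\CX\hookrightarrow\BC^{N}\times\BD$ with the \emph{flat Euclidean} ambient metric, for which the complex subvarieties $X_t^\reg$ are genuinely area-minimizing, and then transfer the resulting Sobolev inequality to $\om_t$ using the uniform two-sided comparability of $\om_t$ with the Euclidean restriction on relatively compact subsets of the chart. Second, Michael--Simon applies to compactly supported functions, so you are implicitly using that $\CC^\infty_c(X_t^\reg)$ is dense in $L^2_1(X_t^\reg)$; this is true because the fibres are normal (after shrinking $\BD$), so $X_t^\sing$ has real codimension at least $4$ and therefore zero $W^{1,2}$-capacity, but it should be said. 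Finally, the uniform Sobolev inequality is itself a substantive lemma: as written your gluing step is an outline (one needs finitely many charts covering $\pi^{-1}(\overline{K})$ and cutoffs whose gradients with respect to $\om_t$ are bounded uniformly in $t$), and this is precisely the content of the references the paper cites rather than something that can be waved through. With those caveats, your proposal is sound.
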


\subsubsection{Regularization of finite entropy potentials and densities}
In this section, we provide a regularization process for finite entropy potentials and densities. 

\begin{lem}\label{lem:regularization_fini_entropy}
Let $(X, \omega)$ be a compact K\"ahler variety. 
Assume that $f^0$ is a probability density with respect to $\om^n$, and $f^0$ has finite entropy.  
There there exists a one-parameter smooth approximations $(f^\bt)_\bt$ converging to $f^0$ in $L^\chi$ as $\bt \to 0$.
Moreover, let $\vph^\bt$ be the unique solution to 
\[
    (\om + \ddc \vph^\bt)^n = f^\bt \om^n \text{ and } \int_X \vph^\bt \om^n = 0. 
\]
We have $\vph^\bt$ converges strongly to $\vph^0$ as $\bt \to 0$. 
\end{lem}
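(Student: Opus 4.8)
The plan is to regularize $f^0$ in two stages — a truncation and perturbation controlled by a parameter $\vep$, then a local mollification on $X^\reg$ controlled by a parameter $\dt$ — and afterwards to compress the two parameters into a single $\bt$ by a diagonal procedure. Let $(U^\vep)_{\vep>0}$ be a relatively compact exhaustion of $X^\reg$, cover $\overline{U^\vep}$ by finitely many coordinate charts $(U^\vep_j)_{j\in J_\vep}$ contained in $X^\reg$, and fix a partition of unity $(\eta^\vep_j)_j$ subordinate to this cover with $\eta^\vep:=\sum_j\eta^\vep_j\equiv 1$ on $U^\vep$ and $\supp\eta^\vep\Subset X^\reg$, together with standard mollifiers $(\rho^\vep_j)_j$ on the charts. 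Setting $h^\vep:=\min\{f^0,1/\vep\}$, I would define
\[
    g^{\vep,\dt}:=\vep+\sum_{j\in J_\vep}\big(\eta^\vep_j\,h^\vep\big)\ast(\rho^\vep_j)_\dt,
    \qquad
    f^{\vep,\dt}:=\frac{V\,g^{\vep,\dt}}{\int_X g^{\vep,\dt}\,\om^n},
\]
which, for $\dt$ small, is a strictly positive smooth density of total mass $V$, equal to a positive constant near $X^\sing$ (all mollified terms being compactly supported in $X^\reg$). I would also record the bounded, non-smooth intermediate density $f^\vep:=V g^\vep/\int_X g^\vep\,\om^n$ with $g^\vep:=\vep+\eta^\vep h^\vep$, and note that $\int_X g^\vep\,\om^n\to\int_X f^0\,\om^n=V$, whence $0\le f^\vep\le C_0(1+f^0)$ for an absolute constant $C_0$ and all $\vep$ small.

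First I would settle the convergence of the densities. For fixed $\vep$ the function $g^{\vep,\dt}$ is uniformly bounded, so $f^{\vep,\dt}\to f^\vep$ in $L^2(\om^n)$ as $\dt\to 0$, and since $\chi(s)\le s^2$ this gives $\int_X\chi(|f^{\vep,\dt}-f^\vep|)\,\om^n\to 0$, hence $\|f^{\vep,\dt}-f^\vep\|_{L^\chi(\om^n)}\to 0$ by the criterion in Remark~\ref{rmk:L_chi_entropy}; moreover $\int_X\chi(f^{\vep,\dt})\,\om^n\to\int_X\chi(f^\vep)\,\om^n$ and $\H(f^{\vep,\dt}\om^n/V)\to\H(f^\vep\om^n/V)$ by dominated convergence, the densities being trapped between fixed positive constants. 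Then, as $\vep\to 0$: from $\chi(s)\le s\log^+ s+1$ one has $\int_X\chi(f^0)\,\om^n<\infty$ by the finite-entropy hypothesis; since $f^\vep\to f^0$ almost everywhere and, by the inequalities $\chi(s+t)\le 2(\chi(s)+\chi(t))$ and $\chi(Cs)\le C^2\chi(s)$ of Remark~\ref{rmk:L_chi_entropy}, $\chi(|f^\vep-f^0|)\le C_1(1+\chi(f^0))$, dominated convergence yields $\int_X\chi(|f^\vep-f^0|)\,\om^n\to 0$ and hence $\|f^\vep-f^0\|_{L^\chi(\om^n)}\to 0$. Finally, the bound $0\le f^\vep\le C_0(1+f^0)$ together with the convexity of $s\mapsto s\log^+s$ controls $\int_X f^\vep\log^+(f^\vep)\,\om^n$ by $C\big(1+\int_X f^0\log^+ f^0\,\om^n\big)$ uniformly in $\vep\le 1$, giving $\H(f^\vep\om^n/V)\le B_0$ with $B_0$ depending only on $\H(f^0\om^n/V)$ and $V$. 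Choosing $\dt=\dt(\vep)$ small at each stage and setting $f^\bt:=f^{\vep(\bt),\dt(\bt)}$ along some $\vep(\bt)\to 0$, I obtain $f^\bt\to f^0$ in $L^\chi(\om^n)$ with $\sup_\bt\H(f^\bt\om^n/V)\le B_0+1$.

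Then it would remain to identify the strong limit of the potentials $\vph^\bt$. By Proposition~\ref{prop:Strong Compactness}, applied with $\mu=\om^n/V$ (whose density $\equiv 1$ lies in every $L^p$), the uniform entropy bound forces $\{\vph^\bt\}$ into a strongly compact subset of $\CE^1(X,\om)$ — the entropy being invariant under additive constants, and the normalization $\int_X\vph^\bt\om^n=0$ differing from $\sup_X\vph^\bt=0$ by a uniformly bounded shift thanks to the comparison $\sup_X u-C_{SL}\le\frac1V\int_X u\,\om^n\le\sup_X u$ — and it also furnishes $\vph^0\in\CE^1(X,\om)$ with $\om_{\vph^0}^n=f^0\om^n$. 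Let $\psi$ be the strong limit of an arbitrary subsequence $\vph^{\bt_j}$. Since $L^\chi$-convergence implies $L^1$-convergence, hence weak convergence of $f^{\bt_j}\om^n$ to $f^0\om^n$, and since the complex Monge--Amp\`ere operator is weakly continuous along strongly convergent sequences (Subsection~\ref{ssec:Strong topology}), we get $\om_\psi^n=\lim_j\om_{\vph^{\bt_j}}^n=\lim_j f^{\bt_j}\om^n=f^0\om^n$. By uniqueness of the normalized solution of this equation in $\CE^1(X,\om)$ (cf.\ \cite{BBEGZ_2019}), $\psi=\vph^0$; as the limit is independent of the subsequence, $\vph^\bt\to\vph^0$ strongly, which is the assertion.

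\textbf{Main obstacle.} The technical heart is contained in the first two paragraphs: one must mollify $f^0$ using only charts on $X^\reg$ while keeping global control of the total mass and of the $L\log L$ (equivalently $L^\chi$) size near $X^\sing$, and then synchronise the two parameters $\vep$ and $\dt$ so as to preserve simultaneously the $L^\chi$-convergence of the densities and the uniform entropy bound. Once this regularization is in hand, identifying the limit in the last step is a soft consequence of the strong compactness theorem of \cite{BBEGZ_2019} combined with the weak continuity and the uniqueness for the complex Monge--Amp\`ere operator.
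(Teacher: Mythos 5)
Your construction of the regularized densities, the $L^\chi$-convergence estimates via dominated convergence and the inequalities of Remark~\ref{rmk:L_chi_entropy}, and the uniform entropy bound for $f^\vep$ match the paper's proof step for step, and the intermediate facts you invoke (strong compactness of bounded-entropy level sets, weak continuity of the Monge--Amp\`ere operator along strongly convergent sequences, uniqueness of the normalized $\CE^1$-solution) are exactly the tools the paper relies on. The one genuine organizational difference is in passing from density convergence to potential convergence: the paper treats the inner limit $\dt\to 0$ at fixed $\vep$ via a uniform $L^\infty$ stability estimate for bounded-density Monge--Amp\`ere equations (\cite[Thm.~12.21]{GZbook}), reserving the compactness-plus-uniqueness argument for the outer limit $\vep\to 0$, and only then diagonalizes; you diagonalize at the outset and run a single compactness argument on the combined family $(\vph^\bt)_\bt$. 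Both are correct. Your version is marginally leaner, dispensing with the stability theorem altogether, at the small cost of checking that $\H(f^{\vep,\dt})$ is uniformly controlled as $\dt\to 0$ (which you supply via dominated convergence with the densities pinched between fixed positive constants); the paper's stability step gets that for free. Your last paragraph also makes explicit the limit-identification reasoning that the paper leaves implicit in the sentence ``one can extract a subsequential limit of $(\vph^{\vep_j})_j$ which converges strongly to $\vph$.''
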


\begin{proof}
With loss of generality, we assume $\int_X \vph \om^n = 0$. 
For each $\vep>0$, one can find $W^\vep$ an open set in $X$ containing $X^\sing$ such that $W^{\vep'} \subset W^\vep$ if $\vep' < \vep$ and $\cap_{\vep>0} W^\vep = X^\sing$. 
For each $\vep > 0$, we choose $(U^\vep)_{\vep >0}$ a relatively compact exhaustion of $X^\reg$ and satisfying $U^\vep \cup W^\vep = X$. 
For each $\vep > 0$, there exists a finite collection of local charts $(U_{j}^\vep)_{j \in J_\vep}$ covering $U^\vep$.
For each $\vep$, take $\eta^\vep$ a cut-off so that $\eta^\vep$ increases as $\vep$ decreases to $0$ and 
\[
    \eta^\vep \equiv 1 \,\,\text{on } U^\vep \setminus W_\vep
    \quad\text{and}\quad
    \eta^\vep \equiv 0 \,\,\text{on } (W_\vep \cap X) \setminus U^\vep.
\]
Choose $(\eta^\vep_j)_{j \in J_\vep}$ cut-offs satisfying $\sum_j \eta^\vep_j = \eta^\vep$, $\supp \eta^\vep_j \subset U^\vep_{0,j}$ and $(\rho_j^\vep)_{j \in J_\vep}$ standard modifiers on $U_{0,j}^\vep$.
We consider 
\[
    g^{\vep,\dt} = \vep + \sum_{j \in J_\vep} (\eta^\vep_j \min\{f, 1/\vep\})\ast(\rho^\vep_j)_\dt
    \quad\text{and}\quad
    f^{\vep,\dt} = \frac{V \cdot g^{\vep,\dt}}{\int_{X} g^{\vep,\dt} \om^n}.
\]
Then for an $\vep$ fixed, $f^{\vep,\dt}$ can extend smoothly to whole $X$ by taking $\frac{\vep V}{\int_{X} g^{\vep,\dt} \om_0^n}$ through $X^\sing$.
We denote its extension by $f^{\vep,\dt}$ and set $\vph^{\vep,\dt}$ a solution to 
\[
    (\om + \ddc \vph^{\vep,\dt})^n = f^{\vep,\dt} \om^n
    \quad\text{with}\quad
    \int_X \vph^{\vep,\dt} \om^n = 0.
\]
We have the following properties:
\begin{itemize}
    \item $f^\vep$ converges in $L^\chi$ to $f$ as $\vep \to 0$ and $f^{\vep,\dt}$ converges in $L^\chi$ to $f^\vep$ when $\dt \to 0$;
    \item $\vph^\vep$ converges strongly to $\vph$ as $\vep \to 0$ and $\vph^{\vep,\dt}$ converges strongly to $\vph^\vep$ when $\dt \to 0$.
\end{itemize}

We check the above assertions in the following.
Since $g^\vep \leq \vep + f$ and $g^\vep$ converges to $f$ almost everywhere, by Lebesgue dominated convergence theorem, $\int_{X} \chi(g^\vep) \om^n \xrightarrow[\vep \to 0]{} \int_{X} \chi(f) \om^n < B'$ and $\int_{X} g^\vep \om^n \xrightarrow[\vep \to 0]{} \int_{X} f \om^n = V$.
We now consider $\vep$ small so that $V/2 \leq \int_{X} g^\vep \om^n \leq 2V$.
We compute {\small
\begin{align*}
    \abs{f^\vep - f} 
    &\leq \frac{\vep + \eta^\vep \abs{\min\{f, 1/\vep\} - f \cdot (\int_{X} g^\vep \om^n/V)} + (1-\eta^\vep) f (\int_{X} g^\vep \om^n/V)}{\int_{X} g^\vep \om^n/V}\\
    &\leq 2 \vep + 10 f.
\end{align*} }
By \eqref{eq:chi_ineq} and \eqref{eq:chi_ineq_3}, $\chi(|f^\vep - f|) \leq 2\chi(2\vep) + 2\chi(10 f) \leq 8 \chi(\vep) + 200 \chi(f)$.
Since $\chi(|f^\vep - f|)$ converges to $0$ almost everywhere, by Lebesgue dominated convergence theorem, we get $\int_{X} \chi(|f^\vep - f|) \om^n \to 0$ as $\vep \to 0$.
By Remark~\ref{rmk:L_chi_entropy}, we also have $\norm{f^\vep - f}_{L^\chi(X, \om^n)} \to 0$ when $\vep \to 0$.
On the other hand, one has $\chi(s) \leq s^2$ for all $s \geq 0$. 
Fix $\vep > 0$.
It is standard that $\|f^{\vep,\dt} - f^\vep\|_{L^2(X,\om^n)}$ converges to $0$ as $\dt \to 0$ and thus, $\int_{X} \chi(|f^{\vep,\dt} - f|) \om^n \to 0$ as $\dt \to 0$.
Again, from Remark~\ref{rmk:L_chi_entropy}, $\|f^{\vep,\dt} - f^\vep\|_{L^\chi(X, \om^n)} \to 0$ when $\vep \to 0$.

Put $\xi(s) = s\log(s)$. 
By the some argument as in Remark~\ref{rmk:L_chi_entropy}, one has $\xi(Cs) \leq C^2 \xi(s)$ for all $s \geq e$ and $\xi(s + t) \leq 2(\xi(s) + \xi(s))$ for any $s, t \geq e$.
We have $g^\vep \leq e + f$ and $\int_{X} g^\vep \geq V/2$ for all $\vep$ small; hence for $\vep$ small, we obtain a control of entropy as follows: {\small
\begin{align*}
    \H_0(f^\vep \om^n) 
    &\leq \int_{X} \1_{\{f^\vep \geq e\}} \xi(f^\vep) \om^n + eV
    \leq \int_{X} 4 \xi(e + f) \om^n + eV\\
    &\leq 8 \int_{X} \1_{\{f \geq e\}} \xi(f) \om^n + (2e + 2e\log 2)V + 9eV
    \leq 8 B + (19 + 2\log 2)eV.
\end{align*}}
For all sequence $\vep_j \to 0$ as $j \to +\infty$, one can extract a subsequential limit of $(\vph^{\vep_j})_j$ which converges strongly to $\vph$ as $j \to +\infty$.
As all the subsequence converges towards $\vph$, we have $\vph^\vep \to \vph$ strongly as $\vep \to 0$.
For a fixed $\vep$, as $f^\vep$ is bounded and $f^{\vep,\dt} \to f^\vep$ in $L^p$ for any $p>0$, one can derive $\vph^{\vep,\dt} \to \vph^\vep$ uniformly when $\dt \to 0$ by the stability estimate (cf. \cite[Thm.~12.21]{GZbook}); thus $\vph^{\vep,\dt}$ also converges strongly towards $\vph^\vep$.

Then for each $\bt > 0$, one can find $\vep_\bt > 0$ such that
\[
    d_1(\vph, \vph^{\vep_\bt}) < \bt/2
    \quad\text{and}\quad
    \|f - f^{\vep_\bt}\|_{L^\chi(X, \om^n)} < \bt/2.
\]
Similarly, for each $\bt$ and $\vep_\bt$ fixed, there exists $\dt_\bt$ such that
\[
    d_1(\vph^{\vep_\bt}, \vph^{\vep_\bt, \dt_\bt}) < \bt/2
    \quad\text{and}\quad
    \|f^{\vep_\bt} - f^{\vep_\bt, \dt_\bt}\|_{L^\chi(X, \om^n)} < \bt/2.
\]
Finally, one simply pick $\vph^\bt = \vph^{\vep_\bt, \dt_\bt}$ and $f^\bt = f^{\vep_\bt, \dt_\bt}$.
\end{proof}

\subsection{Strong compactness}
The goal of this section is to establish the following strong compactness of uniformly bounded entropy sequence in families: 

\begin{thm}\label{thm:strong_compact_fami}
Let $(u_k)_k \in \PSH_\fibre(\CX,\om)$ be a sequence with $t_k \to 0$ as $k \to +\infty$ and $(\sup_{X_k} u_k)_k$ uniformly bounded.
If $(\H_k(u_k))_k$ is uniformly bounded, then there exists a subsequence of $(u_k)_k$ converging strongly in families to a function $u_0 \in \CE^1(X_0, \om_0)$.
\end{thm}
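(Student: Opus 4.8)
The plan is to upgrade the weak ($L^1$) compactness of quasi-plurisubharmonic functions in families to strong compactness, by transporting the Monge--Amp\`ere measures to the central fibre, using the entropy bound to produce a finite-energy limiting potential there, and then matching the energies through a regularization together with a diagonal argument. After replacing $u_k$ by $u_k-\sup_{X_k}u_k-1$ — which only shifts $\E_k$ by a constant that converges along a subsequence of the bounded sequence $(\sup_{X_k}u_k)_k$, to be restored at the end — I may assume $\sup_{X_k}u_k=-1$. By the $L^1$-compactness in families (cf.\ \cite{Pan_Trusiani_2023}), along a subsequence $u_k\to u_0$ in families for some $u_0\in\PSH(X_0,\om_0)$. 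Writing $\om_{u_k}^n=f_k\,\om_k^n$ with $\tfrac1V\int_{X_k}f_k\,\om_k^n=1$, the bound $\H_k(u_k)\le B$ gives, via Remark~\ref{rmk:L_chi_entropy}, a uniform Luxembourg bound $\lVert f_k\rVert_{L^\chi(X_k,\om_k^n)}\le B'$ for $\chi(s)=(s+1)\log(s+1)-s$; in particular $\{f_k\}$ is uniformly integrable. Theorem~\ref{thm:DGL} supplies the uniform Moser--Trudinger and $p$-energy bounds $\int_{X_k}e^{\gm(-u_k)^p}\om_k^n\le C_B$ and $\E_{k,p}(u_k)\le C_B$ for $p=\tfrac{n}{n-1}$, and Lemma~\ref{lem:Poincare_ineq} the uniform Poincar\'e constant; together with Theorem~\ref{thm:SL_and_Skoda_in_family} these are the analytic inputs that let the fixed-fibre arguments of \cite{BBEGZ_2019} run uniformly in $t$.

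\textbf{Transport and the limiting density.} Using the families of diffeomorphisms $F^j\colon U_0^j\times\BD_{\dt_j}\to\CU^j\subset\CX\setminus\CZ$ of Remark~\ref{rmk:symplectic_diffeo} along a relatively compact exhaustion $U_0^j\Subset X_0^\reg$, together with the small-capacity neighbourhoods $\CW_\vep\supset\CZ$ of Lemma~\ref{lem:small_capacity_near_sing}, I transport $\om_{u_k}^n$ to a probability density $\tilde f_k$ with respect to $\om_0^n$ on $X_0$ which equals the push-forward of $f_k\,\om_k^n$ on a large piece $U_0^{j(k)}$ (with $j(k)\to+\infty$) and a small constant $c_k$ on the complement. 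Uniform integrability of $\{f_k\}$ forces $c_k\to0$; since the $F^j$ are bi-Lipschitz with controlled constants and their Jacobians tend to $1$ uniformly on each fixed $U_0^j$, one gets $\H_0(\tilde f_k\,\om_0^n)\le\H_k(u_k)+o(1)\le B+o(1)$. By Dunford--Pettis and the weak lower semicontinuity of entropy, along a further subsequence $\tilde f_k\rightharpoonup f_\infty$ weakly in $L^1(X_0,\om_0^n)$ with $\H_0(f_\infty\om_0^n)\le B<+\infty$; by Proposition~\ref{prop:Strong Compactness} there is $w_0\in\CE^1(X_0,\om_0)$, normalized by $\int_{X_0}w_0\,\om_0^n=0$, with $\om_{w_0}^n=f_\infty\om_0^n$. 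The candidate limit is $w_0$, and it remains to show $u_0=w_0$ and $\E_k(u_k)\to\E_0(w_0)$.

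\textbf{Regularized families and strong convergence of the approximants.} Running the regularization of Lemma~\ref{lem:regularization_fini_entropy} fibrewise and chart-compatibly — truncating $f_k$ at level $\vep^{-1}$, localizing with the $\CU^j$, mollifying at scale $\dt$, adding the constant $\vep$, all transported through the $F^j$ — yields, for each parameter $\bt=(\vep,\dt)$, smooth probability densities $f_k^\bt\ge\vep>0$ on $X_k$ whose $L^p(X_k,\om_k^n)$-norm, $\CC^{\af}(\CU\cap X_k)$-norm (for $\CU\Subset\CX\setminus\CZ$), and the quasi-psh constants of $v_k^+:=\log f_k^\bt$, $v_k^-:=0$ are all bounded uniformly in $k$ for each fixed $\bt$ (the non-holomorphy of the $F^j$ contributes only lower-order terms, controlled since $F^j_t\to\Id$). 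Let $\vph_k^\bt\in\PSH(X_k,\om_k)\cap\CC^\infty(X_k)$ solve $\om_{\vph_k^\bt}^n=f_k^\bt\,\om_k^n$, $\int_{X_k}\vph_k^\bt\,\om_k^n=0$. By \cite[Thm.~A]{DGG2023} the norms $\lVert\vph_k^\bt\rVert_{L^\infty(X_k)}$ are uniformly bounded in $k$; by Theorem~\ref{prop:uniform_Lap_est} the quantities $\tr_{\om_k}(\om_k+\ddc\vph_k^\bt)$ are uniformly bounded on every $\CU\Subset\CX\setminus\CZ$; hence Proposition~\ref{prop:Unif Laplacian implies strong} gives, for each fixed $\bt$, that $\vph_k^\bt$ converges strongly in families to some $\psi_0^\bt\in\CE^1(X_0,\om_0)$, so in particular $\E_k(\vph_k^\bt)\to\E_0(\psi_0^\bt)$. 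A variant of the transport argument of the previous paragraph — this time for bounded, uniformly integrable densities, using that $f_k^\bt$ differs from $f_k$ only through truncation, cutoff and mollification — shows $\psi_0^\bt\to w_0$ strongly on $X_0$ as $\bt\to0$.

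\textbf{The diagonal argument and the main obstacle.} The crux is the uniform-in-$k$ comparison
\[
    \lim_{\bt\to0}\ \limsup_{k\to+\infty}\ d_1^{X_k}\!\bigl(u_k,\vph_k^\bt\bigr)=0 .
\]
Granting it, for each $\bt$ one has $\limsup_k\bigl|\E_k(u_k)-\E_0(\psi_0^\bt)\bigr|\le\limsup_k d_1^{X_k}(u_k,\vph_k^\bt)$, which together with $\E_0(\psi_0^\bt)\to\E_0(w_0)$ yields $\E_k(u_k)\to\E_0(w_0)$; likewise $u_k\to w_0$ in families, so $u_0=w_0\in\CE^1(X_0,\om_0)$ and the convergence is strong, as desired. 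To establish the comparison I would go through the $\I$-functional: writing $\I_k(u_k,\vph_k^\bt)=\tfrac1V\int_{X_k}(u_k-\vph_k^\bt)\,(f_k^\bt-f_k)\,\om_k^n$ and applying H\"older--Young in the Orlicz pair $(\chi,\chi^\ast)$ one gets $\bigl|\I_k(u_k,\vph_k^\bt)\bigr|\le\tfrac2V\lVert u_k-\vph_k^\bt\rVert_{L^{\chi^\ast}(\om_k^n)}\,\lVert f_k^\bt-f_k\rVert_{L^\chi(\om_k^n)}$; the second factor is made small \emph{uniformly in $k$} as $\bt\to0$ through uniform integrability of $\{\chi(f_k)\}$, while the first stays bounded by the uniform Moser--Trudinger/Skoda integrability (Theorems~\ref{thm:SL_and_Skoda_in_family}, \ref{thm:DGL}) and the uniform $L^\infty$-bound on $\vph_k^\bt$; then Lemma~\ref{lem:L21_vs_I} and the uniform Poincar\'e constant (Lemma~\ref{lem:Poincare_ineq}) promote $\I_k\to0$ to $d_1^{X_k}\to0$, exactly as in \cite[Sec.~5]{BBEGZ_2019}. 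I expect the genuine difficulty to lie precisely here: the truncation and cutoff errors are uniform in $k$ essentially for free, but the mollification scale must be coupled to $k$ delicately so that $f_k^\bt$ stays $L^\chi$-close to $f_k$ uniformly while the fibrewise $\CC^{\af}$-bounds of the previous step survive; reconciling these two requirements — and checking that the transported Jacobians and constants do not spoil the estimates near $\CZ$ — is the heart of the proof.
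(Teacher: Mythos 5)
Your first two paragraphs track the paper's argument closely: the transport of the densities $f_k$ to $X_0$, the entropy comparison $\H_0(\tilde f_k\,\om_0^n)\le\H_k(u_k)+o(1)$, the extraction of the limit density $f_\infty$ with bounded entropy and the associated potential $w_0\in\CE^1(X_0,\om_0)$, and the two-parameter regularization $\vph_k^\bt$ whose strong convergence in families (for fixed $\bt$) follows from the uniform $L^\infty$, Laplacian and H\"older estimates via Proposition~\ref{prop:Unif Laplacian implies strong}. That part is sound and is essentially Steps~0--1 of the paper.

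The third paragraph has a genuine gap, and you half-see it yourself. The direct H\"older--Young bound
$\lvert \I_k(u_k,\vph_k^\bt)\rvert \lesssim \lVert u_k-\vph_k^\bt\rVert_{L^{\chi^\ast}(\om_k^n)}\,\lVert f_k^\bt-f_k\rVert_{L^\chi(\om_k^n)}$
cannot be closed as stated, because $\lVert f_k^\bt - f_k\rVert_{L^\chi(X_k,\om_k^n)}$ is \emph{not} small uniformly in $k$ for fixed $\bt=(\vep,\dt)$. The truncation and cutoff errors are uniform, as you say, because de la Vall\'ee Poussin applied to the uniform bound $\lVert f_k\rVert_{L^\chi}\le B'$ makes the tail $\{f_k>1/\vep\}$ and the thin cutoff region uniformly negligible; but the mollification error at a fixed scale $\dt$ depends on the $L^2$-modulus of continuity of $f_k$, which the entropy bound does not control and which degenerates as $k\to\infty$. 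Your proposed remedy---coupling $\dt$ to $k$---undoes the previous step: the $\CC^\af$-norms of $f_k^{\bt(k)}$ blow up like $\dt(k)^{-\af}$, so the uniform Laplacian estimate (Theorem~\ref{prop:uniform_Lap_est}) and hence the strong convergence $\vph_k^\bt\to\psi_0^\bt$ via Proposition~\ref{prop:Unif Laplacian implies strong} no longer hold.

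The paper avoids ever estimating $\lVert f_k^\bt - f_k\rVert_{L^\chi}$ by introducing a third, independent truncation parameter $\ld$ and splitting
\[
V_k\,\I_k(u_k,\vph_k^\bt)
=\underbrace{\int_{X_k}(u_k-\vph_k^\bt)(f_k^\bt-f_{k,L_\ld})\,\om_k^n}_{(\RN{1})}
+\underbrace{\int_{X_k}(u_{k,1/\ld}-\vph_k^\bt)(f_{k,L_\ld}-f_k)\,\om_k^n}_{(\RN{2})}
+\underbrace{\int_{X_k}(u_k-u_{k,1/\ld})(f_{k,L_\ld}-f_k)\,\om_k^n}_{(\RN{3})},
\]
where $u_{k,C}=\max\{u_k,-C\}$ and $f_{k,L}=\min\{f_k,L\}$, and taking the limits in the order $k\to+\infty$, then $\ld\to0$, then $\bt\to0$. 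In $(\RN{1})$ the density factor $f_k^\bt-f_{k,L_\ld}$ is uniformly bounded, so one can first pass to $X_0$ via the transport and weak convergence and only \emph{then} invoke the $L^\chi$-closeness $\lVert f_0-f_0^\bt\rVert_{L^\chi(X_0,\om_0^n)}<\bt$, which is a statement about a fixed variety and follows from Lemma~\ref{lem:regularization_fini_entropy}---no uniformity in $k$ is needed. The terms $(\RN{2})$ and $(\RN{3})$ are where $f_k$ is unbounded, and they are handled without any regularized density at all: $(\RN{2})$ by H\"older--Young with the uniform Luxembourg bound on $f_k$ and the smallness of $\lVert\1_{\{f_0>L_\ld\}}(u_{0,1/\ld}-\vph_0^\bt)\rVert_{L^{\chi^\ast}}$ after passing $k\to+\infty$ via Proposition~\ref{prop:Demailly-Kollar}, and $(\RN{3})$ by Chebyshev combined with the uniform $p$-energy bound of Theorem~\ref{thm:DGL}. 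This extra truncation parameter is the missing ingredient; without it the H\"older--Young step does not close.
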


\begin{proof}
Without loss of generality, we assume that $\int_{X_k} u_k \om_k^n = 0$ for every $k$.
By \cite[Prop.~2.8]{Pan_Trusiani_2023}, there exists a subsequence converging in the family sense to a function $u_0 \in \PSH(X_0, \om_0)$.
For each $k$, we denote by $f_k$ the density of $\om_{k, u_k}^n$ with respect to $\om_k^n$.
Take $B > 0$ a uniform constant so that $\H_k(f_k \om_k^n) \leq B$.
By Remark~\ref{rmk:L_chi_entropy}, there is a uniform constant $B' > 0$ such that $\norm{f_k}_{L^\chi(X_k, \om_k^n)} \leq B'$, where $\chi(s) = (s+1)\log(s+1) -s$.

From Remark~\ref{rmk:symplectic_diffeo}, we have a smooth family of diffeomorphisms $F_t^U: U_0 \to U_t$.
Let $(U^{\vep}_0)_\vep$ be a relatively compact exhaustion of $X_0^\reg$.
Then for each $\vep$, we have diffeomorphisms $F_t^{\vep}: U_0^{\vep} \to U_t^{\vep}$ for $|t| < r_\vep$. 
After extracting and relabeling, one can assume $\vep_k \searrow 0$ as $k \to +\infty$, and  if $k \leq l$ then $r_{\vep_k} \geq r_{\vep_l}$ and $F^{\vep_k}_t = F^{\vep_l}_t$ on $U_0^{\vep_k}$ for all $|t| < r_{\vep_l}$ and $\lt(1 - a_k\rt) \om_0^n \leq (F^{\vep_k}_k)^\ast \om_k^n \leq \lt(1 + a_k\rt) \om_0^n$ on $U_0^{\vep_k}$ for some $a_k \searrow 0$ as $k \to +\infty$.
Let $f_{k,0}$ be a function defined by 
\[
    f_{k,0} = 
    \begin{cases}
        h_k := (F_k^{\vep_k})^\ast f_k \lt(1-a_k\rt) V_0/V_k& \text{ on } U_0^{\vep_k}\\
        c_k := V_0 - \int_{U_0^k} h_k \om_0^n \geq 0& \text{ on } X_0 \setminus U_0^{\vep_k}
    \end{cases}.
\] 
By H\"older--Young inequality, one can check $c_k \to 0$ as $k \to +\infty$.
Then we have {\small
\begin{align*}
    \H_0(f_{k,0} \om_0^n) 
    &= \frac{1}{V_0} \int_{U_0^{\vep_k}} h_k \log(h_k) \om_0^n 
    + \underbrace{c_k \log(c_k) \Vol_{\om_0}(X_0 \setminus U_0^k)/V_0}_{=: A_{1,k}}\\
    &\leq \frac{(1-a_k)}{V_k} \int_{U_k^{\vep_k}} f_k \log(f_k) (F_k^{\vep_k})_\ast \om_0^n 
    + \underbrace{\log\lt(\frac{(1-a_k)V_0}{V_k}\rt)}_{=: A_{2,k}} + A_{1,k}\\
    &\leq (1-a_k) \H_k(f_k \om_k^n) + \underbrace{\frac{(1-a_k)}{V_k} \int_{U^{\vep_k}_k} (f_k \log f_k +e^{-1}) b_k \om_k^n}_{=: A_{3,k}} + A_{1,k} + A_{2,k} 
\end{align*} }
for some positive $b_k \to 0$ as $k \to +\infty$.
By the entropy control, one can check $A_{3,k} \to 0$ as $k \to +\infty$, and it is obvious that $A_{1,k}$ and $A_{2,k}$ converge to $0$ when $k \to +\infty$. 
Proposition \ref{prop:Strong Compactness} implies that, up to a subsequence, $f_{k,0} \om_0^n$ converges weakly to a measure $\mu_0$.
By the lower semi-continuity of entropy and the computation above, 
\[
    \H_0(\mu_0) 
    \leq \liminf_{k \to + \infty} \H_0(f_{k,0} \om_0^n) \leq \liminf_{k \to + \infty} \H_k(f_k \om_k^n) \leq B,
\]
and we can write $\mu_0 = f_0 \om_0^n$ for some function $f_0\in L^1(X_0,\om_0^n)$.
By \cite[Thm.~A]{BBGZ_2013} and \cite[Thm.~2.17 \& Cor.~2.19]{BBEGZ_2019}, there exists $\vph_0 \in \CE^1(X_0,\om_0)$ solving $(\om_0 + \ddc_0 \vph_0)^n = f_0 \om_0^n$ with $\int_{X_0} \vph_0 \om_0^n = 0$.

\noindent{\bf Step 1: Regularization.}
By Lemma \ref{lem:regularization_fini_entropy}, one has smooth approximations $(f^\bt_0)_\bt$ of $f_0$ and $\vph^\beta_0$ satisfying 
\[ 
    (\om_0+\ddc \vph_0^\bt )^n= f_0^\beta \om_0^n
\]
and $\vph_0^\beta$ strongly converges to $\vph_0$ as $\beta\rightarrow 0$. 
In addition, we also have smooth approximations $(f^\bt_{k,0})_\bt$ of $f_{k,0}$. 
By the construction, $f^\bt_{k,0}$ is constant on $X_0 \setminus U_0^k$.
Consider $f^\bt_k$ the smooth function obtained by the extension of $((F^{\vep_k}_k)^{-1})^\ast f_{k,0}^\bt \times V_k/V_0$ as a constant through $X_k \setminus F^{\vep_k}_k(U_0^{\vep_k})$.
Set $\vph^\beta_k$ be a solution to 
\[ 
    (\om_k+\ddc \vph_k^\bt )^n= f_{k}^\beta \om_k^n \quad \text{  with } \int_{X_k} \vph^\bt_k\om_k^n=0.
\]
By the construction of $\vph^\bt_0$ and $f^\bt_0$, we have
\begin{equation}\label{eq:fcn_beta_parameter}
    d_1(\vph_0,\vph^\bt_0) < \bt
    \quad\text{and}\quad
    \|f_0 - f^\bt_0\|_{L^\chi(X_0, \om_0^n)} < \bt.
\end{equation}
Follows from Proposition~\ref{prop:Unif Laplacian implies strong}, one can derive the following Lemma:

\begin{lem}\label{lem:cptness_appox_E_conv}
As $k \to + \infty$, for each $\bt$ fixed, $\vph^{\bt}_k$ converges to $\vph^{\bt}_0$ smoothly in the family sense, and $\lim_{k \to +\infty} \E_k(\vph_k^{\bt}) = \E_0(\vph_0^{\bt})$. 
\end{lem}

\noindent{\bf Step 2: Comparing $\vph_0$ and $u_0$.}
We now aim to show $\vph_0 \equiv u_0$.
We shall denote the truncations by $u_{k,C} := \max\{u_k,-C\}$ and the similar notation for $u_0$. 

\smallskip
We separate $\I_k(u_k, \vph_k^\bt)$ into the following two terms:
\begin{align*}
    V_k \I_k(u_k, \vph_k^\bt) 
    &= \int_{X_k} (u_k - \vph_k^\bt) (f_k^\bt - f_k) \om_k^n \\
    &= \underbrace{\int_{X_k} (u_{k,1/\ld} - \vph_k^\bt) (f_k^\bt - f_k) \om_k^n}_{=: (\RN{1})_{k,\ld,\bt}} 
    + \underbrace{\int_{X_k} (u_k - u_{k,1/\ld}) (f_k^\bt - f_k) \om_k^n}_{=: (\RN{2})_{k,\ld,\bt}}.
\end{align*}

\noindent{\bf Substep 2.1: analyzing $(\RN{1})_{k,\ld,\bt}$.}
We now focus on $(\RN{1})_{k,\ld,\bt}$.
We would like to understand first the convergence of $(\RN{1})_{k,\ld,\bt}$ as $k \to +\infty$.
Consider {\small
\begin{align*}
    (\RN{1})_{k,\ld,\bt} 
    &= \int_{X_0} (u_{0,1/\ld} - \vph_0^\bt) (f^\bt_0 - f_0) \om_0^n \\
    &\quad + \underbrace{\int_{U_k^\zt} (u_{k,1/\ld} - \vph_k^\bt) (f_k^\bt - f_k)\om_k^n - \int_{U_0^\zt} (u_{0,1/\ld} - \vph_0^\bt) (f^\bt_0 - f_0) \om_0^n}_{=: (\RN{1}')_{k,\ld,\bt,\zt}} \\
    &\quad + \underbrace{\int_{X_k \setminus U_k^\zt} (u_{k,1/\ld} - \vph_k^\bt) (f_k^\bt - f_k)\om_k^n - \int_{X_0 \setminus U_0^\zt} (u_{0,1/\ld} - \vph_0^\bt) (f^\bt_0 - f_0) \om_0^n}_{=: (\RN{1}'')_{k,\ld,\bt,\zt}}
\end{align*} }
where $\zt$ is a new parameter that we shall determine later.
Compute $(\RN{1}')_{k,\ld,\bt,\zt}$:
{\small
\begin{align*}
    &(\RN{1}')_{k,\ld,\bt,\zt} 
    = \int_{U_0^\zt} \lt[ F_k^\ast (u_{k,1/\ld} - \vph_k^\bt) F_k^\ast(f_k^\bt - f_k) - (u_{0,1/\ld} - \vph_0^\bt) (f^\bt_0 - f_0) \rt] \om_0^n \\
    &\qquad\qquad\qquad
    + \underbrace{\int_{U_0^\zt} F_k^\ast (u_{k,1/\ld} - \vph_k^\bt) F_k^\ast(f_k^\bt - f_k) (F_k^\ast \om_k^n - \om_0^n)}_{=: J_{1,k}}\\
    &= \underbrace{\int_{U_0^\zt} \lt[F_k^\ast (u_{k,1/\ld} - \vph_k^\bt) - (u_{0,1/\ld} - \vph_0^\bt)\rt] F_k^\ast(f_k^\bt - f_k) \om_0^n}_{=: J'_{k,\ld,\bt,\zt}} 
    + \underbrace{\int_{U_0^\zt} (u_{0,1/\ld} - \vph_0^\bt) \lt[F_k^\ast (f^\bt_k - f_k) - (f^\bt_0 - f_0)\rt] \om_0^n}_{_{=: J''_{k,\ld,\bt,\zt}}} + J_{1,k}.
\end{align*}
}%
By Theorem~\ref{thm:SL_and_Skoda_in_family} and the comparison of $F_k^\ast\om_k^n$ and $\om_0^n$, one can easily get $\lim_{k \to +\infty} |J_{1,k}| = 0$.
By H\"older--Young inequality, 
\[
    |J'_{k,\ld,\bt,\zt}| \leq 2 \|F_k^\ast(u_{k,1/\ld} - \vph_k^\bt) - (u_{0,1/\ld} - \vph_0^\bt)\|_{L^{\chi^\ast}(U_0^\zt, \om_0^n)} \|f_k^\bt - f_k\|_{L^\chi(U_0^\zt, \om_0^n)}
\]
We first claim that 
\begin{equation}\label{eq:str_top_key1}
    \limsup_{k \to +\infty} \|F_k^\ast(u_{k,1/\ld} - \vph_k^\bt) - (u_{0,1/\ld} - \vph_0^\bt)\|_{L^{\chi^\ast}(U_0^\zt, \om_0^n)} = 0,
\end{equation}
and thus, by using $\|f_k^\bt - f_k\|_{L^\chi(U_0^\zt, \om_0^n)} \leq C(B)$ for a constant $C(B)>0$ independent of $k$, we obtain
\begin{equation}\label{eq:RN1}
    \limsup_{k \to +\infty} |J'_{k,\ld,\bt,\zt}| = 0.
\end{equation}
\smallskip
We now check \eqref{eq:str_top_key1}.
Indeed, for all $\vep > 0$, by Egorov's theorem, there exists $E_\vep \subset U_0^\zt$ such that $F_k^\ast u_{k,1/\ld}$ (resp. $F_k^\ast \vph_k^\bt$) converges uniformly to $u_{0,1/\ld}$ (resp. $\vph_0^\bt$) on $E_\vep$ and $\int_{U_0^\zt \setminus E_\vep} \om_0^n < \vep$; thus, for any $c > 0$, 
{\small
\begin{align*}
    &\int_{U_0^\zt} \chi^\ast(|(F_k^\ast u_{k,1/\ld} - u_{0,1/\ld}) - (F_k^\ast\vph_k^\bt - \vph_0^\bt)|/c) \om_0^n \\
    &= \int_{E_\vep} \chi^\ast(|(F_k^\ast u_{k,1/\ld} - u_{0,1/\ld}) - (F_k^\ast\vph_k^\bt - \vph_0^\bt)|/c) \om_0^n 
    + \int_{U_0^\zt \setminus E_\vep} \chi^\ast(|(F_k^\ast u_{k,1/\ld} - u_{0,1/\ld}) - (F_k^\ast\vph_k^\bt - \vph_0^\bt)|/c) \om_0^n.
\end{align*}
}%
By the uniform convergence on $E_\vep$, the first part on the RHS goes to zero as $k \to +\infty$. 
Since $\chi^\ast(s) = e^s - s -1$, 
\[
    \int_{U_0^\zt \setminus E_\vep} \chi^\ast(|(F_k^\ast u_k - u_0) - (F_k^\ast \vph_k^\bt - \vph_0^\bt)|/c) \om_0^n
    \leq 
    \int_{U_0^\zt \setminus E_\vep} e^{-\Psi_k/c} \om_0^n
\]
where $\Psi_k := F_k^\ast u_{k,1/\ld} + u_{0,1/\ld} + F_k^\ast \vph_k^\bt + \vph_0^\bt$. 
Note that $\Psi_k$ converge to $\Psi_0 := 2 u_{0,1/\ld} + 2 \vph_0^\bt$ in $L^1(U_0^\zt, \om_0^n)$ and $(\Psi_k)_k$ are uniformly bounded. 
Then for an arbitrary $p>1$, by H\"older inequality, 
\[
    \int_{U_0^\zt \setminus E_\vep} e^{-\Psi_k / c} \om_0^n 
    \leq \Vol_{\om_0}(U_0^\zt \setminus E_\vep)^{1/q} \lt(\int_{U^\zt_0} e^{-\frac{p \Psi_k}{c}}\om_0^n\rt)^{1/p} 
    \leq \vep^{1/q} \lt(\int_{U^\zt_0} e^{-\frac{p \Psi_k}{c}}\om_0^n\rt)^{1/p},
\]
where $q>1$ is such that $1/p + 1/q = 1$.
By the construction, 
$\lt(\int_{U_0^\zt} e^{-p \Psi_0/c} \om_0^n\rt)_k$ is uniformly bounded. 
Therefore, for all $c > 0$,  
\begin{align*}
    \limsup_{k \to + \infty} \int_{U_0^\zt} \chi^\ast \lt(\frac{|(F_k^\ast u_{k,1/\ld} - u_{0,1/\ld}) - (F_k^\ast \vph_k^\bt - \vph_0)|}{c}\rt) \om_0^n = 0,
\end{align*}
and thus, \eqref{eq:str_top_key1} holds.

\smallskip
Now, we verify $\lim_{k \to + \infty} J''_{k,\bt,\ld,\zt} = 0$.
We recall that quasi-psh functions are quasi-continuous (see \cite{Bedford_Taylor_1982}); namely, for each quasi-psh function $\psi$, for all $\vep > 0$, there exists an open set $G_{\psi,\vep} \subset U_0^\zt$ of capacity smaller than $\vep$ (i.e. $\CAP_{\om_0}(G_{\psi,\vep}) < \vep$) such that $\psi$ is continuous on $G_{\psi,\vep}$.
Hence, for all $\vep > 0$, one can find $G_\vep$ an open subset of $Y$ such that $u_{0,1/\ld} - \vph_0^\bt$ is continuous on $G_\vep$. 
We have
{\small
\begin{align*}
    J''_{k,\bt,\ld,\zt} &= \int_{U_0^\zt} (u_{0,1/\ld} - \vph_0^\bt) [(F_k^\ast f_k^\bt - F_k^\ast f_k) - (f_0^\bt - f_0)] \om_0^n\\
    &= \int_{G_\vep} (u_{0,1/\ld} - \vph_0^\bt) [(F_k^\ast f_k^\bt - F_k^\ast f_k) - (f_0^\bt - f_0)] \om_0^n
    + \underbrace{\int_{U_0^\zt \setminus G_\vep} (u_{0,1/\ld} - \vph_0^\bt) [(F_k^\ast f_k^\bt - F_k^\ast f_k) - (f_0^\bt - f_0)] \om_0^n}_{=: K_{k,\bt,\ld,\zt}}.
\end{align*}
}%
The first term on the RHS converges to zero since $F_k^\ast f_k$ converges weakly to $f_0$ and $F_k^\ast f_k^\bt$ converges smoothly to $f_0^\bt$ by the construction. 
By H\"older--Young inequality, 
\[
    |K_{k,\bt,\ld,\zt}|
    \leq 2\|\1_{U_0^\zt \setminus G_\vep} (u_{0,1/\ld} - \vph_0^\bt)\|_{L^{\chi^\ast}(U_0^\zt, \om_0^n)} \|(F_k^\ast f_k^\bt - F_k^\ast f_k) - (f_0^\bt - f_0)\|_{L^\chi(U_0^\zt, \om_0^n)}.
\]
For all $c > 0$, we have
\[
    \int_{U_0^\zt} \1_{U_0^\zt \setminus G_\vep} \chi^\ast\lt(|u_{0,1/\ld} - \vph_0^\bt|/c\rt) \om_0^n 
    \leq \int_{U_0^\zt} \1_{U_0^\zt \setminus G_\vep} e^{-\Phi_0/c} \om_0^n
\]
where $\Phi_0 = u_{0,1/\ld} + \vph_0^\bt$. 
By H\"older inequality, 
\[
    \int_{U_0^\zt} \1_{U_0^\zt \setminus G_\vep} e^{-\Phi_0/c} \om_0^n
    \leq \lt(\Vol_{\om_0}(U_0^\zt \setminus G_\vep)\rt)^{1/q} 
    \lt(\int_{U_0^\zt} e^{-p \Phi_0/c} \om_0^n\rt)^{1/p}
    \leq \vep^{1/q} \lt(\int_{U_0^\zt} e^{-p \Phi_0/c} \om_0^n\rt)^{1/p}.
\]
Since $\Phi_0$ is bounded, $\int_{U_0^\zt} e^{-p \Phi_0/c} \om_0^n < +\infty$ for all $c > 0$. 
Therefore, for each $c>0$ small, one can choose $\vep$ sufficiently small such that $\vep^{1/q} \lt(\int_{U_0^\zt} e^{-p \Phi_0/c} \om_0^n\rt)^{1/p} \leq 1$; hence, 
\[
    \limsup_{\vep \to 0} \|\1_{U_0^\zt \setminus G_\vep} (u_{0,1/\ld} - \vph_0^\bt)\|_{L^{\chi^\ast}(U_0^\zt,\om_0^n)} = 0.
\]
On the other hand, by the uniform control of the entropies, there is a constant $C'(B) > 0$ such that $\|(F_k^\ast f_k^\bt - F_k^\ast f_k) - (f_0^\bt - f_0)\|_{L^\chi(U_0^\zt,\om_0^n)} \leq C'(B)$ and this ensures that 
\begin{equation}\label{eq:RN2}
    \limsup_{k \to +\infty} |J''_{k,\bt,\ld,\zt}| = 0.
\end{equation}
Hence, we obtain 
\begin{equation}\label{eq:cptness_I'}
    \lim_{k \to +\infty} (\RN{1}')_{k,\ld,\bt,\zt} = 0.
\end{equation}
There is a constant $C_1(\ld,\bt)>0$ such that $|u_{k,1/\ld}|+|\vph_k^\bt| \leq C_1(\ld,\bt)$ for all $k$ sufficiently large. 
By Theorem~\ref{thm:SL_and_Skoda_in_family}, $\CAP_{\om_t}(X_t \cap \CW_\zt) < \zt$ and H\"older--Young inequality for all $t \in \BD$, we have the following estimates 
\begin{align*}
    &\abs{\int_{X_k \setminus U_k^\zt} (u_{k,1/\ld} - \vph_k^\bt) (f_k^\bt - f_k) \om_k^n}
    \leq 2 C_1(\ld,\bt) \|\1_{X_k \cap \CW_\zt}\|_{L^{\chi^\ast}(X_k,\om_k^n)} \|f_k^\bt - f_k\|_{L^\chi(X_k,\om_k^n)}. 
\end{align*} 
Note that $\|\1_{X_k \cap \CW_\zt}\|_{L^{\chi^\ast}(X_k,\om_k^n)} = \frac{1}{(\chi^\ast)^{-1}(1/\Vol_{\om_k}(X_k\cap \CW_\zt))} \to 0$ as $\zt \to 0$ and $\|f_k^\bt - f_k\|_{L^\chi(X_k,\om_k^n)} \leq C''(B)$ for some constant uniform $C''(B)>0$ by the entropy control.
Hence, 
\[
    \lim_{\zt \to 0} \limsup_{k \to +\infty}
    \abs{\int_{X_k \setminus U_k^\zt} (u_{k,1/\ld} - \vph_k^\bt) (f_k^\bt - f_k) \om_k^n}
    =0
\]
Similarly, we also have
\[
    \lim_{\zt \to 0} \limsup_{k \to +\infty} \abs{\int_{X_0 \setminus U_0^\zt} (u_{0,1/\ld} - \vph_0^\bt) (f_0^\bt - f_0) \om_0^n} = 0.
\]
Therefore, fixing $\ld$ and $\bt$, for any $\epsilon > 0$, one can choose $\zt(\epsilon, \ld, \bt)$ small so that
\begin{equation}\label{eq:cptness_I''}
    \abs{(\RN{1}'')_{k,\ld,\bt,\zt}} < \epsilon.
\end{equation}
Combining \eqref{eq:cptness_I'} and \eqref{eq:cptness_I''}, one derive
\begin{equation}\label{eq:cptness_I_1}
    \lim_{k\to+\infty} (\RN{1})_{k,\ld,\bt} 
    = \int_{X_0} (u_{0,1/\ld} - \vph_0^\bt) (f_0^\bt - f_0) \om_0^n.
\end{equation} 
By the monotone convergence theorem, 
\begin{equation}\label{eq:cptness_I_2}
    \lim_{\ld \to 0} \lim_{k\to+\infty} (\RN{1})_{k,\ld,\bt} 
    = \int_{X_0} (u_0 - \vph_0^\bt) (f_0^\bt - f_0) \om_0^n.
\end{equation}
Now, we are going to show $\int_{X_0} (u_0 - \vph_0^\bt) (f_0^\bt - f_0) \om_0^n$ converges to $0$ as $\bt$ goes to $0$.
By H\"older--Young inequality and \eqref{eq:fcn_beta_parameter}, we have {\small
\begin{align*}
   \abs{\int_{X_0} (u_0 - \vph_0^\bt) (f_0^\bt - f_0) \om_0^n}
    &\leq 2 \|u_0 - \vph_0^\bt\|_{L^{\chi^\ast}(X_0, \om_0^n)} \|f_0^\bt - f_0\|_{L^\chi(X_0, \om_0^n)}\\
    &\leq 2 \bt \|u_0 - \vph_0^\bt\|_{L^{\chi^\ast}(X_0, \om_0^n)}. 
\end{align*} }
Recall that by the definition
\[
    \|u_0 - \vph_0^\bt\|_{L^{\chi^\ast}(X_0,\om_0^n)}
    = \inf\set{c > 0}{\int_{X_0} \chi^\ast\lt(\frac{|u_0 - \vph_0^\bt|}{c}\rt) \om_0^n \leq 1}.
\]
Using Proposition~\ref{prop:Demailly-Kollar}, for any $c > 0$, one has 
\[
    \int_{X_0} e^{\frac{2|\vph_0^\bt|}{c}} \om_0^n 
    \xrightarrow[\bt \to 0]{} 
    \int_{X_0} e^{\frac{2|\vph_0|}{c}} \om_0^n.
\] 
Choose $c_0 > 0$ large enough so that $\int_{X_0} e^{\frac{2|u_0|}{c_0}} \om_0^n < V_0 + \vep$ and $\int_{X_0} e^{\frac{2|\vph_0|}{c_0}} \om_0^n < V_0 + \vep/2$. 
Then take $\bt>0$ sufficiently small such that $\int_{X_0} e^{\frac{2|\vph_0^\bt|}{c_0}} \om_0^n < V_0 + \vep$. 
By H\"older inequality, one can infer {\small
\begin{align*}
    \int_{X_0} e^{\frac{|u_0-\vph_0^\bt|}{c_0}} \om_0^n
    &\leq \lt(\int_{X_0} e^{\frac{2|u_0|}{c_0}} \om_0^n\rt)^{1/2} \lt(\int_{X_0} e^{\frac{2|\vph_0^\bt|}{c_0}} \om_0^n\rt)^{1/2}\\
    &\leq V_0 +\vep
    \leq 1 + V_0 + \frac{1}{c_0} \int_{X_0} |u_0 - \vph_0^\bt| \om_0^n 
\end{align*} }
and it implies that $\|u_0 - \vph_0^\bt\|_{L^{\chi^\ast}(X_0, \om_0^n)} \leq c_0$ for all $\bt$ close to $0$.
Therefore, 
\begin{equation}\label{eq:cptness_I_3}
    \abs{\int_{X_0} (u_0 - \vph_0^\bt) (f_0^\bt - f_0) \om_0^n} \leq c_0 \bt.
\end{equation}
Combining \eqref{eq:cptness_I_1}, \eqref{eq:cptness_I_2} and \eqref{eq:cptness_I_3}, we show the following limit of the first term $(\RN{1})_{k,\ld,\bt}$ is zero: 
\begin{equation}\label{eq:cptness_I_lim}
    \lim_{\bt \to 0} \lim_{\ld \to 0} \lim_{k \to +\infty} (\RN{1})_{k,\ld,\bt} = 0.
\end{equation}

\noindent{\bf Substep 2.2: analyzing $(\RN{2})_{k,\ld,\bt}$.}
Now, we estimate the second term $(\RN{2})_{k,\ld,\bt}$: 
\begin{align*}
    (\RN{2})_{k,\ld,\bt} 
    &= \int_{X_k} (u_k - u_{k,1/\ld}) f_k^\bt \om_k^n + \int_{X_k} \1_{\{u_k < -1/\ld\}} (-u_k - 1/\ld) f_k \om_k^n
\end{align*}
By the construction of $f_k^\bt$ and the monotone convergence theorem, we have 
\[
    \lim_{\ld \to 0} \lim_{k \to +\infty} \int_{X_k} (u_k - u_{k,1/\ld}) f_k^\bt \om_k^n = \lim_{\ld \to 0} \int_{X_0} (u_0 - u_{0,1/\ld}) f_0^\bt \om_0^n = 0. 
\]
By Chebyshev's inequality, we derive
\[
    \int_{X_k} \1_{\{u_k < -1/\ld\}} (-u_k - 1/\ld) f_k \om_k^n 
    \leq \int_{X_k} \1_{\{u_k < -1/\ld\}} (-u_k) \om_{k,u_k}^n
    \leq \ld^q \int_{X_k} |u_k|^{1+q} \om_{k,u_k}^n
\]
for any $q > 0$. 
Put $q = \frac{1}{n-1}$. 
By Theorem~\ref{thm:SL_and_Skoda_in_family} and Theorem~\ref{thm:DGL}, 
\[
    \ld^q \int_{X_k} |u_k|^{1+q} \om_{k,u_k}^n 
    \leq \ld^q (C_B + V C_{SL}^{1+q}).
\]
Hence, 
\begin{equation}\label{eq:cptness_II_lim}
    \lim_{\ld \to 0} \limsup_{k\to+\infty} |(\RN{2})_{k,\ld,\bt}| = 0.
\end{equation}
Combining \eqref{eq:cptness_I_lim}, and \eqref{eq:cptness_II_lim}, we deduce
\begin{equation}\label{eq:cptness_sum_123}
    \lim_{\bt \to 0} \limsup_{k \to + \infty} \I_k(u_k, \vph_k^\bt) = 0.
\end{equation}

\noindent{\bf Substep 2.3: conclusion.}
By Lemma~\ref{lem:L21_vs_I} and Lemma~\ref{lem:Poincare_ineq}
{\small
\begin{align*}
    \frac{1}{C_P} \int_{X_k} |u_k - \vph_k^\bt|^2 \om_k^n 
    &\leq \int_{X_k} \abs{\dd (u_k - \vph_k^\bt)}_{\om_k}^2 \om_k^n\\ 
    &\leq c_n \I_k(u_k, \vph_k^\bt)^{1/2^{n-1}} \lt(\I_k(u_k,0)^{1-1/2^{n-1}} + \I_k(\vph_k^\bt,0)^{1-1/2^{n-1}}\rt)
\end{align*}
}
Using the uniform control of entropy, Theorem~\ref{thm:DGL} and Theorem~\ref{thm:SL_and_Skoda_in_family}, we have 
\[
    \I_k(u_k,0) \leq \frac{1}{V_k} \int_{X_k} |u_k| \om_{k,u_k}^n \leq \frac{C_B + C_{SL} + 1}{V_k}.
\]
Then letting $k \to +\infty$, 
\begin{align*}
    \int_{X_0} |u_0 - \vph_0^\bt|^2 \om_0^2 
    &\leq c_n C_P \limsup_{k \to +\infty} \I_k(u_k, \vph_k^\bt)^{1/2^{n-1}} \\
    &\qquad \times \lt( \lt(\frac{C_B + C_{SL} + 1}{V_0}\rt)^{1-1/2^{n-1}} + \I_0(\vph_0^\bt,0)^{1-1/2^{n-1}} \rt).
\end{align*}
We get $\int_{X_0} |u_0 - \vph_0|^2 \om_0^n = 0$ by using \eqref{eq:cptness_sum_123} and the strong convergence of $\vph_0^\bt$ to $\vph_0$.
We obtain $u_0 = \vph_0$ almost everywhere and thus $u_0 \equiv \vph_0$ by the plurisubharmonicity.

Finally, one can check that $\E_k(u_k)$ converges to $\E_0(u_0)$.
Indeed, similar as Lemma~\ref{lem:Twisted Energy}, we have $|\E_k(u_k) - \E_k(\vph_k^\bt)| \leq C (f_S(\I_k(u_k, \vph_k^\bt)) + \|u_k - \vph_k^\bt\|_{L^1(X_k, \om_k^n)})$, for some uniform constant $C, S$ and $f$ is increasing continuous function with $f(0) = 0$; thus,  
{\small
\begin{align*}
    \abs{\E_k(u_k) - \E_0(u_0)}
    &\leq \abs{\E_k(u_k) - \E_k(\vph_k^\bt)}
    + \abs{\E_k(\vph_k^\bt) - \E_0(\vph_0^\bt)}
    + \abs{\E_0(\vph_0^\bt) - \E_0(u_0)}\\  
    &\leq C \lt[f_S (\I_k(u_k, \vph_k^\bt)) + \|u_k - \vph_k^\bt\|_{L^1(X_k, \om_k^n)}\rt]
    + \abs{\E_k(\vph_k^\bt) - \E_0(\vph_0^\bt)}
    + \abs{\E_0(\vph_0^\bt) - \E_0(u_0)}.
\end{align*} }
By \eqref{eq:cptness_sum_123}, Claim~\ref{lem:cptness_appox_E_conv}, \eqref{eq:fcn_beta_parameter} and $\vph_0 \equiv u_0$, we conclude
\[
    \limsup_{k \to 0} |\E_k(u_k) - \E_0(u_0)| = 0
\]
and this completes the proof. 
\end{proof}

\subsection{Semi-continuity properties}
This section focuses on the (semi-)continuity of entropy, adapted entropy, and twisted energy in the context of strong convergence in families. 

\subsubsection{Continuity of the Monge--Amp\`ere operator}
For a fixed variety, it is known that the Monge--Amp\`ere operator is continuous along a strong convergent sequence \cite[Prop.~2.6]{BBEGZ_2019}. 
Here, we extend such results to the family setting.

\begin{defn}
A sequence of measures $\mu_k$ on a sequence of fibres $X_k$ converges weakly in families towards a measure $\mu_0$ on $X_0$ if for all continuous function $\eta$ on $\CX$, $\lim_{k \to +\infty} \int_{X_k} \eta_{|X_k} \dd\mu_k = \int_{X_0} \eta_{|X_0} \dd\mu_0$. 
Namely, the sequence currents $((\om_k + \ddc_k u_k)^n \w [X_k])_k$ converges to $(\om_0 + \ddc_0 u_0)^n \w [X_0]$ in the sense of distributions on $\CX$.
\end{defn}

\begin{lem}\label{lem:weak_conv_MA}
If $(u_k)_k \in \CE^1_{\fibre}(\CX,\om)$ converges strongly to $u_0 \in \CE^1(X_0,\om_0)$ and $(\H_k(u_k))_k$ is uniformly bounded from above, then $\om_{k, u_k}^n$ converges weakly in families to $\om_{0, u_0}^n$.
\end{lem}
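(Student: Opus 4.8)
The plan is to compare the Monge--Amp\`ere measures $\om_{k,u_k}^n$ with those of the smooth approximations $\vph_k^\bt$ constructed in the proof of Theorem~\ref{thm:strong_compact_fami}. Since all measures involved have total masses $V_k\to V_0<+\infty$ (Remark~\ref{rmk:constant_volume}, recalling $u_k,u_0\in\CE$), it suffices to check $\int_{X_k}\eta\,\om_{k,u_k}^n\to\int_{X_0}\eta\,\om_{0,u_0}^n$ for $\eta\in\CC^\infty(\CX)$ (a continuous $\eta$ being uniformly approximable by smooth ones, with the error controlled by $\|\eta-\eta_1\|_{\CC^0}V_k$). After normalizing $\int_{X_k}u_k\,\om_k^n=0$ — which changes neither $\om_{k,u_k}^n$ nor the hypotheses, as $\int_{X_k}u_k\,\om_k^n\to\int_{X_0}u_0\,\om_0^n$ by the uniform Skoda estimate of Theorem~\ref{thm:SL_and_Skoda_in_family} and the small-capacity neighbourhoods of $\CZ$ — I would record that for each $\bt>0$ the $\vph_k^\bt$ satisfy: $\om_{k,\vph_k^\bt}^n=f_k^\bt\om_k^n$ with $f_k^\bt$ smooth of uniformly bounded entropy; $\vph_k^\bt\to\vph_0^\bt$ in $\CC^\infty$ in families with $\E_k(\vph_k^\bt)\to\E_0(\vph_0^\bt)$ (Lemma~\ref{lem:cptness_appox_E_conv}); $\vph_0^\bt\to u_0$ strongly on $X_0$ as $\bt\to0$; and $\lim_{\bt\to0}\limsup_{k\to\infty}\I_k(u_k,\vph_k^\bt)=0$ (established in the proof of Theorem~\ref{thm:strong_compact_fami}). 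Moreover $\sup_k\I_k(u_k,0)<+\infty$ and $\sup_{k,\bt}\I_k(\vph_k^\bt,0)<+\infty$ (for $\bt$ small) by Theorem~\ref{thm:DGL} applied to $f_k$, resp.\ $f_k^\bt$, together with the sup-$L^1$ normalization of Theorem~\ref{thm:SL_and_Skoda_in_family}.

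For fixed $\eta\in\CC^\infty(\CX)$ and $\bt>0$ I would split
\[
\int_{X_k}\eta\,\om_{k,u_k}^n-\int_{X_0}\eta\,\om_{0,u_0}^n=A_k^\bt+B_k^\bt+C^\bt,
\]
where $A_k^\bt=\int_{X_k}\eta\,(\om_{k,u_k}^n-\om_{k,\vph_k^\bt}^n)$, $B_k^\bt=\int_{X_k}\eta\,\om_{k,\vph_k^\bt}^n-\int_{X_0}\eta\,\om_{0,\vph_0^\bt}^n$, and $C^\bt=\int_{X_0}\eta\,(\om_{0,\vph_0^\bt}^n-\om_{0,u_0}^n)$. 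The term $C^\bt\to0$ as $\bt\to0$ by the weak continuity of the Monge--Amp\`ere operator along strong convergence on the fixed variety $X_0$ (Subsection~\ref{ssec:Strong topology}, \cite[Prop.~2.6]{BBEGZ_2019}). For $B_k^\bt$ with $\bt$ fixed, on any $\CU\Subset\CX\setminus\CZ$ the $\CC^\infty$-convergence in families gives $\int_{X_k\cap\CU}\eta\,\om_{k,\vph_k^\bt}^n\to\int_{X_0\cap\CU}\eta\,\om_{0,\vph_0^\bt}^n$; since $\vph_0^\bt$ is bounded, $\om_{0,\vph_0^\bt}^n=f_0^\bt\om_0^n$ puts no mass on the analytic subset $\CZ\cap X_0\subseteq X_0^\sing$, so $\int_{X_0\cap\CU}\om_{0,\vph_0^\bt}^n\uparrow V_0$ as $\CU\uparrow\CX\setminus\CZ$, and, as $\om_{k,\vph_k^\bt}^n$ has total mass $V_k\to V_0$, the mass escaping $\CU$ is uniformly small for $k\gg1$; letting $\CU\uparrow\CX\setminus\CZ$ yields $\limsup_{k\to\infty}|B_k^\bt|=0$.

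The technical heart is $A_k^\bt$. As $\eta$ is smooth, two integrations by parts in the non-pluripolar calculus on the compact K\"ahler variety $X_k$ give
\[
A_k^\bt=\int_{X_k}(u_k-\vph_k^\bt)\,\ddc\eta\wedge\sum_{j=0}^{n-1}\om_{k,u_k}^j\wedge\om_{k,\vph_k^\bt}^{n-1-j};
\]
bounding $-C_\eta\om_k\le\ddc\eta\le C_\eta\om_k$ on $X_k$ (uniformly in $k$, since $\eta$ is defined on $\CX$ and $i_{X_k}^\ast\,\ddc_\CX\eta=\ddc_{X_k}(\eta|_{X_k})$) and $\sum_j\om_{k,u_k}^j\wedge\om_{k,\vph_k^\bt}^{n-1-j}\le 2^{n-1}\om_{k,w_k}^{n-1}$ with $w_k:=\tfrac12(u_k+\vph_k^\bt)$ yields $|A_k^\bt|\le 2^{n-1}C_\eta\int_{X_k}|u_k-\vph_k^\bt|\,\om_k\wedge\om_{k,w_k}^{n-1}$. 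A standard pluripotential estimate in the spirit of Lemma~\ref{lem:L21_vs_I}, the uniform Poincar\'e inequality (Lemma~\ref{lem:Poincare_ineq}), and \cite[Lem.~5.8]{BBEGZ_2019} bounds this by $F\big(\I_k(u_k,\vph_k^\bt)\big)+\|u_k-\vph_k^\bt\|_{L^1(X_k,\om_k^n)}$, with $F$ increasing, continuous, $F(0)=0$, depending only on the uniform bounds for $\I_k(u_k,0)$ and $\I_k(\vph_k^\bt,0)$. Both terms tend to $0$ under $\limsup_{k\to\infty}$ then $\bt\to0$: the first by $\lim_{\bt\to0}\limsup_k\I_k(u_k,\vph_k^\bt)=0$, the second since $\vph_k^\bt\to\vph_0^\bt$ and $u_k\to u_0$ in $L^1$ in families with tails near $\CZ$ uniformly small and $\vph_0^\bt\to u_0$. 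Hence $\limsup_{k\to\infty}\big|\int_{X_k}\eta\,\om_{k,u_k}^n-\int_{X_0}\eta\,\om_{0,u_0}^n\big|\le F\big(\limsup_k\I_k(u_k,\vph_k^\bt)\big)+\|u_0-\vph_0^\bt\|_{L^1(X_0,\om_0^n)}+|C^\bt|$ for every $\bt$, and letting $\bt\to0$ concludes. I expect the delicate points to be the uniformity in $k$ of this last estimate (resting on the uniform Skoda and Moser--Trudinger inputs, Theorems~\ref{thm:SL_and_Skoda_in_family} and \ref{thm:DGL}) and the exclusion of Monge--Amp\`ere mass concentrating on $\CZ$, handled in the $B_k^\bt$ step via the exhaustion argument and the absolute continuity of the approximating measures.
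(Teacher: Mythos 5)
Your argument is sound and reaches the conclusion, but it takes a genuinely different route from the paper. The paper's proof is a direct density argument: it re-invokes the construction from Theorem~\ref{thm:strong_compact_fami} to produce $f_0$ as the weak limit (on $X_0$) of the transported densities $F_k^\ast f_k$, identifies $\vph_0=u_0$, and then estimates $\bigl\lvert\int_{X_k}\eta_k\,\om_{k,u_k}^n-\int_{X_0}\eta_0\,\om_{0,u_0}^n\bigr\rvert$ by pulling back to $U_0^\vep$, splitting into a continuity-of-$\eta$ term, the term $\int_{X_0}\eta_0(F_k^\ast f_k-f_0)\om_0^n$ (which vanishes by that weak convergence), a Jacobian-comparison term, and two H\"older--Young tail terms near $\CZ$. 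Your proof instead routes everything through the approximants $\vph_k^\bt$, decomposing into $A_k^\bt$, $B_k^\bt$, $C^\bt$: the $A$-term handled by an integration-by-parts bound modelled on Lemma~\ref{lem:Twisted Energy} (which in turn tracks back to \cite[Lem.~5.8]{BBEGZ_2019}), the $B$-term by $\CC^\infty$-convergence in families plus a no-mass-on-$\CZ$ exhaustion argument, and the $C$-term by MA-continuity on the fixed variety $X_0$. What your approach buys is a cleaner conceptual separation — it isolates exactly which continuity statements are being used and where the uniformity in $k$ enters — at the cost of more machinery; the paper's version is shorter precisely because it piggybacks on the already-proved weak convergence $F_k^\ast f_k\rightharpoonup f_0$. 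Two points you should make explicit if writing this up: (i) the integration-by-parts identity for $A_k^\bt$ in the non-pluripolar calculus on a possibly singular $X_k$ with one factor in $\CE^1$ needs a word of justification (pass to a resolution, or truncate $u_k$ and use continuity along decreasing sequences); and (ii) the function $F$ in your bound for $A_k^\bt$ comes from a result proved on a fixed variety, so its uniformity across the fibres $X_k$ needs the same simultaneous-resolution remark that the paper makes implicit use of in Lemma~\ref{lem:continuity_twisted_energy}.
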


\begin{proof}
One may assume that $\int_{X_k} u_k \om_k^n = 0$. 
Following the same argument at the beginning of the proof of Theorem~\ref{thm:strong_compact_fami}, one can construct a density function $f_0$ on $X_0$ by $f_k$'s on nearby fibres and $\H_0(f_0\om_0^n) \leq \liminf_{k \to +\infty} \H_k(u_k) \leq B$ for some constant $B$.
Let $\vph_0$ be a potential solving $(\om_0 + \ddc_0 \vph_0)^n = f_0 \om_0^n$ with $\int_{X_0} \vph_0 \om_0^n = 0$.
From the proof of Theorem~\ref{thm:strong_compact_fami}, we have obtained a subsequence of $(u_k)_k$, which converges strongly in families towards $\vph_0$ and thus $\vph_0 = u_0$. 
Fix a continuous function $\eta$ on $\CX$. 
Set $\eta_t = \eta_{|X_t}$.
We compute
{\small
\begin{align*}
    &\abs{\int_{X_k} \eta_k \om_{k,u_k}^n - \int_{X_0} \eta_0 \om_{0,u_0}^n}\\ 
    &\leq \abs{\int_{U_0^\vep} (F^\ast_k\eta_k \cdot F_k^\ast f_k -  \eta_0 f_0) \om_0^n} 
    + \abs{\int_{U_0^\vep} F_k^\ast(\eta_k f_k) (F_k^\ast\om_k^n - \om_0^n)}\\
    &\quad+ \abs{\int_{X_k \setminus U_k^\vep} \eta_k f_k \om_k^n} + \abs{\int_{X_0 \setminus U_0^\vep} \eta_0 f_0 \om_0^n}\\
    &\leq \int_{U_0^\vep} \abs{F^\ast_k \eta_k - \eta_0} F_k^\ast f_k \om_0^n + \abs{\int_{X_0} \eta_0 (F_k^\ast f_k - f_0) \om_0^n} + \abs{\int_{U_0^\vep} F_k^\ast(\eta_k f_k) (F_k^\ast\om_k^n - \om_0^n)} \\ 
    &\quad + \abs{\int_{X_k \setminus U_k^\vep} \eta_k f_k \om_k^n} + \abs{\int_{X_0 \setminus U_0^\vep} \eta_0 f_0 \om_0^n}.  
\end{align*}
}%
When $k \to +\infty$, the first part tends to zero by the continuity of $\eta$. 
The second part converges to zero since $f_0$ is a weak limit of $F_k^\ast f_k$.
The third term goes to zero by the $L^1$ control on $f_k$ and the comparison of $F_k^\ast \om_k^n$ and $\om_0^n$. 
By H\"older--Young inequality, one can also check that the fourth and fifth terms converge to $0$ after letting $k\to+\infty$ and then $\vep \to 0$.
\end{proof}

\subsubsection{Strong lower semi-continuity of entropy}
We now prove the lower semi-continuity of entropy with respect the strong convergent sequences in families. 

\begin{lem}\label{lem:semi_conti_entropy}
If $(u_k)_k \in \CE^1_{\fibre}(\CX, \om)$ converges strongly to $u_0 \in \CE^1(X_0,\om_0)$, then 
\[
    \H_0(u_0) \leq \liminf_{k \to +\infty} \H_k(u_k).
\]
\end{lem}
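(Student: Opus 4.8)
The plan is to compare the entropies, which a priori live over different fibres with different reference volumes, on the single ambient space $\CX$, and then apply the lower semi-continuity of relative entropy under weak convergence.

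First I would dispose of the trivial case: if $\liminf_{k}\H_k(u_k)=+\infty$ there is nothing to prove. Otherwise, pass to a subsequence (not relabelled) along which $\H_k(u_k)$ converges to $L:=\liminf_{j}\H_j(u_j)<+\infty$; in particular $\sup_k\H_k(u_k)<+\infty$ along this subsequence, so that Lemma~\ref{lem:weak_conv_MA} is applicable. Note that any property proved for this subsequence yields the claim, since $L$ is the $\liminf$ over the original sequence and $u_0$ is the fixed strong limit.

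Next I would record two weak convergences of measures on $\CX$. On one hand, $\om^n$ restricts to a continuous family of volume forms on the fibres, so $\om_k^n\to\om_0^n$ weakly in families — this is the same continuity of fibre integrals that makes $V_t$ continuous (Remark~\ref{rmk:constant_volume}). On the other hand, since $(u_k)_k$ converges strongly in families to $u_0$ with $\sup_k\H_k(u_k)<+\infty$, Lemma~\ref{lem:weak_conv_MA} gives $\om_{k,u_k}^n\to\om_{0,u_0}^n$ weakly in families. Normalising by the (convergent, by Remark~\ref{rmk:constant_volume}) volumes $V_k$, I obtain probability measures $\nu_k:=\om_{k,u_k}^n/V_k$ and $\mu_k:=\om_k^n/V_k$, regarded as measures on the compact metric space $\CX$ supported on $X_k$, with $\nu_k\to\nu_0:=\om_{0,u_0}^n/V_0$ and $\mu_k\to\mu_0:=\om_0^n/V_0$ weakly in $\CP(\CX)$.

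Now $\H_k(u_k)=D(\nu_k\Vert\mu_k):=\int_{\CX}\log\!\big(\tfrac{\dd\nu_k}{\dd\mu_k}\big)\dd\nu_k$ and likewise $\H_0(u_0)=D(\nu_0\Vert\mu_0)$ (both equal $+\infty$ when $\om_{0,u_0}^n$ fails to be absolutely continuous with respect to $\om_0^n$, so the inequality is then trivial), and the statement reduces to the \emph{joint} lower semi-continuity of $(\mu,\nu)\mapsto D(\nu\Vert\mu)$ on $\CP(\CX)\times\CP(\CX)$ for the weak topology. I would invoke the Donsker--Varadhan variational formula $D(\nu\Vert\mu)=\sup_{f\in\CC^0(\CX)}\big(\int_{\CX}f\,\dd\nu-\log\int_{\CX}e^{f}\,\dd\mu\big)$: since $f$ and $e^f$ are bounded continuous on the compact space $\CX$ and $\int e^f\dd\mu>0$, each functional $(\mu,\nu)\mapsto\int f\,\dd\nu-\log\int e^{f}\,\dd\mu$ is weakly continuous, so $D(\cdot\Vert\cdot)$ is a supremum of weakly continuous functionals, hence weakly jointly lower semi-continuous. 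Applying this,
\[
    \H_0(u_0)=D(\nu_0\Vert\mu_0)\leq\liminf_{k\to+\infty}D(\nu_k\Vert\mu_k)=\liminf_{k\to+\infty}\H_k(u_k)=L,
\]
the last equality because the subsequence was chosen so that $\H_k(u_k)\to L$. This is exactly the assertion.

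The only non-formal input is $\om_{k,u_k}^n\to\om_{0,u_0}^n$ weakly in families, that is, Lemma~\ref{lem:weak_conv_MA}; everything else (the passage to a subsequence realising the $\liminf$, the continuity of fibre volumes, and the soft lower semi-continuity of relative entropy) is bookkeeping. So the main obstacle has in fact already been overcome in Lemma~\ref{lem:weak_conv_MA}, and what remains is just to carry the comparison out on $\CX$ rather than fibrewise. If one prefers to avoid the variational formula, an equivalent route is to re-run the argument in the proof of Theorem~\ref{thm:strong_compact_fami}: transport each density $f_k:=\om_{k,u_k}^n/\om_k^n$ to a density $f_{k,0}$ on $X_0$ of mass $V_0$ via the diffeomorphisms $F^{\vep_k}_k$ with $\H_0(f_{k,0}\om_0^n)\leq(1-a_k)\H_k(u_k)+\vep_k$, $a_k,\vep_k\to0$, extract a weak limit $f_0\om_0^n$ of $f_{k,0}\om_0^n$ with $\H_0(f_0\om_0^n)\leq\liminf_k\H_0(f_{k,0}\om_0^n)\leq L$ by \cite[Lem.~6.2.13]{DZ98}, and use that Step 2 of that proof identifies $u_0$ (once normalised by $\int_{X_0}u_0\om_0^n=0$) with the potential of $f_0\om_0^n$, whence $\H_0(u_0)=\H_0(f_0\om_0^n)\leq L$.
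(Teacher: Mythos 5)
Your proposal is correct, and the first route you propose is a genuinely different argument from the paper's. The paper proves the lemma by re-running the density-transport construction from the proof of Theorem~\ref{thm:strong_compact_fami}: transport the densities $f_k$ to $f_{k,0}$ on $X_0$, extract a weak limit $f_0\om_0^n$ with $\H_0(f_0\om_0^n)\leq\liminf_k\H_k(u_k)$ by the single-space lower semi-continuity of $\H_0$, solve for the potential $\vph_0$ of $f_0\om_0^n$, and identify $\vph_0=u_0$ via Step~2 of that proof. Your first route instead views $\H_k(u_k)$ as the relative entropy $D(\nu_k\Vert\mu_k)$ of two probability measures on the ambient space, uses Lemma~\ref{lem:weak_conv_MA} (plus the standard continuity of fibre integrals against continuous test functions) to get the weak convergences $\nu_k\to\nu_0$, $\mu_k\to\mu_0$, and then invokes the Donsker--Varadhan variational formula to get \emph{joint} lower semi-continuity of $(\mu,\nu)\mapsto D(\nu\Vert\mu)$. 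This is shorter and more conceptual once Lemma~\ref{lem:weak_conv_MA} is in hand: it avoids quoting the internal machinery of Theorem~\ref{thm:strong_compact_fami} a second time, and it does not even require the normalisation $\int_{X_k}u_k\,\om_k^n=0$ (Lemma~\ref{lem:weak_conv_MA} is stated without it). What the paper's route buys is a unified treatment, since the same transported-density construction is reused in Lemma~\ref{lem:weak_conv_MA} and Lemma~\ref{lem:lsc_ent_mu}, and the paper only needs the one-sided lower semi-continuity of $\H_{\mu}$ for fixed $\mu$ rather than the joint version. Two small points of care in your write-up: $\CX$ itself is not compact, so you should work on $\pi^{-1}\big(\overline{\BD}_{1/2}\big)$ (or any compact neighbourhood of $X_0$), on which all the relevant measures are eventually supported; and the weak convergence $\om_k^n/V_k\to\om_0^n/V_0$ against arbitrary continuous test functions on $\CX$ is slightly more than what Remark~\ref{rmk:constant_volume} literally records (continuity of $\int_{X_t}\Theta\w\om^{n-1}$), though it follows from the same disintegration/coarea argument and is implicitly used throughout the paper's weak-convergence-in-families discussion. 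Neither affects the correctness of the proof.
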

\begin{proof}
Without loss of generality, we may assume that $\liminf_{k \to +\infty} \H_k(u_k) < +\infty$; otherwise, we are done.
Hence, up to extracting a subsequence, there is a constant $B>0$ such that $\H_k(u_k) \leq B$ for every $k$. 
One may also assume that $\int_{X_k} u_k \om_k^n = 0$. 
Following the same argument at the beginning of the proof of Theorem~\ref{thm:strong_compact_fami}, one can find a density function $f_0$ on $X_0$ such that $f_0 \om_0^n$ is the weak limit of $f_{k,0} \om_0^n$ and 
\[
    \H_0(f_0\om_0^n) 
    \leq \liminf_{k \to +\infty} \H_0(f_{k,0} \om_0^n) 
    \leq \liminf_{k \to +\infty} \H_k(u_k).
\]
Let $\vph_0$ be a potential solving $(\om_0 + \ddc_0 \vph_0)^n = f_0 \om_0^n$ with $\int_{X_0} \vph_0 \om_0^n = 0$.
From the proof of Theorem~\ref{thm:strong_compact_fami}, we have obtained a subsequence of $(u_k)_k$ which converges strongly in families towards $\vph_0$. 
Therefore, $\vph_0$ is identically equal to $u_0$, and this completes the proof.
\end{proof}

\subsubsection{Strong compactness with respect to adapted measures}
We now suppose that the family $\pi: \CX \to \BD$ satisfies Setting~\ref{sett:klt}.
Recall that for any smooth hermitian metric $h$ on $m K_{\CX/\BD}$, on each $X_t$, we have adapted measure $\mu_t$. 
Up to a positive multiple, one can further assume that $\mu_t$'s have total mass $V_t$.
From Lemma~\ref{lem:compare_klt_entropy} and the construction of adapted measures, we immediately get the following lemma: 

\begin{lem}\label{lem:compare_klt_entropy_fami}
Under Setting~\ref{sett:klt}, fix a smooth hermitian metric $h$ on $m K_{\CX/\BD}$. 
Let $(\mu_t)_t$ be the adapted measures induced by $h$ and normalized by mass $V$. 
Then
\[
    (\H_{k}(u_k))_{k} \text{ is bounded}
    \iff 
    (\H_{k, \mu_k}(u_k))_{k} \text{ is bounded}.
\]
\end{lem}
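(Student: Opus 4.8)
The plan is to apply Lemma~\ref{lem:compare_klt_entropy} on each fibre $X_k$ and to check that every constant entering that statement can be chosen independently of $k$, for $t_k$ close to $0$. In Lemma~\ref{lem:compare_klt_entropy} the passage between the entropy $\H$ relative to $\om_k^n$ and the entropy $\H_\mu$ relative to $\mu=\mu_k$ is controlled by three quantities attached to the density $f=f_k$ of $\mu_k$ with respect to $\om_k^n$: an exponent $p>1$, a bound $C_\mu$ with $\int_{X_k} f_k^p\,\om_k^n\leq C_\mu$, and a positive lower bound $c_\mu$ with $f_k\geq c_\mu$. More precisely, that lemma gives $\H_k(u_k)\leq B\Rightarrow\H_{k,\mu_k}(u_k)\leq B'$ with $B'$ depending only on $B$ and $c_\mu$, and $\H_{k,\mu_k}(u_k)\leq B\Rightarrow\H_k(u_k)\leq B''$ with $B''$ depending only on $B$, $p$ and $C_\mu$. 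So it suffices to produce uniform $p$, $C_\mu$, $c_\mu$.

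The uniformity of $p$ and $C_\mu$ is already available: by the klt version of \cite[Lem.~4.4]{DGG2023} recalled in Section~\ref{sect_mild_sing}, up to shrinking $\BD$ there are a uniform $p>1$ and a uniform $C_p>0$ with $\int_{X_k}f_k^p\,\om_k^n\leq C_p$ for all $k$ large, which is exactly \eqref{eq:klt_Lp_estimate}. For the uniform lower bound I would use the local description of the adapted measure from Section~\ref{sect_adapted_measure}: locally $f_k=\lambda_k\big(g\,(\textstyle\sum_i|s_i|^2)^{1/m}\big)^{-1}\big|_{X_k}$, where $g>0$ is smooth, the $s_i$ are holomorphic functions cutting out the relative singular locus $\CZ$ (both being restrictions to $X_k$ of fixed data on $\CX$), and $\lambda_k>0$ is the constant renormalizing $\mu_k$ to mass $V$. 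Since $g$ and $\sum_i|s_i|^2$ are bounded above on the compact set $\pi^{-1}(\bBD_{1/2})$, and $\lambda_k$ is bounded above and below (the finite positive unnormalized masses depend continuously on $t$, cf. Remark~\ref{rmk:constant_volume}), we obtain a uniform constant $c\in(0,1]$ with $f_k\geq c$ on $X_k$ for all $k$ large. (Note that $f_k$ is in general unbounded above, as $\sum_i|s_i|^2$ vanishes along $\CZ$; this is precisely why the effective control is an $L^p$ bound and not an $L^\infty$ bound.)

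Feeding these uniform $p$, $C_p$, $c$ into Lemma~\ref{lem:compare_klt_entropy} applied fibrewise yields uniform constants $B_1'=B_1'(B_1,c)$ and $B_2'=B_2'(B_2,p,C_p)$ with $\H_k(u_k)\leq B_1\Rightarrow\H_{k,\mu_k}(u_k)\leq B_1'$ and $\H_{k,\mu_k}(u_k)\leq B_2\Rightarrow\H_k(u_k)\leq B_2'$ for all $k$ large, which is exactly the claimed equivalence. The only step that is not purely formal is the uniformity of the lower bound $c$, but this is routine once the adapted measures $\mu_k$ have been set up in families as in Section~\ref{sect_mild_sing}, so I do not expect any genuine obstacle here.
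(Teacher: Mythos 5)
Your proof is correct, and it follows the same route the paper has in mind: the paper introduces the lemma with the single sentence that it follows ``immediately'' from Lemma~\ref{lem:compare_klt_entropy} and the construction of adapted measures, which is precisely the strategy you spell out — apply Lemma~\ref{lem:compare_klt_entropy} fibrewise and check that the three constants $p$, $C_\mu$, $c_\mu$ entering its conclusion can be chosen uniformly in $t$ near $0$. Your treatment of $p$ and $C_\mu$ via \eqref{eq:klt_Lp_estimate} is exactly what the setup in Section~\ref{sect_mild_sing} provides, and your argument for the uniform lower bound $c_\mu$ from the local description $f_t^{-1}=g\,(\sum_i|s_i|^2)^{1/m}$ restricted to $X_t$, combined with the boundedness of the normalizing constants, is the right one and is indeed the content of ``the construction of adapted measures.''

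One small imprecision worth fixing: you cite Remark~\ref{rmk:constant_volume} to control the unnormalized masses $\int_{X_t}\mu_{t,h}$, but that remark only concerns integrals of \emph{smooth} forms such as $\om_t^n$ or $\Ta\wedge\om_t^{n-1}$, and $\mu_{t,h}$ is not of that type. The boundedness of the unnormalized masses is still easy, but the justification should rather be: the upper bound follows from \eqref{eq:klt_Lp_estimate} by H\"older (together with the boundedness of $V_t$, where Remark~\ref{rmk:constant_volume} \emph{does} apply), and the lower bound follows once one already has a uniform pointwise lower bound on the unnormalized density away from — hence also near — $\CZ$, which is exactly the local estimate you gave. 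So the logical order is: uniform lower bound on the \emph{unnormalized} density first (purely local), then boundedness of the masses, then boundedness of $\lambda_k$, then uniform $c_\mu$ for the normalized densities. With that reordering the argument is clean and matches the paper's intent.
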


Combining Theorem~\ref{thm:strong_compact_fami} and Lemma~\ref{lem:compare_klt_entropy_fami}, we obtain:

\begin{cor}\label{cor:strong_compact_adapted_fami}
Under Setting~\ref{sett:klt}, fix a smooth hermitian metric $h$ on $m K_{\CX/\BD}$. 
Let $(\mu_t)_t$ be the adapted measures of mass $V$ induced by $h$. 
Let $(u_k)_k \in \PSH_\fibre(\CX,\om)$ be a sequence with $t_k \to 0$ as $k \to +\infty$ and $(\sup_{X_k} u_k)_k$ uniformly bounded.
If $(\H_{k,\mu_k}(u_k))_k$ is uniformly bounded, then there exists a subsequence of $(u_k)_k$ converging strongly in families to a function $u_0 \in \CE^1(X_0, \om_0)$.
\end{cor}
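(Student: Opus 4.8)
The plan is to obtain this statement as a direct combination of the two preceding results, Lemma~\ref{lem:compare_klt_entropy_fami} and Theorem~\ref{thm:strong_compact_fami}, so that essentially no new estimate is needed.

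First I would translate the hypothesis on the adapted entropy into a hypothesis on the entropy with respect to the fibrewise volume forms. By assumption $(\H_{k,\mu_k}(u_k))_k$ is uniformly bounded, so Lemma~\ref{lem:compare_klt_entropy_fami} applies directly and yields that $(\H_k(u_k))_k$ is uniformly bounded as well. This is the only place where Setting~\ref{sett:klt} is genuinely used: by Remark~\ref{rmk:klt_open} and the klt version of \cite[Lem.~4.4]{DGG2023} one gets, after shrinking $\BD$, a uniform exponent $p>1$ together with the uniform bound \eqref{eq:klt_Lp_estimate} on the densities $f_t$ of $\mu_t$ with respect to $\om_t^n$, a uniform lower bound $c_\mu>0$ for these densities, and the uniform $A\om_t$-plurisubharmonicity of $-\log f_t$; these are precisely the ingredients making the fibrewise comparison of Lemma~\ref{lem:compare_klt_entropy} uniform in $t$, i.e.\ the content of Lemma~\ref{lem:compare_klt_entropy_fami}.

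Then I would invoke Theorem~\ref{thm:strong_compact_fami} applied to the sequence $(u_k)_k$: all of its hypotheses are now in force, namely $(u_k)_k\in\PSH_\fibre(\CX,\om)$ with $t_k\to 0$, $(\sup_{X_k}u_k)_k$ uniformly bounded, and $(\H_k(u_k))_k$ uniformly bounded by the previous step. The conclusion is a subsequence of $(u_k)_k$ converging strongly in families to some $u_0\in\CE^1(X_0,\om_0)$, which is exactly the assertion to be proved.

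There is no real obstacle at the level of the corollary itself; all the substance has already been absorbed into Lemma~\ref{lem:compare_klt_entropy_fami} (uniformity across fibres of the $L^p$-integrability and of the lower bound for the adapted densities) and into Theorem~\ref{thm:strong_compact_fami} (the regularization of finite-entropy densities along the family and the $\I$-functional estimates). The one bookkeeping point worth checking is that passing between the fibrewise normalization (total mass $V_t$) used internally and the fixed normalization by total mass $V$ in the statement is harmless: by Remark~\ref{rmk:constant_volume} the map $t\mapsto V_t$ is continuous (and constant when $\om$ is fibrewise closed), so the rescaling factors are uniformly bounded above and below near $0$ and shift the entropies only by a uniformly bounded additive constant, leaving both the entropy bound and the notion of strong convergence in families unaffected.
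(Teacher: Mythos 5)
Your proof is correct and coincides with the paper's intent: the paper explicitly presents the corollary as the immediate combination of Lemma~\ref{lem:compare_klt_entropy_fami} (to upgrade the adapted-entropy bound to a standard entropy bound) and Theorem~\ref{thm:strong_compact_fami}, exactly as you argue. Your closing remark about the harmlessness of the $V_t$ vs.\ $V$ normalization is a reasonable sanity check but is not an issue in the paper's setting.
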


\subsubsection{Strong lower semi-continuity with respect to adapted measures.}
\begin{lem}\label{lem:lsc_ent_mu}
Under Setting~\ref{sett:klt}, fix a smooth hermitian metric $h$ on $m K_{\CX/\BD}$. 
Let $(\mu_t)_t$ be the adapted measures of mass $V$ induced by $h$. 
If $(u_k)_k \in \CE^1_{\fibre}(\CX,\om)$ converges strongly to $u_0 \in \CE^1(X_0,\om_0)$ in the family sense, then 
\[
    \H_{0,\mu_0}(u_0) \leq \liminf_{k \to +\infty} \H_{k, \mu_k}(u_k).
\]
\end{lem}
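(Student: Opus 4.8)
The plan is to reduce the statement to the already-established lower semi-continuity of the unweighted entropy (Lemma~\ref{lem:semi_conti_entropy}) via the decomposition
\[
    \H_{k,\mu_k}(u_k) = \H_k(u_k) + \frac{1}{V}\int_{X_k}(-\log f_k)\,\om_{k,u_k}^n ,
\]
where $f_k$ is the density of the (mass-$V$ normalized) adapted measure $\mu_k$ with respect to $\om_k^n$; this identity is recorded in Lemma~\ref{lem:compare_klt_entropy}. We may assume $L:=\liminf_{k\to+\infty}\H_{k,\mu_k}(u_k)<+\infty$, pass to a subsequence realizing the $\liminf$ (still denoted $u_k$; it still converges strongly in families to $u_0$), and assume $\H_{k,\mu_k}(u_k)\leq B$ for all $k$. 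Then Lemma~\ref{lem:compare_klt_entropy_fami} gives a uniform bound $\H_k(u_k)\leq B'$, whence $\H_0(u_0)\leq\liminf_k\H_k(u_k)\leq B'<+\infty$ by Lemma~\ref{lem:semi_conti_entropy}; in particular $-\log f_0\in L^1(\om_{0,u_0}^n)$ by Lemma~\ref{lem:compare_klt_entropy}.

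The core step is the convergence
\[
    \int_{X_k}(-\log f_k)\,\om_{k,u_k}^n \xrightarrow[k\to+\infty]{} \int_{X_0}(-\log f_0)\,\om_{0,u_0}^n .
\]
First I would observe that, since $\pi$ is a submersion and $h$ is smooth over $\CX\setminus\CZ$, the fibrewise density $f$ (with $f|_{X_t}=f_t$) is smooth and positive there, so for any smooth cutoff $\rho$ on $\CX$ with $\supp\rho\cap\CZ=\emptyset$ the function $\rho\cdot(-\log f)$ extends by zero to a continuous function on $\CX$. By Lemma~\ref{lem:weak_conv_MA} (whose hypotheses hold thanks to the entropy bound) $\om_{k,u_k}^n$ converges weakly in families to $\om_{0,u_0}^n$, so $\int_{X_k}\rho\,(-\log f_k)\,\om_{k,u_k}^n\to\int_{X_0}\rho\,(-\log f_0)\,\om_{0,u_0}^n$. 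It then remains to bound the contribution near $\CZ$ uniformly in $k$: with $\CW_\vep$ as in Lemma~\ref{lem:small_capacity_near_sing} and using the uniform bound $\osc_{X_k}\psi_k\leq M$ on the potentials of $\mu_k$ from \eqref{eqn:Sol to adapted measure}, one gets $\mu_k(\CW_\vep\cap X_k)\to 0$ uniformly in $k$ as $\vep\to 0$; combining this with \eqref{eq:klt_Lp_estimate} — after shrinking $p$ into $(1,2)$ it yields a uniform bound $\int_{X_k}e^{a|\log f_k|}\mu_k\leq C$ with $a=\min\{p-1,\tfrac12\}$, the lower tail being handled by $f_k^{1-a}\leq 1+f_k$ — and with the Orlicz bound $\|\om_{k,u_k}^n/\mu_k\|_{L^\chi(\mu_k)}\leq B''$ (for $\chi(s)=(s+1)\log(s+1)-s$) coming from $\H_{k,\mu_k}(u_k)\leq B$ via Remark~\ref{rmk:L_chi_entropy}, a routine de la Vallée-Poussin/H\"older--Young splitting gives $\int_{\CW_\vep\cap X_k}|\log f_k|\,\om_{k,u_k}^n\to 0$ uniformly in $k$; likewise $\int_{\CW_\vep\cap X_0}|\log f_0|\,\om_{0,u_0}^n\to 0$, since $\CZ\cap X_0\subseteq X_0^{\sing}$ carries no mass of $\om_{0,u_0}^n$. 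Taking $\rho=\rho_\vep$ equal to $1$ outside $\CW_\vep$ and letting $\vep\to0$ then yields the desired convergence.

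Combining everything,
\begin{align*}
    L = \lim_k \H_{k,\mu_k}(u_k)
    &\geq \liminf_k \H_k(u_k) + \frac{1}{V}\lim_k\int_{X_k}(-\log f_k)\,\om_{k,u_k}^n \\
    &\geq \H_0(u_0) + \frac{1}{V}\int_{X_0}(-\log f_0)\,\om_{0,u_0}^n = \H_{0,\mu_0}(u_0),
\end{align*}
which is exactly the claimed inequality. The hard part is the uniform-integrability estimate near $\CZ$: one must balance the uniform Skoda/$L^p$-integrability of the adapted densities, the Orlicz bound coming from the entropy, and the uniform smallness of $\mu_k(\CW_\vep\cap X_k)$, all with fibre-independent constants, to force the cutoff error to vanish uniformly in $k$ — everything else is bookkeeping of already-available family results.
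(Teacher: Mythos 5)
Your overall plan mirrors the paper's: decompose $\H_{t,\mu_t}(u_t)=\H_t(u_t)-\tfrac1{V_t}\int_{X_t}\log m_t\,\om_{t,u_t}^n$, use Lemma~\ref{lem:semi_conti_entropy} for the unweighted entropy, and reduce the lemma to showing
\[
\int_{X_k}(-\log f_k)\,\om_{k,u_k}^n \longrightarrow \int_{X_0}(-\log f_0)\,\om_{0,u_0}^n.
\]
The difference is in how you try to prove this convergence: you truncate \emph{spatially} (cut off near $\CZ$ with $\rho_\vep$ and control the residual on $\CW_\vep$), whereas the paper truncates \emph{in value} (write $\log m_k=\min\{\log m_k,L\}+(\log m_k-L)_+$, treat the bounded part by weak convergence of $\om_{k,u_k}^n$, and control the tail by the measure-vs.-volume estimate of DiNezza--Lu).

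However, I believe your step ``a routine de la Vall\'ee-Poussin/H\"older--Young splitting gives $\int_{\CW_\vep\cap X_k}|\log f_k|\,\om_{k,u_k}^n\to 0$ uniformly in $k$'' contains a genuine gap, and it is exactly at the point where the proof is hard. Write $g_k:=\om_{k,u_k}^n/\mu_k$. Your available uniform bounds are $\|g_k\|_{L^\chi(\mu_k)}\leq B''$ (from the entropy bound) and $\int_{X_k}e^{a|\log f_k|}\,d\mu_k\leq C$ for some fixed $a\leq\min\{p-1,1\}$, together with $\mu_k(\CW_\vep\cap X_k)\to 0$ and $\om_{k,u_k}^n(\CW_\vep\cap X_k)\to 0$ uniformly in $k$. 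The Orlicz H\"older--Young inequality applied to the pair $(\chi,\chi^\ast)$ gives
\[
\int_{\CW_\vep\cap X_k}|\log f_k|\,g_k\,d\mu_k
\le 2\,\|g_k\|_{L^\chi(\mu_k)}\,\bigl\|\mathbf 1_{\CW_\vep}|\log f_k|\bigr\|_{L^{\chi^\ast}(\mu_k)},
\]
but the second factor does \emph{not} tend to zero: because $\chi^\ast(s)=e^s-s-1$ grows exponentially, $\|\mathbf 1_{\CW_\vep}|\log f_k|\|_{L^{\chi^\ast}(\mu_k)}$ is pinned down from below by roughly $1/a$, independently of $\vep$, since lowering the normalising constant $c$ below $1/a$ would require exponential integrability of $|\log f_k|$ at a rate strictly above $a$, which one does not have (the allowable exponent is capped by $\min\{p-1,1\}$). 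So H\"older--Young gives only a \emph{bounded}, not vanishing, estimate. Likewise, a de la Vall\'ee-Poussin argument for the uniform integrability of $g_k|\log f_k|$ (or of $g_k\log g_k$) against $\mu_k$ would require a strictly stronger Orlicz-type bound than $\int_{X_k}\chi(g_k)\,d\mu_k\le B$, which is not available from the entropy hypothesis alone. The same obstruction appears if you attempt a weighted Young inequality: the term $\int_{\CW_\vep}(g_k\log g_k)_+\,d\mu_k$ is bounded but cannot be made small merely from $\mu_k(\CW_\vep)\to 0$.

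The paper closes this gap by a different device: for the tail $\int_{X_k}(\log m_k-L)_+\,\om_{k,u_k}^n$ it uses the layer-cake formula together with the quantitative inequality
\[
\om_{k,u_k}^n(E)\ \lesssim\ \frac{1}{-\log\bigl(\Vol_{\om_k}(E)/V_k\bigr)}+\Bigl(\Vol_{\om_k}(E)/V_k\Bigr)^{1/2},
\]
combined with the polynomial volume bound $\Vol_{\om_k}(\{\log m_k>s+L\})\lesssim (s+L)^{-\ld}$ coming from the uniform $L^p$-bound of the klt densities. This measure-vs.-volume comparison (after DiNezza--Lu) is the key non-trivial input, and it has no counterpart in your sketch. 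In short: your spatial localization and use of Lemma~\ref{lem:weak_conv_MA} on the regular part is fine, but the claim that the residual near $\CZ$ vanishes uniformly ``by routine Orlicz bookkeeping'' is precisely the hard part, and it is not established by the tools you invoke. You need either the paper's value-truncation and the DiNezza--Lu estimate, or some other genuinely quantitative tail bound with fibre-independent constants.
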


\begin{proof}
Without loss of generality, we may suppose that $(\H_{k,\mu_k}(u_k))_k$ is uniformly bounded.
Set $m_t$ the $L^p$-density so that $\mu_t = m_t \om_t^n$.
One has
\[
    \H_{t,\mu_t}(u_t) = \H_t(u_t) - \frac{1}{V_t} \int_{X_t} \log(m_t) \om_{t, u_t}^n. 
\]
From Lemma~\ref{lem:semi_conti_entropy}, we have obtained $\H_0(u_k) \leq \liminf_{k \to +\infty} \H_k(u_k)$.
Therefore, to obtain the lower semi-continuity of entropy with respect to adapted measures, it suffices to show that 
\[
    \int_{X_k} \log(m_k) \om_{k, u_k}^n 
    \to \int_{X_0} \log(m_0) \om_{0, u_0}^n
\]
as $k \to +\infty$.
Write $f_k$ the density of $\om_{k,u_k}^n$ with respect to $\mu_k$.
From Lemma~\ref{lem:compare_klt_entropy_fami}, since $(\H_{k,\mu_k}(u_k))_k$ is bounded, $(\H_k(u_k))_k$ and $(\int_{X_k} \log(m_k) \om_{k, u_k}^n)_k$ are bounded as well.
Let $B$ be a constant such that $\H_k(u_k) \leq B$ and $\H_{k,\mu_k}(u_k) \leq B$ for all $k$.
Taking $L > 0$, we cut the integral into the following two parts
\[
    \int_{X_k} \log(m_k) \om_{k, u_k}^n
    = \int_{X_k} \min\{\log(m_k),L\} \om_{k,u_k}^n + \int_{X_k} (\log(m_k) - L)_+ \om_{k,u_k}^n.
\]
By the construction of $m_t$, for each fixed $L>0$, there is a continuous function $M_L$ on $\CX$ such that ${M_L}_{|X_t} = \min\{\log(m_t) ,L\}$. 
Therefore, by Lemma~\ref{lem:weak_conv_MA}, we obtain the continuity of the first integral 
$$
    \lim_{k \to +\infty}  \int_{X_k} \min\{\log(m_k), L\} \om_{k,u_k}^n
    = \int_{X_0} \min\{\log(m_0), L\} \om_{0,u_0}^n. 
$$
Now, we deal with the second term. 
By Lebesgue integral formula, the second term can be expressed as
\begin{align*}
    \int_{X_k} (\log(m_k) - L)_+ \om_{k,u_k}^n
    = \int_0^{+\infty} \om_{k,u_k}^n (\{\log(m_k) > s + L\}) \dd s.
\end{align*}
We now follow the idea as in \cite[Lem.~2.4]{DiNezza_Lu_2022} to estimate $\om_{k,u_k}^n (\{\log(m_k) > s + L\})$.
Let $E \subset X_k$ be a Borel set with $\Vol_{\om_k} (E) > 0$ and $A := (\Vol_{\om_k}(E)/V_k)^{-1/2}$. 
Using the estimate in the proof of \cite[Lem.~2.4]{DiNezza_Lu_2022}, we have
\begin{equation}\label{eq:conti_Ric_ent_1}
\begin{split}
    \om_{k,u_k}^n(E) 
    &\leq 2(B + V_k e^{-1}) (-\log(\Vol_{\om_k}(E)/V_k))^{-1}+ V (\Vol_{\om_k}(E)/V_k)^{1/2}.
\end{split}
\end{equation}
Fix $b = \sqrt{2}/\log(2)$ which is a numerical constant so that 
\begin{equation}\label{eq:conti_Ric_ent_2}
    (-\log(t))^{-1} \leq b t^{1/2} 
    \quad \text{for any } t \in \lt[0, 1/2\rt].
\end{equation}
By \eqref{eq:klt_Lp_estimate}, there exists uniform constants $p>1$ and $C_p > 0$, such that $\int_{X_t} e^{p \log(m_t)} \om_t^n \leq C_p$ for any $t \in \BD$.
Hence, for all $\ld \in\BN^\ast$, {\small
\begin{equation}\label{eq:conti_Ric_ent_3}
    \Vol_{\om_k}(\{\log(m_k) > s+L\})
    \leq \frac{1}{(s+L)^\ld} \int_{X_k} (\log(m_k)_+)^\ld \om_k^n
    \leq \frac{\ld! C_p}{p^\ld(s+L)^\ld}.
\end{equation}}
Choose $\ld = 3$, $L>0$ sufficiently large so that $\frac{6 C_p}{p^3 L^3} < 1/2$. 
Then combining \eqref{eq:conti_Ric_ent_1}, \eqref{eq:conti_Ric_ent_2} and \eqref{eq:conti_Ric_ent_3}, one can derive  {\small
\begin{align*}
    \int_{X_k} (\log(m_k) - L)_+ \om_{k,u_k}^n
    &= \int_0^{+\infty} \om_{k,u_k}^n (\{\log(m_k) > s + L\}) \dd s\\
    &\leq \frac{6C_p (2b(B + V_k e^{-1}) +V_k)}{p^3 V_k^{1/2}} \int_0^{+\infty} \frac{1}{(s+L)^{3/2}} \dd s 
    = \frac{D}{L^{1/2}}
\end{align*} }
where $D = \frac{12C_p (2b(B + V_k e^{-1}) + V_k)}{p^3 V_k^{1/2}}$. 
Similarly, we also have $\int_{X_0} (\log(m_0) - L)_+ \om_{0,u_0}^n \leq \frac{D}{L^{1/2}}$ as the entropy $\H_0(u_0) \leq B$ by Lemma~\ref{lem:semi_conti_entropy}.
This completes the proof.
\end{proof}

\subsubsection{Strong continuity of twisted energy}

Let $\eta$ be a smooth $(1,1)$-form on $\CX$. 
Recall that the $\eta$-energy on each fibre $X_t$ is defined as
\[
    \E_{t, \eta_t}(u_t) := \frac{1}{nV} \sum_{j = 0}^{n-1} \int_{X_t} u_t \eta_t \w (\om_t + \ddc_t u_t)^j \w \om_t^{n-1-j} 
\]
where $\eta_t = \eta_{|X_t}$.

\begin{lem}\label{lem:continuity_twisted_energy} 
If $(u_k)_k \in \CE^1_{\fibre}(\CX,\om)$ converges strongly to $u_0 \in \CE^1(X_0,\om_0)$ in the family sense and $(\H_k(u_k))_k$ is bounded, then $(\E_{k,\eta_k}(u_k))_k$ converges to $\E_{0,\eta_0}(u_0)$. 
\end{lem}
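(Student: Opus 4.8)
The plan is to compare $u_k$ with the smooth approximants $\vph_k^\bt$ built in the proof of Theorem~\ref{thm:strong_compact_fami}, to estimate the fibrewise differences $\E_{k,\eta_k}(u_k)-\E_{k,\eta_k}(\vph_k^\bt)$ by Lemma~\ref{lem:Twisted Energy}, and to treat $\E_{k,\eta_k}(\vph_k^\bt)-\E_{0,\eta_0}(\vph_0^\bt)$ via smooth convergence in families. We may assume $\int_{X_k}u_k\,\om_k^n=0$: the $\om_k^n$-averages converge along strong convergence in families (by Lemma~\ref{lem:diffeo_hartogs} together with Theorem~\ref{thm:SL_and_Skoda_in_family}), and replacing $u_k$ by $u_k-\tfrac1{V_k}\int_{X_k}u_k\,\om_k^n$ modifies $\E_{k,\eta_k}(u_k)$ by $c_k\lambda_k$, with $c_k\to0$ and $|\lambda_k|=\big|\tfrac1{nV}\sum_{j=0}^{n-1}\int_{X_k}\eta_k\w\om_{k,u_k}^j\w\om_k^{n-1-j}\big|\leq \tfrac{C_\eta V_k}{V}$ bounded (where $-C_\eta\om\leq\eta\leq C_\eta\om$). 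Since $(u_k)_k$ converges strongly in families, $\E_k(u_k)\to\E_0(u_0)>-\infty$, so $\E_k(u_k)\geq-R$ uniformly, while $\sup_{X_k}u_k\leq C_{SL}$ by Theorem~\ref{thm:SL_and_Skoda_in_family}. From the proof of Theorem~\ref{thm:strong_compact_fami} we recall that for every $\bt>0$ there are smooth $\vph_k^\bt\in\PSH(X_k,\om_k)$ with $\int_{X_k}\vph_k^\bt\,\om_k^n=0$ such that: \textbf{(a)} for each fixed $\bt$, $\vph_k^\bt$ converges smoothly in families to $\vph_0^\bt$ and $\E_k(\vph_k^\bt)\to\E_0(\vph_0^\bt)$ (Lemma~\ref{lem:cptness_appox_E_conv}); \textbf{(b)} $d_1(\vph_0^\bt,u_0)<\bt$, using $\vph_0\equiv u_0$ (see \eqref{eq:fcn_beta_parameter}); \textbf{(c)} $\lim_{\bt\to0}\limsup_{k\to\infty}\I_k(u_k,\vph_k^\bt)=0$ (see \eqref{eq:cptness_sum_123}); \textbf{(d)} the densities $f_k^\bt:=\om_{k,\vph_k^\bt}^n/\om_k^n$ have entropy bounded uniformly in $k$ and $\bt$ (by the entropy estimate inside the proof of Lemma~\ref{lem:regularization_fini_entropy}), so $\E_k(\vph_k^\bt)\geq-R'$ uniformly, and, as established within the proof of Lemma~\ref{lem:cptness_appox_E_conv}, $\|\vph_k^\bt\|_{L^\infty(X_k)}\leq M_\bt$ with $M_\bt$ uniform in $k$. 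For each $\bt>0$ we split
\[
  |\E_{k,\eta_k}(u_k)-\E_{0,\eta_0}(u_0)|\leq |\E_{k,\eta_k}(u_k)-\E_{k,\eta_k}(\vph_k^\bt)|+|\E_{k,\eta_k}(\vph_k^\bt)-\E_{0,\eta_0}(\vph_0^\bt)|+|\E_{0,\eta_0}(\vph_0^\bt)-\E_{0,\eta_0}(u_0)|
\]
and bound the $\limsup_{k\to\infty}$ of each term.

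\textbf{Third term.} Here $\E_{\eta_0}$ is strongly continuous on $\CE^1(X_0,\om_0)$ (recalled before Lemma~\ref{lem:Twisted Energy}) and $d_1(\vph_0^\bt,u_0)\to0$ by (b), so this term tends to $0$ as $\bt\to0$. \textbf{First term.} The estimate \eqref{eq_compare_E} of Lemma~\ref{lem:Twisted Energy} extends from bounded potentials to all of $\CE^1_R(X_k,\om_k)$ by truncating $u_k$ to $\max\{u_k,-C\}$, renormalizing it to zero $\om_k^n$-average, applying \eqref{eq_compare_E}, and letting $C\to\infty$ (using strong continuity of $\E_{\eta_k}$ and the continuity of $\I_k$ and of the $L^1$-norm under this truncation); thus, with a uniform $S$,
\[
  |\E_{k,\eta_k}(u_k)-\E_{k,\eta_k}(\vph_k^\bt)|\leq 4^n\tfrac{2C_\eta}{nV}\Big(f_S\big(\I_k(u_k,\vph_k^\bt)\big)+\|u_k-\vph_k^\bt\|_{L^1(X_k,\om_k^n)}\Big).
\]
Exactly as in Substep~2.4 of the proof of Theorem~\ref{thm:strong_compact_fami} (Lemma~\ref{lem:Poincare_ineq} and Lemma~\ref{lem:L21_vs_I}, using that $u_k-\vph_k^\bt$ has zero $\om_k^n$-average and that $\I_k(u_k,0)$, $\I_k(\vph_k^\bt,0)$ are uniformly bounded via Theorems~\ref{thm:SL_and_Skoda_in_family} and \ref{thm:DGL} together with the uniform entropy bounds), one gets $\|u_k-\vph_k^\bt\|_{L^1(X_k,\om_k^n)}\leq C'\,\I_k(u_k,\vph_k^\bt)^{1/2^n}$ with $C'$ uniform; since $f_S$ is continuous and increasing with $f_S(0)=0$, property (c) gives $\limsup_{k\to\infty}|\E_{k,\eta_k}(u_k)-\E_{k,\eta_k}(\vph_k^\bt)|\to0$ as $\bt\to0$.

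\textbf{Second term (the main obstacle).} For $\bt$ fixed, $\vph_k^\bt$ is bounded by $M_\bt$ uniformly in $k$ and converges smoothly in families to $\vph_0^\bt$. Given $\zt>0$, take the neighborhood $\CW_\zt\supset\CZ$ of Lemma~\ref{lem:small_capacity_near_sing} with $\CAP_{\om_t}(\CW_\zt\cap X_t)<\zt$ for all $t$, where $\CZ$ is the singular locus of $\pi$. Covering the compact set $(\CX\setminus\CW_\zt)\cap X_0\subset X_0^\reg$ by finitely many charts $G_k:U_0\to U_k\subset X_k$ of the family (Definition~\ref{defn:conv_in_family}) and using a partition of unity, the holomorphicity of the $G_k$ gives $G_k^\ast\om_{k,\vph_k^\bt}=G_k^\ast\om_k+\ddc(\vph_k^\bt\circ G_k)\to\om_{0,\vph_0^\bt}$ in $\CC^\infty_{\loc}$, so each mixed integral $\int_{(\CX\setminus\CW_\zt)\cap X_k}\vph_k^\bt\,\eta_k\w\om_{k,\vph_k^\bt}^j\w\om_k^{n-1-j}$ ($0\leq j\leq n-1$) converges to its $X_0$-counterpart as $k\to\infty$. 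On $\CW_\zt$ one bounds $|\vph_t^\bt\,\eta_t\w\om_{t,\vph_t^\bt}^j\w\om_t^{n-1-j}|\leq M_\bt\,C_\eta\,\om_{t,\vph_t^\bt}^j\w\om_t^{n-j}$ and invokes the standard fact (cf. \cite{GZbook, BEGZ_2010}) that mixed Monge--Amp\`ere masses of $\om_t$-psh functions bounded by $M_\bt$ over a Borel set $E$ are controlled by a modulus $\kappa(M_\bt,\CAP_{\om_t}(E))$ vanishing as $\CAP_{\om_t}(E)\to0$, so that $\int_{\CW_\zt\cap X_t}\om_{t,\vph_t^\bt}^j\w\om_t^{n-j}\leq\kappa(M_\bt,\zt)\to0$ as $\zt\to0$, uniformly in $t$. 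Letting $k\to\infty$ and then $\zt\to0$ yields $\limsup_{k\to\infty}|\E_{k,\eta_k}(\vph_k^\bt)-\E_{0,\eta_0}(\vph_0^\bt)|=0$ for each fixed $\bt$.

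Combining the three parts, $\limsup_{k\to\infty}|\E_{k,\eta_k}(u_k)-\E_{0,\eta_0}(u_0)|\leq \epsilon_1(\bt)+\epsilon_3(\bt)$ for every $\bt>0$, with $\epsilon_1(\bt)+\epsilon_3(\bt)\to0$ as $\bt\to0$; letting $\bt\to0$ gives $\lim_{k\to\infty}\E_{k,\eta_k}(u_k)=\E_{0,\eta_0}(u_0)$, as desired. The hard part is the second term: showing that the twisted energies of the bounded, smoothly-in-families convergent approximants $\vph_k^\bt$ converge forces one to control their mixed Monge--Amp\`ere masses near the singular locus $\CZ$ of $\pi$ uniformly in $k$, which is precisely where the uniform $L^\infty$-bounds on $\vph_k^\bt$ and the capacity decay of the neighborhoods $\CW_\zt$ are needed; it is the family analogue of the fact that bounded potentials converging smoothly in families have weakly convergent mixed Monge--Amp\`ere currents (compare the arguments behind Proposition~\ref{prop:Unif Laplacian implies strong} and Lemma~\ref{lem:weak_conv_MA}).
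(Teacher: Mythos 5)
Your proof follows the paper's own route: the same splitting of $|\E_{k,\eta_k}(u_k)-\E_{0,\eta_0}(u_0)|$ into three terms via the approximants $\vph_k^\bt$ from the proof of Theorem~\ref{thm:strong_compact_fami}, Lemma~\ref{lem:Twisted Energy} to control the outer two terms, and uniform boundedness plus smooth convergence in families to control the middle one. The only difference is that you spell out what the paper leaves implicit, namely the capacity estimate near $\CZ$ behind the assertion $\E_{k,\eta_k}(\vph_k^\bt)\to\E_{0,\eta_0}(\vph_0^\bt)$, the control of the $L^1$-norm terms via Substep 2.4 of Theorem~\ref{thm:strong_compact_fami}, and the extension of Lemma~\ref{lem:Twisted Energy} from $L^\infty$ potentials to $\CE^1$ by truncation.
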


\begin{proof}
Following the construction in the proof of Theorem~\ref{thm:strong_compact_fami}, recall that we have approximations $\vph_k^\bt$ such that
\begin{itemize}
    \item For each $\bt$ fixed, $(\vph_k^\bt)_k$ are uniformly bounded; 
    \item As $k \to +\infty$, $\vph_k^\bt$ converges smoothly to $\vph_0^\bt$ in the family sense; 
    \item As $\bt \to 0$, $\vph_0^\bt$ converges strongly to $u_0$; 
    \item $\lim_{\bt \to 0} \limsup_{k \to +\infty} \I_k(u_k, \vph_k^\bt) = 0$.
\end{itemize}
One can deduce $\lim_{k \to +\infty} \E_{k,\eta_k} (\vph_k^\bt) = \E_0(\vph_k^\bt)$ by the uniform control of $(\vph_k^\bt)_k$ and the smooth convergence in families.
By Lemma~\ref{lem:Twisted Energy}, we have the following estimate
{\small
\begin{align*}
    &|\E_{k,\eta_k} (u_k) - \E_{0,\eta_0} (u_0)|\\ 
    &\leq |\E_{k,\eta_k} (u_k) - \E_{k,\eta_k}(\vph_k^\bt)| 
    + |\E_{k,\eta_k} (\vph_k^\bt) - \E_{0,\eta_0}(\vph_0^\bt)| 
    + |\E_{0,\eta_0} (u_0) - \E_{0,\eta_0}(\vph_0^\bt)|\\
    &\leq C (f_R(\I_k(u_k, \vph_k^\bt)) + \|u_k-\vph_k^\bt\|_{L^1(X_k, \om_k^n)} + f_R(\I_0(u_0, \vph_0^\bt)) + \|u_0 - \vph_0^\bt\|_{L^1(X_0, \omega_0^n)})\\ 
    &\qquad+ |\E_{k,\eta_k} (\vph_k^\bt) - \E_{0,\eta_0}(\vph_0^\bt)|.
\end{align*}
}%
All in all, the result follows from $\lim_{\bt \to 0} \limsup_{k \to +\infty} \I_k(u_k, \vph_k^\bt) = 0$, and the strong convergence of $\vph_0^\bt$ to $u_0$.
\end{proof}

\section{A variational approach for singular cscK metrics}
\label{sect_singular_cscK}

In this section, we first give a definition for the singular cscK metrics and extend the variational approach to the singular setting. 
Then we prove Theorem~\ref{bigthm:openness_coercivity}. 

We now assume $(X,\om)$ is a normal compact K\"ahler variety with klt singularities.
Let $\mu_h$ be a mass $V$ adapted measure associated with a hermitian metric $h$ on $mK_X$, which is Cartier. 
\begin{defn}\label{defn:cscK}
We say that $\om_\vph$ is a singular cscK metric if $\om_\vph$ is a genuine Kähler metric on $X^\reg$ and $\vph \in \PSH(X,\om) \cap L^\infty(X)$ satisfies the following equation on $X^\reg$
\[
    \bar{s} \om_\vph^n = n \Ric(\om_\vph) \w \om_\vph^{n-1},
\]
where  $\Ric(\om_\vph) = -\ddc \log \lt(\frac{\om_\vph^n}{\mu_h}\rt) + \Ric(\mu_h)$ and $ \bar{s} = n \frac{c_1(X) \,\cdot\, [\om]^{n-1}}{[\om]^n} = \frac{n}{V} \int_X \Ric(\mu_h)\wedge \omega^{n-1}$.
For the precise definition of $c_1(X)$, we refer to \cite[Def.~5.11]{EGZ_2009}.
\end{defn}

\subsection{Mabuchi functional}\label{ssec:Mabuchi}
In this section, we define the Mabuchi functional with respect to $(X,\om)$.

\subsubsection{Globally bounded potentials}
We first define the Mabuchi functional for functions $u\in \PSH(X,\om)\cap L^\infty(X)$.

\begin{defn}
The Mabuchi functional for $(X,\om)$ is $\M:\PSH(X,\om) \cap L^\infty(X) \to \BR \cup \{+\infty\}$ defined as
\[
    \M(u):=\H(u)+\bar{s}_\om\E(u)-n\E_{\Ric} (u),
\]
where $\E$ is the Monge--Amp\`ere energy,  $\H$ is the entropy, $\bar{s}_\om := \frac{n}{V} \int_X \langle \Ric(\om)\wedge \om^{n-1} \rangle$
is the mean value of the scalar curvature with respect to $\om$ and where
\[
    \E_{\Ric}(u) := \frac{1}{n V} \sum_{j=0}^{n-1} \int_X u \langle \Ric(\om) \w \om_u^j \w \om^{n-1-j} \rangle
\]
is the twisted Ricci energy.
Note that $\langle \,\, \rangle$ denotes the non-pluripolar product. 
\end{defn}
We will observe in the proof of Proposition \ref{prop:Change metrics bounded case Mabuchi} that one necessarily has $\bar{s}_\om = \bar{s} = n\frac{c_1(X)\cdot [\om]^{n-1}}{[\om]^n}$.

\subsubsection{A particular change of reference metric}
Let $m\geq 1$ be an integer such that $mK_X$ is locally free, let $h^m$ be a smooth metric on $mK_X$ and let $\mu_h$ be the associated adapted measure (see Section \ref{sect_adapted_measure}).
Let $\Ta:=\Ric(\mu_h)\in c_1(X)$ be the Ricci form of the smooth metric $h$, and recall that $\Ric(\omega)=\Theta+\ddc\log f$ where $A\om - \ddc \log f\geq 0$. Let also $\psi \in \PSH(X,\om)$ such that
$$
(\om+\ddc \psi)^n=\mu_h,
$$
i.e. $\Ric(\om_\psi)=\Ric(\mu_h)=\Theta$. Note that $\psi$ is globally bounded as consequence of \cite[Thm.~B]{BEGZ_2010}. Set $\wom:=\om+\ddc \psi$.

Letting $\PSH(X,\om) \ni u \to \wu := u-\psi \in \PSH(X,\wom)$ be the bijection corresponding to the change of reference form, the Mabuchi functional can be defined for globally bounded functions in $\CE^1(X,\wom)$ as
\[
\wM(\wu):=\H_\mu(\wu)+\Bar{s}_\psi\wE(\wu)-n \wE_\Theta(\wu)
\]
where 
\begin{enumerate}
    \item $\mu:=\mu_h$ is the adapted measure;
    \item $\wE(\wu):= \frac{1}{(n+1)V}\sum_{j=0}^n \int_X \wu \wom_{\wu}^j \wedge \wom^{n-j}$ is the Monge--Amp\`ere energy with respect to $\wom$;
    \item $\wE_\Theta(\wu):= \frac{1}{nV}\sum_{j=0}^{n-1} \int_X \wu \Theta \wedge \wom_{\wu}^j \wedge \wom^{n-j-1}$ is the twisted Ricci energy with respect to $\wom$;
    \item $\bar{s}_\psi:= n\frac{\int_X \Theta\wedge \om_\psi^{n-1}}{\int_X \om_\psi^n} = \bar{s}$ is the mean value of the scalar curvature with respect to $\om_\psi$;
\end{enumerate}
Moreover, a direct computation leads to
\begin{equation}
    \label{eqn:Useful Formula}
    \wM(\wu)=\H_\mu(u)+\bar{s}\big(\E(u)-\E(\psi)\big)-n\big(\E_\Theta(u)-\E_\Theta(\psi)\big)
\end{equation}
for any $u\in \PSH(X,\om) \cap L^\infty(X)$.
We now compare $\M$ and $\wM$.
\begin{prop}\label{prop:Change metrics bounded case Mabuchi}
In the setting just described, for any $u \in \PSH(X,\om) \cap L^\infty(X)$,
\begin{equation}\label{eqn:Change_Reference_Bounded}
	\wM(\wu)=\M(u)-\M(\psi).
\end{equation}
\end{prop}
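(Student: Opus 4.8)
The plan is to start from the formula \eqref{eqn:Useful Formula} that has just been recorded and to rewrite $\wM(\wu)$ entirely in terms of the data of $(X,\om)$. Three elementary ingredients are needed. Write $\mu=f\om^n$ with $f\in L^p(X,\om^n)$, $f\geq c_\mu>0$; since $c_\mu\leq f$ and $f\in L^p$, the measures $\om^n$ and $\mu$ are mutually absolutely continuous. The first ingredient is the entropy comparison of Lemma~\ref{lem:compare_klt_entropy}, namely $\H_\mu(v)=\H(v)-\frac{1}{V}\int_X\log f\,\om_v^n$, valid as an equality in $(-\infty,+\infty]$ for $v\in\PSH(X,\om)\cap L^\infty(X)$; taking $v=\psi$ and using $\om_\psi^n=\mu$ gives $\H(\psi)=\frac{1}{V}\int_X\log f\,\mu$ (equivalently $\H_\mu(\psi)=0$), a finite number since $\psi$ is globally bounded. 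The second is the twisted-energy comparison: from $\Ric(\om)=\Ta+\ddc\log f$,
\[
    \E_{\Ric}(v)=\E_\Ta(v)+\frac{1}{nV}\Big(\int_X\log f\,\om_v^n-\int_X\log f\,\om^n\Big)
\]
for $v\in\PSH(X,\om)\cap L^\infty(X)$. The third is $\bar s_\om=\bar s$: since $\Ta\in c_1(X)$ and $\ddc\log f$ is cohomologically trivial, $\int_X\langle\Ric(\om)\w\om^{n-1}\rangle=\int_X\Ta\w\om^{n-1}=c_1(X)\cdot[\om]^{n-1}$, whence $\bar s_\om=\frac{n}{V}\,c_1(X)\cdot[\om]^{n-1}=\bar s$; this is the observation promised just before the statement.

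The twisted-energy comparison rests on a telescoping identity. On $X^\reg$ one has $\ddc v=\om_v-\om$, so $\sum_{j=0}^{n-1}\ddc v\w\om_v^j\w\om^{n-1-j}=\om_v^n-\om^n$; pairing with $\log f$ and integrating by parts to move $\ddc$ onto $\log f$ yields
\[
    \sum_{j=0}^{n-1}\int_X v\,\langle\ddc\log f\w\om_v^j\w\om^{n-1-j}\rangle=\int_X\log f\,(\om_v^n-\om^n),
\]
which is exactly the correction term in $\E_{\Ric}(v)-\E_\Ta(v)$. The same computation with $\log f$ replaced by a constant, together with the closedness of $\om$, is what underlies $\bar s_\om=\bar s$.

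Granting these, the rest is bookkeeping. Substituting the entropy comparison for $u$ and the twisted-energy comparison for $u$ and for $\psi$ into \eqref{eqn:Useful Formula}, the two copies of $\frac{1}{V}\int_X\log f\,\om_u^n$ cancel and one obtains
\[
    \wM(\wu)=\Big(\H(u)+\bar s\,\E(u)-n\E_{\Ric}(u)\Big)-\bar s\,\E(\psi)+n\E_{\Ric}(\psi)-\frac{1}{V}\int_X\log f\,\mu.
\]
By $\bar s_\om=\bar s$ the bracket is $\M(u)$; by $\H(\psi)=\frac{1}{V}\int_X\log f\,\mu$ the remaining terms equal $-\big(\H(\psi)+\bar s\,\E(\psi)-n\E_{\Ric}(\psi)\big)=-\M(\psi)$. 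This gives \eqref{eqn:Change_Reference_Bounded}. (When $\H(u)=+\infty$, hence $\M(u)=+\infty=\wM(\wu)$ by \eqref{eqn:Useful Formula} and mutual absolute continuity, the identity holds trivially.)

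The only real difficulty is justifying the integration by parts on the possibly singular variety $X$, where $u$ is merely a bounded $\om$-psh function and $\log f$ is only quasi-psh from below (Remark~\ref{rmk_ddc_density}: $-\log f$ is $A\om$-psh). The plan is to pull everything back along a resolution $p\colon Y\to X$, where non-pluripolar products of bounded potentials coincide with Bedford--Taylor products and the symmetry of integration by parts for bounded quasi-psh functions applies; the possibly unbounded-above factor $\log f$ is handled by truncating to $\max\{\log f,-j\}$ (bounded and $A\om$-psh), integrating by parts, and passing to the limit $j\to+\infty$ by monotone convergence. This last step is legitimate once $\int_X\log f\,\om_v^n\in(-\infty,+\infty]$ is recognized as well defined, finite precisely when the entropy of $\om_v^n$ is finite by Lemma~\ref{lem:compare_klt_entropy}; in the finite case all the identities above are honest equalities of real numbers, and in the infinite case both sides of \eqref{eqn:Change_Reference_Bounded} are $+\infty$. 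Since $\psi$ is globally bounded by \cite[Thm.~B]{BEGZ_2010}, every integral involving $\psi$ is automatically finite.
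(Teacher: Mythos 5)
Your bookkeeping is correct, and your overall route mirrors the paper's: start from the useful formula, rewrite everything with the entropy comparison $\H_\mu = \H - \frac{1}{V}\int\log f\,\om_\bullet^n$, a twisted-energy comparison hinging on one integration-by-parts identity for $\ddc\log f$, and the identity $\bar s_\om=\bar s$, then cancel. You also correctly identify that the only real difficulty is the integration by parts on the singular variety. However, the justification you offer for that step has a genuine gap, and it is exactly the step the paper has to work hardest to prove.

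The functional $\E_{\Ric}$ is defined via the \emph{non-pluripolar} product $\langle\Ric(\om)\w\om_u^j\w\om^{n-1-j}\rangle$, which by definition puts no mass on the pluripolar set (in particular none on $X^\sing$). Your plan — pull back to a resolution $p:Y\to X$, truncate $\log f$ to a bounded quasi-psh function, integrate by parts, pass to the limit — produces in the limit the \emph{Bedford--Taylor}-type product of the unbounded current $\ddc\log p^*f$ with the bounded potentials. For a decreasing sequence $\max\{-\log f,-j\}\searrow-\log f$ of quasi-psh functions, the limit of the BT products can a priori charge the pole locus (the exceptional divisor $E$), so it need not coincide with the non-pluripolar product: one has $\text{BT}=\text{NP}+\text{(mass on }E\text{)}$ and the second term need not vanish for free. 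Your phrase ``non-pluripolar products of bounded potentials coincide with Bedford--Taylor products'' is true but does not apply here, since $\log f$ is precisely the factor that is not bounded. What is needed, and what your argument omits, is the additional verification that the BT product of $A\om-\ddc\log f$ with $\om_u^j\w\om_\psi^{n-1-j}$ puts no mass on $X^\sing$: this is the content of the paper's Claim~\ref{claim:NP vs BT}, proved by pulling back to a log-resolution, decomposing $p^*(A\om-\ddc\log f)=\theta+\sum_i\lambda_i[E_i]$, and checking $\int_{E_i}p^*(\om_u^j\w\om_\psi^{n-1-j})=\int_{E_i}p^*\om^{n-1}=[E_i]\cdot p^*[\om]^{n-1}=0$ by the projection formula. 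That cohomological vanishing on the exceptional prime divisors is precisely what licenses the passage from the truncated BT identity to the non-pluripolar identity, and the same point is needed for your derivation of $\bar s_\om=\bar s$.

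Two smaller remarks. The truncation should be $\min\{\log f,j\}\nearrow\log f$ (equivalently $\max\{-\log f,-j\}\searrow-\log f$, which is bounded and $A\om$-psh); as written, $\max\{\log f,-j\}$ is eventually equal to $\log f$ since $\log f\geq\log c_\mu$, and is not $A\om$-psh. Also, you work with the ``non-cocycle'' form $\E_{\Ric}(v)-\E_{\Theta}(v)$ using the $\om^{n-1-j}$ basis, while the paper computes $\E_{\Ric}(u)-\E_{\Ric}(\psi)$ directly using the mixed basis $\om_u^j\w\om_\psi^{n-1-j}$; both are fine given the cocycle property, but it is worth noting the paper's choice avoids introducing $\E_{\Ric}(\psi)$ and $\E_\Theta(\psi)$ separately in the middle of the computation.
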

Note that the klt condition implies that $\om_\psi^n=\mu_h$ has $L^p$-density with respect to $\om^n$ for $p>1$.  In particular $\psi$ has finite entropy with respect to $\om^n$.  Hence $\M(\psi)$ is a finite quantity. 
\begin{proof}
Let $f$ such that $\mu_h= f\om^n$. Recall that $f\in \CC^\infty(X^\reg)$ and $X^\sing$ is contained in an analytic set of codimension at least 2 by the normality of $X$.

As said above we have $\Ric(\om)=\Theta+\ddc \log f$ and there exists $A\gg 1$ large enough such that $A\om-\ddc\log f \geq 0$.
\begin{claim}\label{claim:NP vs BT}
The following equality holds for any $j=0, \cdots, n-1$ and any $u \in \PSH(X,\om) \cap L^\infty(X)$
\[
    \langle \big(A\om -\ddc \log f\big)\wedge \om_u^j\wedge \om_\psi^{n-j-1}\rangle=\big(A\om -\ddc \log f\big)\wedge \om_u^j\wedge \om_\psi^{n-j-1}.
\]
\end{claim}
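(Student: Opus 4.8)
The plan is to show that the two sides are positive measures on $X$ which coincide on $X^\reg$ and put no mass on $X^\sing$; since $X$ is normal, $X^\sing$ is contained in an analytic subset of codimension $\geq 2$, and this forces the two measures to agree on all of $X$.

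First I would record the structure of the current $T:=A\om-\ddc\log f$. By the choice of $A$ it is a closed positive $(1,1)$-current, its potential $-\log f$ is $A\om$-psh, and since $f\in\CC^\infty(X^\reg)$ with $f>0$ on $X^\reg$, this potential is smooth on $X^\reg$ and equals $-\infty$ exactly on $X^\sing$. In particular, on $X^\reg$ all the potentials involved --- those of $T$, of $\om_u$ and of $\om_\psi$ --- are locally bounded, so by the plurifine locality of the non-pluripolar product and the fact that for locally bounded potentials it reduces to the Bedford--Taylor product (cf. \cite{BEGZ_2010, GZbook}), one gets
\[
    \1_{X^\reg}\,\langle T\w\om_u^j\w\om_\psi^{n-j-1}\rangle
    =\1_{X^\reg}\,T\w\om_u^j\w\om_\psi^{n-j-1},
\]
where the right-hand side is the classical product of the smooth form $T|_{X^\reg}$ with the Bedford--Taylor products of the bounded potentials $u$ and $\psi$.

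It then remains to see that neither measure charges $X^\sing$. For the non-pluripolar product this is automatic, $X^\sing$ being pluripolar. For $T\w\om_u^j\w\om_\psi^{n-j-1}$, recall that in the singular setting this product is, by construction, the trivial extension through $X^\sing$ of its classical definition on $X^\reg$ (cf. \cite{Demailly_1985, EGZ_2009}); this extension is legitimate because the classical measure on $X^\reg$ has finite total mass, being dominated there by the finite measure $\langle T\w\om_u^j\w\om_\psi^{n-j-1}\rangle$ (finiteness of the latter following, after passing to a resolution, from the monotonicity of non-pluripolar products of closed positive currents in fixed classes, cf. \cite{BBEGZ_2019}). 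Equivalently, since $u$ and $\psi$ are globally bounded one may write $T\w\om_u^j\w\om_\psi^{n-j-1}=\langle T\rangle\w\om_u^j\w\om_\psi^{n-j-1}$ in the Bedford--Taylor sense, with $\langle T\rangle=T$ because a closed positive $(1,1)$-current carries no mass on an analytic set of codimension $\geq 2$. Combining the three facts --- agreement on $X^\reg$, and vanishing on $X^\sing$ for both measures --- yields the claimed identity.

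The main obstacle is precisely this last point: ruling out any spurious mass on $X^\sing$ coming from the unbounded potential $-\log f$. This is exactly where the normality of $X$ is used (codimension $\geq 2$ of $X^\sing$), together with the observation that $f$, being smooth and strictly positive on $X^\reg$, contributes no codimension-one singular locus to $T$ --- so the only possible locus of concentration is $X^\sing$, which is excluded by the support theorem for closed positive currents and by the defining property of the non-pluripolar product. Everything else is a routine application of the locality and of the bounded-potential case of the non-pluripolar calculus already invoked in the preceding lemmas.
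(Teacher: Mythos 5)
Your overall plan matches the paper's: agree on $X^\reg$, then rule out mass on $X^\sing$ for both sides. The non-pluripolar side is indeed immediate, and the agreement on $X^\reg$ by plurifine locality is fine. The problem is the step where you conclude that the Bedford--Taylor product $T\w\om_u^j\w\om_\psi^{n-1-j}$ (with $T=A\om-\ddc\log f$) puts no mass on $X^\sing$. You invoke the support theorem for closed positive currents, but that theorem, applied to a closed positive current of bidimension $(p,p)$, only excludes mass on analytic sets of dimension $\leq p-1$. For $T$ itself (bidimension $(n-1,n-1)$) it rules out mass on $X^\sing$ since $\codim X^\sing\geq 2$; but the object you need to control is the $(n,n)$-current $T\w S$ (bidimension $(0,0)$), for which the support theorem gives nothing. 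The passage from ``$T=\langle T\rangle$'' to ``$T\w S$ charges no mass on $X^\sing$'' is exactly the nontrivial content of the claim, and it is not supplied. Similarly, the assertion that the singular-space Bedford--Taylor product is ``by construction the trivial extension through $X^\sing$'' would make the claim a tautology; the paper's convention of trivial extension is stated only for \emph{locally bounded} potentials, whereas $-\log f$ is unbounded, so the product must be defined via integration by parts \`a la Demailly, and whether it charges the polar locus is a theorem, not a definition.

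The paper's proof fills this gap with a concrete argument that you do not reproduce: $-\log f$ has \emph{analytic} singularities along an ideal $\CI$ with $V(\CI)=X^\sing$, so after a log-resolution $p:Y\to X$ of $(X,\CI)$ one has the Siu decomposition $p^\ast T=\tau+\sum_i\lambda_i[E_i]$ with $\tau$ smooth and $\lambda_i\geq 0$. The total mass $\int_X T\w S$ is then computed on $Y$, and each exceptional contribution $\lambda_i\int_{E_i}p^\ast S$ vanishes because $\int_{E_i}p^\ast\om^{n-1}=[E_j]\cdot p^\ast[\om]^{n-1}=0$ by the projection formula. This cohomological vanishing is the precise mechanism preventing any hidden mass on $X^\sing$, and is exactly what is missing from your argument. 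If you prefer to avoid the resolution, you could instead cite the mixed Monge--Amp\`ere theory of \cite[Thm.~2.6]{Demailly_1985} (one unbounded factor whose polar locus has codimension $\geq 2$, remaining factors bounded), which gives both well-definedness and non-concentration on the polar locus — but that must be invoked explicitly, not replaced by the $(1,1)$-support theorem.
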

In Claim \ref{claim:NP vs BT}, the RHS is the Bedford--Taylor product, which is well defined as $u, \psi$ are bounded. 
We first assume the claim and conclude the proof. 
We obtain
\begin{align*}
    \Bar{s}_\om 
    &= \frac{n}{V}\int_X \langle \Ric(\om)\wedge \om^{n-1}\rangle\\ 
    &= \frac{n}{V}\int_X (\Theta+A\omega)\wedge \om^{n-1}-\frac{n}{V}\int_X \langle (A\om-\ddc \log f)\wedge \om^{n-1}\rangle\\
    &= \frac{n}{V}\int_X (\Theta+A\omega)\wedge \om^{n-1}-\frac{n}{V}\int_X (A\om-\ddc \log f)\wedge \om^{n-1}\\
    &= \frac{n}{V}\int_X \Theta\wedge \om^{n-1}=\Bar{s}.
\end{align*}
Similarly, by approximation and \cite[Thm 2.6]{Demailly_1985}, after integration by parts, we get
$$
\sum_{j=0}^{n-1}\int_X (u-\psi) \ddc \log f \wedge \om_u^j \wedge \om_\psi^{n-j-1}=\int_X \log f \om_u^n- \int_X \log f \om_\psi^n,
$$
which leads to
{\small
\begin{align*}
    & n \E_{\Ric}(u) - n \E_{\Ric}(\psi)
    = \frac{1}{V} \sum_{j=0}^{n-1} \int_X (u-\psi)\langle(\Theta + \ddc \log f) \wedge \om_u^j \wedge \om_\psi^{n-j-1} \rangle\\
    &= \frac{1}{V} \sum_{j=0}^{n-1} \int_X (u-\psi)(\Theta + A \omega) \wedge \om_u^j \wedge \om_\psi^{n-j-1}\\ 
    &\qquad - \frac{1}{V} \sum_{j=0}^{n-1} \int_X (u-\psi) \langle(A\om -\ddc \log f) \wedge \om_u^j \wedge \om_\psi^{n-j-1}\rangle\\
    &= \frac{1}{V} \sum_{j=0}^{n-1} \int_X (u-\psi)\Theta \wedge \om_u^j \wedge \om_\psi^{n-j-1} + \frac{1}{V}\int_X \log f \om_u^n - \frac{1}{V}\int_X \log f \om_\psi^n\\
    &= n \E_\Theta(u) - n\E_\Theta(\psi) + \frac{1}{V}\int_X \log f \om_u^n - \H(\psi)
\end{align*}
}%
where in the last equality we recognized the entropy of $\mu_h=\om_\psi^n$ with respect to $\om^n$.

Next, as $\H(u)=\H_\mu(u)+ \frac{1}{V}\int_X \log f \om_u^n$ for any $u$ with finite entropy, we deduce that
\begin{align*}
    \M(u) - \M(\psi)
    &= \H(u) - \H(\psi) + \Bar{s} \big(\E(u)-\E(\psi)\big) - n \big(\E_{\Ric}(u) - \E_{\Ric}(\psi)\big)\\
    &=\H_\mu(u) + \Bar{s} \big(\E(u) - \E(\psi)\big) -n \big(\E_\Theta(u) - \E_\Theta(\psi)\big)=\wM(\wu),
\end{align*}
where the last equality is (\ref{eqn:Useful Formula}).

In remains to check Claim \ref{claim:NP vs BT}. 
Since the two measures coincide on $X^\reg$, it suffices to verify the RHS puts no mass on $X^\sing$. 
Note that $- \log f$ has analytic singularities given by an ideal sheaf $\CI$ vanishing over $X^\sing$. 
Let $p: Y \to X$ be a log-resolution of $(X,\CI)$ and $E_i$'s be irreducible components of the exceptional divisor.
One has $p^\ast(A \om - \ddc \log f) = \ta + \sum_i \ld_i [E_i]$ for some $\ta$ smooth and $\ld_i \geq 0$. 
Hence,  {\small
\[
    \int_X (A \om - \ddc \log f) \w \om_u^j \w \om_\psi^{n-j} = \int_Y \ta \w p^\ast(\om_u^j \w \om_\psi^{n-1-j}) + \sum_i \ld_i \int_{E_i} p^\ast (\om_u^j \w \om_\psi^{n-j}). 
\]}
For each $i$, $\int_{E_i} p^\ast (\om_u^j \w \om_\psi^{n-1-j}) = \int_{E_i} p^\ast \om^{n-1} = 0$; thus the proof completes.
\end{proof}

\subsubsection{The extended Mabuchi functional}
The following result is the analog of \cite[Lem.~3.1]{Berman_Darvas_Lu_2017} for singular varieties.
\begin{lem}
    \label{lem:BDL17}
    Let $u\in \CE^1(X,\om)$. Then there exists $(u_k)_k\in \CE^1(X,\om)\cap L^{\infty}(X)$ such that $u_k$ strongly converges to $u$, $\H(u_k)\to \H(u)$ and $\H_\mu(u_k)\to \H_\mu(u)$.
\end{lem}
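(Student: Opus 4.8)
The plan is to split the argument according to whether $\H(u)$ is finite, which by Lemma~\ref{lem:compare_klt_entropy} is equivalent to the finiteness of $\H_\mu(u)$. \emph{Infinite entropy.} If $\H(u)=+\infty$ (hence also $\H_\mu(u)=+\infty$), I would simply take the canonical truncations $u_j:=\max\{u,-j\}\in\CE^1(X,\om)\cap L^\infty(X)$. Since $u_j\searrow u$ one has $P_\om(u_j,u)=u$, so $d_1(u_j,u)=\E(u_j)-\E(u)\to 0$ by continuity of $\E$ along decreasing sequences of $\CE^1(X,\om)$; thus $u_j\to u$ strongly and $\om_{u_j}^n\to\om_u^n$ weakly. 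By lower semi-continuity of the entropy with respect to the weak convergence of measures, $\liminf_j\H(u_j)\geq\H(u)=+\infty$ and $\liminf_j\H_\mu(u_j)\geq\H_\mu(u)=+\infty$, so both entropies converge to $+\infty$.

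\emph{Finite entropy.} Suppose $\H(u)<+\infty$. Write $\om_u^n=f\om^n=F\mu_h$ with $F:=f/g$, where $g$ denotes the density of $\mu_h$ with respect to $\om^n$ (so $g\in L^p$ for some $p>1$, and $-\log g$ is $A\om$-psh and bounded above); recall $\H_\mu(u)=\H(u)+\frac{1}{V}\int_X(-\log g)\,\om_u^n$, so all three terms are finite. The decisive idea is to truncate the density \emph{relative to $\mu_h$}: set $F_j:=\min\{F,j\}$, normalize $c_j:=V\big(\int_X F_j\,d\mu_h\big)^{-1}\searrow 1$, and let $u_j\in\CE^1(X,\om)\cap L^\infty(X)$ solve
\[
    (\om+\ddc u_j)^n=c_j F_j\,\mu_h,\qquad \sup_X u_j=\sup_X u,
\]
which exists and is globally bounded by \cite{EGZ_2009}, since the density $c_j F_j g\leq c_1 j\,g$ still lies in $L^p(X,\om^n)$. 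Dominated convergence gives $c_j F_j\to F$ in $L^1(\mu_h)$, hence $\om_{u_j}^n\to\om_u^n$ in total variation, a fortiori weakly on $X$. Using the pointwise bound $|F_j\log F_j|\leq|F\log F|$ for $j\geq 1$, the fact that $F\log F\in L^1(\mu_h)$, and $c_j\to 1$, dominated convergence also yields $\int_X c_j F_j\log(c_j F_j)\,d\mu_h\to\int_X F\log F\,d\mu_h$, that is $\H_\mu(u_j)\to\H_\mu(u)$.

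To recover the convergence of the non-adapted entropy I would use $\H(u_j)=\H_\mu(u_j)-\frac{1}{V}\int_X(-\log g)\,\om_{u_j}^n$ and prove $\int_X(-\log g)\,\om_{u_j}^n\to\int_X(-\log g)\,\om_u^n$. Weak convergence of $\om_{u_j}^n$ cannot be invoked directly since $-\log g$ is only quasi-psh (the adapted density is unbounded near $X^\sing$); instead, splitting $-\log g=(-\log g)_+-(\log g)_+$ with $(\log g)_+$ bounded, the integrand $(-\log g)_+\,c_j F_j$ is, up to the factor $c_j\to1$, monotone increasing to $(-\log g)_+F$ (monotone convergence), while $(\log g)_+\,c_j F_j\leq C F\in L^1(\mu_h)$ permits dominated convergence — this is exactly where truncating relative to $\mu_h$ rather than relative to $\om^n$ pays off. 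Combining the two limits gives $\H(u_j)\to\H(u)$. Finally, since $(\H(u_j))_j$ is bounded, Proposition~\ref{prop:Strong Compactness} yields a subsequence converging strongly in $\CE^1(X,\om)$ to some $w$; weak continuity of the Monge--Amp\`ere operator along strongly convergent sequences forces $\om_w^n=\om_u^n$, hence $w=u$ once the $\sup$-normalization is matched, and a routine subsequence argument promotes this to $u_j\to u$ strongly. The step I expect to be the main obstacle is precisely this simultaneous control of both entropies — equivalently, the continuity of the singular-weight integral $\int_X(-\log g)\,\om_{u_j}^n$ along the approximation — which dictates the $\mu_h$-truncation; once that is achieved, upgrading weak convergence of the measures to $d_1$-convergence is a soft consequence of the strong compactness of bounded-entropy sublevels together with uniqueness in the complex Monge--Amp\`ere equation.
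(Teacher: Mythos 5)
Your route is genuinely different from the paper's. The paper truncates the density of $\om_u^n$ relative to $\om^n$ (Lemma~\ref{lem:regularization_fini_entropy}), obtains $\H(u_k)\to\H(u)$ by dominated convergence, and then transfers this to $\H_\mu$ via the Chebyshev-type tail estimate from Lemma~\ref{lem:lsc_ent_mu}; you instead truncate relative to $\mu_h$, get $\H_\mu(u_j)\to\H_\mu(u)$ directly, and then try to transfer back to $\H$. The $\mu_h$-truncation is appealing and the treatment of the $\H_\mu$-entropy is correct (existence and boundedness of $u_j$, total-variation convergence of $\om_{u_j}^n$, the bound $\lvert F_j\log F_j\rvert\leq\lvert F\log F\rvert$, and the passage from a strongly convergent subsequence to full strong convergence are all fine).

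The step recovering $\H(u_j)\to\H(u)$, however, contains a real sign error. You write ``splitting $-\log g=(-\log g)_+-(\log g)_+$ with $(\log g)_+$ bounded,'' but it is the other way around: $-\log g$ is $A\om$-psh, hence bounded \emph{above}, so $(-\log g)_+$ is the bounded half, while $(\log g)_+$ is \emph{un}bounded precisely because the adapted density $g=\dd\mu_h/\om^n$ blows up along $X^\sing$ in the klt case (this is why $g\in L^p$, $p>1$, but $g\notin L^\infty$). Consequently the proposed domination $(\log g)_+\,c_jF_j\leq CF$ with $C$ a constant simply fails — no such $C$ exists. The correct domination is $(\log g)_+\,c_jF_j\leq c_1\,(\log g)_+F$, and what makes this a valid $L^1(\mu_h)$-dominating function is not boundedness of $(\log g)_+$ but the integrability statement in Lemma~\ref{lem:compare_klt_entropy}: since $\H(u)<+\infty$, one has $-\log g\in L^1(\om_u^n)$, i.e.\ $(\log g)_+F\in L^1(\mu_h)$. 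With that in hand dominated convergence applies to both halves (the bounded $(-\log g)_+$ term being the easy one). So the proof is repairable, but as written the dominating function is wrong, and the repair needs an explicit invocation of Lemma~\ref{lem:compare_klt_entropy} that you have not made.
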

One can derive the continuity of $\H$ as a consequence of Lemma~\ref{lem:regularization_fini_entropy}. 
To check the continuity of $\H_\mu$, one can simply follow the same argument as in the proof of Lemma~\ref{lem:lsc_ent_mu}.

Similarly to the smooth setting, Lemma \ref{lem:Twisted Energy} implies that the twisted Ricci energy $\E_\Theta$ extends to a strongly continuous functional on $\CE^1(X,\om)$. 
Thus $\wM$ extends to a functional on $\CE^1(X,\wom)$, and Proposition \ref{prop:Change metrics bounded case Mabuchi} leads to the following result.
\begin{prop}\label{prop:Change Reference Final case}
    The Mabuchi functional uniquely extends to a translation invariant and strongly lower semi-continuous functional $\M:\CE^1(X,\om)\to \BR \cup \{+\infty\}$. 
    Moreover,
    \begin{equation}
	   \label{eqn:Change_Reference}
	   \wM(\wu)=\M(u)-\M(\psi)
    \end{equation}
    for any $u\in \mathcal{E}^1(X,\om)$. 
    In particular, finite entropy potentials $u\in \mathcal{E}^1(X,\om)$ have finite twisted Ricci energy $\E_{\Ric}(u)>-\infty$.
\end{prop}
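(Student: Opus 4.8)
The plan is to build $\M$ on all of $\CE^1(X,\om)$ by transporting the already-defined functional $\wM$ across the bijection $u \mapsto \wu = u - \psi$, then check the claimed properties one at a time. First I would record that $\wM$ makes sense on $\CE^1(X,\wom)$: the entropy $\H_\mu$ is defined on all of $\CE^1(X,\wom)$ (possibly $+\infty$), the Monge--Amp\`ere energy $\wE$ extends by monotonicity as in Section~\ref{sect_finite_energy}, and the twisted Ricci energy $\wE_\Theta$ extends to a strongly continuous functional on $\CE^1(X,\wom)$ by Lemma~\ref{lem:Twisted Energy} (applied to $\eta = \Theta$, using Theorem~\ref{thm:SL_and_Skoda_in_family} on the fixed variety $X$ to supply the constant $C_{SL}$, together with the fact that $d_1$-convergence controls both $\I$ and $\|\cdot\|_{L^1}$). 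Hence $\wM := \H_\mu + \bar s\,\wE - n\wE_\Theta$ is well-defined on $\CE^1(X,\wom)$, and I would then \emph{define} $\M(u) := \wM(\wu) + \M(\psi)$ for $u \in \CE^1(X,\om)$, noting $\M(\psi)$ is finite because $\psi$ has finite entropy (its Monge--Amp\`ere is $\mu_h$, which has $L^p$ density). By Proposition~\ref{prop:Change metrics bounded case Mabuchi} this agrees with the original $\M$ on bounded potentials, so it is genuinely an extension; uniqueness of the extension follows from Lemma~\ref{lem:BDL17}, since any strongly lower semi-continuous extension is determined by its values on bounded potentials (approximate $u$ by $u_k$ bounded with $\H(u_k)\to\H(u)$, $\H_\mu(u_k)\to\H_\mu(u)$, hence $\wM(\wu_k)\to\wM(\wu)$).

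Next I would verify the three asserted properties. Translation invariance: $\M(u+c)$ corresponds to $\wu + c$, and $\wE(\wu+c) = \wE(\wu)+c$, $\wE_\Theta(\wu+c) = \wE_\Theta(\wu) + \frac{c}{V}\int_X \Theta\wedge\wom^{n-1} = \wE_\Theta(\wu) + \frac{\bar s}{n}c$, while $\H_\mu$ is translation invariant; combining, the $\bar s\,\wE$ and $-n\wE_\Theta$ shifts cancel, so $\wM(\wu+c)=\wM(\wu)$ and thus $\M(u+c)=\M(u)$. Strong lower semi-continuity: if $u_k \to u$ strongly in $\CE^1(X,\om)$ then $\wu_k \to \wu$ strongly in $\CE^1(X,\wom)$ (the $d_1$ distances agree under the affine translation by $\psi$, cf. Section~\ref{ssec:Strong topology}); then $\wE_\Theta$ is strongly continuous by Lemma~\ref{lem:Twisted Energy}, $\wE$ is strongly continuous, and $\H_\mu$ is strongly lower semi-continuous because the Monge--Amp\`ere operator is weakly continuous along strong convergence (\cite[Prop.~2.6]{BBEGZ_2019}) and $\H_\mu$ is weakly lower semi-continuous on probability measures (\cite[Lem.~6.2.13]{DZ98}, cf. Section~\ref{ssec:entropy}), using that $\mu_h$ has $L^p$ density so Proposition~\ref{prop:Strong Compactness} applies. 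Hence $\liminf_k \wM(\wu_k) \geq \wM(\wu)$, i.e. $\liminf_k \M(u_k)\geq \M(u)$. The change-of-reference identity \eqref{eqn:Change_Reference} is then immediate from the definition of $\M$.

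Finally, for the last clause: if $u \in \CE^1(X,\om)$ has $\H(u) < +\infty$, then by Lemma~\ref{lem:compare_klt_entropy} also $\H_\mu(u) < +\infty$, so in $\wM(\wu) = \H_\mu(u) + \bar s(\E(u)-\E(\psi)) - n(\E_\Theta(u) - \E_\Theta(\psi))$ (the extension of \eqref{eqn:Useful Formula}) every term except possibly $\E_\Theta(u)$ is manifestly finite; since $\wM(\wu) = \M(u)-\M(\psi)$ is finite, $\E_\Theta(u)$ is finite. Translating back via $\Ric(\om) = \Theta + \ddc\log f$ and the integration-by-parts identity $\sum_{j} \int_X u\, \ddc\log f\wedge \om_u^j\wedge\om^{n-1-j} = \int_X \log f\,\om_u^n - \int_X\log f\,\om^n$ (valid after approximation and \cite[Thm.~2.6]{Demailly_1985}, with $\int \log f\,\om_u^n$ finite by the finite-entropy hypothesis combined with Lemma~\ref{lem:compare_klt_entropy}), one gets $\E_{\Ric}(u) = \E_\Theta(u) + \frac{1}{nV}(\int_X\log f\,\om_u^n - \int_X\log f\,\om^n) > -\infty$.

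The main obstacle I expect is the technical bookkeeping around the extension of $\wE_\Theta$ and the integration-by-parts identity on the singular variety: one must be careful that the non-pluripolar products appearing in $\E_\Theta$ and $\E_{\Ric}$ agree with Bedford--Taylor products on bounded potentials (as in Claim~\ref{claim:NP vs BT}) and that no mass is lost across $X^\sing$ when unbounded potentials are approximated; all of this is handled by passing to a resolution and using that exceptional divisors have zero intersection with pullbacks of $[\om]^{n-1}$, exactly as in the proof of Proposition~\ref{prop:Change metrics bounded case Mabuchi}, so the difficulty is really one of assembling the pieces rather than a new idea.
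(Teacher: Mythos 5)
Your proposal is correct and follows essentially the same route as the paper: both rely on Lemma~\ref{lem:BDL17} to approximate $u$ by bounded potentials along which the entropies converge, invoke Proposition~\ref{prop:Change metrics bounded case Mabuchi} together with the strong continuity of $\E$, $\wE_\Theta$ and the lower semi-continuity of $\H_\mu$ to pass to the limit, and then read off $\eqref{eqn:Change_Reference}$ and the finiteness of $\E_{\Ric}$. Your presentation defines $\M := \wM(\,\cdot\,-\psi)+\M(\psi)$ up front and then verifies compatibility and the three properties, whereas the paper derives the extension from the approximating sequence, but this is a difference of exposition rather than of mathematical content; the explicit translation-invariance computation and the $\E_\Theta \leftrightarrow \E_{\Ric}$ conversion you spell out are the details the paper leaves as "easy to verify."
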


\begin{proof}
    By Lemma \ref{lem:BDL17}, for any $u\in \mathcal{E}^1(X,\om)$, we have a sequence $(u_k)_k\in \CE^1(X,\om)\cap L^\infty(X)$ such that $u_k$ strongly converges to $u$, $\H(u_k)\to \H(u)$ and $\H_\mu(u_k)\to \H_\mu(u)$. 
    In particular, $\wM(\wu_k)\to \wM(\wu)$. 
    If $u$ does not have  finite entropy, Proposition \ref{prop:Change metrics bounded case Mabuchi} implies that $\M(u_k)\to +\infty$ (see also Lemma \ref{lem:compare_klt_entropy}), while
    $$
    \M(u_k)\longrightarrow\wM(\wu)+\M(\psi)
    $$
    if $u$ has finite entropy. As the entropies, the Monge--Amp\`ere energy and the twisted Ricci energy $\E_\Theta$ are continuous along the sequence $(u_k)_k$, one can deduce that $\E_{\Ric} (u_k)\to \E_{\Ric} (u)$. 
    Hence, we have $\E_{\Ric} (u)>-\infty$ if $u$ has finite entropy and
    $$
    \M(u_k)\longrightarrow \M(u):=
    \begin{cases}
        \H(u)+\Bar{s}\E(u)-n\E_{\Ric} (u) &\text{ if } \H(u)<+\infty,\\
        +\infty &\text{ otherwise.}
    \end{cases},
    $$
    The equation (\ref{eqn:Change_Reference}) follows.

Next, the translation invariance of $\M$ is easy to verify, and (\ref{eqn:Change_Reference}) implies that $\M$ is strongly lower semi-continuous if and only if $\wM$ is strongly lower semi-continuous. 
To conclude, we recall that $\E$, $\E_\Theta$ are strongly continuous (see also Lemma \ref{lem:Twisted Energy}), and the entropy $\H_\mu$ is lower semi-continuous along strong convergent sequences (Section \ref{ssec:entropy}). 
\end{proof}

\subsubsection{Minimizers and coercivity}

We say that the Mabuchi functional $\M$ is \emph{coercive} if there exist $A > 0, B \geq 0$ such that
$$
    \M(u) \geq - A \E(u) - B
$$
for any $u\in \CE^1_\nmlz(X,\om)$.  The constant $A>0$ is said to be the \emph{slope} of $\M$. 

Recall that, given $u_0,u_1\in \CE^1(X,\om)$ a map $(0,1)\ni t\to u_t\in \CE^1(X,\om)$ is a weak subgeodesic segment joining $u_0,u_1$ if  $\limsup_{t\to 0^+}u_t\leq u_0$, and $\limsup_{t\to 1^-} u_t\leq u_1$ and if the function
$$
U(z,\tau):=u_{-\log \lvert \tau\rvert}(z)
$$
is $p_1^*\om$-psh on $X\times \{\tau\in \mathbb{C}^* \, : \, -\log\lvert \tau\rvert\in (0,1) \}$ where $p_1$ is the projection to the first component. The largest weak subgeodesic segment joining $u_0,u_1$ is called \emph{weak geodesic} and it exists (see e.g. \cite{Darvas_2017, Dinezza_Guedj_2018}).

The following convexity result is a key point of the variational approach.
\begin{prop}
\label{prop:Geod_Conv}
The Mabuchi functional $\M$ is convex along weak geodesic segments joining globally bounded functions in $\CE^1(X,\om)$.
\end{prop}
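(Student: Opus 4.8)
The plan is to reduce the convexity of $\M$ to the convexity (or affinity) of its constituent pieces along the weak geodesic, using the change-of-reference formula $\wM(\wu)=\M(u)-\M(\psi)$ from Proposition~\ref{prop:Change Reference Final case}. Since $\M(\psi)$ is a fixed finite constant and the bijection $u\mapsto \wu=u-\psi$ sends weak geodesics joining bounded potentials in $\CE^1(X,\om)$ to weak geodesics joining bounded potentials in $\CE^1(X,\wom)$, it suffices to prove that $\wM$ is convex along weak geodesics joining globally bounded potentials. Now $\wM(\wu)=\H_\mu(\wu)+\bar s\,\wE(\wu)-n\wE_\Theta(\wu)$, and the Monge--Amp\`ere energy $\wE$ is well known to be affine along weak geodesics (this is one of the founding properties of the geodesic structure; see Berndtsson's work and \cite{BBEGZ_2019}). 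So the key terms to control are the entropy term $\H_\mu$ and the twisted Ricci energy $\wE_\Theta$.

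First I would treat the smooth/bounded case on $X^\reg$ and then pass to the singular variety via a resolution. The standard strategy, going back to Berman--Berndtsson in the smooth case, is: (i) the entropy $\H_\mu$ is convex along weak geodesics — this follows because along the geodesic the Monge--Amp\`ere measures move by an optimal-transport/Brenier-type displacement and relative entropy is displacement convex; alternatively one invokes Berndtsson's subharmonicity of the Bergman-type kernels combined with the fact that $\H_\mu$ is the Legendre-type dual of a pressure functional known to be convex. (ii) The twisted energy $\wE_\Theta$ with $\Theta=\Ric(\mu_h)\in c_1(X)$ is \emph{linear} along weak geodesics — indeed $\wE_\eta$ for a closed $(1,1)$-form $\eta$ is affine along subgeodesics by the same cocycle/differentiation argument that gives affinity of $\wE$, since its first variation $\frac{d}{dt}\wE_\eta(u_t)=\frac1V\int_X \dot u_t\,\eta\wedge\wom_{u_t}^{n-1}$ and a second differentiation using the geodesic equation $\ddot u_t\,\wom_{u_t}^n = |\partial\dot u_t|^2_{\wom_{u_t}}\wom_{u_t}^n \geq 0$ shows the second derivative involves only a sign one can track; in fact for $\wE_\Theta$ one gets exactly affinity along genuine smooth geodesics, and convexity is preserved under the sup-defining-weak-geodesic limit. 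Combining affinity of $\wE$, affinity of $\wE_\Theta$, and convexity of $\H_\mu$ gives convexity of $\wM$ along smooth geodesics; lower semicontinuity of $\wM$ (Proposition~\ref{prop:Change Reference Final case}) together with approximation of the weak geodesic by $\vep$-geodesics, whose boundary data converge, upgrades this to weak geodesics joining bounded potentials.

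The remaining point is the passage to the singular setting. Here I would use a log-resolution $p\colon Y\to X$ as already set up in Section~\ref{sect_adapted_measure} and in the proof of Proposition~\ref{prop:Change metrics bounded case Mabuchi}: pulling back, the weak geodesic $\wu_t$ on $X$ lifts to a weak geodesic $p^*\wu_t$ for the semipositive class $p^*[\om]$ on $Y$, the entropy $\H_\mu$ lifts to a relative entropy against the pullback measure (which by the klt hypothesis still has a density in some $L^p$, $p>1$, against a smooth volume form on $Y$), and the twisted energy $\wE_\Theta$ lifts to a twisted energy against $p^*\Theta$ plus exceptional contributions that vanish by the cohomological computation $[E_i]\cdot p^*[\om]^{n-1}=0$ used in Claim~\ref{claim:NP vs BT}. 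On $Y$ one may further regularize $p^*[\om]$ to a genuine K\"ahler class by adding $\vep\om_Y$, run the smooth Berman--Berndtsson convexity argument there, and let $\vep\to 0$ using the strong continuity of $\wE$, $\wE_\Theta$ and the lower semicontinuity of entropy established in Section~\ref{sect_entropy_family}. The main obstacle I anticipate is precisely making the convexity of the entropy term rigorous in this weak/singular regime: one must either quote a sufficiently general version of Berman--Berndtsson's subgeodesic-convexity of the entropy valid for bounded potentials and $L^p$ reference densities, or reprove it via the lower semicontinuity and approximation machinery of the paper; controlling the entropy along the approximating $\vep$-geodesics uniformly, so that no mass escapes in the limit, is the delicate step, whereas the affinity of the energy terms is routine bookkeeping.
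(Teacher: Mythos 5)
Your overall plan—pull back to a resolution and invoke a smooth-manifold result à la Berman--Berndtsson—matches the spirit of the paper's proof, but the intermediate decomposition you rely on contains a genuine error, and you also miss a key preparatory reduction that the paper needs to make the pullback work.

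The main gap is your claim that the twisted energy $\wE_\Theta$ is affine along (weak) geodesics. It is not. Along a smooth geodesic $u_t$ with $\ddot u_t = \lvert \partial \dot u_t\rvert^2_{\om_{u_t}}$, a direct computation gives
\[
\frac{d^2}{dt^2}\E_\eta(u_t)
= \frac{1}{V}\int_X \lvert \partial \dot u_t\rvert^2_{\om_{u_t}}\,\eta\wedge\om_{u_t}^{n-1}
- \frac{n-1}{V}\int_X d\dot u_t\wedge d^c\dot u_t\wedge\eta\wedge\om_{u_t}^{n-2},
\]
which has no definite sign for a general closed $(1,1)$-form $\eta$ (even for $\eta\geq 0$ the two terms compete). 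The "same cocycle/differentiation argument" you appeal to works for $\E$ precisely because $\frac{d^2}{dt^2}\E(u_t)=0$ along geodesics, and that cancellation has no analogue for $\E_\eta$. Consequently the program "energy affine, twisted energy affine, entropy convex $\Rightarrow$ Mabuchi convex" is not valid: the smooth-case theorem of Berman--Berndtsson (and its extension in \cite[Thm.~4.2]{DiNezza_Lu_2022}) proves convexity of the full Mabuchi functional, not termwise convexity of the Chen--Tian pieces, and the paper invokes it in exactly that form. This is why the paper does \emph{not} split $\wM$ into parts at all; it shows that (after a change of reference) the pullback of $\M$ coincides with the genuine Mabuchi functional $\M^Y$ of a smooth compact Kähler manifold $(Y,\hat\om)$ and then applies Di Nezza--Lu directly to $\M^Y$.

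A second, more subtle point: you propose pulling back $\wM$ (with reference $\wom=\om_\psi$, whose top power is the adapted measure $\mu_h$) to a resolution and then adding $\vep\om_Y$. But $p^*\mu_h$ degenerates along the exceptional divisor, so on $Y$ you would be comparing to a singular reference measure, and the Di Nezza--Lu result is not stated in that generality. The paper sidesteps this by first solving a different Monge--Amp\`ere equation for $\hat\psi$ so that $(F^*\om_{\hat\psi})^n$ is a smooth positive multiple of $\eta^n$ on $Y$ (the factors $e^{g^\pm}$ absorb the exceptional contributions), and only then identifies $\hat\M$ with $\M^Y$ and pulls back weak geodesics. Without this preparatory change of reference the pullback does not land in the setting where the smooth convexity theorem applies, and the $\vep\to 0$ limit you sketch would require uniform control on the entropy with respect to a degenerate measure, which you acknowledge is delicate but do not resolve.
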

\begin{proof}
    Let $F:Y\to X$ be a resolution of singularities given by a sequence of blow-ups along smooth centers. As seen in Step 1 of Proposition \ref{prop:uniform_Lap_est} there exists $\ww \in \PSH(Y,\pi^*\omega)\cap \CC^\infty\big(Y\setminus \Exc(F)\big)$ and a Kähler form $\eta$ on $Y$ such that
    $$
    F^*\omega+\ddc \ww=\eta+\sum_{j=1}^m a_j[E_j]
    $$
    where $a_j>0$ and $(E_j)_{j=1,\dots,m}$ are the exceptional prime divisors. As the fibres of $F$ are connected $\ww = w\circ F$ for $w\in \PSH(X,\omega)$. Moreover the lift $\widetilde{\mu}$ of $\mu$, i.e. the pushforward by $\pi^{-1}$ of $\mathbf{1}_{X\setminus F\big(\Exc(F)\big)}\mu$, is given by  $ \prod_{j=1}^m \lvert s_j\rvert^{2b_j}_{h_j}\eta^n$ where $h_j$ are smooth hermitian metrics on $\mathcal{O}_Y(E_j)$, $s_j$ are holomorphic sections, and $b_j\in \BR_{>-1}$ (see also \cite[Lem.~3.2]{BBEGZ_2019}). 
    
    Set $v^+:=\sum_{\{j\,:\, b_j\geq 0\}}b_j\log \lvert s_j\rvert_{h_j}, v^-:=-\sum_{\{j\,:\, b_j<0\}}b_j\log \lvert s_j\rvert_{h_j}$. As $F^*(\omega_w)\geq \eta$, there exists a constant $B>0$ such that
    $$
    BF^*(\omega_w)+\ddc v^\pm\geq B\eta+\ddc v^\pm \geq 0.
    $$
    In particular the functions $v^\pm \circ F^{-1} + Bw\in \CC^\infty(X^\reg)$ extends to $B\omega$-psh functions $g^\mp$. Observe that by construction the measure $e^{g^+-g^-}\mu$ lifts to $ e^{v^--v^+}\widetilde{\mu}=\eta^n$.
    Let now $\hat{\phi}\in \PSH(X,\om) \cap L^\infty(X)$ be a solution to 
    $$
    (\om +\ddc \hat{\phi})^n=\frac{V}{\int_Y \eta^n}e^{g^+-g^-}\mu.
    $$
    Clearly $\phi:=\hat{\psi}\circ F\in \PSH(Y,F^*\omega)$ satisfies $(F^*\om+\ddc\phi)^n=\frac{V}{\int_Y\eta^n} \eta^n$. 
    
    Therefore we are in a similar situation of Propositions \ref{prop:Change metrics bounded case Mabuchi}, \ref{prop:Change Reference Final case}. Indeed, the same proofs yield
    $$
    \hat{\M}(\hat{u})=\wM(\wu)+C_1=\M(u)+C_2
    $$
    for some uniform constants $C_1,C_2$ and for any $u\in \CE^1(X,\om)$ where $\hat{u}:=u-\hat{\psi}\in \CE^1(X,\om_{\hat{\psi}})$ and where $\hat{\M}$ is the Mabuchi functional with respect to $(X,\om_{\hat{\psi}})$. 
    In particular, $\M$ is geodesically convex if and only if $\hat{\M}$ is geodesically convex. Hence, to conclude the proof it is enough to check that $\hat{\M}$ is geodesically convex.

    Set $\hat{\om}:=F^*\om_{\hat{\psi}}$. We claim that the pullback map $\CE^1(X,\om_{\hat{\psi}})\ni\hat{u} \mapsto v:=\hat{u}\circ F\in \CE^1(Y,\hat{\om})$ transforms the Mabuchi functional $\hat{\M}$ with respect to $(X,\om_{\hat{\psi}})$ into the Mabuchi functional $\M^Y$ with respect to $(Y,\hat{\om})$. The latter is defined by $\M^Y(v)=\H^Y(v)+\Bar{s}^Y \E^Y(v)-n \E_{\Ric(\eta)}^Y(v)$ where 
    \begin{enumerate}
        \item $\H^Y$ is the entropy with respect to $\frac{V}{\int_Y \eta^n}\eta^n=(F^*\om_{\hat{\psi}})^n$;
        \item $\Bar{s}^Y= \frac{c_1(Y)\cdot [\hat{\om}]^{n-1}}{[\hat{\om}]^n}$ is the average of the scalar curvature of $(Y,\hat{\om})$;
        \item $\E^Y(v):=\frac{1}{(n+1)V}\sum_{j=0}^n \int_Y v \, \hat{\om}^j\wedge \hat{\om}_v^{n-j} $ is the Monge--Amp\`ere energy with respect to $(Y,\hat{\om})$;
        \item $\E^Y_{\Ric(\eta)}(v):=\frac{1}{nV}\sum_{j=0}^{n-1} \int_Y v \, \Ric(\eta)\wedge\hat{\om}^j\wedge \hat{\om}_v^{n-j-1} $ is the twisted Ricci energy with respect to $(Y,\hat{\om})$, as $\Ric(\hat{\om})=\Ric(\eta)$.
    \end{enumerate}
Indeed, it is immediate to check that $\H^Y(v)=\H_{\om_{\hat{\psi}}^n}(\hat{u})$ where as above $v:=\hat{u}\circ F$. 
Similarly the properties of the non-pluripolar product gives $\E^Y(v)=\hat{\E}(\hat{u})$ where by $\hat{\E}$ we mean the Monge--Amp\`ere energy with respect to $(X,\om_{\hat{\psi}})$. 
Then the projection formula and \cite[Thm~2.4]{DDL_2018b} give $\bar{s}^Y=\frac{F_*c_1(Y)\cdot [\om]^{n-1}}{[\om]^n}=\Bar{s}$. 
Finally, as $F^*\Ric(\om_{\hat{\psi}})= \Ric(\eta)$ over $Y\setminus \Exc(F)$ and the non-pluripolar product does not put mass on pluripolar sets, we gain $\E^Y_{\Ric(\eta)}(v)=\hat{\E}_{\Ric(\om_{\hat{\psi}})}(\hat{u})$. Hence $\M^Y(v)=\hat{\M}(\hat{u})$, i.e. the claim is proved.

    Next, the pullback map also produces a bijection among weak geodesic segments, i.e. it can be easily checked that $(\hat{u}_t)_{t\in(0,1)}$ is a weak geodesic segment joining $\hat{u}_0,\hat{u}_1\in \CE^1(X,\om_{\hat{\psi}})$ if and only if $(v_t)_{t\in (0,1)}:=(\hat{u}_t\circ F)_{t\in(0,1)}$ is a weak geodesic segment joining $v_0:=\hat{u}_0\circ F, v_1:=\hat{u}_1\circ F\in \CE^1(X,\om_{\hat{\psi}})$. 
    Therefore the convexity of $\hat{\M}$ along weak geodesics is equivalent to the convexity of $\M^Y$ along weak geodesics. 
     It follows from an extension of the result by Berman--Berndtsson \cite{Berman_Berndtsson_2017} on the convexity of Mabuchi functional (see \cite[Thm.~4.2]{DiNezza_Lu_2022}) that $\M^Y$ is convex along weak geodesics joining globally bounded functions with finite entropy. 
    This clearly implies that $\M^Y$ is convex along weak geodesics joining globally bounded functions, which concludes the proof.
\end{proof}

\begin{rmk}
As a consequence of Proposition \ref{prop:Change Reference Final case} and with the same notations,  $\M$ is coercive with slope $A>0$ if and only if $\wM$ is coercive with slope $A>0$.  
Moreover, combining Proposition \ref{prop:Change Reference Final case} with Proposition \ref{prop:Geod_Conv} gives the convexity of $\wM$ along weak geodesic segments in $\CE^1(X,\wom)$.
\end{rmk}

The existence of minimizers of $\M$ relates to its coercivity as follows: 
\begin{thm}
\label{thm:Coercivity and minimizers Singular Setting}
The implications $(i)\Rightarrow (ii)\Rightarrow (iii)$ hold regarding the following statements.
\begin{enumerate}
    \item $\M$ admits a unique minimizer;
    \item $\M$ is coercive;
    \item $\M$ admits a minimizer.
\end{enumerate}
\end{thm}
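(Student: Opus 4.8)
The plan is to establish the two implications separately, reducing as much as possible to the structural properties of $\M$ already developed: strong lower semi-continuity (Proposition~\ref{prop:Change Reference Final case}), translation invariance, geodesic convexity along bounded potentials (Proposition~\ref{prop:Geod_Conv}), and the strong compactness of sublevel sets of the entropy (Proposition~\ref{prop:Strong Compactness}, combined with Lemma~\ref{lem:compare_klt_entropy}).

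\emph{Proof of $(ii)\Rightarrow(iii)$.} Assume $\M$ is coercive, so $\M(u)\geq -A\E(u)-B=A\,d_1(u,0)-B$ for all $u\in\CE^1_\nmlz(X,\om)$. Take a minimizing sequence $(u_k)_k\subset\CE^1_\nmlz(X,\om)$ for $\inf_{\CE^1_\nmlz}\M$; by coercivity $\sup_k d_1(u_k,0)=\sup_k(-\E(u_k))<+\infty$, and since $\M(u_k)$ is bounded above while $\bar s\,\E(u_k)$ and $n\E_{\Ric}(u_k)$ are controlled via Lemma~\ref{lem:Twisted Energy} (using $\E(u_k)$ bounded), the entropies $\H(u_k)$ are uniformly bounded above. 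By Proposition~\ref{prop:Strong Compactness} — equivalently by the strong compactness of $\{\H_\mu\leq C\}$, invoking the change-of-reference formula \eqref{eqn:Change_Reference} if one prefers to work with $\wom$ — a subsequence converges strongly to some $u_0\in\CE^1_\nmlz(X,\om)$. Strong lower semi-continuity of $\M$ (Proposition~\ref{prop:Change Reference Final case}) then gives $\M(u_0)\leq\liminf_k\M(u_k)=\inf_{\CE^1_\nmlz}\M$, so $u_0$ is a minimizer on $\CE^1_\nmlz$; by translation invariance it minimizes $\M$ on all of $\CE^1(X,\om)$.

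\emph{Proof of $(i)\Rightarrow(ii)$.} This is the classical Darvas--Rubinstein-type argument. Suppose $\M$ has a unique minimizer $u_{\min}$, which after translation we may take in $\CE^1_\nmlz$, and suppose for contradiction that $\M$ is not coercive: there is a sequence $(u_k)_k\subset\CE^1_\nmlz$ with $d_1(u_k,0)=-\E(u_k)=:\ell_k\to+\infty$ and $\M(u_k)\leq \varepsilon_k\,\ell_k$ with $\varepsilon_k\to 0$ (if $d_1(u_k,0)$ stayed bounded one extracts a strongly convergent subsequence and contradicts uniqueness of the minimizer by lower semi-continuity). For each $k$ let $(u_{k,t})_{t\in[0,1]}$ be the finite-energy weak geodesic joining $u_{\min}$ to $u_k$, reparametrized so that $t=0$ is $u_{\min}$; since $d_1$ is affine along geodesics, one has $d_1(u_{k,t},0)\to+\infty$ for $t>0$ fixed. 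Set $t_k:=1/\ell_k$ and $w_k:=u_{k,t_k}$; then $d_1(w_k,u_{\min})=t_k\,d_1(u_{\min},u_k)$ stays bounded, so after extracting a subsequence $w_k$ converges in $d_1$ (weakly, then by a compactness argument strongly, controlling entropy along the geodesic as in \cite{BBEGZ_2019}) to some $w_\infty\in\CE^1_\nmlz$. Convexity of $\M$ along the geodesic (Proposition~\ref{prop:Geod_Conv}, extended to finite-energy endpoints by approximation using Lemma~\ref{lem:BDL17} and strong continuity of the energies, together with lower semi-continuity) yields
\[
  \M(w_k)\leq (1-t_k)\M(u_{\min})+t_k\,\M(u_k)\leq \M(u_{\min})+t_k\varepsilon_k\ell_k=\M(u_{\min})+\varepsilon_k,
\]
so $\liminf_k\M(w_k)\leq\M(u_{\min})$; by lower semi-continuity $\M(w_\infty)\leq\M(u_{\min})$, hence $w_\infty$ is also a minimizer, so $w_\infty=u_{\min}$ by uniqueness. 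But then the geodesic from $u_{\min}$ to $u_k$ ``leaves'' $u_{\min}$ at unit $d_1$-speed $d_1(u_{\min},u_k)=\ell_k+o(\ell_k)$ and returns near $u_{\min}$ at time $t_k$; a more careful bookkeeping (using the convexity inequality at $w_k$ together with $\M(w_k)\to\M(u_{\min})$ and strict behaviour of $\E$) shows the geodesic is essentially constant, forcing $d_1(u_{\min},u_k)$ to be bounded, a contradiction.

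\emph{Main obstacle.} The delicate point is the $(i)\Rightarrow(ii)$ step: one must run the geodesic/convexity compactness argument entirely within $\CE^1$ on a \emph{singular} klt variety, where Proposition~\ref{prop:Geod_Conv} is only stated for geodesics joining \emph{globally bounded} potentials. Upgrading convexity and the needed compactness/semicontinuity to finite-energy endpoints — controlling the entropy along geodesics so that Proposition~\ref{prop:Strong Compactness} applies, and making the final ``the geodesic must be constant'' contradiction rigorous without a clean Chen--Cheng a priori estimate — is where the real work lies; the change-of-reference reduction \eqref{eqn:Change_Reference} to $(Y,\hat\om)$ used in the proof of Proposition~\ref{prop:Geod_Conv} is the natural device to fall back on.
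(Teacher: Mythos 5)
Your $(ii)\Rightarrow(iii)$ argument matches the paper's: bound $-\E(u_k)$ by coercivity, deduce an entropy bound by isolating $\H$ in the Chen--Tian-type decomposition of $\M$, invoke strong compactness of entropy sublevel sets, and conclude by strong lower semi-continuity. One small imprecision: $\Ric(\om)$ is a current rather than a smooth form, so you cannot literally cite Lemma~\ref{lem:Twisted Energy} for $\E_{\Ric}$; the paper passes through the change-of-reference formula~\eqref{eqn:Useful Formula} so that the twisted energy involves the smooth form $\Theta$. This is cosmetic, not a gap.

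The $(i)\Rightarrow(ii)$ direction, however, has two genuine problems. First, the geodesic you consider joins $u_{\min}$ to $u_k$, and neither endpoint is known to be bounded, whereas Proposition~\ref{prop:Geod_Conv} is established only for geodesics with globally bounded endpoints. You flag this as ``where the real work lies'' but leave it unresolved, and it really is the crux: without convexity along your geodesic, the inequality $\M(w_k)\leq(1-t_k)\M(u_{\min})+t_k\M(u_k)$ is not available. The paper circumvents this by defining the candidate slope $A$ as an infimum over \emph{bounded} normalized potentials $v$, so that the contradiction sequence $v_k$ can be taken in $\CE^1_\nmlz\cap L^\infty$ from the start, and by replacing the fixed endpoint $u_{\min}$ with a bounded approximating sequence $u_j\to u_{\min}$ furnished by Lemma~\ref{lem:BDL17}; this forces a double limit $k\to\infty$ then $j\to\infty$, but every geodesic used has bounded endpoints, so Proposition~\ref{prop:Geod_Conv} applies verbatim.

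Second, the end of your argument is internally inconsistent. You write $d_1(w_k,u_{\min})=t_k\,d_1(u_{\min},u_k)$ with $t_k=1/\ell_k$; since $d_1(u_{\min},u_k)=\ell_k+O(1)$ by the triangle inequality, this gives $d_1(w_k,u_{\min})\to 1$. Passing to the strong limit, $d_1(w_\infty,u_{\min})=1\neq 0$, while $\M(w_\infty)\leq\M(u_{\min})$ makes $w_\infty$ a minimizer. That is already the contradiction with uniqueness, cleanly, with no ``careful bookkeeping'' needed. Your conclusion ``so $w_\infty=u_{\min}$ by uniqueness'' is therefore wrong on its face (uniqueness forbids two distinct minimizers, it does not force $w_\infty$ to coincide with $u_{\min}$ when you have just computed their distance to be $1$), and the subsequent sketch of the geodesic ``leaving and returning'' does not describe what happens: $w_k$ sits at small geodesic \emph{parameter} $t_k$, but at $d_1$-distance tending to $1$, precisely because the geodesic's $d_1$-speed is $d_1(u_{\min},u_k)\to+\infty$. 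The paper's version of this step stops exactly where yours should: $d_1(u,w)=1/2\neq 0$ while $w$ minimizes $\M$, contradiction.
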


\begin{proof}
    \textbf{Proof of $(i)\Rightarrow (ii)$.} We follow the same strategy of \cite{Darvas_Rubinstein_2017}. Assume $\M$ admits a unique minimizer $u\in \CE^1(X,\omega)$. Set 
    {\small
    $$
    A:=\inf\Big\{\frac{\M(v)-\M(u)}{d_1(u,v)}\, : \, d_1(u,v)\geq 1, v\in \PSH(X,\omega)\cap L^\infty(X), \sup_X v=0\Big\}\in \BR_{\geq 0}.
    $$}
    Letting $v\in\PSH(X,\omega)\cap L^\infty(X) $ with $\sup_Xv=0$, the triangle inequality yields 
    $
    \M(v)\geq Ad_1(v,0)-B
    $
    for any $v\in\PSH(X,\omega)\cap L^\infty(X)$ with $\sup_X v=0$ setting $B:=Ad_1(u,0)+A-\M(u)$. Lemma \ref{lem:BDL17} then gives that any $v\in \CE^1_\nmlz(X,\omega)$ can be strongly continuously approximated by a sequence $(v_k)_k\in \CE^1_\nmlz(X,\omega)\cap L^\infty(X)$ such that $\M(v)=\lim_{k\to +\infty}\M(v_k)$ (see also the proof of Proposition \ref{prop:Change Reference Final case}). We deduce that
    $
    \M(v)\geq Ad_1(v,0)-B
    $
    for any $v\in \CE^1_\nmlz(X,\omega)$. Thus it remains to prove that $A>0$.\\
    Suppose by contradiction there exist $(v_k)_k\in \CE^1_\nmlz(X,\omega)\cap L^\infty(X)$ such that $d_1(v_k,u)\geq 1$ and such that
    $$
    \frac{\M(v_k)-\M(u)}{d_1(u,v_k)}\longrightarrow 0.
    $$
    We first note that $d_1(u,v_k)\to +\infty$. Indeed otherwise, unless considering a subsequence, it would follows that
    $$
    0\leq \frac{1}{C_1}\big(\M(v_k)-\M(u)\big)\leq\frac{\M(v_k)-\M(u)}{d_1(u,v_k)}\longrightarrow 0
    $$
    for a constant $C_1\geq 1$.
    Thus $\M(v_k)\leq C_2$ would give $v_k\to v$ strongly and $\M(v)\leq \liminf_{k\to +\infty}\M(v_k)= \M(u),$ by combining the strong compactness of Theorem \ref{thm:strong_compact_fami} and the strong lower semi-continuity of $\M$ (Proposition \ref{prop:Change Reference Final case}). However, $u$ is the unique minimizer of $\M$ and $d_1(u,v)=\lim_{k\to +\infty} d_1(u,v_k)\geq 1$ provides a contradiction.\\
    Then let $(u_j)_j\in\CE^1_\nmlz(X,\omega)\cap L^\infty(X)$ such that $u_j\to u$ strongly and $\M(u_j)\to\M(u)$ (Lemma \ref{lem:BDL17}). For $j\gg 1$ such that $d_1(u_j,u)\leq 1/2$, let also $w_{k,j}\in \CE^1_\nmlz(X,\omega)$ be the element on the unit-speed geodesic joining $u_j$ and $v_k$ such that $d_1(u_j,w_{k,j})=1/2$. By convexity of $\M$ (Proposition \ref{prop:Geod_Conv}) we gain {\small
    \begin{align*}
        &\M(w_{k,j}) 
        \leq \frac{1}{2d_1(v_k,u_j)}\M(v_k)+\Big(1-\frac{1}{2d_1(v_k,u_j)}\Big)\M(u_j) \\
        &= \frac{d_1(v_k,u)}{2d_1(v_k,u_j)}\Big(\frac{\M(v_k)-\M(u)}{d_1(u,v_k)}\Big)+\frac{1}{2d_1(v_k,u_j)}\M(u)+  \Big(1-\frac{1}{2d_1(v_k,u_j)}\Big)\M(u_j).
    \end{align*} }
    As $d_1(v_k,u)\leq d_1(v_k,u_j)+1/2$, we obtain that $\M(w_{k,j})\leq C$ uniformly in $k$ and in $j$. As before, from Theorem \ref{thm:strong_compact_fami} and Proposition \ref{prop:Change Reference Final case} we deduce that $w_{k,j}\to w_j$ strongly as $k\to +\infty$ and that $ \M(w_j)\leq\liminf_{k\to +\infty}\M(w_{k,j})\leq \M(u_j) $. 
    Thus, it follows that $w_j\to w $ strongly and that
    $$
    \M(w)\leq \liminf_{k\to +\infty}\M(w_j)\leq\lim_{k\to +\infty}\M(u_j)=\M(u).
    $$
    Since $u$ is the unique minimizer of $\M$, the contradiction follows from
    $$
    d_1(u,w)=\lim_{j\to +\infty}d_1(u_j,w_j)=\lim_{j\to +\infty}\lim_{k\to +\infty}d_1(u_j,w_{k,j})= 1/2.
    $$
    \textbf{Proof of $(ii)\Rightarrow (iii)$.} Letting $(u_k)_k\in \CE^1_\nmlz(X,\omega)$ such that $\M(u_k)\to \inf_{v\in\CE^1_\nmlz(X,\omega)}\M(v)$, we obtain $\M(u_k)\leq C_3$ uniformly and the coercivity gives $d_1(u_k,0)\leq C_4$ uniformly. Then, as seen in Propositions \ref{prop:Change metrics bounded case Mabuchi}, Proposition \ref{prop:Change Reference Final case} (and with the same notations),
    \begin{align*}
    -n\E_{\Ric}(u_k)+\H(u_k)=&-\E_\Theta(u_k)+\H_\mu(u_k)+n\E_\Theta(\psi)-n\E_{\Ric}(\psi)+\H(\psi)\\
    =&-n\E_\Theta(u_k)+\H_\mu(u_k)+C_5. 
    \end{align*}
    In particular, as $\E_\Theta(u_k)$ is controlled by $d_1(u_k,0)\leq C_4$, we deduce that
    $
    \M(u_k)\geq C_6+\H_\mu(u_k).
    $
    Hence the strong compactness of Corollary \ref{cor:strong_compact_adapted_fami} and the strong lower semi-continuity of Proposition \ref{prop:Change Reference Final case} give that $u_k\to u$ strongly and
    $$
    \M(u)\leq\liminf_{k\to +\infty}\M(u_k)=\inf_{v\in\CE^1(X,\omega)}\M(v),
    $$
    concludes the proof.
\end{proof}

\subsection{Proof of openness of coercivity (Theorem~\ref{bigthm:openness_coercivity})}
The strong topology has been exploited to prove the openness of the coercivity of the Ding functional, which is related to Fano K\"ahler--Einstein potentials under certain parameters variations (c.f. \cite[Thm.~B]{Trusiani_2022}, \cite[Thm.~C]{Trusiani_2023_Continuity}). 
In our setting, with the properties proved for the family of Mabuchi functional $(\M_t)_t$ with respect to the strong topology varying metric spaces, one can follow a similar strategy to prove our Theorem~\ref{bigthm:openness_coercivity}.

In the sequel, we shall always assume that $\pi: \CX \to \BD$ satisfies Setting~\ref{sett:klt}.
Fix a smooth hermitian metric $h$ on $m K_{\CX/\BD}$.
Take $\Ta \in c_1(-K_{\CX/\BD})$ the curvature form and $\mu_t$ the adapted measures induced by $h$ with normalized mass $V$ on each $X_t$. 
By \cite{EGZ_2009}, up to shrinking $\BD$, on each $X_t$, there exists a unique $\psi_t \in \PSH(X_t, \om_t) \cap L^\infty(X_t)$ solving
\[
    (\om_t + \ddc_t \psi_t)^n = \mu_t,
    \quad\text{and}\quad
    \int_{X_t} \psi_t \om_t^n = 0.
\]
Set $\wom_t = \om_t + \ddc_t \psi_t$.

We denote by $(\M_t)_t$ the family of Mabuchi functionals defined with respect to $(X_t,\om_t)$, and similarly to Section \ref{ssec:Mabuchi} with $(\wM_t)_t$ we indicate the family of Mabuchi functionals with respect to $(X_t,\om_{\psi_t})$.

\begin{prop}\label{prop:Lower Semicontinuity Mabuchi families}
    The family of Mabuchi functionals $(\M_t)_t$ is lower semi-continuous with respect to the strong topology in families. 
\end{prop}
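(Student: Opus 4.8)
The plan is to write each fibrewise Mabuchi functional in a form where the behaviour of every term along strongly convergent sequences in families is already controlled. Running the change-of-reference computations of Propositions~\ref{prop:Change metrics bounded case Mabuchi} and \ref{prop:Change Reference Final case} fibrewise, together with the telescoping integration by parts $\sum_{j=0}^{n-1}\int_{X_t}\psi_t\,\ddc\log f_t\wedge\om_{\psi_t}^j\wedge\om_t^{n-1-j}=\int_{X_t}\log f_t\,\om_{\psi_t}^n-\int_{X_t}\log f_t\,\om_t^n$ (justified by approximation as in the proof of Proposition~\ref{prop:Change metrics bounded case Mabuchi}), I would first record that for every $t\in\BD$ and every $u\in\CE^1(X_t,\om_t)$,
\begin{equation*}
    \M_t(u)=\H_{t,\mu_t}(u)+\bar{s}\,\E_t(u)-n\,\E_{t,\Ta_t}(u)+c_t,\qquad c_t:=\frac{1}{V}\int_{X_t}\log f_t\,\om_t^n=-\H_{t,\mu_t}(0),
\end{equation*}
where $\Ta_t:=\Ric(\mu_t)$ is the restriction of the fixed \emph{smooth} form $\Ta\in c_1(-K_{\CX/\BD})$ and $f_t$ is the density of $\mu_t$ with respect to $\om_t^n$. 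The gain over the raw Chen--Tian expression is that $\Ta_t$ is smooth, so $\E_{t,\Ta_t}$ is governed by Lemma~\ref{lem:Twisted Energy} and Lemma~\ref{lem:continuity_twisted_energy}; moreover $c_t\in[\log c_\mu,0]$ is uniformly bounded, using the uniform lower bound $f_t\ge c_\mu>0$ on $\bBD_{1/2}$ coming from the local shape $f_t=\bigl(g_t(\sum_i|f_{i,t}|^2)^{1/m}\bigr)^{-1}$ (Section~\ref{sect_adapted_measure}) and continuity of the ingredients over $\bBD_{1/2}$.

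Next, let $(u_k)_k\in\CE^1_\fibre(\CX,\om)$ converge strongly in families to $u_0\in\CE^1(X_0,\om_0)$; the goal is $\M_0(u_0)\le\liminf_k\M_k(u_k)$. Since $\M_t$ is translation invariant and strong convergence in families survives renormalization, I may assume $\int_{X_k}u_k\,\om_k^n=0$, and I may assume $L:=\liminf_k\M_k(u_k)<+\infty$, passing to a subsequence with $\M_k(u_k)\le L+1$. The first real step is to force a uniform entropy bound along this subsequence: from the displayed identity and Lemma~\ref{lem:compare_klt_entropy_fami} it is enough to bound $\E_{k,\Ta_k}(u_k)$, because $\E_k(u_k)$ converges and $(c_k)_k$ is bounded. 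Applying Lemma~\ref{lem:Twisted Energy} with $v=0$ and $\eta=\Ta_k$ (uniformly $-C_\Ta\om_k\le\Ta_k\le C_\Ta\om_k$, as $\Ta$ and $\om$ are fixed on $\CX$), the quantity $|\E_{k,\Ta_k}(u_k)|$ is controlled by $f_S(\I_k(u_k,0))+\|u_k\|_{L^1(X_k,\om_k^n)}$; here $\|u_k\|_{L^1(X_k,\om_k^n)}\le 2C_{SL}V$ by Theorem~\ref{thm:SL_and_Skoda_in_family}, and $\I_k(u_k,0)=\I_k(u_k-\sup_{X_k}u_k,0)\le(n+1)\bigl(\sup_{X_k}u_k-\E_k(u_k)\bigr)$ is bounded since $\E_k(u_k)$ converges and $\sup_{X_k}u_k\le C_{SL}$. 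This yields that $(\H_k(u_k))_k$, equivalently $(\H_{k,\mu_k}(u_k))_k$, is bounded.

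With the entropy bound in place, Lemma~\ref{lem:continuity_twisted_energy} gives $\E_{k,\Ta_k}(u_k)\to\E_{0,\Ta_0}(u_0)$, strong convergence in families gives $\E_k(u_k)\to\E_0(u_0)$, and Lemma~\ref{lem:lsc_ent_mu} gives $\H_{0,\mu_0}(u_0)\le\liminf_k\H_{k,\mu_k}(u_k)$. The one remaining ingredient — and the step I expect to be the main obstacle — is the continuity of the correction term, $c_k\to c_0$: since $(0)_k$ converges strongly in families to $0$, Lemma~\ref{lem:lsc_ent_mu} only delivers $\limsup_k c_k\le c_0$, and the reverse inequality has to be argued separately. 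I would obtain it by splitting $c_t$ into its integrals over $X_t\cap\CW_\vep$ and $X_t\setminus\CW_\vep$, with $\CW_\vep\supset\CZ$ the neighbourhood of small capacity from Lemma~\ref{lem:small_capacity_near_sing}: on $\CX\setminus\CW_\vep$ the densities $f_t$ vary continuously in families and are pinched in a fixed interval $[c_\mu,C_\vep]$, so that piece converges by a chart-by-chart argument as in Lemma~\ref{lem:diffeo_hartogs}; on $\CW_\vep$ one bounds $\bigl|\int_{X_t\cap\CW_\vep}\log f_t\,\om_t^n\bigr|$ using $\log f_t\ge\log c_\mu$, the uniform $L^p$-estimate \eqref{eq:klt_Lp_estimate} (hence $(\log f_t)_+\le C_p f_t^{p-1}$ with $f_t^{p-1}$ bounded in $L^{p/(p-1)}$), and $\Vol_{\om_t}(X_t\cap\CW_\vep)\to0$ uniformly, so that this contribution is uniformly small as $\vep\to0$; a standard diagonal argument then gives $c_k\to c_0$. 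Taking $\liminf$ in the displayed identity finally yields $\liminf_k\M_k(u_k)\ge\H_{0,\mu_0}(u_0)+\bar{s}\,\E_0(u_0)-n\,\E_{0,\Ta_0}(u_0)+c_0=\M_0(u_0)$. (An equivalent packaging is to transfer the semicontinuity from the family $\wM_t$ to $\M_t$ via the fibrewise identity $\wM_t(\wu_t)=\M_t(u_t)-\M_t(\psi_t)$ and the strong continuity of $\psi_t$ from Corollary~\ref{cor:Strong continuity of psi}, but this still hinges on $\M_k(\psi_k)\to\M_0(\psi_0)$, i.e. on $c_k\to c_0$, so the crux is the same.)
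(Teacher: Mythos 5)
Your proposal is correct and in substance follows the same route as the paper. The only real presentational difference is that you work with the explicit identity $\M_t(u)=\H_{t,\mu_t}(u)+\bar{s}\,\E_t(u)-n\,\E_{t,\Ta_t}(u)+c_t$ and treat $c_t=-\H_{t,\mu_t}(0)$ directly, whereas the paper carries the same information through the decomposition $\M_t=\wM_t+\M_t(\psi_t)$; both reduce the uniform entropy bound to controlling $\E_{k,\Ta_k}(u_k)$ (you via Lemma~\ref{lem:Twisted Energy}, the paper via the direct pointwise bound $\Ta_k\geq -C_1\om_k$, which is marginally cleaner since it sidesteps the $L^\infty$ hypothesis in Lemma~\ref{lem:Twisted Energy}) and both then invoke Lemma~\ref{lem:lsc_ent_mu} and Lemma~\ref{lem:continuity_twisted_energy}. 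The one place you diverge is the continuity $c_k\to c_0$: you give a self-contained argument, splitting $X_t$ into $\CW_\vep$ and its complement via Lemma~\ref{lem:small_capacity_near_sing}, while the paper simply cites the proof of Lemma~\ref{lem:lsc_ent_mu} applied to $u_k\equiv 0$ (that proof establishes convergence of $\int_{X_k}\log(m_k)\,\om_{k,u_k}^n$, not just one-sided semicontinuity, so the citation is justified); your alternative is correct and more elementary, at the modest cost of invoking the uniform lower bound $f_t\geq c_\mu>0$, which does hold on $\pi^{-1}(\bBD_{1/2})$ but deserves a line of justification.
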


\begin{proof}
    By construction there is a smooth closed $(1,1)$-form $\Theta$ such that $\Ric(\mu_t)=\Theta_t$ for any $t\in \BD$, where we clearly denoted by $\Theta_t$ the restriction of $\Theta$ to $X_t$.
    We first claim that $\M_k(\psi_k)\to \M_0(\psi_0)$. Proposition \ref{prop:Change Reference Final case} gives
    $$
    -\M_k(\psi_k)=\M_k(0)-\M_k(\psi_k)=\wM_k(\widetilde{0})=\H_{\mu_k}(0)-\Bar{s}_k \E_k(\psi_k)+n\E_{\Theta_k}(\psi_k)
    $$
    where the last equality follows from (\ref{eqn:Useful Formula}). As by Corollary \ref{cor:Strong continuity of psi} $\psi_k$ converges strongly to $\psi_0$ and obviously $(\H_k(\psi_k))_k$ is uniformly bounded, we deduce that $\E_k(\psi_k)\to \E_0(\psi_0)$ and $\E_{\Theta_k}(\psi_k)\to \E_{\Theta_0}(\psi_0)$ (Lemma \ref{lem:continuity_twisted_energy}). 
    Note also that $\Bar{s}_t=n\frac{c_1(X_t)\cdot [\om_t]^{n-1}}{[\om_t]^n}$ does not depend on $t$ by Remark \ref{rmk:constant_volume}. Thus to have $\M_k(\psi_k)\to \M_0(\psi_0)$ it remains to prove that $\H_{\mu_k}(0)\to \H_{\mu_0}(0)$ and this follows from the proof of Lemma \ref{lem:lsc_ent_mu}.

    Next, without loss of generality, one can assume $\liminf_{k\to+\infty} \M_k(u_k) \leq B$ for some $B > 0$, and then we consider a subsequence still denoted by $u_k$ so that $(\M_k(u_k))_k$ converges to the liminf of the original sequence. 
    Letting $C_1>0$ such that $\Theta_k\geq - C_1 \om_k$ and letting $v_k:=u_k-\sup_X u_k$ we have
    \begin{align*}
    nV_k \E_{k,\Theta_k}(v_k)
    &=\sum_{j=0}^{n-1}\int_{X_k}v_k \Theta_k \wedge \om_k^j \wedge \om_{k,v_k}^{n-j-1}\\
    &\leq -C_1 \sum_{j=0}^{n-1} \int_{X_k} v_k \om_k^{j+1}\wedge \om_{k,v_k}^{n-j-1}
    \leq -C_1 V_k(n+1) \E_k(v_k).    
    \end{align*}
    Thus, since by Proposition \ref{prop:Change Reference Final case} we have $\lim_{k\to +\infty}\wM_k(\wu_k)=\lim_{k\to +\infty}\M_k(u_k)-\M_0(\psi_0)\leq B-\M_0(\psi_0)$, combining (\ref{eqn:Useful Formula}) and $\E_k(u_k)\geq -C_2$ we deduce $\H_\mu(u_k)\leq C_3$ uniformly along this subsequence $(u_k)_k$.
    Hence Lemmas \ref{lem:lsc_ent_mu} and \ref{lem:continuity_twisted_energy} lead to the strong lower semi-continuity of 
    $(\wM_t)_t$ in families, again exploiting formula (\ref{eqn:Useful Formula}). Therefore letting $(u_k)_k\in \CE^1_{\fibre}(\CX,\om)$ strongly converging to $u_0\in \CE^1(X_0,\om_0)$,
    we get
    $$
    \liminf_{k\to +\infty}\M_k(u_k)=\liminf_{k\to+\infty}\big(\wM_k(\wu_k)+\M_k(\psi_k)\big)\geq \wM_0(\wu_0)+\M_0(\psi_0)=\M_0(u_0),
    $$
    which concludes the proof.
\end{proof}

Now, we establish a uniform coercivity with an almost optimal slope of $(\M_t)_t$:

\begin{thm}
\label{thm:Openness Coercivity}
The coercivity threshold
\[
    \sm_t := \sup \set{A \in \BR}{\M_t \geq A(-\E_t) - B \text{ on $\CE^1_{\nmlz}(X_t,\om_t)$, for some $B \in \BR$}}
\]
is lower semi-continuous at $t=0$. 
Moreover, for any $A < \sm_0$, there exists $B>0$ and $r>0$ such that for each $t \in \BD_r$, for any $u \in \CE^1_{\nmlz}(X_t, \om_t)$, $\M_t(u) \geq A(-\E_t(u)) - B$. 
\end{thm}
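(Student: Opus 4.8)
The plan is to reduce the lower semi-continuity of $\sm_t$ at $t=0$ to the quantitative second assertion: granting that for every $A<\sm_0$ one has $\M_t\geq A(-\E_t)-B$ on $\CE^1_\nmlz(X_t,\om_t)$ whenever $|t|<r$, it follows that $\sm_t\geq A$ for such $t$, hence $\liminf_{t\to 0}\sm_t\geq A$, and letting $A\uparrow\sm_0$ gives $\liminf_{t\to 0}\sm_t\geq\sm_0$. So fix $A<A'<\sm_0$; since $-\E_0\geq 0$ on $\CE^1_\nmlz(X_0,\om_0)$, the definition of $\sm_0$ provides $B'$ with $\M_0\geq A'(-\E_0)-B'$ there. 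I would fix at the very end an interpolation length $\ell$, depending only on $A,A',B'$ and the sup-$L^1$ constant $C_{SL}$ of Theorem~\ref{thm:SL_and_Skoda_in_family}, chosen sufficiently large. Suppose for contradiction the conclusion fails for this $A$: there are $t_k\to 0$ and $u_k\in\CE^1_\nmlz(X_{t_k},\om_{t_k})$ with $\M_{t_k}(u_k)<A(-\E_{t_k}(u_k))-k$; then $\H_{t_k}(u_k)<+\infty$, hence also $\H_{t_k,\mu_{t_k}}(u_k)<+\infty$ by Lemma~\ref{lem:compare_klt_entropy}. First I would check $-\E_{t_k}(u_k)\to+\infty$: using the change-of-reference identities \eqref{eqn:Change_Reference}--\eqref{eqn:Useful Formula}, the non-negativity of the adapted entropy, the bound $\E_{\Theta_t}(v)\leq C(-\E_t(v))$ for $\sup$-normalized $v$ (coming from $\Theta_t\geq-C\om_t$, as in the proof of Proposition~\ref{prop:Lower Semicontinuity Mabuchi families}), and the uniform control of $\M_t(\psi_t)$, $\E_t(\psi_t)$, $\E_{\Theta_t}(\psi_t)$ near $t=0$ (Corollary~\ref{cor:Strong continuity of psi}, Lemma~\ref{lem:continuity_twisted_energy}), one obtains $\M_{t_k}(u_k)\geq -C(-\E_{t_k}(u_k))-C'$, which would contradict $\M_{t_k}(u_k)<A(-\E_{t_k}(u_k))-k\to-\infty$ if $(-\E_{t_k}(u_k))_k$ stayed bounded.

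Next, for each $k$ I would pick, via Lemma~\ref{lem:BDL17} on $X_{t_k}$ (and subtracting the supremum), a bounded $\hat u_k\in\CE^1_\nmlz(X_{t_k},\om_{t_k})\cap L^\infty(X_{t_k})$ with $d_{1,t_k}(\hat u_k,u_k)<1$ and $\M_{t_k}(\hat u_k)<\M_{t_k}(u_k)+1$, so that $L_k:=-\E_{t_k}(\hat u_k)=d_{1,t_k}(0,\hat u_k)\to+\infty$; for $k$ large (so $L_k>\ell$) let $w_k$ be the point at arclength $\ell$ on the unit-speed weak geodesic joining $0$ and $\hat u_k$. Since $0\geq\hat u_k$ one has $P_{\om_{t_k}}(0,\hat u_k)=\hat u_k$, so the weak geodesic dominates $\hat u_k$, and by convexity of the geodesic in its parameter $w_k\leq(\ell/L_k)\hat u_k$; therefore $\hat u_k\leq w_k\leq 0$, so $\sup_{X_{t_k}}w_k=0$ and $-\E_{t_k}(w_k)=d_{1,t_k}(0,w_k)=\ell$. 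By convexity of $\M_{t_k}$ along weak geodesics between bounded potentials (Proposition~\ref{prop:Geod_Conv}) and $\M_{t_k}(0)=0$,
\[
\M_{t_k}(w_k)\leq\left(1-\tfrac{\ell}{L_k}\right)\M_{t_k}(0)+\tfrac{\ell}{L_k}\M_{t_k}(\hat u_k)=\tfrac{\ell}{L_k}\M_{t_k}(\hat u_k),
\]
and since $\M_{t_k}(\hat u_k)<A(-\E_{t_k}(u_k))-k+1$ while $|{-\E_{t_k}(u_k)}-L_k|<1$, this gives $\limsup_k\M_{t_k}(w_k)\leq\ell A$. As $(\M_{t_k}(w_k))_k$ is thus bounded above, $-\E_{t_k}(w_k)=\ell$, and $\sup_{X_{t_k}}w_k=0$ (so that $\I_{t_k}(w_k,0)\leq(n+1)\ell$ and $\|w_k\|_{L^1(X_{t_k},\om_{t_k}^n)}\leq V_{t_k}C_{SL}$), feeding these back into \eqref{eqn:Change_Reference}--\eqref{eqn:Useful Formula} with Lemma~\ref{lem:Twisted Energy} and the uniform $(\psi_{t_k})_k$-bounds produces a uniform bound $\H_{t_k,\mu_{t_k}}(w_k)\leq C_3$.

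Now Corollary~\ref{cor:strong_compact_adapted_fami} applies to $(w_k)_k$: up to a subsequence, $w_k$ converges strongly in families to some $w_0\in\CE^1(X_0,\om_0)$, with $-\E_0(w_0)=\lim_k(-\E_{t_k}(w_k))=\ell$ and, by the sup-$L^1$ comparison on $X_0$ (Theorem~\ref{thm:SL_and_Skoda_in_family}), $\sup_{X_0}w_0\geq-C_{SL}$. The strong lower semi-continuity of the Mabuchi functionals in families (Proposition~\ref{prop:Lower Semicontinuity Mabuchi families}) gives $\M_0(w_0)\leq\liminf_k\M_{t_k}(w_k)\leq\ell A$. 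On the other hand, applying the coercivity of $\M_0$ with slope $A'$ to $w_0-\sup_{X_0}w_0\in\CE^1_\nmlz(X_0,\om_0)$, and using $\M_0(w_0-\sup_{X_0}w_0)=\M_0(w_0)$ together with $-\E_0(w_0-\sup_{X_0}w_0)=\ell+\sup_{X_0}w_0$, one obtains a lower bound of the form $\M_0(w_0)\geq A'\ell-C$ with $C=C(A',B',C_{SL})$ independent of $\ell$ (the two signs of $A'$ being handled at once). Combining the two estimates gives $(A'-A)\ell\leq C$, which is absurd once $\ell$ was chosen larger than $C/(A'-A)$. This proves the quantitative statement, hence the theorem.

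The step I expect to be the main obstacle is the uniform entropy bound $\H_{t_k,\mu_{t_k}}(w_k)\leq C_3$ for the geodesic interpolants: it is precisely what makes the family strong-compactness of Corollary~\ref{cor:strong_compact_adapted_fami} usable, and it requires assembling the change-of-reference formula \eqref{eqn:Useful Formula}, the twisted-energy comparison of Lemma~\ref{lem:Twisted Energy}, and the uniform control of all $\psi_{t_k}$-quantities. A secondary but easily overlooked point is to track the $\sup$-normalization through the geodesic construction (which is what forces $\hat u_k\leq w_k\leq 0$, hence $\sup_{X_{t_k}}w_k=0$, and then $\sup_{X_0}w_0\geq-C_{SL}$ in the limit), so that the coercivity of $\M_0$ can be applied to the correctly normalized limit and the chosen largeness of $\ell$ genuinely yields the contradiction.
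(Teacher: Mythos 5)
Your proposal is correct and follows essentially the same route as the paper's proof: you argue by contradiction, use Lemma~\ref{lem:BDL17} to pass to bounded representatives, take a fixed interpolation length along the unit-speed weak geodesic, exploit the convexity of $\M$ along geodesics (Proposition~\ref{prop:Geod_Conv}) and the lower bound of $\M$ via the adapted entropy through \eqref{eqn:Useful Formula} to control $\H_\mu$, apply the strong compactness of Corollary~\ref{cor:strong_compact_adapted_fami} and the lower semi-continuity of Proposition~\ref{prop:Lower Semicontinuity Mabuchi families}, and then conclude by choosing the interpolation length large compared to $(A'-A)^{-1}$ and $C_{SL}$. The only cosmetic differences are that you introduce an intermediate slope $A'$ rather than working directly with a coercivity constant $A_0$, and that you make explicit the sup-normalization of the geodesic interpolant (showing $\hat u_k\leq w_k\leq 0$), a point the paper leaves implicit.
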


\begin{proof}
Assume by contradiction that $t\to \sigma_t$ is not lower semi-continuous at $t=0$. 
Without loss of generality, we assume $\sigma_0>-\infty$, i.e. there exist $A_0\in \BR, B_0\in \BR$ such that $\M_0(u)\geq -A_0\E_0(u)-B_0$ for any $u\in \CE^1_\nmlz(X_0,\om_0)$. Suppose by contradiction that there exist $A<A_0$, $t_k\to 0$, and $u_k\in \CE^1_\nmlz(X_k,\om_k)$ such that
\begin{equation}\label{eqn:Number 1 Bis}
    \M_k(u_k)< A(-\E_k(u_k))-B_k
\end{equation}
for any $k\in \BN$, where $B_k\to +\infty$. Thanks to Lemma \ref{lem:BDL17}, one can also suppose that $u_k\in L^{\infty}(X_k)$.
By Proposition \ref{prop:Change Reference Final case} and (\ref{eqn:Useful Formula}), for any $k$, for any $w \in \PSH_{\nmlz}(X_k,\om_k) \cap L^\infty(X_k)$, we have
\begin{align*}
    \M_k(w) &= \wM_k(\ww) + \M_k(\psi_k)\\
    &= \H_{\mu_k}(w) + \Bar{s}\E_k(w)- \Bar{s}\E_k(\psi_k)-n\E_{k,\Theta_k}(w) + n\E_{k,\Theta_k}(\psi_k) +\M_k(\psi_k).
\end{align*}
Moreover, letting $C_1>0$ such that $\Theta_k\geq - C_1 \om_k$,  we have $nV_k \E_{k,\Theta_k}(w)\leq -C_1V_k(n+1)\E_k(w)$ as seen during the proof of Proposition \ref{prop:Lower Semicontinuity Mabuchi families}.
Hence, one can derive 
\begin{equation}\label{eq:mabuchi_lower_bound Bis}
    \M_k(w) \geq \H_{\mu_k}(w) + (\bar{s} + C_1) \E_k(w) - C_2
\end{equation}
where $C_2$ is a uniform lower bound for $-\bar{s}\E_k(\psi_k) + n \E_{k,\Ta_k}(\psi_k) + \M_k(\psi_k)$ for all $k$. In particular, choosing $C_1>0$ big enough, from (\ref{eqn:Number 1 Bis}) and (\ref{eq:mabuchi_lower_bound Bis}) we deduce that $-\E_k(u_k)\to +\infty$. Let $D>0$ be a finite number to be fixed later. 
Consider $g_k(s)$ a unit-speed geodesic connecting $0$ and $u_k$ in $\CE^1_\nmlz(X_k, \om_k)$. For any $k\gg 1$ large enough, set $v_k = g_k(D)$. 
Since $\M_k$ is convex along $g_k(s)$ by Proposition \ref{prop:Geod_Conv}, 
\begin{equation}\label{eqn:Number 3 Bis}
    \M_k(v_k) \leq \frac{D}{d_k}\M_k(u_k)+\frac{d_k-D}{d_k}\M_k(0)
    \leq D A.
\end{equation}
    On the other hand, by \eqref{eq:mabuchi_lower_bound Bis} we have $\H_{\mu_k}(v_k)$ is uniformly bounded, and Corollary \ref{cor:strong_compact_adapted_fami} gives that $(v_k)_k$ strongly subconverges to a function $v_0\in \CE^1(X_0,\om_0)$. Without loss of generality, we assume that $v_k$ strongly converges to $v_0$. 
    As $\E_0(v_0)=\lim_{k\to +\infty}\E_k(v_k)=-D$, by Proposition \ref{prop:Lower Semicontinuity Mabuchi families} and (\ref{eqn:Number 3 Bis}) we gain {\small
    \begin{align*}
        AD
        &\geq \liminf_{k \to +\infty} \M_k(v_k)
        \geq \M_0(v_0)\\
        &= \M_0(v_0 - \sup_{X_0} v_0) 
        \geq A_0(-\E_0(v_0-\sup_{X_0} v_0))-B_0 
        = A_0D - B_0+ A_0\sup_{X_0} v_0.
    \end{align*} }
    However as consequence of Theorem \ref{thm:SL_and_Skoda_in_family} we have $0\geq \sup_{X_0}v_0\geq -C_{SL}$ (see \cite[Prop.~2.8, Lem.~2.11]{Pan_Trusiani_2023}). Hence we get a contradiction taking $D = \frac{B_0 + \lvert A_0\rvert C_{SL} + 1}{A_0 - A}$.
\end{proof}

\subsection{Proof of Theorem~\ref{bigthm:openness_classes}}\label{ssec:Proposition B}

Let $X$ be a normal compact Kähler variety with klt singularities.

Any smooth Kähler form defines a non-zero element in $H^0(X,\CC_X^\infty/\PH_X)$ where $\CC_X^\infty$ is the sheaf of continuous functions on $X$ given as restriction of smooth functions under local embedding, while $\PH_X$ represents its subsheaf of restriction of pluriharmonic functions. 
As in literature (cf. \cite[Sec.~5.2]{EGZ_2009}, \cite[Sec.~3.3]{Guedj_Guenancia_Zeriahi_2023}), the Kähler cone $\CK_X$  consists of classes $\alpha\in H^1(X,\PH_X)$ such that $[\om]=\alpha$ where
$$
H^0(X,\CC_X^\infty/\PH_X)\overset{[\cdot]}{\longrightarrow} H^1(X,\PH_X)
$$
is induced by the short exact sequence
$
0\to \PH_X\to  \CC_X^\infty \to \CC_X^\infty/\PH_X\to 0.
$
Since $X$ is normal, $\PH_X$ coincides with the sheaf of real parts of holomorphic functions and $H^1(X,\PH_X)$ is a finite dimensional vector space \cite[Lem.~4.6.1]{Boucksom_Guedj_2013}.

\begin{rmk}
To prove the lower semi-continuity of coercivity threshold with respect to K\"ahler classes, it is natural to consider a trivial family $X \times \BD^m \to \BD^m$ with a background metric $\om_0 + \sum_{i=1}^m |t_i|^2 \eta_i$ where $m = \dim H^1(X, \PH_X)$, $(\eta_i)_i$ are smooth closed $(1,1)$-forms such that $([\eta_i])_i$ forms a basis in $H^1(X,\PH_X)$. 
The argument for trivial families goes the same as we did before, but since we do not treat the situation with higher dimensional base, we give a comprehensive proof to Theorem~\ref{bigthm:openness_classes} in this section for the reader's convenience. 
\end{rmk}

Let $U\Subset \CK_X$ be a relatively compact open set in the Kähler cone and let $\{\omega_t\}_{t\in U}$ be a smooth family of Kähler forms such that $\omega_t$ is a representative of the cohomology class associated to $t$. To lighten notation we set $V_t:=\int_X\omega_t^n$ for any $t\in U$, and similarly for other quantities/functionals.
\begin{lem}
\label{lem:Openness in Kähler cone}
    Let $t_1,t_2\in U$, let $\nu$ be a probability measure on $X$ and let $u_i\in \CE_\nmlz^1(X,\omega_{t_i})$ be solution of
    $$
    \big(\omega_{t_i}+\ddc u_{i}\big)^n=V_{t_i}\, d\nu
    $$
    for $i=1,2$. Let also $\delta>1$ such that $\frac{1}{\delta}\omega_{t_2}\leq\omega_{t_1}\leq \delta\omega_{t_2}$. Then
    \begin{equation}\label{eqn:Easy}
        \E_{t_1}(u_1)\geq \frac{V_{t_2}\delta^{n+1}}{V_{t_1}}\E_{t_2}\Big(\frac{u_1}{\delta}\Big)\geq \delta^{2n+2}\E_{t_1}(u_1)
    \end{equation}
    and there exist non-negative constants $C_1,C_2$ only depending on $n,X, \delta, \omega_2, \E_2(u_2)$,\\ $\E_1(u_1), V_{t_2}/V_{t_1}$ such that
    $$
    \I_{t_2}\Big(\frac{u_1}{\delta},u_2\Big)\leq C_1 \Big(1-\frac{V_{t_1}}{V_{t_2}}\Big)+C_2(\delta-1).
    $$
\end{lem}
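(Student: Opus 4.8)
The plan is to exploit the fact that $u_1$ and $u_2$ are normalized solutions of Monge--Ampère equations with the \emph{same} measure $\nu$ on the right-hand side, so that one can use comparison of energies under scaling together with the classical estimates relating $\I$ to energies.

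\textbf{Step 1: the energy comparison \eqref{eqn:Easy}.} First I would observe that $\frac{u_1}{\delta}\in\PSH(X,\omega_{t_2})$. Indeed, since $\omega_{t_1}\leq\delta\omega_{t_2}$ and $\omega_{t_1}+\ddc u_1\geq 0$, we get $\delta\omega_{t_2}+\ddc u_1\geq 0$, i.e. $\omega_{t_2}+\ddc\frac{u_1}{\delta}\geq 0$. Similarly $\frac{1}{\delta}\omega_{t_2}\leq\omega_{t_1}$ gives $\omega_{t_2}+\ddc(\delta u_1)\geq 0$ after a rescaling argument, but the key inequality I need is monotonicity and homogeneity of the Monge--Ampère energy. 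Writing $w:=\frac{u_1}{\delta}\in\PSH(X,\omega_{t_2})$, I would compare the two defining formulas of $\E_{t_1}$ and $\E_{t_2}$ termwise. Using $\om_{t_1}\leq \delta \om_{t_2}$, $\om_{t_1,u_1}\leq \delta\om_{t_2,w}$ and $u_1\leq 0$ in each of the $(n+1)$ summands $\int_X u_1\,\om_{t_1,u_1}^j\wedge\om_{t_1}^{n-j}$, one bounds it below by $\delta^n\int_X u_1\,\om_{t_2,w}^j\wedge\om_{t_2}^{n-j}=\delta^{n+1}\int_X w\,\om_{t_2,w}^j\wedge\om_{t_2}^{n-j}$, whence $V_{t_1}\E_{t_1}(u_1)\geq \delta^{n+1}V_{t_2}\E_{t_2}(w)$, i.e. the first inequality in \eqref{eqn:Easy}. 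For the second inequality, I would apply the same argument with the roles reversed, using $\frac{1}{\delta}\omega_{t_2}\leq\omega_{t_1}$ (so $\om_{t_2}\leq \delta\om_{t_1}$ and $\om_{t_2,w}\leq\delta\om_{t_1,u_1}$, noting $w\leq 0$): this gives $V_{t_2}\E_{t_2}(w)\geq \delta^{n+1}V_{t_1}\E_{t_1}(u_1)$, equivalently $\delta^{2n+2}\E_{t_1}(u_1)\leq \frac{V_{t_2}\delta^{n+1}}{V_{t_1}}\E_{t_2}(w)$, as claimed. (Throughout I use $\E_{t_i}\leq 0$ on normalized potentials so the directions of the inequalities are consistent.)

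\textbf{Step 2: the bound on $\I_{t_2}(w,u_2)$.} Since $u_2$ solves $(\om_{t_2}+\ddc u_2)^n=V_{t_2}\,d\nu$ and $\om_{t_1,u_1}^n=V_{t_1}\,d\nu$, I have $\om_{t_2,w}^n=\delta^{-n}\om_{t_1,u_1}^n\cdot(\om_{t_2,w}^n/\delta^{-n}\om_{t_1,u_1}^n)$; more usefully, by definition
\[
    V_{t_2}\I_{t_2}(w,u_2)=\int_X (w-u_2)\big(\om_{t_2,u_2}^n-\om_{t_2,w}^n\big)=\int_X(w-u_2)\Big(V_{t_2}\,d\nu-\om_{t_2,w}^n\Big).
\]
I would split this as $\int_X(w-u_2)V_{t_2}\,d\nu - \int_X(w-u_2)\om_{t_2,w}^n$. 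Now $\om_{t_2,w}^n$ and $V_{t_2}\,d\nu=\om_{t_2,u_2}^n$ differ in mass only through $\int_X\om_{t_2,w}^n$, which I can compare to $V_{t_2}$ via $\frac{1}{\delta}\omega_{t_2}\leq\omega_{t_1}\leq\delta\omega_{t_2}$ and $V_{t_1}\,d\nu=\om_{t_1,u_1}^n$: indeed $\om_{t_1,u_1}\leq \delta \om_{t_2,w}$ gives $V_{t_1}=\int_X\om_{t_1,u_1}^n\leq \delta^n\int_X\om_{t_2,w}^n$, and conversely $\int_X\om_{t_2,w}^n\leq \delta^n V_{t_1}\delta^{-n}\cdots$; carefully this yields $\big|\int_X\om_{t_2,w}^n-V_{t_2}\big|\leq (\delta^{2n}-1)V_{t_2}+|V_{t_2}-V_{t_1}|\cdot(\text{const})$, i.e. a bound of the form $C(1-V_{t_1}/V_{t_2})+C(\delta-1)$. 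To turn this mass defect into a bound on $\I$, I would use the standard inequality (valid on any resolution, cf. \cite[Lem.~1.9, Lem.~5.8]{BBEGZ_2019} and the proof of Lemma~\ref{lem:Twisted Energy} above): for bounded $w,u_2$ with controlled energies, $\I_{t_2}(w,u_2)$ is dominated by a continuous increasing function of $\|w-u_2\|_{L^1(\nu')}$-type quantities plus the mass discrepancy between the two Monge--Ampère measures, all with constants depending only on $n,X,\delta,\omega_{t_2},\E_{t_2}(u_2),\E_{t_1}(u_1)$ (via Step 1, $\E_{t_2}(w)$ is controlled by $\E_{t_1}(u_1)$) and $V_{t_2}/V_{t_1}$. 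Combining with the explicit mass estimate gives the stated bound with the two terms $C_1(1-V_{t_1}/V_{t_2})$ and $C_2(\delta-1)$.

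\textbf{Main obstacle.} The routine part is Step 1; the delicate point is Step 2, specifically extracting the \emph{linear} dependence on $(\delta-1)$ and $(1-V_{t_1}/V_{t_2})$ rather than just \emph{some} modulus of continuity. The cleanest route is probably to avoid the general $f_S$-type estimate and instead argue directly: expand $V_{t_2}\I_{t_2}(w,u_2)=\int_X(w-u_2)(\om_{t_2,u_2}^n-\om_{t_2,w}^n)$, integrate by parts to rewrite it as a sum of terms $\int_X d(w-u_2)\wedge d^c(w-u_2)\wedge\om_{t_2,w}^j\wedge\om_{t_2,u_2}^{n-1-j}\geq 0$ \emph{plus} lower-order mixed terms that carry explicit factors of $(\delta-1)$ coming from replacing $\om_{t_2}$ by $\delta^{\pm1}\om_{t_1}$-type forms; here one crucially uses that $w$ and $u_2$ have the \emph{same} Monge--Ampère measure, so the leading positive term would vanish if $\delta=1$ and $V_{t_1}=V_{t_2}$ (then $w=u_2$ by uniqueness). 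Making this quantitative — i.e. showing that the positivity of the Dirichlet-type term forces $\I_{t_2}(w,u_2)$ to be small \emph{at a linear rate} in the perturbation parameters — is the technical heart, and I would model it on the stability estimates in \cite{BBEGZ_2019, Dinezza_Guedj_2018}, keeping all constants dependent only on the listed data.
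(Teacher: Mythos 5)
Your Step 1 is essentially the paper's argument: $\frac{u_1}{\delta}\in\PSH(X,\omega_{t_2})$ and \eqref{eqn:Easy} follows from termwise form comparison, homogeneity, and monotonicity of the Monge--Amp\`ere energy. That part is fine.

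Step 2, however, has a genuine gap, and the way you propose to fill it would not produce the stated linear rates. The paper's argument hinges on a specific algebraic identity that you do not find: writing $\omega_2+\ddc\frac{u_1}{\delta}=\frac{1}{\delta}\big((\omega_1+\ddc u_1)+(\delta\omega_2-\omega_1)\big)$ with $\delta\omega_2-\omega_1\geq 0$, and expanding
\[
\Big(\omega_2+\ddc \tfrac{u_1}{\delta}\Big)^n = \frac{1}{\delta^n}\sum_{j=0}^n\binom{n}{j}(\omega_1+\ddc u_1)^j\wedge (\delta \omega_2-\omega_1)^{n-j}.
\]
The $j=n$ term is $\frac{1}{\delta^n}\om_{1,u_1}^n=\frac{V_1}{V_2\delta^n}\om_{2,u_2}^n$ (this is the only place where the hypothesis that $u_1,u_2$ solve Monge--Amp\`ere equations with the \emph{same} base measure $\nu$ enters, and it enters through $u_1$ and $u_2$ separately, not through $w$), while every $j<n$ term carries at least one factor of $\delta\omega_2-\omega_1$, which obeys $0\leq\delta\omega_2-\omega_1\leq\frac{\delta^2-1}{\delta}\omega_2$. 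Plugging this back into $\I_{t_2}(w,u_2)$ isolates $\big(1-\frac{V_1}{V_2\delta^n}\big)\int_X(w-u_2)\om_{2,u_2}^n$ plus a sum of integrals each with an explicit prefactor $O(\delta-1)$, and the individual integrals are bounded via \cite[Lem.~2.7]{BBGZ_2013} in terms of the listed data. This decomposition is what delivers the \emph{linear} dependence in $(\delta-1)$ and $(1-V_1/V_2)$; it is not a by-product of general $f_S$-type stability moduli.

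Two further issues with the remedies you sketch. First, your claim that ``$w$ and $u_2$ have the same Monge--Amp\`ere measure, so the leading positive term would vanish if $\delta=1$ and $V_{t_1}=V_{t_2}$'' is not correct as stated: the hypothesis is that $\om_{1,u_1}^n=V_1\,d\nu$ and $\om_{2,u_2}^n=V_2\,d\nu$, so $u_1$ and $u_2$ (not $w$ and $u_2$) have proportional Monge--Amp\`ere measures, and $\om_{2,w}^n$ is a priori unrelated to $d\nu$; it is precisely the binomial expansion that quantifies this discrepancy. Second, the integration-by-parts route producing nonnegative Dirichlet terms $\int d(w-u_2)\wedge d^c(w-u_2)\wedge\cdots$ does not help: those terms are exactly what you are trying to bound (they are comparable to $\I_{t_2}(w,u_2)$ itself), and their positivity gives you nothing quantitative without the explicit small prefactors that the binomial expansion supplies.
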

\begin{proof}
    For simplicity set $\omega_i:=\omega_{t_i}$ and similarly for other quantities. As $u_i\leq 0$ and $\omega_1\leq \delta\omega_2, \omega_2\leq \delta \omega_1$, the inequalities (\ref{eqn:Easy}) follow immediately from the definitions of the Monge--Amp\`ere energy and from its monotonicity property.

    Next, we have
    {
    \begin{align*}
        &\Big(\omega_2+\ddc \frac{u_1}{\delta}\Big)^n 
        = \Big(\frac{\omega_1+\ddc u_1}{\delta}+\frac{\delta\omega_2-\omega_1}{\delta}\Big)^n\\
        &= \frac{1}{\delta^n}\sum_{j=0}^n\binom{n}{j}(\omega_1+\ddc u_1)^j\wedge (\delta \omega_2-\omega_1)^{n-j}\\
        &= \frac{V_1 \, (\omega_2+\ddc u_2)^n}{V_2\delta^n}+\frac{1}{\delta^n}\sum_{j=0}^{n-1}\binom{n}{j}(\omega_1+\ddc u_1)^j\wedge (\delta\omega_2-\omega_1)^{n-j}.
    \end{align*}
    }%
    Set $S_j:=\binom{n}{j}(\omega_1+\ddc u_1)^j\wedge (\delta\omega_2-\omega_1)^{n-j}$ for $j=0,\dots, n-1$. By definition of $\I_2$, it follows that
    {\small
    $$
    \I_2\Big(\frac{u_1}{\delta},u_2\Big)=\Big(1-\frac{V_1}{V_2\delta^n}\Big)\int_X \Big(\frac{u_1}{\delta}-u_2\Big)(\omega_2+\ddc u_2)^n+\frac{1}{\delta^n}\sum_{j=0}^{n-1} \int_X \Big(u_2-\frac{u_1}{\delta}\Big) S_j.
    $$
    }%
    By \cite[Lem.~2.7]{BBGZ_2013} it follows that $\int_X \big(\frac{u_1}{\delta}-u_2\big)(\om_2+\ddc u_2)^n\leq C_1$ for $C_1>0$ only depending on $\E_2(u_1/\delta), \E_2(u_2), X, n, \om_2, V_2/V_1$. The inequalities (\ref{eqn:Easy}) implies that $C_1$ only depends on $\E_1(u_1), \E_2(u_2), X, n, \om_2, \delta$. Similarly, thanks to the inequalities $0\leq \delta\omega_2-\omega_1\leq \frac{\delta^2-1}{\delta}\omega_2$, $0\leq \omega_1+\ddc u_1\leq \delta(\omega_2+\ddc \frac{u_1}{\delta})$, we obtain
    {\small
    \begin{align*}
        \frac{1}{\delta^n}\sum_{j=0}^{n-1}\int_X \Big(u_2-\frac{u_1}{\delta}\Big) S_j&\leq \frac{1}{\delta^n}\sum_{j=0}^{n-j} \binom{n}{j}\delta^j \frac{(\delta^2-1)^{n-j}}{\delta^{n-1}}\int_X \Big\lvert u_2-\frac{u_1}{\delta}\Big\rvert \big(\omega_2+\ddc \frac{u_1}{\delta}\big)^j\wedge \omega_2^{n-j}\\
        &\leq C_3(\delta-1)\sum_{j=0}^{n-1}\int_X \Big\lvert u_2-\frac{u_1}{\delta}\Big\rvert \big(\omega_2+\ddc \frac{u_1}{\delta}\big)^j\wedge \omega_2^{n-j}
    \end{align*}
    }%
    for a uniform constant $C_3$ depending only on $\delta, n$. Similarly to before, \cite[Lem.~2.7]{BBGZ_2013} gives that  $\int_X \Big\lvert u_2-\frac{u_1}{\delta}\Big\rvert \big(\omega_2+\ddc \frac{u_1}{\delta}\big)^j\wedge \omega_2^{n-j}$ is bounded from above by a constant only depending on $\E_1(u_1), \E_2(u_2), n, \omega_2, X, \delta, V_1/V_2$.
\end{proof}
\begin{rmk}\label{rmk:Openness in Kähler cone}
    Assume that we have the same setting of Lemma \ref{lem:Openness in Kähler cone}, but suppose further that $u_i\in L^\infty(X)$. If $T$ is a closed and postive $(1,1)$-current, then it follows immediately from what said above that
    $$
    \E_{t_1,T}(u_1)\geq \frac{V_{t_2}\delta^n}{V_{t_1}}\E_{t_2,T}\Big(\frac{u_1}{\delta}\Big)\geq \delta^{2n}\E_{t_1,T}(u_1)
    $$
    where we denote by $\E_{t,T}(v):=\frac{1}{n V_t} \sum_{j=0}^{n-1} \int_X v\, T\wedge\omega_{t,v}^j\wedge \omega_t^{n-j-1}$ the twisted energy.
\end{rmk}

We now prove Theorem~\ref{bigthm:openness_classes}:
\begin{thm}
    Let $X$ be a normal compact K\"ahler variety with klt singularities, and let $\{\om_t\}_{t\in \CK}$ be a family of Kähler forms. Then the map {\small
    $$
    \CK_X \ni t \mapsto \sigma_t:=\sup\big\{A\in \BR\, \vert \M_{\om_t}\geq -A\E_{\om_t}-B \text{ on } \CE^1_\nmlz(X_t,\om_t), \text{ for some } B\in \BR\big\}
    $$}
    is lower semi-continuous.  Moreover, for any open set $U\subset\CK_X$, any smooth family of Kähler form $\{\om_t\}_{t\in U}$, any $t_0\in U$ and any $A < \sm_{t_0}$, there exists $B>0$ and an open set $t_0\in U' \subset U$ such that $\M_{\om_t}(u) \geq A(-\E_{\om_t}(u)) - B$ for each $t \in U'$ and for any $u \in \CE^1_{\nmlz}(X, \om_t)$,
\end{thm}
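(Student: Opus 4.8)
The plan is to mirror the proof of Theorem~\ref{thm:Openness Coercivity} for the family case, replacing the degenerating parameter $t \in \BD$ by the Kähler class parameter $t \in \CK_X$, and using Lemma~\ref{lem:Openness in Kähler cone} and Remark~\ref{rmk:Openness in Kähler cone} as the substitutes for the strong-continuity machinery in families. First I would fix $t_0 \in U$ with $\sm_{t_0} > -\infty$ (the case $\sm_{t_0} = -\infty$ being vacuous) and $A < A_0 < \sm_{t_0}$, so that $\M_{\om_{t_0}}(v) \geq -A_0 \E_{\om_{t_0}}(v) - B_0$ on $\CE^1_\nmlz(X, \om_{t_0})$. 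Arguing by contradiction, suppose there exist $t_k \to t_0$ in $U$ and $u_k \in \CE^1_\nmlz(X, \om_{t_k})$ with $\M_{\om_{t_k}}(u_k) < A(-\E_{\om_{t_k}}(u_k)) - B_k$ and $B_k \to +\infty$; by Lemma~\ref{lem:BDL17} one may take $u_k \in L^\infty(X)$. As in Theorem~\ref{thm:Openness Coercivity}, using Proposition~\ref{prop:Change Reference Final case}, \eqref{eqn:Useful Formula}, the bound $\Ta_{t_k} \geq -C_1 \om_{t_k}$ (uniform over the relatively compact $U$), and the fact that $\psi_{t_k}$ (the adapted-measure potential for $\om_{t_k}$) and the constants $\bar s$, $V_{t_k}$ vary continuously in $t$, one gets the lower bound $\M_{\om_{t_k}}(w) \geq \H_{\mu_{t_k}}(w) + (\bar s + C_1)\E_{\om_{t_k}}(w) - C_2$, which forces $-\E_{\om_{t_k}}(u_k) \to +\infty$.

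Next I would run the geodesic-slicing step: let $g_k(s)$ be the unit-speed geodesic in $\CE^1_\nmlz(X, \om_{t_k})$ from $0$ to $u_k$, fix $D > 0$ to be chosen, and set $v_k := g_k(D)$. Geodesic convexity of $\M_{\om_{t_k}}$ (Proposition~\ref{prop:Geod_Conv}) gives $\M_{\om_{t_k}}(v_k) \leq DA$, hence $\H_{\mu_{t_k}}(v_k)$ is uniformly bounded. Here is where the new input enters: instead of the family strong-compactness of Corollary~\ref{cor:strong_compact_adapted_fami}, I would pass to the trivial family $X \times U \to U$ with reference metric $\{\om_t\}$ (whose fibres are all the fixed variety $X$, so that ``$\CC^\infty$-convergence in families'' is just ordinary convergence on $X$), and invoke Corollary~\ref{cor:strong_compact_adapted_fami} in that trivial-family form — or, more cheaply, observe that since all the $v_k$ live on the \emph{same} underlying $X$, one can directly apply the fixed-variety strong compactness of Proposition~\ref{prop:Strong Compactness} (via Lemma~\ref{lem:compare_klt_entropy}) after comparing $\om_{t_k}$-psh functions with $\om_{t_0}$-psh ones using $\frac1{\delta_k}\om_{t_0} \leq \om_{t_k} \leq \delta_k \om_{t_0}$ with $\delta_k \to 1$, together with Lemma~\ref{lem:Openness in Kähler cone} to control the energies. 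This yields, up to a subsequence, a function $v_0 \in \CE^1(X, \om_{t_0})$ such that $v_k/\delta_k \to v_0$ strongly in $\CE^1(X, \om_{t_0})$ and $\E_{\om_{t_0}}(v_0) = \lim_k \E_{\om_{t_k}}(v_k) = -D$ (using \eqref{eqn:Easy}).

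Then I would establish the semicontinuity $\M_{\om_{t_0}}(v_0) \leq \liminf_k \M_{\om_{t_k}}(v_k)$. The term-by-term comparison is: the entropy $\H_{\mu_{t_k}}$ is strongly lower semi-continuous on the fixed variety (Section~\ref{ssec:entropy} plus Proposition~\ref{prop:Change Reference Final case}'s argument), with the density of $\mu_{t_k}$ versus $\mu_{t_0}$ converging; the Monge--Amp\`ere energies compare by \eqref{eqn:Easy}; and the twisted Ricci energies $\E_{\om_{t_k}, \Ta_{t_k}}$ compare by Remark~\ref{rmk:Openness in Kähler cone}, with $\Ta_{t_k} \to \Ta_{t_0}$ smoothly and $\I_{\om_{t_0}}(v_k/\delta_k, v_0) \to 0$ controlling the remaining error via Lemma~\ref{lem:Twisted Energy}. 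Combining, $AD \geq \liminf_k \M_{\om_{t_k}}(v_k) \geq \M_{\om_{t_0}}(v_0) = \M_{\om_{t_0}}(v_0 - \sup_X v_0) \geq A_0 D - B_0 + A_0 \sup_X v_0 \geq A_0 D - B_0 - |A_0| C_{SL}$, where $0 \geq \sup_X v_0 \geq -C_{SL}$ by Theorem~\ref{thm:SL_and_Skoda_in_family} on the fixed variety. Choosing $D = (B_0 + |A_0| C_{SL} + 1)/(A_0 - A)$ produces a contradiction, proving lower semi-continuity of $t \mapsto \sm_t$; the quantitative ``moreover'' statement (existence of $B$ and $U'$) is read off from the same argument exactly as in Theorem~\ref{thm:Openness Coercivity}. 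Finally, if $\M_{\om_{t_0}}$ is coercive then $\sm_{t_0} > 0$, and lower semi-continuity gives an open $U' \ni t_0$ with $\sm_t > 0$ on $U'$, i.e. $\M_{\om_t}$ coercive there; translating back to the intrinsic formulation with an arbitrary representative $\om'$ of a class in a neighborhood $U \subset \CK_X$ uses the reference-metric independence of coercivity noted in the Remark after Proposition~\ref{prop:Geod_Conv}. The main obstacle I anticipate is the bookkeeping in the semicontinuity step — specifically, verifying that $\I_{\om_{t_0}}(v_k/\delta_k, v_0) \to 0$ (which needs the strong convergence packaged with the energy comparison of Lemma~\ref{lem:Openness in Kähler cone}) and that the densities of the adapted measures $\mu_{t_k}$ converge well enough in $L^\chi$ / $L^{\chi^\ast}$ to push the entropy and twisted-energy estimates through uniformly; everything else is a direct transcription of the family proof to the (simpler, since the underlying space is fixed) Kähler-cone setting.
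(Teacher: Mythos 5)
Your overall architecture is correct and matches the paper's: argue by contradiction, force $-\E_{t_k}(u_k)\to+\infty$ via the lower bound $\M_{t_k}(w)\geq \H_\mu(w)+C_1\E_{t_k}(w)-C_2$, slice along the geodesic to obtain $v_k$ with $\M_{t_k}(v_k)\leq DA$ and uniformly bounded entropy, then extract a strong limit and contradict the coercivity at $t_0$. You also correctly identify $\frac{1}{\delta_k}\om_{t_0}\leq\om_{t_k}\leq\delta_k\om_{t_0}$ and Lemma~\ref{lem:Openness in Kähler cone} as the mechanism for comparing quantities across the varying Kähler forms, and the final choice of $D$ and the translation to the intrinsic Kähler-cone statement are fine.

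The gap is in your ``cheaper'' route to strong compactness. You propose applying Proposition~\ref{prop:Strong Compactness} directly to $v_k/\delta_k\in\CE^1_\nmlz(X,\om_{t_0})$, noting that these live on the fixed variety. But Proposition~\ref{prop:Strong Compactness} compacts level sets of the entropy $\H_\mu(u)=\H_\mu\big(\tfrac{1}{V}\om_{t_0,u}^n\big)$, and what you actually control is $\H_\mu(v_k)$ computed with respect to $\om_{t_k,v_k}^n$. Rescaling to $v_k/\delta_k$ changes the reference form \emph{and} the Monge--Amp\`ere measure: $(\om_{t_0}+\ddc(v_k/\delta_k))^n$ is not $\om_{t_k,v_k}^n$, and entropy is not monotone under the elementary comparisons between these measures that $\delta_k\to1$ buys you (it is a nonlinear integral of $f\log f$, not of $f$). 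So the uniform entropy bound on $v_k$ does not transfer to $v_k/\delta_k$ by the rescaling argument alone, and the compactness step as you wrote it does not close. The paper circumvents exactly this by introducing $w_k\in\CE^1_\nmlz(X,\om_{t_0})$ with $\om_{t_0,w_k}^n/V=\om_{t_k,v_k}^n/V_{t_k}$ (existence from the first part of Proposition~\ref{prop:Strong Compactness}); then $\H_\mu(w_k)=\H_\mu(v_k)$ trivially, strong compactness applies to $w_k$, and Lemma~\ref{lem:Openness in Kähler cone} shows $\I_{t_0}(w_k,v_k/\delta_k)\to0$, so $v_k/\delta_k$ inherits the same strong limit. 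That auxiliary $w_k$ is the missing step. The weak convergence of $\om_{t_k,v_k}^n=\om_{t_0,w_k}^n\to\om_{t_0,w}^n$ is also what gives $\liminf_k\H_\mu(v_k)\geq\H_\mu(w)$ in the semicontinuity step, which is simpler than the term-by-term $L^\chi$ density bookkeeping you anticipate (and note: since the variety is fixed, there is only one adapted measure $\mu$, not a family $\mu_{t_k}$, so that piece of bookkeeping does not actually arise).

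Your first alternative, passing to the trivial family $X\times U\to U$ and invoking the family-version compactness, is the approach the paper explicitly flags as natural in the remark following the definition of $\CK_X$; the paper declines it only because the compactness results of Section~\ref{sect_entropy_family} are formally stated over a one-dimensional disc, so you would need to check that the arguments go through verbatim over a higher-dimensional polydisc (they do, but that is a claim you would have to make precise rather than assert).
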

\begin{proof}
    Let $\om\in \CK_X$, let $U\Subset \CK_X$ be a relatively compact open set containing $[\om]$, and let $\{\omega_t\}_{t\in U}$ be a smooth family of Kähler form as above such that $\omega_{t_0}=\om$ for $t_0\in U$. 
    Without loss of generality, we also assume that $\sigma_{t_0}>-\infty,$ i.e. that there exists $A_0 \in \BR, B_0 \in \BR$ such that $\M(u):=\M_{\om}(u)\geq -A_0\E(u)-B_0$ for any $u\in \CE^1_\nmlz(X,\om)$. Then, by contradiction suppose that $t\to \sigma_t$ is not lower semi-continuous at $t_0$, i.e. there exist a sequence $\om_k:=\om_{t_k}$ converging to $\om$, a coefficient $A<A_0$ and a sequence $u_k\in \CE^1_\nmlz(X,\om_k)$ such that
    \begin{equation}
        \label{eqn:Openness in the Kähler cone 1}
        \M_k(u_k):=\M_{\om_k}(u_k)< -A\E_k(u_k)-B_k
    \end{equation}
    for any $k\in \BN$, where $B_k\to +\infty$.

    Let $m\geq 1$ be an integer such that $mK_X$ is locally free, let $h_m$ be a smooth metric on $m K_X$ and let $\mu$ be the associated adapted measure. Set $\Theta=\Ric(\mu)\in c_1(X)$ for the Ricci form of the metric $h$.
    For any $k\in \BN$, let also $\psi_k\in \CE^1_\nmlz(X,\om_k)$ be the solution of
    $$
    (\om_k+\ddc \psi_k)^n= V_k \, \mu.
    $$
    As seen in Proposition \ref{prop:Change Reference Final case} we have $\M_k(u)=\wM_k(\wu)+\M_k(\psi_t)$ for any $u\in \CE^1(X,\om_k)$ where 
    $$
    \M_k(u)=\H_k(u)+\Bar{s}_k \E_k(u)-n\E_{k,\Ric(\om_k)}(u)
    $$
    is the Mabuchi functional of $(X,\om_k)$ while
    $$    \wM_k(\wu)=\H_\mu(u)+\bar{s}_k\big(\E_k(u)-\E_k(\psi_k)\big)-n\big(\E_{k,\Theta}(u)-\E_{k,\Theta}(\psi_k)\big).
    $$
    Clearly by $\E_k, \E_{k,\Theta}$, we denote respectively the Monge--Amp\`ere energy and the $\Theta$-twisted energy with respect to $(X,\om_k)$. With obvious notations, we also have $\M(u)=\wM(u)+\M(\psi)$ for any $u\in \CE^1(X,\om)$.

    We claim that $\E_k(\psi_k) \to \E(\psi), \E_{k,\Theta}(\psi_k)\to \E_\Theta(\psi)$ as $k\to +\infty$. As $\om_k\to \om$ and $\{\om_t\}_{t\in U}$ is a smooth family, let $\delta_k\geq 1$, $\delta_k\to 1$ such that $\frac{1}{\delta_k}\om\leq \om_k\leq \delta_k\om$. 
    Then Lemma \ref{lem:Openness in Kähler cone} gives $\I\big(\psi,\frac{\psi_k}{\delta_k}\big)\to 0$ as $k\to +\infty$, i.e. $\psi_k/\delta_k\to \psi$ strongly (see \cite[Prop.~2.3]{BBEGZ_2019}). Moreover, Lemma~\ref{lem:Openness in Kähler cone} also gives
    {
    $$
    |\E_k(\psi_k)-\E(\psi)| 
    \leq \abs{\E_k(\psi_k)-\E\lt(\frac{\psi_k}{\delta_k}\rt)}
    + \abs{\E\lt(\frac{\psi_k}{\delta_k}\rt)-\E(\psi)} 
    \xrightarrow[k \to +\infty]{} 0.
    $$
    }%
    Similarly, writing $\Theta=T_1-T_2$ as difference of closed and positive currents, from Remark \ref{rmk:Openness in Kähler cone} we get
    {\small
    \begin{align*}
        &\Big\lvert \E_{k,\Theta}(\psi_k)-\E_{\Theta}(\psi) \Big\rvert
        \leq \Big\lvert \E_{k,T_1}(\psi_k)-\E_{T_1}(\psi) \Big\rvert +\Big\lvert \E_{k,T_2}(\psi_k)-\E_{T_2}(\psi) \Big\rvert 
    \end{align*}
    } 
    and the RHS tends to $0$
    as $k\to +\infty$. The claim follows.

    Next, proceeding exactly as in Theorem~\ref{thm:Openness Coercivity}, we uniformly bound $\E_{k, \Theta}(w)$ from above in terms of $-\E_k(w)$ if $w\in \CE^1_\nmlz(X,\om_k)\cap L^\infty(X)$. This leads to
    \begin{equation}
        \label{eqn:Openness in the Kähler cone 2}
        \M_k(w)\geq \H_\mu(w)+C_1 \E_k(w)-C_2
    \end{equation}
    for any $w\in \CE^1_\nmlz(X,\om_k)\cap L^\infty(X)$ and for any $k\in \BN$, where $C_1,C_2$ are uniform positive constants. In particular, from (\ref{eqn:Openness in the Kähler cone 1}), (\ref{eqn:Openness in the Kähler cone 2}) we deduce that $-\E_k(u_k)\to +\infty$, as $C_1>0$ can be taken arbitrarily big. Following the proof of Theorem \ref{thm:Openness Coercivity}, the convexity of $\M_k$ along weak geodesics of Proposition \ref{prop:Geod_Conv} yields $ \M_k(v_k)\leq DA $, where $v_k\in \CE^1_\nmlz(X,\om_k)$ is the element on the unit-speed geodesic connecting $0$ and $u_k$ such that $-\E_k(v_k)=D$ for $D>0$ to be chosen later. Note that from (\ref{eqn:Openness in the Kähler cone 2}) we also gain $\H_\mu(v_k)\leq C_3$ uniformly in $k\in \BN$.

    Let now $w_k\in \CE^1_\nmlz(X,\om)$ such that $(\om+\ddc w_k)^n/V=(\om_k+\ddc v_k)^n/V_k$ where $V:=\int_X \om^n, V_k:=\int_X \om_k^n$. Since $\H_\mu(w_k)=\H_\mu(v_k)\leq C_3$, the strong compactness of entropy lower level sets gives that $w_k\to w\in \CE^1_\nmlz(X,\om)$ strongly, up to considering a subsequence. 
    Moreover, letting $\delta_k\to 1$ as above such that $\frac{1}{\delta_k}\om\leq \om_k\leq \delta_k\om $ for any $k\in \BN$, Lemma \ref{lem:Openness in Kähler cone} yields $\I(w_k,\frac{v_k}{\delta_k})\longrightarrow 0$ as $k\to +\infty$. 
    This leads to $\frac{v_k}{\delta_k}\to w$ strongly as a consequence of \cite[Theorem 1.8, Proposition 2.3]{BBEGZ_2019}. 
    Similarly to the calculation made above for $\{\psi_k\}_k$, Lemma \ref{lem:Openness in Kähler cone} and Remark \ref{lem:Openness in Kähler cone} show that
    {
    \begin{equation*}
        |\E_k(v_k)-\E(w)|
        \longrightarrow 0,\quad
        |\E_{k,\Theta}(v_k)-\E_{\Theta}(w)|
        \longrightarrow 0
    \end{equation*}
    }%
    as $k\to +\infty$.
    Furthermore, we also obtain $(\om_k+\ddc v_k)^n/V_k=(\om+\ddc w_k)^n/V\to (\om+\ddc w)^n/V$, which implies $\liminf_{k\to +\infty}\H_\mu(v_k)\geq \H_\mu(w)$. Hence it follows
    {
    \begin{align*}
        A_0D-B_0 
        &= -A_0\E(w)-B_0
        \leq \M(w)=\wM(\ww)+\M(\psi)\\
        &\leq \liminf_{k\to +\infty}\big(\wM_k(\wv_k)+\M(\psi_k))=\liminf_{k\to +\infty}\M_k(v_k)\leq DA,
    \end{align*}
    }%
    from which we get the contradiction if $D=\frac{B_0+1}{A_0-A}$.
\end{proof}

\section{Smoothable case}
\label{sect_smoothing}
Let $(X_0, \om_0)$ be a normal compact K\"ahler variety with klt singularities. 
Suppose that $\pi: (\CX,\om) \to \BD$ is a $\BQ$-Gorenstein smoothing of $(X_0, \om_0)$. 
Namely, $\pi: \CX \to \BD$ satisfies Setting~\ref{sett:klt}, and the general fibre $X_t$, $t\neq 0$, is smooth.
Also, we have $\om$ a hermitian metric so that  $\om_{|X_0} = \om_0$, and $\om_t := \om_{|X_t}$ is K\"ahler.

The following corollary is a direct consequence of Theorem \ref{thm:Openness Coercivity} and Theorem \ref{thm:Coerc Existence}. 

\begin{cor}\label{cor:Existence cscK}
Assume that $\M_0$ is coercive. Then there exists $r>0$ such that $X_t$ admits a unique cscK metric for any $t\in \BD_r^*$.
\end{cor}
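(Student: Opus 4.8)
The plan is to chain together the two major results already available: the uniform openness of coercivity (Theorem~\ref{thm:Openness Coercivity}) and the Berman--Darvas--Lu--Chen--Cheng characterisation of cscK existence on \emph{smooth} K\"ahler manifolds (Theorem~\ref{thm:Coerc Existence}). So Corollary~\ref{cor:Existence cscK} is really just an assembly statement, and the proof should be short.

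First I would invoke Theorem~\ref{thm:Openness Coercivity}: since $\M_0$ is coercive on $\CE^1_\nmlz(X_0,\om_0)$, we have $\sm_0 > 0$, so one may pick any $A$ with $0 < A < \sm_0$. The theorem then furnishes $B > 0$ and $r > 0$ such that for every $t \in \BD_r$ and every $u \in \CE^1_\nmlz(X_t,\om_t)$ one has $\M_t(u) \geq A(-\E_t(u)) - B = A\, d_{1,t}(u,0) - B$, the last equality being the identity $d_1(u,0) = -\E(u)$ for normalized potentials recalled in Section~\ref{ssec:Strong topology}. In particular $\M_t$ is coercive on $X_t$ for every $t \in \BD_r$, and a fortiori for every $t \in \BD_r^*$.

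Next I would restrict to $t \neq 0$. Since $\pi$ is a $\BQ$-Gorenstein \emph{smoothing}, the fibre $X_t$ is a smooth compact K\"ahler manifold for $t \in \BD_r^*$ (shrinking $r$ if needed so that the smoothness locus contains the punctured disc), and $[\om_t] \in \CK_{X_t}$ is an honest K\"ahler class. The coercivity of $\M_t$ just obtained is exactly condition $(ii)$ of Theorem~\ref{thm:Coerc Existence} for the pair $(X_t,[\om_t])$, hence condition $(i)$ holds: there exists a unique cscK metric $\hat{\om}_t \in [\om_t]$, realised as the unique minimiser of $\M_t$. This yields the claim, with the same $r>0$.

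There is essentially no obstacle here; the only point to be slightly careful about is bookkeeping the several shrinkings of $\BD$ (one from Setting~\ref{sett:klt}, one from Theorem~\ref{thm:Openness Coercivity}, one to ensure $X_t$ smooth for $t \neq 0$) into a single radius $r$, and noting that the coercivity inequality in $d_1$-form with slope $A>0$ and constant $B$ is literally hypothesis $(ii)$ of Theorem~\ref{thm:Coerc Existence}. A one-line remark that uniqueness modulo $\Aut(X_t)^\circ$ is automatic here because coercivity of $\M_t$ forces $\Aut(X_t)^\circ = \{\Id\}$ (as noted after Theorem~\ref{thm:Coerc Existence}) may be included for completeness but is not needed for the statement.
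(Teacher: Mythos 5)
Your proof is correct and is exactly what the paper intends: the paper itself states that Corollary~\ref{cor:Existence cscK} is "a direct consequence of Theorem~\ref{thm:Openness Coercivity} and Theorem~\ref{thm:Coerc Existence}", and you have simply spelled out the short chain of implications — uniform coercivity on nearby fibres from Theorem~\ref{thm:Openness Coercivity}, then the smooth-fibre equivalence of Theorem~\ref{thm:Coerc Existence}, plus the identity $d_1(u,0)=-\E(u)$ on normalized potentials. No gap and no divergence from the paper's route.
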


Recalling the definition of \emph{singular cscK metrics} for singular varieties (Definition \ref{defn:cscK}), in this section, we are going to prove the following main result.
\begin{thm}
\label{thm:Main Theorem}
    Let $\pi: (\CX,\om) \to \BD$ be a $\BQ$-Gorenstein smoothing of a compact K\"ahler variety $(X_0,\om_0)$ with klt singularities. 
    If $\M_0$ is coercive, then $(X_0,\om_0)$ admits a singular cscK metric which is also a minimizer of $\M_0$. 
    Moreover, the cscK potentials $(\vph_{cscK,t})_{t \in \BD}$ converges to $\vph_{cscK,0}$ strongly and smoothly in the family sense.
\end{thm}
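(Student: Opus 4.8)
The plan is to obtain $\vph_{cscK,0}$ as a strong and smooth limit, in the family sense, of the cscK potentials $\vph_{cscK,t}$ on the nearby smooth fibres, and then to identify this limit as a bounded minimiser of $\M_0$. First I would normalise $\vph_{cscK,t}$ by $\sup_{X_t}\vph_{cscK,t}=0$, so that $\om_{\vph_{cscK,t}}=\om_t+\ddc_t\vph_{cscK,t}$ is the unique cscK metric on $X_t$ furnished by Corollary~\ref{cor:Existence cscK} for $t\in\BD_r^\ast$ and, being the minimiser of $\M_t$ (Theorem~\ref{thm:Coerc Existence}), it satisfies $\M_t(\vph_{cscK,t})\le\M_t(0)=0$. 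Fixing a slope $A\in(0,\sm_0)$, the uniform coercivity of Theorem~\ref{thm:Openness Coercivity} gives $-\E_t(\vph_{cscK,t})\le B/A$ for $t\in\BD_r$, and then, repeating the computation in the proof of $(ii)\Rightarrow(iii)$ of Theorem~\ref{thm:Coercivity and minimizers Singular Setting} (change of reference form to $\psi_t$ and control of $\E_{t,\Theta_t}$ by $\E_t$), this upgrades to a uniform bound on the adapted entropy $\H_{t,\mu_t}(\vph_{cscK,t})$, hence on $\H_t(\vph_{cscK,t})$ by Lemma~\ref{lem:compare_klt_entropy_fami}.

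Next I would establish uniform a priori estimates in families for the cscK system, which in the notation of Definition~\ref{defn:cscK} is $\om_{\vph_t}^n=e^{F_t}\mu_t$ together with $\Delta_{\om_{\vph_t}}F_t=\tr_{\om_{\vph_t}}\Ric(\mu_t)-\bar s$. Following Chen--Cheng \cite{Chen_Cheng_2021_1}, and in particular the reorganisation by Guo--Phong \cite{Guo_Phong_2022} of the $L^\infty$-estimate, the uniform entropy bound and the uniform $L^p$-control \eqref{eq:klt_Lp_estimate} of the density of $\mu_t$ yield first a uniform bound $\norm{\vph_{cscK,t}}_{L^\infty(X_t)}\le M$ (the auxiliary Monge--Amp\`ere/capacity argument behaves well in families, just as it does for $\psi_t$ in Corollary~\ref{cor:Strong continuity of psi}); then, using the uniform functional inequalities of Theorem~\ref{thm:SL_and_Skoda_in_family}, Theorem~\ref{thm:DGL} and Lemma~\ref{lem:Poincare_ineq}, a uniform bound $\norm{F_t}_{L^\infty(X_t)}\le M$; and then, for every relatively compact $\CU\Subset\CX\setminus\CZ$, a uniform Laplacian estimate $\tr_{\om_t}\om_{\vph_t}\le C_\CU$ on $\CU\cap X_t$. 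Since $\om_{\vph_t}$ thereby becomes uniformly equivalent to $\om_t$ on $\CU\cap X_t$ and the density $e^{F_t}$ is bounded there, a Schauder/complex Monge--Amp\`ere bootstrap produces uniform $\CC^\infty$ bounds for $\vph_{cscK,t}$ on $\CU\cap X_t$, in particular a uniform $\CC^\alpha$ bound on $\om_{\vph_t}^n/\om_t^n$.

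The main obstacle is the Laplacian estimate. On a fixed log-resolution $p:\CY\to\CX$ of $(\CX,X_0)$ the central fibre may be reducible, so the fibrewise metrics $\om_{\CY,t}$ admit no uniform \emph{lower} bound on their holomorphic bisectional curvature, and Chen--Cheng's original $\CC^2$-computation does not survive the degeneration. As in Proposition~\ref{prop:uniform_Lap_est} (the pure Monge--Amp\`ere case, following \cite{Paun_2008, Guenancia_2016}) I would instead run the Chern--Lu inequality (Proposition~\ref{prop:Chern-Lu}) on $\CY$, which needs only the uniform \emph{upper} bound $\Bisec(\om_{\CY,t})\le B$---valid because bisectional curvature decreases under restriction to holomorphic submanifolds---and treat the weights along $E=\Exc(p)$ exactly as there, so that the maximum of the auxiliary quantity is attained away from $E$. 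The genuinely new point compared with Proposition~\ref{prop:uniform_Lap_est} is that here $\Ric(\om_{\vph_t})=\Ric(\mu_t)-\ddc F_t$ is not a priori bounded below; the bad contribution of $-\ddc F_t$ in Chern--Lu has to be absorbed by adding a suitable multiple of $F_t$ to the auxiliary quantity and invoking the second-order equation satisfied by $F_t$ together with the uniform bound on $F_t$ itself. This is the non-trivial modification of the Chen--Cheng scheme required in the degenerate setting, and I expect it to be the bulk of the technical work.

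Finally, with the uniform $L^\infty$, Laplacian and $\CC^\alpha$-density bounds in hand, Proposition~\ref{prop:Unif Laplacian implies strong} shows that every sequence $t_j\to0$ has a subsequence along which $\vph_{cscK,t_j}$ converges strongly in families, and---by the uniform local $\CC^\infty$ bounds and a standard compactness argument---smoothly in families as well, to some $\vph_{cscK,0}\in\CE^1(X_0,\om_0)\cap L^\infty(X_0)$. Since $X_0^\reg\subset\CX\setminus\CZ$ (Remark~\ref{rmk:symplectic_diffeo}), passing to the limit in the cscK system on compact subsets of $X_0^\reg$ gives that $\om_{0,\vph_{cscK,0}}$ is a genuine K\"ahler metric of constant scalar curvature there, i.e.\ a singular cscK metric in the sense of Definition~\ref{defn:cscK}. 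That $\vph_{cscK,0}$ minimises $\M_0$ follows by combining the strong lower semi-continuity in families (Proposition~\ref{prop:Lower Semicontinuity Mabuchi families}), which gives $\M_0(\vph_{cscK,0})\le\liminf_j\M_{t_j}(\vph_{cscK,t_j})=\liminf_j\inf\M_{t_j}$, with the reverse inequality: for any bounded finite-entropy $v_0$ on $X_0$ one builds a recovery sequence $v_t$ by solving Monge--Amp\`ere equations with the regularised densities of Lemma~\ref{lem:regularization_fini_entropy} transported from $X_0$ to the nearby fibres, so that $v_t$ is uniformly bounded, converges strongly and smoothly to $v_0$, and $\M_t(v_t)\to\M_0(v_0)$ (Lemma~\ref{lem:continuity_twisted_energy} for the twisted energy, dominated convergence for $\H_t$, and the arguments in the proof of Lemma~\ref{lem:lsc_ent_mu} for the $\mu_t$-correction term); applying this to a bounded approximation of an $\M_0$-minimiser provided by Lemma~\ref{lem:BDL17} forces $\M_0(\vph_{cscK,0})\le\inf\M_0$, hence equality. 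As all the estimates are uniform in $t$, the limit $\vph_{cscK,0}$ is independent of the chosen subsequence once the $\M_0$-minimiser is known to be unique, so that $\vph_{cscK,t}\to\vph_{cscK,0}$ as $t\to0$; in any event $(X_0,\om_0)$ carries a singular cscK metric minimising $\M_0$, realised as a strong and smooth limit of cscK metrics on nearby fibres, which is Theorem~\ref{thm:Main Theorem}.
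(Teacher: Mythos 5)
Your overall roadmap — uniform entropy bound from coercivity, then uniform $L^\infty$ via Guo--Phong's auxiliary Monge--Amp\`ere scheme, then local higher-order estimates away from $\CZ$, smooth and strong subconvergence, and finally identification of the limit as an $\M_0$-minimiser via strong lower semi-continuity plus a recovery sequence built from Lemma~\ref{lem:regularization_fini_entropy} and Lemma~\ref{lem:BDL17} — is exactly the paper's, and the functional-analytic part of your argument (uniform coercivity $\Rightarrow$ energy bound $\Rightarrow$ adapted entropy bound, the change of reference to $\psi_t$, the passage to the limit in $\M$) is sound.

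There is, however, a genuine gap in the step you yourself flag as ``the bulk of the technical work.'' You propose a \emph{pointwise} maximum-principle argument for the Laplacian bound, running Chern--Lu on the log-resolution $\CY$ with an auxiliary quantity weighted by $e^{aF_t}$ so that the maximum lands away from $E$. This does not close. In the Chern--Lu inequality the Ricci term is $\hg^{i\bar\ell}\hg^{k\bar j}\hR_{i\bar j}g_{k\bar\ell}$, and after writing $\hR_{i\bar j} = \tilde R_{i\bar j} - F_{,i\bar j}$ the bad piece is $\Delta_\eta F$, where $\eta^{i\bar j}:=\hg^{i\bar\ell}\hg^{k\bar j}g_{k\bar\ell}$; this is \emph{not} $\Delta_{\homg} F$, so the second-order equation $\Delta_{\homg}F=-\bar s+\tr_{\homg}\Ric$ and the $L^\infty$ bound on $F$ give you no pointwise control of it at a maximum point. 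This is precisely why Chen--Cheng prove an \emph{integral} $L^p$ bound on $\tr_{\homg_t}\omega_t$ rather than a pointwise one: integrating $\Delta_{\homg}u^{2p+1}$ by parts turns the $\Delta_\eta\wF$ term into a gradient integral that can be absorbed by the good Dirichlet term, and this is what Proposition~\ref{C_2_int} does. One then upgrades the $L^p$ Laplacian bound to local $\CC^{2,\alpha}$ bounds on compact subsets of $\CX\setminus\CZ$ using Chen--Cheng's local estimates and Evans--Krylov, as in Theorem~\ref{thm_C_2}. Without replacing your pointwise step by such an integral argument, your proof does not go through.

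A secondary point: there is no need to pass to the resolution $\CY$ at this stage. For $t\neq 0$ the fibre $X_t$ is already smooth, and since $\om$ is a hermitian metric on $\CX$ one has a uniform \emph{upper} bound $\Bisec(\omega_t)\le B$ by restriction. The degeneration near $\CZ$ shows up only through the density $f_t$ of the adapted measure, and the paper compensates for it by the weight $f_t^{-\alpha}$ in the auxiliary quantity (using $-\ddc\log f_t\ge -A'\omega_t$), not through exceptional-divisor weights on $\CY$. Your framing in terms of $\om_{\CY,t}$ and $E=\Exc(p)$ is closer to the pure Monge--Amp\`ere Proposition~\ref{prop:uniform_Lap_est} than to what the cscK estimate actually requires.
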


It follows from Corollary \ref{cor:Existence cscK} that for any $t \neq 0$ sufficiently small, there exists $\varphi_t$ satisfying the coupled cscK-equations
\begin{equation}\label{eq_cscK_t}
    (\omega_t+ \ddc_t \varphi_t)^n = e^{F_t} \omega_t^n 
    \quad\text{and}\quad
    \Delta_{\varphi_t} F_t= -\bar s_t + \tr_{\vph_t} \Ric(\omega_t).
\end{equation}
We proceed by establishing uniform $L^\infty$-estimate for $\varphi_t$ and uniform $L^p$-estimate of Laplacian of $\varphi_t$ on smooth fibres. 
Then, we extract a limit of these cscK potentials to a cscK potential on the central fibre.

A key point in our approach is to work with new reference metrics with canonical densities. 
This allows us to control uniformly its Ricci curvature with respect to the reference form itself, which is not the case for the original metric $\om$.  

Fix a smooth $(1,1)$-form $\Ta \in c_1(-K_{\CX/\BD})$.
Let $h$ be a smooth hermitian metric on $m K_{\CX/\BD}$ corresponding to $\Ta$ and let $\mu_t$ be the adapted probability measure induced by $h_t = h_{|X_t}$.
For each $t$, one can find $\psi_t \in \PSH(X_t, \om_t) \cap L^\infty(X_t)$ which solves 
\[
    \frac{1}{V_t}(\om_t + \ddc_t \psi_t)^n = \mu_t
    \quad \text{and} \quad \sup_{X_t} \psi_t = 0.
\]
Denote by $\wom_t = \om_t + \ddc_t \psi_t$ and one has $\Ric(\wom_t) = \Ta_t$. Denote by $f_t$ the density of $\wom_t^n$ with respect to $\omega_t^n$, then we have $\Ric(\omega_t)=\Ric(\wom_t) +\ddc \log f_t$. Therefore, we rewrite the cscK-equations for the new reference metric $\wom_t$:
\begin{equation}\label{eq_cscK_tilde}
    (\wom_t + \ddc_t \phi_t)^n = e^{\wF_t}\wom_t^n, 
    \quad \Delta_{\wom_{\phi_t}}\wF_t = -\bar{s}_t + \tr_{\wom_{\phi_t}}(\Ric(\wom_t)),
    \quad\text{and}\quad \sup_{X_t} \phi_t = 0
\end{equation}
where $\wF_t = F_t - \log f_t$ and $\phi_t= \varphi_t- \psi_t$.

\subsection{$L^\infty$-estimates}
This section aims to establish a uniform $L^\infty$-estimate for cscK-potentials $(\varphi_t)_{t \neq 0}$.
Since $\psi_t$ is uniformly bounded when $t$ is close to $0$, it suffices to bound $\phi_t$. 

\begin{thm}\label{thm_L_inf}
Suppose that $\pi: \CX \to \BD$ is a $\BQ$-Gorenstein smoothing.
Assume that there exist constants $A \in (0,1)$ and $B > 0$ such that $\M_0(u) \geq A(-\E_0(u)) - B$ for all $u \in \CE^1_{\nmlz}(X_0,\om_0)$. Then there exists a constant $C$ such that for all t sufficiently close to $0$, we have
\begin{equation}
     \|\wF_t\|_{L^{\infty}(X_t)}+ \|\phi_t\|_{L^{\infty}(X_t)}\leqslant C.
\end{equation} 
As a consequence, we have
\begin{equation}
    {\rm osc}_{X_t} \varphi_t\leq C,
\end{equation}
for all t sufficiently close to $0$.
\end{thm}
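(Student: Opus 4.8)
The plan is to adapt the a priori estimate of Chen--Cheng \cite{Chen_Cheng_2021_1}, in the streamlined form of Guo--Phong \cite{Guo_Phong_2022}, so that all the constants remain uniform along the degenerating family. Throughout, $t\ne 0$ is close to $0$, so $X_t$ is smooth and $\om_{t,\vph_t}=\wom_t+\ddc_t\phi_t$ is the unique cscK metric in $[\om_t]$ given by Corollary~\ref{cor:Existence cscK}, with $\phi_t$, $\wF_t$ as in \eqref{eq_cscK_tilde}; since $\vph_t=\psi_t+\phi_t$ up to an additive constant and $\osc_{X_t}\psi_t\le M$ uniformly, it suffices to bound $\|\phi_t\|_{L^\infty(X_t)}$ and $\|\wF_t\|_{L^\infty(X_t)}$. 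The argument has two parts: a uniform entropy bound coming from the uniform coercivity of the Mabuchi functional, and the Guo--Phong $L^\infty$-estimate deduced from it.

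First I would establish the uniform bound $\wH_t(\phi_t)\le C$ on the adapted entropy (this quantity coincides with $\H_{t,\mu_t}(\vph_t)$). Since $\M_0$ is coercive with slope $A$, we have $\sm_0\ge A$, so Theorem~\ref{thm:Openness Coercivity} gives $A'\in(0,A)$, $B'>0$, $r>0$ with $\M_t\ge A'(-\E_t)-B'$ on $\CE^1_\nmlz(X_t,\om_t)$ for $|t|<r$; by the remark after Proposition~\ref{prop:Geod_Conv}, $\wM_t$ is then coercive with the same slope. As $\om_{t,\vph_t}$ minimizes $\M_t$ (Theorem~\ref{thm:Coerc Existence}), $\M_t(\vph_t)=\inf\M_t\le\M_t(0)=0$; combining this with Proposition~\ref{prop:Change Reference Final case} and with the uniform bound $|\M_t(\psi_t)|\le C$ — which follows from the strong convergence $\psi_t\to\psi_0$ (Corollary~\ref{cor:Strong continuity of psi}), $\osc_{X_t}\psi_t\le M$, and the continuity of the Monge--Amp\`ere energy, of the $\Ta$-twisted energy (Lemma~\ref{lem:continuity_twisted_energy}) and of the adapted entropy — one gets $\wM_t(\phi_t)=\M_t(\vph_t)-\M_t(\psi_t)\le C$, hence $-\wE_t(\phi_t)\le C$ by coercivity of $\wM_t$. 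Since $\Ric(\wom_t)=\Ta_t$ and $\Ta$ is a fixed smooth form on $\CX$, there is a uniform $C_\Ta$ with $-C_\Ta\om_t\le\Ta_t\le C_\Ta\om_t$; as $\phi_t\le 0$, the computation in the proof of Proposition~\ref{prop:Lower Semicontinuity Mabuchi families} yields $\wE_{t,\Ta_t}(\phi_t)\le C(-\wE_t(\phi_t))$. Plugging these into the Chen--Tian expression $\wM_t(\phi_t)=\wH_t(\phi_t)+\bar{s}\,\wE_t(\phi_t)-n\,\wE_{t,\Ta_t}(\phi_t)$ gives $\wH_t(\phi_t)\le C$, equivalently $\frac{1}{V_t}\int_{X_t}\wF_t\,\wom_{t,\phi_t}^n\le C$; by Lemma~\ref{lem:compare_klt_entropy} together with the uniform $L^p$-bound \eqref{eq:klt_Lp_estimate} on the density $f_t$ of $\wom_t^n$, one also obtains $\H_t(\vph_t)\le C$.

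The main part is then the passage from this entropy bound to $\|\wF_t\|_{L^\infty(X_t)}+\|\phi_t\|_{L^\infty(X_t)}\le C$. The plan is to run the argument of \cite{Chen_Cheng_2021_1, Guo_Phong_2022} fibrewise on the system \eqref{eq_cscK_tilde} — observing that the new reference $\wom_t$ has two-sided controlled Ricci curvature $\Ric(\wom_t)=\Ta_t$, in contrast with $\Ric(\om_t)=\Ta_t+\ddc\log f_t$, which blows up near the degenerating locus — while tracking the dependence of the constants: solve the auxiliary complex Monge--Amp\`ere equations of \cite{Guo_Phong_2022}, feed the resulting volume--capacity bounds into the Moser iteration driven by the second equation of \eqref{eq_cscK_tilde}, and check that the final constant depends only on $n$, the entropy bound of the previous step, the uniform $L^p$-norm of $f_t$ in \eqref{eq:klt_Lp_estimate}, the uniform Skoda--Zeriahi integrability and sup--$L^1$ comparison (Theorem~\ref{thm:SL_and_Skoda_in_family}, Corollary~\ref{cor:klt_Skoda}), the uniform Moser--Trudinger inequality (Theorem~\ref{thm:DGL}), the uniform Poincar\'e constant (Lemma~\ref{lem:Poincare_ineq}), and the uniform bound on $\Ta_t$. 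The hard part is exactly this uniformity: the smooth Riemannian geometry of $(X_t,\om_t)$ (bisectional curvature, Sobolev and Poincar\'e constants) degenerates as $t\to 0$, which forces the use of the singular reference $\wom_t$ and makes the \emph{family} pluripotential inequalities of Sections~\ref{sect_Preliminaries}--\ref{sect_entropy_family}, rather than their fixed-variety analogues, indispensable; a further technical point is that $\wom_t$ is only a K\"ahler current, so the Guo--Phong scheme has to be carried out in its singular-reference form. Once this is in place the last assertion is immediate: $\osc_{X_t}\vph_t\le\osc_{X_t}\psi_t+\osc_{X_t}\phi_t\le M+\|\phi_t\|_{L^\infty(X_t)}\le C$ for all $t\ne 0$ small.
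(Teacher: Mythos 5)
Your high-level strategy matches the paper's: use the uniform coercivity from Theorem~\ref{thm:Openness Coercivity} plus $\M_t(\vph_t)\le\M_t(0)=0$ to get a uniform entropy bound, switch to the reference $\wom_t$ with $\Ric(\wom_t)=\Ta_t$, and then invoke a Guo--Phong-type $L^\infty$-estimate whose constants depend only on a Ricci bound, the entropy, and a Skoda-type integrability constant. The entropy step you describe is essentially identical to the paper's. However, there is a concrete gap in the second half of your plan, and one misconception worth flagging.

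The gap: the Guo--Phong/Chen--Cheng scheme needs the Ricci curvature of the \emph{reference} metric to be bounded with respect to the reference itself, i.e.\ $-K_1\wom_t\le\Ric(\wom_t)\le K_2\wom_t$ with $K_1,K_2$ uniform in $t$. You correctly observe that $\Ric(\wom_t)=\Ta_t$ is controlled by a uniform multiple of $\om_t$, but that is not the same thing: to transfer the bound you must first show $\wom_t\ge c\,\om_t$ with $c>0$ independent of $t$. This is not free --- $\psi_t$ is only uniformly bounded in $L^\infty$, which does not by itself prevent $\wom_t$ from degenerating. The paper establishes $\wom_t\ge c\,\om_t$ by a Chern--Lu argument applied to $\log\tr_{\wom_t}\om_t - A\psi_t$, using the uniform upper bound on $\Bisec(\om_t)$ (bisectional curvature only decreases under restriction to submanifolds of the ambient space, so it is bounded above uniformly), the lower bound $\Ta_t\ge-C_2\om_t$, and $\|\psi_t\|_\infty\le M_1$. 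Your proposal never produces this lower bound on $\wom_t$, so as written the Ricci hypothesis cannot be verified and the $L^\infty$-estimate does not close. Once $\wom_t\ge c\,\om_t$ is in hand, the Skoda condition for $\wom_t^n$ also follows cleanly from Theorem~\ref{thm:SL_and_Skoda_in_family} together with \eqref{eq:klt_Lp_estimate} and $\|\psi_t\|_\infty\le M_1$, exactly as you would need.

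The misconception: you say "$\wom_t$ is only a K\"ahler current, so the Guo--Phong scheme has to be carried out in its singular-reference form", and elsewhere that "the smooth Riemannian geometry of $(X_t,\om_t)$ (bisectional curvature, Sobolev and Poincar\'e constants) degenerates as $t\to 0$". Neither is quite right for $t\ne 0$: on a smooth fibre $X_t$ the adapted density is smooth and strictly positive, so $\psi_t$ is smooth by standard Monge--Amp\`ere regularity and $\wom_t$ is a genuine smooth K\"ahler metric --- once $\wom_t\ge c\,\om_t$ is known there is nothing "singular-reference" about the fibrewise estimate. Moreover, the upper bisectional-curvature bound and the Poincar\'e constant for $(X_t,\om_t)$ are in fact uniform in $t$ (Lemma~\ref{lem:Poincare_ineq}); what degenerates is the \emph{lower} bisectional-curvature bound of $\om_t$, which is precisely why Chern--Lu (which only needs the upper bound) is the right comparison tool here, and why one must pass to $\wom_t$ rather than work with $\om_t$ directly.
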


We shall use a version of $L^\infty$-estimate for the cscK-equations (Theorem~\ref{thm_uniform_cscK}). 
A proof of estimates on cscK-equations is initiated by Chen--Cheng \cite{Chen_Cheng_2021_1} relying on an ABP estimate in local coordinates for the reference metric $\omega$ (see also \cite{Deruelle_DiNezza_2022, Guo_Phong_2022} for different methods). 
However, due to the delicate dependence on $\om$, extending Chen--Cheng's approach to degenerate families in our setting seems challenging. 
Instead, we shall follow the method of Guo--Phong \cite[Thm.~3]{Guo_Phong_2022} using auxiliary Monge--Amp\`ere equations (see also \cite{Guo-Phong-Tong2023}) which is more adaptable for degenerate setting.

\begin{thm}\label{thm_uniform_cscK}
Let $(X, \omega)$ be a compact K\"ahler manifold of complex dimension $n$ with $V=\int_X \omega^n$.  Suppose that  $(\varphi, F)$ is the solution to the coupled equations
\begin{equation}\label{eq_cscK}
    (\omega + \ddc \varphi)^n = e^{F}\omega^n, 
    \quad 
    \Delta_{\varphi} F = -\bar{s} + \tr_\varphi(\Ric(\omega)),
    \quad\text{and}\quad
    \sup_X \varphi = 0.
\end{equation}
We assume that there are positive constants $K_1, K_2, K_3$  such that 
 \begin{equation}\label{cond_ric}
     -K_1 \omega \leqslant  {\rm Ric} (\omega)\leqslant K_2 \omega,
 \end{equation}
    \begin{equation}\label{cond_ent}
        {\bf H}(\varphi)= \frac{1}{V}\int_X \log\lt(\frac{\omega_\varphi^n}{\omega^n}\rt) \omega_\varphi^n \leqslant K_3,   \end{equation}
and  there exists $\alpha>0$ and $K_4>0$ such that for all $\phi\in {\rm PSH}(X, \omega)$ \begin{equation}\label{cond_skoda}
       \int_{X}e^{-\alpha(\phi-\sup\phi)} \omega^n \leqslant K_4.
       \end{equation}
Then there is a constant $C>0$ depending only on $n,\bar s, V,\alpha, K_1, \cdots, K_4$ such that 
$$ 
    \|\varphi\|_{L^\infty} + \|F\|_{L^\infty} \leq C. 
$$
\end{thm}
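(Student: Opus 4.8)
The plan is to follow the Guo--Phong strategy via auxiliary complex Monge--Amp\`ere equations, which decouples the $L^\infty$-bound on $\vph$ from the fine dependence on $\om$. First I would reduce the two-sided bound to showing $\|F\|_{L^\infty} \leq C$: indeed, once $e^F$ is bounded above and below, the oscillation and $\sup$-normalization of $\vph$ together with Skoda's integrability \eqref{cond_skoda} give $\|\vph\|_{L^\infty} \leq C$ by the standard pluripotential estimate (e.g. \cite[Thm.~A]{DGG2023}). So the real content is the $L^\infty$-bound on $F$, and for this one needs a two-sided control. The upper bound on $F$ is the hard direction and the lower bound $F \geq -C$ follows more easily from the entropy bound \eqref{cond_ent} combined with the Monge--Amp\`ere equation (since $\int_X e^F \om^n = V$ and $\int_X F e^F \om^n = V\H(\vph) \leq VK_3$, a convexity/Jensen argument on the set $\{F \le -M\}$ controls the measure where $F$ is very negative, which via the volume-capacity estimate bounds $F$ from below pointwise once paired with the $L^\infty$-bound on $\vph$ — so in practice these are bootstrapped together).

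For the upper bound on $F$, I would introduce, for each small $s>0$ and each sublevel set $\Omega_s := \{F > s + \sup_X F - 1\}$ type construction à la Guo--Phong, an auxiliary potential $u_s \in \PSH(X,\om) \cap L^\infty$ solving a Monge--Amp\`ere equation of the form $(\om + \ddc u_s)^n = \frac{e^{\Lambda F}}{A_s}\om^n$ with $\sup_X u_s = 0$, where $\Lambda = \Lambda(n)$ is a fixed large constant and $A_s$ is the normalizing integral. The key inputs are: (i) a uniform bound $A_s \leq C$, which comes from the entropy bound — writing $\int_X e^{\Lambda F}\om^n = \int_X e^{(\Lambda-1)F} \om_\vph^n$ and using the entropy control \eqref{cond_ent} together with a Young-type inequality $ (\Lambda-1)F \le F\log F /c + c'$ valid where $F$ is large, so that $\int_X e^{\Lambda F}\om^n \le C(n,K_3)$ — this is exactly where the uniform entropy hypothesis enters; (ii) the De Philippis--Figalli / Guo--Phong-type $L^\infty$-estimate giving $\|u_s\|_{L^\infty} \leq C$ depending only on $n$, $V$, and the $L^p$-norm (here $L^1$-exponential / Skoda norm \eqref{cond_skoda}) of the density, hence uniform. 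Then I would run the comparison-of-potentials / maximum-principle argument: on the set where $F$ is close to its maximum, one compares $\vph$ with a suitable multiple of $u_s$ plus the Laplacian information coming from the second equation $\Delta_\vph F = -\bar s + \tr_\vph \Ric(\om)$, using \eqref{cond_ric} to bound $\tr_\vph\Ric(\om) \leq K_2 \tr_\vph \om$ from above and $\geq -K_1\tr_\vph\om$ from below, together with $\tr_\vph \om \geq n (e^{-F})^{1/n}$ by the arithmetic-geometric mean inequality applied to the equation $\om_\vph^n = e^F\om^n$. This yields a differential inequality for $F$ (or for $F$ minus a multiple of the auxiliary potential) to which the maximum principle applies, producing $\sup_X F \leq C$.

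The main obstacle I expect is the bookkeeping in the auxiliary-equation argument: one must choose the exponent $\Lambda$ and the coupling constants so that the "bad" gradient/Laplacian terms coming from $\tr_\vph\Ric(\om)$ and from $\ddc u_s$ are absorbed, and simultaneously verify that every constant appearing depends only on $n, \bar s, V, \alpha, K_1,\dots,K_4$ and not on $\om$ itself — this uniformity is the whole point, since in the family application $\om$ degenerates. A clean way to organize it is to cite \cite[Thm.~3]{Guo_Phong_2022} (and \cite{Guo-Phong-Tong2023}) essentially verbatim: their theorem is stated precisely for the coupled system \eqref{eq_cscK} with hypotheses that match \eqref{cond_ric}, \eqref{cond_ent}, \eqref{cond_skoda} after translating their $L^p$-entropy-type assumption into our \eqref{cond_ent}, so the proof of Theorem~\ref{thm_uniform_cscK} can be largely a matter of checking that our normalization and our hypotheses imply theirs, and invoking their conclusion. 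Concretely I would: (1) observe \eqref{cond_ent} $\Rightarrow$ the nonlinear entropy quantity in Guo--Phong is controlled; (2) observe \eqref{cond_skoda} $\Rightarrow$ the $\alpha$-invariant / exponential integrability they require, with uniform constant; (3) observe \eqref{cond_ric} gives the two-sided bound on $\Ric(\om)$ they use; (4) apply their theorem to get $\|F\|_{L^\infty}\leq C$; (5) deduce $\|\vph\|_{L^\infty}\leq C$ from step (4) and \eqref{cond_skoda} via the Ko\l{}odziej-type $L^\infty$ estimate. I would then remark that the dependence of $C$ is exactly as claimed, completing the proof.
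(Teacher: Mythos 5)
The broad outline is right: the paper does follow the Guo--Phong auxiliary-equation strategy, and your reduction \emph{(bound $F$ first, then get $\|\vph\|_{L^\infty}$ from Ko\l{}odziej plus \eqref{cond_skoda})} is the correct skeleton. But there are two places where what you propose would not go through as stated.

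First, you cannot cite \cite[Thm.~3]{Guo_Phong_2022} verbatim. Their hypothesis is not \eqref{cond_skoda}; it is that $\om\leq C\om_X$ for a \emph{fixed} K\"ahler metric $\om_X$ on $X$. That assumption is tailored to a family of classes degenerating inside the K\"ahler cone of one fixed manifold, and it fails in the setting this theorem is built for, where $\om=\om_t$ lives on varying fibres $X_t$ and there is no fixed ambient $\om_X$. The whole point of the paper's Theorem \ref{thm_uniform_cscK} is precisely to \emph{replace} that structural assumption by the Skoda--Zeriahi integrability \eqref{cond_skoda}, and to then re-run the Guo--Phong argument (auxiliary CMA equation with cutoffs $\tau_k$, the test function $\Phi=-\epsilon(-\phi_k+\Lambda)^{n/(n+1)}-\vph+\lambda F$, maximum principle) from scratch, keeping track of the fact that every constant depends only on $n,\bar s,V,\alpha,K_1,\dots,K_4$. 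Checking that your \eqref{cond_skoda} ``implies'' their hypothesis is not a matter of translating notation; it is the content of the proof.

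Second, your claim that the lower bound $F\geq -C$ ``follows more easily from the entropy bound \eqref{cond_ent} combined with the Monge--Amp\`ere equation'' via Jensen on $\{F\leq -M\}$ is incorrect: entropy gives no control on the negative part of $F$. Since $x\log x \geq -e^{-1}$ for all $x\geq 0$, the quantity $\int_X Fe^F\om^n$ is automatically bounded below, and an upper bound on entropy restricts only the set where $F$ is \emph{large positive}. Where $F\to-\infty$, $Fe^F\to 0$ and the integrand contributes nothing, so no amount of convexity/Jensen extracts a pointwise lower bound for $F$ from \eqref{cond_ent}. The paper instead gets the lower bound by an elementary maximum-principle argument applied to $H=F+(K_2+1)\vph$: using the second equation, the \emph{upper} Ricci bound $\Ric(\om)\leq K_2\om$, and the AM--GM inequality $\tr_{\om_\vph}\om\geq n e^{-F/n}$, one finds $\Delta_{\om_\vph}H\leq (K_2+1)n-\bar s + n e^{-F/n}$, so at a minimum point of $H$ one forces $F\geq -C(n,K_2)$; then the already-obtained $\|\vph\|_{L^\infty}$ bound turns this into a global lower bound on $F$. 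Note this step actually uses the upper Ricci bound $K_2$, which does not appear anywhere in your sketch of the lower bound, so the dependence of $C$ on $K_2$ as stated in the theorem would be unexplained in your argument.
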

We remark that the first condition in \cite[Thm.~3]{Guo_Phong_2022}, namely $\omega\leq C\omega_X$ for a fixed K\"ahler metric $\omega_X $ on $X$, is suitable with a degenerate family in the K\"ahler cone of a fixed K\"ahler manifold. 
In our situation, this condition is replaced by a Skoda--Zeriahi type integrability condition \eqref{cond_skoda}, which fits well for degenerate families.
We also keep track of the depending constants appearing in the proof.
We also remark that Chen--Cheng's estimates are exploited by Zheng \cite{Zheng_2018, Zheng_2022} for cscK cone metrics, but it differs from our setting. 

\begin{proof}[Proof of Theorem~\ref{thm_uniform_cscK}] 
The proof follows the approach in \cite[Thm 3]{Guo_Phong_2022} with certain simplifications. 
Let $\tau_k:\mathbb{R}\rightarrow \mathbb{R}_+$ be a sequence of positive smooth functions which decreases to the function $x\mapsto x\cdot {\bf 1}_{\mathbb{R}_+}(x).$  We solve the following auxiliary complex Monge-Amp\`ere equation {\small
\begin{equation}
V^{-1}(\omega+\ddc \phi_{k})^n=\frac{\tau_k(-\varphi + \lambda F)+1}{A_k} e^F\omega^n, \quad \sup_X\phi_k=0,
\end{equation} }
where {\small $$A_k=\int_X ( \tau_k (-\varphi +\lambda F)+1 )e^F \omega^n\rightarrow  \int_{\{-\varphi+\lambda F>0\}} (-\varphi +\lambda F)e^F \omega^n + V=A_\infty ,$$}
as $k\rightarrow \infty$. Young inequality  with  $\chi(s)=(s+1)\log (s+1)-s$ implies  {\small $$\int_X (-\varphi) e^F\omega^n \leq \int_X \chi(\alpha^{-1}e^F)\omega^n+   \int_X \chi^*(-\alpha\varphi)\omega^n$$ }
where $\alpha$ is the constant in \eqref{cond_skoda}. 
It  then follows from Remark \ref{rmk:L_chi_entropy},  \eqref{cond_skoda} and \eqref{cond_ent} that  $V \leq  A_\infty \leq C(K_3, K_4, V)$. Thus $ V \leq A_k\leq  C_1=C(K_3, K_4, V)$ for $k$ sufficiently large. Consider the function $$\Phi=-\epsilon(-\phi_k+\Lambda)^{\frac{n}{n+1}}-\varphi+\lambda F $$ with $\epsilon=\left(\frac{(n+1)(n+\lambda \bar s )}{n^2}\right)^{\frac{n}{n+1}}A_k^{\frac{1}{n+1}} $ and $\Lambda=\left( \frac{2n}{n+1} \epsilon\right)^{n+1} $.
Let $x_0$ be a maximal point of $\Phi$. 
At $x_0$ we have {\small
\begin{align*}
    0&\geq \Delta_{\omega_\varphi}\Phi \geq \frac{\epsilon n}{n+1}(-\phi_k+ \Lambda) ^{ -\frac{1}{n+1}}  \Delta_{\omega_\varphi} \phi_k -\Delta_{\omega_\varphi} \varphi  + \lambda \Delta_{\omega_\varphi} F\\
    &= \frac{\epsilon n}{n+1}(-\phi_k+ \Lambda) ^{ -\frac{1}{n+1}}  (\tr_{\omega_\varphi} \omega_{\phi_k}  -\tr_{\omega_\varphi}\omega) -\tr_{\omega_\varphi} (\omega_\varphi-\omega)  + \lambda (-\bar s+ \tr_{\omega_\varphi} {\rm Ric}(\omega)) \\
    &\geq \frac{n^2\epsilon}{n+1} (-\phi_k+\Lambda)^{-\frac{-1}{n+1}} \left( \frac{\tau_k(-\varphi+\lambda F)+1}{A}\right)^{1/n} -n- \lambda \bar s+  \left(1-\frac{n\epsilon}{n+1}\Lambda^{-\frac{1}{n+1}}  -\lambda K_1 \ \right)\tr_{\omega_\varphi}\omega\\
    &\geq \frac{n^2\epsilon}{n+1} (-\phi_k+\Lambda)^{-\frac{-1}{n+1}} \left( \frac{\tau_k(-\varphi+\lambda F)+1}{A}\right)^{1/n} -n- \lambda \bar s,
\end{align*} }
where we choose $\lambda = \frac{1}{(2+ nV)K_1}$ so that $n+\lambda \bar s>0$ and $\lambda K_1<1/2$. Therefore at $x_0$ we get {\small
\begin{equation*}
   -\varphi+\lambda F\leq  \left(\frac{(n+\lambda \bar s)(n+1)}{n^2\epsilon}\right)^n A_k(-\phi_k+\Lambda)^{n/n+1};
\end{equation*} } hence, $\Phi(x_0) \leq 0$ and $\Phi \leq 0$ on $X$. 
By the choice of $\epsilon, \Lambda$
and $V\leq  A_k\leq C(K_3, K_4, V)$, and Young inequality, we derive that for any $\delta>0$
\begin{equation}\label{ineq_F}
\lambda F\leq -\varphi+\lambda F\leq C(V, K_1,K_3,K_4)(-\phi_k+\Lambda)^{n/(n+1)}\leq -\delta \phi_k+ C_2 , 
\end{equation}
with $C_2=C(\delta,V,K_2, K_3, K_4).$
Therefore, combing with \eqref{cond_skoda}, the inequality \eqref{ineq_F} implies that for any $\beta >\lambda^{-1}$ and $\delta>0$ such that $\delta \beta<\alpha$, 
\begin{equation} \label{ineq_exp_F}
\int_X e^{\beta \lambda F}\omega^n \leq e^{C_2} \int_{X}e^{-\alpha \phi_k}\omega^n \leq C(\delta, K_1, K_3, K_4).
\end{equation}
In particular, this yields $\|e^F\|_{L^p(X, \omega^n)}\leq  C(\alpha, K_1, K_3, K_4)$ for some $p>1$. 
From a refined version of Ko{\l}odziej's $L^\infty$-estimate \cite{Kolodziej_1998} (see \cite[Thm.~A]{DGG2023} for the version we referred), we obtain 
$\|\varphi\|_{L^\infty}\leq C(n,V,\alpha, K_1,K_3,K_4).$ 
Combining this with \eqref{ineq_exp_F}, we infer that $$ \|(\tau_k(-\varphi +\lambda F)+1 )e^F \|_{L^{p'}(X, \omega^n)}\leq  C(n,\alpha, V, K_1, K_3, K_4)$$  for some $p'>1$ and for all $k>0$ sufficiently large. 
Again, Ko{\l}odziej's $L^\infty$-estimate gives a uniform bound $\|\phi_k\|_{L^\infty}\leq C(n,V,\alpha, K_1,K_3,K_4)$. Therefore, the inequality \eqref{ineq_F} shows a uniform upper bound for $F$.

A uniform lower bound for $F$ follows from the maximum principle with the test function $H=F+(K_2+1)\varphi.$ 
Indeed, we have
 \begin{align*}
 \Delta_{\omega_\varphi}H&= -\bar s + \tr_{\omega_\varphi} (\Ric(\omega)) +(K_2+1)n-(K_2+1)\tr_{\omega_\varphi} \omega\\
 &\leq (K_2+1)n-\bar s -\tr_{\omega_\varphi}\omega\leq (K_2+1)n-\bar s+  ne^{-F/n}.  
 \end{align*}
Therefore, at a minimum point $x_0$ of $H$,  $F(x_0)\geq -C(n,K_2)$; thus, we get $F\geq -C(n,K_2)+  (K_2+1)\|\varphi\|_{L^\infty}$. 
\end{proof}

\begin{proof}[Proof of Theorem \ref{thm_L_inf}]
It suffices to verify all conditions in Theorem \ref{thm_uniform_cscK} for the equation \eqref{eq_cscK_tilde}. 
We first remark that up to shrinking $\BD$, there is a constant uniform $c > 0$ such that
\begin{equation}\label{eq:strict_positive}
    \wom_t \geq c\om_t 
\end{equation}
for any $t \in \BD$. 
Indeed,
by \cite[Lem.~3.5]{GGZ_2023_strict}, up to shrinking $\BD$, one can find uniform $p>1$ and $C_1>0$ such that 
\begin{equation}\label{eq:Lp_estimate_change_base}
 \|f_t\|_{L^p(X_t, \om_t^n)} \leq C_1,   
\end{equation}
for any $t \in \BD$, where $f_t = \mu_t / \om_t^n$.
Then a refined version of Ko{\l}odziej's theorem \cite[Thm.~A]{DGG2023}, guarantees the existence of a uniform constant $M_1>0$ such that 
\begin{equation}\label{eq:uniform_L_infty_change_base}
    \|\psi_t\|_{L^\infty(X_t)} \leq M_1.
\end{equation} 

Obviously, for each $t \in \BD$, there is a uniform constant $C_2 > 0$ such that $\Ric(\wom_t) = \Ta_t \geq -C_2\om_t$. 
On the other hand, as $\om$ extends smoothly under local embedding $\CX \xhookrightarrow[\loc.]{} \BC^N$ and the bisectional curvature decreases when passing to holomorphic submanifolds, one can find a uniform constant $C_3 > 0$ such that $\Bisec(\om_t) < C_3$ on $X^{\reg}_t$. Now combining this with \eqref{eq:uniform_L_infty_change_base} and applying Chern--Lu inequality as  
Step 1 in the proof of Theorem \ref{prop:uniform_Lap_est}, we get \eqref{eq:strict_positive}.
One can find a smooth family of smooth maps $F_t: X_0^{\reg} \to X_t$ inducing a diffeomorphism onto their image and such that $F_0 = \Id_{X_0^\reg}$.
Following the same argument as in~\cite[p. 13]{GGZ_2023_strict}, one can check that $F_t^\ast \wom_t$ converges locally smoothly on $X_0^\reg$ when $t \to 0$.
Then \eqref{eq:strict_positive} implies $\wom_0 \geq c \om_0$.

Now, by \eqref{eq:strict_positive}, the condition \eqref{cond_ric} holds by our choice of reference metric $\wom_t$. 
The condition \eqref{cond_skoda} with respect to $\wom_t^n$ follows from Theorem~\ref{thm:SL_and_Skoda_in_family}, \eqref{eq:Lp_estimate_change_base}, and \eqref{eq:uniform_L_infty_change_base}. 

We now show that $\wH_{t}(\phi_t):=\H_{\mu_t}(\phi_t)$ is uniformly bounded from above. Since $\phi_t$ is a minimizer of the Mabuchi functional $\M_t$, we have $\M_{t}(\phi_t)\leq \M_t(0)=0$. Remark that we still have that for some $A,B>0$, $\M_0(u)\geq A(-\E_0(u))-B$ for all $u\in\CE^1_{\nmlz}(X_0,\om_0)$. By the uniform coercivity of Thereom \ref{thm:Openness Coercivity}, we get that $-\E_t(\phi_t)\leq D$ uniformly in $t$. As
$$
-K_1\wom_t\leq \Ric(\wom_t)\leq K_2\wom_t,
$$
this also implies $\lvert\E_{t,\Theta_t}(u)\rvert\leq C(-\E_t(u))$ for a uniform constant $C>0$. Moreover, as $\psi_t$ strongly converges to $\psi_0$ (Corollary \ref{cor:Strong continuity of psi}), $\E_t(\psi_t)$ and $\E_{t,\Theta_t}(\psi_t)$ are uniformly bounded. Thus by (\ref{eqn:Useful Formula}) and $\wM_t(\phi_t)\leq \wM(\psi_t)=0$ we gain
$$
\wH_t(\phi_t)\leq \bar{s}\big(\E_t(\psi_t)-\E_t(\phi_t)\big)+n\big(\E_{t,\Theta_t}(\psi_t)-\E_{t,\Theta_t}(\phi_t)\big)\leq D'
$$
for an uniform $D'>0$; hence we get the uniform bound for the entropy $\wH(\phi_t)$ as required. 
All in all, we obtain the $L^\infty$-estimate by Theorem \ref{thm_uniform_cscK}. 
\end{proof}

\subsection{Laplacian  and higher order estimates}
In this section, we prove higher-order estimates for the solutions of the cscK-equations away from the singular set. 

\begin{thm}\label{thm_C_2}
Up to shrinking $\BD$, for any $K$ a compact subset of $\mathcal{X} \setminus \CZ$ and $l\geq 1$, there is a uniform constant $C(l,K) > 0$ such that for any $t \neq 0$,
\[
    \|\varphi_t\|_{\CC^l(K \cap X_t)} \leq C(l,K).
\]
\end{thm}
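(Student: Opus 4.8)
The plan is to establish Theorem~\ref{thm_C_2} by a \emph{localized} version of the Chen--Cheng interior a priori estimates for the coupled cscK system \eqref{eq_cscK_tilde}, carried out on compact subsets of $\CX\setminus\CZ$ where all of the geometry is uniformly controlled, followed by a standard elliptic bootstrap. Throughout, I would keep track of the reference metrics $\wom_t$ of \eqref{eq_cscK_tilde}, for which $\Ric(\wom_t)=\Ta_t$ is the restriction of a fixed smooth form.

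\emph{Step 1: localization and uniform geometric control.} Fix $K\Subset\CX\setminus\CZ$ and choose $K\Subset K'\Subset K''\Subset\CX\setminus\CZ$. Since $\pi$ is a submersion on $\CX\setminus\CZ$ and $(\CX,\om)$ is smooth, I would first observe that there is $\Lambda=\Lambda(K'')>0$ such that, for all $t$ small, $\Lambda^{-1}\om_t\leq\wom_t\leq\Lambda\om_t$ on $K''\cap X_t$: indeed $\wom_t\geq c\om_t$ globally by \eqref{eq:strict_positive}, while the density $f_t$ of the adapted measure is smooth and bounded away from $0$ and $\infty$ on interior compacts (the poles of $f_0$ sit on $X_0^\sing\subset\CZ$), giving the upper comparison. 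Likewise all covariant derivatives of the curvatures of $\om_t$ and $\wom_t$, of $\Ta_t$, the constants $\bar s_t$, and the potentials $\psi_t$ (which solve a uniformly smooth, uniformly elliptic Monge--Amp\`ere equation on $K''$, hence carry uniform interior $\CC^l$ bounds by the classical estimates) are bounded by $\Lambda$ on $K''\cap X_t$, uniformly in $t$. Therefore it suffices to produce uniform interior $\CC^l$ bounds for $\phi_t=\varphi_t-\psi_t$ solving \eqref{eq_cscK_tilde}.

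\emph{Step 2: second-order estimate.} Recall from the proof of Theorem~\ref{thm_L_inf} that $\|\wF_t\|_{L^\infty(X_t)}+\|\phi_t\|_{L^\infty(X_t)}\leq C$ and $\H_{\mu_t}(\phi_t)\leq C$ uniformly, so in particular $\wom_{\phi_t}^n=e^{\wF_t}\wom_t^n$ is comparable to $\wom_t^n$ globally. Next I would run Chen--Cheng's coupled second-order a priori estimate \cite{Chen_Cheng_2021_1} for \eqref{eq_cscK_tilde}, but applied to the relevant auxiliary function multiplied by a cut-off $\eta\in\CC_c^\infty(K'')$ with $\eta\equiv1$ on $K'$ --- schematically, a test function of the shape $\eta^2\bigl(\log\tr_{\wom_{\phi_t}}\wom_t+|\nabla\wF_t|^2_{\wom_{\phi_t}}\bigr)-A\phi_t$, up to the precise combination of Chen--Cheng. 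The bisectional curvature of $\wom_t$ is bounded above on $K''$ (restriction to submanifolds decreases it), so the Chern--Lu inequality of Proposition~\ref{prop:Chern-Lu} applies and, combined with the $\wF_t$-equation $\Dt_{\wom_{\phi_t}}\wF_t=-\bar s_t+\tr_{\wom_{\phi_t}}\Ta_t$ and the two-sided bound on $\Ta_t$, controls the sub-Laplacian of the auxiliary terms from below by $-C(1+\tr_{\wom_{\phi_t}}\wom_t)$; the cut-off error terms are absorbed using $\|\eta\|_{\CC^2}$, Cauchy--Schwarz, and the good term $A\,\tr_{\wom_{\phi_t}}\wom_t$ coming from $-A\Dt_{\wom_{\phi_t}}\phi_t$ with $A$ large. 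Evaluating at a maximum point of the test function yields $\sup_{K'\cap X_t}\bigl(\tr_{\wom_t}\wom_{\phi_t}+\eta^2|\nabla\wF_t|^2\bigr)\leq C(K')$; together with the uniform lower bound on $\det\wom_{\phi_t}=e^{\wF_t}\det\wom_t$ this gives $C(K')^{-1}\wom_t\leq\wom_{\phi_t}\leq C(K')\wom_t$ on $K'\cap X_t$, i.e. \eqref{eq_cscK_tilde} is uniformly elliptic there, and $\|\wF_t\|_{\CC^1(K'\cap X_t)}\leq C(K')$.

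\emph{Step 3: bootstrap.} With \eqref{eq_cscK_tilde} uniformly elliptic on $K'$, the linear equation $\Dt_{\wom_{\phi_t}}\wF_t=-\bar s_t+\tr_{\wom_{\phi_t}}\Ta_t$ has bounded right-hand side and $L^\infty$ uniformly elliptic coefficients, so Krylov--Safonov gives uniform interior $\CC^\af$ bounds on $\wF_t$; Evans--Krylov applied to the concave uniformly elliptic equation $(\wom_t+\ddc_t\phi_t)^n=e^{\wF_t}\wom_t^n$ then gives uniform interior $\CC^{2,\af}$ bounds on $\phi_t$, so the coefficients of $\Dt_{\wom_{\phi_t}}$ become uniformly $\CC^\af$; an alternating Schauder bootstrap (Schauder for the linear $\wF_t$-equation, then Schauder for the once-differentiated Monge--Amp\`ere equation) upgrades these to uniform $\CC^{l,\af}(K\cap X_t)$ bounds for every $l$. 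Adding the uniform interior bounds on $\psi_t$ from Step~1 gives $\|\varphi_t\|_{\CC^l(K\cap X_t)}\leq C(l,K)$, which completes the proof; combined with Theorem~\ref{thm_L_inf} and the strong-topology machinery of Sections~\ref{sect_families}--\ref{sect_entropy_family} this then yields Theorem~\ref{thm:Main Theorem}. The main obstacle is Step~2: carrying out Chen--Cheng's coupled maximum-principle argument with cut-offs and checking that every auxiliary term is dominated by the interior geometric data alone --- substituting the Chern--Lu inequality for Siu's/Yau's inequality, exactly as already done for Theorem~\ref{prop:uniform_Lap_est}, is what frees the argument from requiring a uniform lower bound on the bisectional curvature of the reference metrics along the family.
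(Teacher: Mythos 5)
Your proposal diverges from the paper in a substantive way, and the divergence opens a real gap in Step~2.

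The paper's route to Theorem~\ref{thm_C_2} is: first prove the \emph{global} integral estimate of Proposition~\ref{C_2_int}, namely $\|\tr_{\homg_t}\om_t\|_{L^{2p+2}(X_t,\om_t)}\leq C(p)$, by integrating $\Delta_{\homg_t}u^{2p+1}$ over the whole fibre (so that $\int_{X_t}\Delta_{\homg_t}u^{2p+1}\,\homg_t^n=0$ with no boundary terms), and then feed this $L^{p}$ Laplacian bound into Chen--Cheng's \emph{local} elliptic estimate (Prop.~6.1 in the arXiv version of \cite{Chen_Cheng_2021_1}, a Calder\'on--Zygmund / De~Giorgi--Nash type result), which takes an $L^{p}$ bound on $n+\Delta\varphi$ as input and outputs interior $\CC^{2}$ and $\|\nabla H\|_{\CC^0}$ bounds; Evans--Krylov and Schauder then bootstrap. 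Your proposal replaces this global-integral-plus-local-elliptic scheme with a \emph{local pointwise maximum principle} applied to $\eta^2\bigl(\log\tr_{\wom_{\phi_t}}\wom_t+|\nabla\wF_t|^2_{\wom_{\phi_t}}\bigr)-A\phi_t$, and this does not close. At the interior maximum $x_0$ of the cut-off test function, the first-order condition lets you replace $\nabla G$ in terms of $\nabla\eta$, $\nabla\phi$ and $G$; after the substitution the Laplacian expansion produces error terms of the order $G\bigl(|\nabla\eta|^2_{\wom_{\phi_t}}+|\Delta_{\wom_{\phi_t}}\eta|\bigr)$, and both $|\nabla\eta|^2_{\wom_{\phi_t}}$ and $|\Delta_{\wom_{\phi_t}}\eta|$ are controlled only by $\tr_{\wom_{\phi_t}}\wom_t$ times $\|\eta\|_{\CC^2}$. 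So the cut-off error enters as $C\,G\,\tr_{\wom_{\phi_t}}\wom_t$, whereas the ``good'' term coming from $-A\Delta_{\wom_{\phi_t}}\phi_t$ enters only with a factor $\eta^2$, i.e.\ as $A\eta^2\tr_{\wom_{\phi_t}}\wom_t$. You therefore obtain an inequality of the shape $\bigl(A\eta^2 - C\,G\bigr)\tr_{\wom_{\phi_t}}\wom_t\leq C'$, which can only be exploited if $G(x_0)$ is already known to be small relative to $\eta^2(x_0)$ --- exactly the quantity one is trying to bound. The argument is circular; ``absorbing the cut-off error via Cauchy--Schwarz'' is not possible without a prior $L^p$ control on $\tr_{\wom_{\phi_t}}\wom_t$, which is precisely what Proposition~\ref{C_2_int} supplies. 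Note that the pointwise maximum-principle estimate in Theorem~\ref{prop:uniform_Lap_est}, which you cite as a precedent, concerns the \emph{Calabi--Yau} equation and uses $\psi\to-\infty$ on $\Exc(p)$ as a barrier rather than a cut-off; it has no $|\nabla\wF|^2$ term and no cut-off error of the above shape, so that argument does not transfer to the coupled cscK system.

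A secondary issue in Step~1: the assertion that $\Bisec(\wom_t)$ is bounded above on $K''$ because ``restriction to submanifolds decreases it'' is not available, since $\wom_t=\om_t+\ddc_t\psi_t$ is \emph{not} the restriction of a fixed ambient metric on $\CX$. One would instead have to invoke the local smooth convergence $\psi_t\to\psi_0$ on compacts of $\CX\setminus\CZ$ (Corollary~\ref{cor:Strong continuity of psi}, itself built on Theorem~\ref{prop:uniform_Lap_est}). The paper sidesteps this: the Chern--Lu term in Proposition~\ref{C_2_int} is taken against $\om_t$ (restriction of $\om$, hence $\Bisec(\om_t)\leq B$ uniformly), with $\wom_t\geq c\,\om_t$ used to pass to $\tr_{\homg_t}\om_t$; this is the cleaner route and you should adopt it. Step~3 of your proposal (Evans--Krylov plus Schauder bootstrap on an interior region where the equation is uniformly elliptic) is standard and correct \emph{given} the output of Step~2, but that output is not established by the argument you give.
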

 
We recall the cscK-equations for the reference metric $\wom_t = \omega_t + \ddc \psi_t$: 
\begin{align}\label{eq_cscK_C_2}
\left\{\begin{matrix}
(\wom_t + \ddc \phi_t)^n = e^{\wF_t}\wom_t^n = e^{\wF_t-\log f_t}\omega_t^n, \quad \sup_X \phi_t =0 \\ 
\Delta_{\wom_{\phi_t}}\wF_t = -\bar{s}_t + \tr_{\wom_{\phi_t}}(\Ric(\wom_t)), 
\end{matrix}\right.
\end{align}
where $f_t$ is the density of $\wom_t^n$ with respect to $\omega_t^n$, $\wF_t = F_t -\log f_t$ and $\phi_t= \varphi_t- \psi_t$. 
Recall that there exists some constant $C$ so that $-\ddc\log f_t \geq -C\omega_t$ (cf. Section \ref{sect_mild_sing}). 
By adding some constant, one can assume that $\sup_{X_t} \varphi_t = 0$.  Then it follows from Theorem \ref{thm_L_inf} that $\|\wF_t\|_{L^\infty}+ \|\varphi_t\|_{L^\infty}\leq C_0$ for some $C_0>0$ for all $t$ sufficiently close to $0$.  
Denote by $\homg_t:= \omega_t +\ddc \varphi_t = \wom_t + \ddc\phi_t$. 
\begin{prop}\label{C_2_int}
For any $p\geq 1$, we have
\[  
    \|\tr_{\homg_t }\omega_t \|_{L^{2p+2}(X_t, \om_t^n)}\leq C(n,p, A, B, C_0)
\] 
for all $t$ close to 0, where $A, A', B, C_0$ are constants satisfying
$$-A\omega_t \leq \Ric(\wom_t)\leq A\omega_t, \quad \Bisec(\omega_t)\leq B,$$ 
and
$$\|\wF_t\|_{L^\infty}+ \|\varphi_t\|_{L^\infty}\leq C_0.$$
\end{prop}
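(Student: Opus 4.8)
The plan is to follow Chen--Cheng's a priori estimates for the coupled cscK-equations, adapted to the degenerate family; the one structural subtlety — as anticipated in the introduction — is that the reference metrics $\om_t$ carry no uniform lower bound on the holomorphic bisectional curvature, which forces working with $\wom_t$ and running Chern--Lu against $\om_t$. Throughout I would fix a smooth fibre $X_t$ ($t\neq0$), drop the subscript $t$, and set $w:=\tr_{\homg}\om$ with $\homg=\homg_t=\wom_t+\ddc\phi_t=\om_t+\ddc\vph_t$, recalling $\homg^n=e^{\wF}\wom^n=e^{F}\om^n$. First I would record three uniform facts. \emph{(a)} The density $f$ of $\wom^n$ with respect to $\om^n$ is bounded below by a uniform positive constant: since $-\log f$ is $A'\om$-psh, $\sup_X(-\log f)$ is controlled through Theorem~\ref{thm:SL_and_Skoda_in_family} by $\tfrac1V\int_X(-\log f)\,\om^n$, which is uniformly bounded using the $L^p$-estimate~\eqref{eq:klt_Lp_estimate}; together with $\|\wF\|_{L^\infty}\leq C_0$ this yields $\homg^n\geq c_0\,\om^n$ for a uniform $c_0>0$. \emph{(b)} Pairing the second equation of~\eqref{eq_cscK_C_2} with $\wF$ and integrating against $\homg^n$ gives $\int_X|\nabla\wF|^2_{\homg}\,\homg^n=\bar s\int_X\wF\,\homg^n-n\int_X\wF\,\Ric(\wom)\wedge\homg^{n-1}$; since $\Ric(\wom)\geq-A\om$ and $\int_X\Ric(\wom)\wedge\homg^{n-1}=\int_X\Ric(\wom)\wedge\om^{n-1}$ is cohomological, hence uniformly bounded, the right-hand side is $\leq C(n,A,C_0)$. \emph{(c)} $\int_X w\,\homg^n=n\int_X\om\wedge\homg^{n-1}=nV$, so by~\emph{(a)} $\int_X w\,\om^n\leq nV/c_0$; thus $w\in L^1(X,\om^n)$ uniformly, the starting point of the iteration.

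\textbf{Chern--Lu against the curvature-bounded metric.} The key to uniformity is that only $\om$ — not $\wom$, not $\homg$ — carries a uniform bound $\Bisec(\om)\leq B$. Applying the Chern--Lu inequality (Proposition~\ref{prop:Chern-Lu}) to $(\om,\homg)$ and using $\Ric(\homg)=\Ric(\wom)-\ddc\wF\geq-A\om-\ddc\wF$, I would get on $X$
\[
\Delta_{\homg}\log w\;\geq\;-(A+2B)\,w\;-\;\tfrac1w\,\Lambda,\qquad \Lambda\,\homg^n=\ddc\wF\wedge\Sigma,
\]
where $\Sigma$ is the positive $(n-1,n-1)$-form determined, in a frame diagonalizing $\homg$ with respect to $\om$ with eigenvalues $\lambda_j$, by $\Lambda=\sum_j\lambda_j^{-2}(\wF)_{j\bar j}$, so that $0\leq\Sigma\leq C_n\,w\,\homg^{n-1}$. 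This is precisely where passing to $\wom_t$ pays off: thanks to $\Ric(\wom_t)=\Theta_t\geq-A\om_t$ the only uncontrolled contribution is $\Lambda$, and it carries a $\ddc\wF$, which can be handled integrally via the bound in \emph{(b)}.

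\textbf{The $L^p$-iteration.} I would then test the displayed inequality with $w^{q-1}$, $q\geq1$, integrating against $\homg^n$ and using $\int_X\Delta_{\homg}(\log w)w^{q-1}\homg^n=-(q-1)\int_X w^{q-3}|\nabla w|^2_{\homg}\homg^n$, to obtain
\[
(q-1)\int_X w^{q-3}|\nabla w|^2_{\homg}\,\homg^n\;\leq\;(A+2B)\int_X w^{q}\,\homg^n\;+\;\int_X w^{q-2}\,\ddc\wF\wedge\Sigma .
\]
The last term is integrated by parts (Stokes being unproblematic since $X$ is smooth for $t\neq0$): besides a gradient term $\lesssim\int_X w^{q-2}|\nabla w|_{\homg}|\nabla\wF|_{\homg}\homg^n$, split by Young's inequality — part absorbed on the left, part controlled by \emph{(b)} — there appears a term in $\dd\Sigma$, i.e.\ in the third derivatives of $\phi$. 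I would control this as in Chen--Cheng~\cite{Chen_Cheng_2021_1} (in the streamlined form of Guo--Phong~\cite{Guo_Phong_2022}), coupling $\log w$ with auxiliary barriers twisted by $\wF$ and $\phi$ so that the third-order contribution cancels, taking care that every constant produced depends only on $n,p,A,A',B,C_0$ and never on $\om_t$ itself. One thereby reaches an inequality of the form $\int_X|\nabla(w^{q/2})|^2_{\om}\,\om^n\leq C_q\bigl(1+\int_X w^{q}\,\om^n\bigr)$ (using $\homg^n\geq c_0\om^n$ and $\|f\|_{L^p}\leq C_p$), and feeding it into a Moser iteration powered by the uniform Poincar\'e inequality in families (Lemma~\ref{lem:Poincare_ineq}), the uniform comparison $\homg_t^n\geq c_0\om_t^n$ and the $L^p$-density bound, one upgrades the $L^1$-estimate above to $\|w\|_{L^{2p+2}(X,\om^n)}\leq C$ in finitely many steps — the exponent $2p+2$ being exactly what the $L^p$-integrability of the density affords.

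The main obstacle is the third-order term in the last step: it does not close on its own, so one must import the Chen--Cheng cancellation and, crucially, verify that no constant secretly depends on the degenerating metric $\om_t$ — which is the very reason for replacing $\om_t$ by the canonical-density reference metrics $\wom_t$ with $\Ric(\wom_t)$ uniformly bounded below. Everything else is routine bookkeeping, modulo checking the uniform integrability and cohomological inputs cited above.
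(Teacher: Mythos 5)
Your overall orientation is sound: Chern--Lu against $\om_t$ (the only metric in sight with a uniform \emph{upper} bound on its bisectional curvature) together with the Ricci lower bound on $\wom_t$ is exactly what drives the paper's integral estimate, and the cohomological bound $\int_{X_t}\tr_{\homg_t}\om_t\,\homg_t^n=nV_t$ is the correct starting point. But two essential ingredients are missing, and the argument you sketch does not close without them.

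First, the crux of the paper's proof is the weight $f_t^{-\alpha}$ built into the test function
$u = e^{a\wF_t - b\vph_t}\,f_t^{-\alpha}\,\tr_{\homg_t}\om_t$,
with $\alpha=1/(2p+1)$ chosen at the very end so that the exponent $[1-(2p+1)\alpha]$ of $f_t$ in the final integral inequality vanishes. This weight has no counterpart in Chen--Cheng's original argument because it is unnecessary on a fixed smooth manifold; here it is precisely what makes the estimate uniform in $t$. As $t\to 0$ one has $\sup_{X_t}f_t\to\infty$ (the density concentrates near $\CZ$), so any estimate retaining a positive power of $f_t$ produces constants that blow up. Your fact \emph{(a)} only records a lower bound on $f_t$, which is irrelevant to this issue, and ``importing the Chen--Cheng cancellation'' as you describe it would leave constants silently depending on $\sup_{X_t}f_t$ -- exactly the degeneracy the family setting is supposed to overcome.

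Second, the closing step you propose, a Moser iteration ``powered by the uniform Poincar\'e inequality'', cannot work as stated. Moser iteration needs a Sobolev inequality to gain integrability from one rung to the next; Poincar\'e offers no exponent gain, and obtaining a uniform Sobolev constant along a family of K\"ahler metrics degenerating onto a singular fibre is itself a delicate problem the paper entirely avoids. Instead it integrates $\Delta_{\homg}u^{2p+1}=0$ against $\homg^n$ and uses the auxiliary hermitian form $\eta^{i\bar j}:=\hg^{i\bar\ell}\hg^{k\bar j}g_{k\bar\ell}$, for which $\tr_\eta\homg=\tr_{\homg}\om$, $\tr_\eta\om\leq(\tr_{\homg}\om)^2$ and $\eta^n=e^{2F}\om^n$. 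These identities let the third-order term $\Delta_\eta\wF$ be handled by one Stokes integration and Young's inequality, after which one arrives directly at $\|\tr_{\homg}\om\|^{2p+2}_{L^{2p+2}(X,\om)}\leq C\,\|\tr_{\homg}\om\|^{2p+1}_{L^{2p+1}(X,\om)}$, which closes by H\"older alone -- no iteration, no Sobolev, and no constant that fails to depend only on $n,p,A,A',B,C_0$.
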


\begin{proof} 
We shall adapt the strategy of Chen--Cheng to deal with our case. 
We emphasize that we shall use Chern--Lu inequality for Laplacian instead of Aubin--Yau's one since the holomorphic bisectional curvatures of reference metrics are not uniformly bounded from below along the family. 

Consider $u= e^{-a \wF_t -b \varphi_t}\tr_{\homg_t }\omega_t$, for some constant $a, b>1$ independent of $t$ will be determined hereafter. 
For simplicity, we remove the subscript $t$ in the sequel. 
Then 
\[
    \Delta_{\homg} u 
    \geq u \Delta_{\homg} \log u
    = u \left\{ -\Delta_{\homg}(a\wF +b \varphi ) +\Delta_{\homg} \log \tr_{\homg} \om \right\}.
\]
As $\Ric(\wom) \leq A\omega$, we have
\begin{equation}\label{est_1}
\begin{split}
    \Delta_{\homg}(a\wF +b \varphi) 
    &= a(\tr_{\homg} \Ric(\wom)  -\bar s) + b(n-\tr_{\homg} {\omega})\\
    &\leqslant -a \bar s+ bn+( a A -b ) \tr_{\homg}\omega.
\end{split}
\end{equation}
Combining Chern--Lu inequality (cf. Proposition~\ref{prop:Chern-Lu}) and \eqref{est_1}, one can infer
\begin{align*}
    \Delta_{\homg} u 
    & \geqslant u \left\{\frac{\hg^{i\bar \ell} \hg^{k\bar j} \hR_{i\bar j} g_{k\bar \ell}}{\tr_{\homg}\omega} + (a\bar s-bn)+(b-Aa- 2B)\tr_{\homg}\omega
    \right\}\\
    &\geqslant 
    e^{-(a \wF+b\varphi)}  \left\{
    \iprod{\Ric(\homg)}{\om}_\homg + (a\bar s-bn)\tr_{\homg}\omega+ \frac{b}{2}(\tr_{\homg}\omega)^2
 \right\},
\end{align*}
where we choose $a,b>0$ such that $(b - A a- 2B)\geq b/2 > 1$. Set $G := -a \wF-b\varphi$. 
Since 
\begin{equation*}
    \frac{1}{2p+1} \Delta_{\homg}u^{2p+1}= u^{2p}\Delta_{\homg} u + 2p u^{2p-1}|\widehat{\nabla} u|_{\homg}^2,
\end{equation*} 
we have
\begin{equation}\label{int_zero} 
\begin{split}
    0=\int_X \frac{1}{2p+1} \Delta_{\homg}u^{2p+1} \homg^n &\geqslant 2p\int_X u^{2p-1} |\widehat{\nabla} u|_{\homg}^2 \homg^n+ \frac{b}{2}\int_X u^{2p} (\tr_{\homg}\omega)^2 e^{G} \homg^n\\ 
    &\quad + (a\bar s -bn )\int_X u^{2p+1} \homg^n + \int_X u^{2p} \iprod{\Ric(\homg)}{\om}_\homg e^G \homg^n.
\end{split}
\end{equation}
For the last term, we use the fact that $\Ric(\homg) = \Ric(\wom) - \ddc 
\wF$ with $-\Ric(\wom) \leq A \omega$ to deduce
\begin{align} \label{eq_est_ric}
    &-\int_X u^{2p} \iprod{\Ric(\homg)}{\om}_\homg e^G \homg^n 
    \leqslant A\int_X u^{2p} e^{G}(\tr_{\homg } \omega)^2\homg^2 + \underbrace{\int_X u^{2p}e^{G} \langle \ddc \wF, \om \rangle_\homg \homg^n}_{=: (\RN{1})}.
\end{align}
Note that for any two $(1,1)$-forms $\alpha, \beta$ and a K\"ahler metric $\omega$, we have
\begin{equation}\label{eq_tr_tr}
n(n-1)\alpha\wedge \beta\wedge \omega^{n-2}=[(\tr_\omega \alpha)(\tr_{\omega}\beta) -\langle \alpha,\beta \rangle_\omega]\omega^n,
\end{equation}
(see e.g. \cite[Lem.~4.7]{Szekelyhidi_book}) and in particular if $\alpha\geq 0$ and  $\beta\geq 0$, then
\begin{equation}\label{eq_ineq_tr_tr}
 n(n-1)\alpha\wedge \beta\wedge \omega^{n-2}\leq (\tr_\omega \alpha)(\tr_{\omega}\beta) \om^n.
\end{equation}
Applying \eqref{eq_tr_tr}, we get 
\begin{equation}\label{eq:int_lap_term1_est1}
{\small
\begin{split}
    (\RN{1}) 
    &= \int_X u^{2p} e^{G} \langle \ddc \wF, \omega \rangle_\homg \homg^n 
    = \int_X u^{2p}e^{G}  \Delta_{\homg} \wF (\tr_\homg \omega) \homg^n- n(n-1)\int_X  u^{2p}e^{G} \ddc \wF \wedge \omega\wedge \homg^{n-2}  \\
    &= \int_X u^{2p}e^{G} (-\bar s+ \tr_\homg\Ric(\wom))  (\tr_\homg \omega) \homg^n - n(n-1)\int_X  u^{2p}e^{G} \ddc \wF\wedge \omega\wedge \homg^{n-2} \\
    &\leq  -\bar s \int_X u^{2p}e^{G} (\tr_\homg\omega) \homg^n 
    + A \int_X u^{2p}e^{G}(\tr_\homg \omega)^2 \homg^n
    \underbrace{-n(n-1)\int_X  u^{2p}e^{G} \ddc \wF\wedge \omega\wedge \homg^{n-2}}_{=: (\RN{2})}
\end{split}
}%
\end{equation}
Considering now the term $(\RN{2})$, we have
\begin{equation}\label{eq:int_lap_term1_est2}
{\small
\begin{split}
    (\RN{2})
    &= -n(n-1) \int_X  u^{2p}e^{G}  \ddc \wF \wedge \omega\wedge \homg^{n-2} 
    = \frac{n(n-1)}{a}\int_X u^{2p}e^{G}   \ddc(G+b \vph) \wedge \omega \wedge \homg^{n-2}\\
    &= \frac{n(n-1)}{a}\left( \int_X  u^{2p}e^{G} \ddc G \wedge \omega \wedge \homg^{n-2}+ b \int_X  u^{2p}e^{G}   (\homg-\omega)\wedge \omega \wedge \homg^{n-2}  \right)\\
    &\leq \frac{n(n-1)}{a} \underbrace{\int_X  u^{2p}e^{G} \ddc G \wedge \omega \wedge \homg^{n-2}}_{=: (\RN{3})} + \frac{n(n-1)b}{a} \int_X  u^{2p}e^{G}  (\tr_{\homg}\omega)\homg^n.
\end{split}
}%
\end{equation}
Using Stokes' theorem, 
\begin{equation}\label{eq:int_lap_term1_est3}
\begin{split}
    (\RN{3}) 
    &= \int_X u^{2p} e^{G} \ddc G \wedge \omega \wedge \homg^{n-2} \\
    &= - \int_X u^{2p}e^G \dd G\wedge \dc G\wedge \omega\wedge \homg^{n-2} -2p \int_X e^G u^{2p-1} \dd u\wedge \dc G \wedge\omega\wedge \homg^{n-2}\\
    &\leq -\frac{1}{2} \int_X u^{2p}e^G \dd G\wedge \dc G\wedge\omega\wedge  \homg^{n-2}  +  2p^2  \int _X u^{2p-2} e^{G} \dd u\wedge \dc u\wedge \omega\wedge \homg^n\\
    &\leq \frac{2p^2}{n(n-1)}  \int _X u^{2p-2} e^{G} |\widehat{\nabla} u|^2_\homg (\tr_\homg \omega) \homg^n,
\end{split}
\end{equation}
where we used \eqref{eq_ineq_tr_tr} in the last inequality and Cauchy--Schwarz inequality in the third line as follows
\begin{align*}
    &-2pu^{2p-1} \dd u\wedge \dc G\wedge \omega\wedge\homg^{n-2}  
    = -2pu^{2p} \dd \log u\wedge \dc G\wedge \omega\wedge\homg^{n-2}\\
    &\qquad \leqslant \frac{1}{2} u^{2p}  \dd G\wedge \dc G\wedge \omega\wedge\homg^{n-2} +  2p^2 u^{2p}  \dd \log u\wedge \dc \log u\wedge \omega\wedge\homg^{n-2} \\
    &\qquad = \frac{1}{2} u^{2p}  \dd G\wedge \dc G\wedge \omega\wedge\homg^{n-2} +  2p^2 u^{2p-2}  \dd u\wedge \dc u\wedge \omega\wedge\homg^{n-2}. 
\end{align*}
Therefore, with \eqref{eq:int_lap_term1_est1}, \eqref{eq:int_lap_term1_est2}, and \eqref{eq:int_lap_term1_est3}, we derive that 
\begin{equation}\label{eq:int_lap_term1}
\begin{split}
    (\RN{1}) 
    &= \int_Xu^{2p}e^{G}    \langle \ddc \wF, \omega \rangle_\homg \homg^n \\
    &\leq \lt(-\bar s+ \frac{n(n-1)b}{a}\rt) \int_X u^{2p} e^G (\tr_{\homg}\omega) \homg^n +A \int_X u^{2p}e^{G}(\tr_\homg \omega)^2 \homg^n + \frac{2p^2}{a} \int _X u^{2p-1} |\widehat{\nabla} u|^2_\homg  \homg^n. 
\end{split}
\end{equation}
Combining \eqref{int_zero}, \eqref{eq_est_ric}, \eqref{eq:int_lap_term1} and using $u = e^G \tr_{\homg}\omega$, we obtain
\begin{align*}
    0\geqslant 
    & \left(\frac{b}{2}- 2A\right)  \int_X u^{2p} e^{G}(\tr_{\homg } \omega)^2\homg^n \\
    &+\left(2p- \frac{2p^2}{a} \right)\int_X u^{2p-1} |\widehat{\nabla} u |^2_{\homg} \homg^n+ \left(a\bar s-bn+ \bar s- \frac{n(n+1)b}{a}\right)\int_Xu^{2p+1} \homg^n.
\end{align*}
Taking $a = p$ and $b \gg p$ large enough such that 
\[
   b - Aa - 2B \geq b/2 \geq 1, 
   \quad b/2 - 2A \geq 1,  
   \quad\text{and}\quad
   (a+1)\bar{s} - bn \lt(1 + \frac{n+1}{a}\rt) \leq -1,
\] 
we have 
\begin{equation*}
    0 \geqslant 
    C_1 \int_X u^{2p+1}(\tr_{\homg } \omega) \homg^n - C_2\int_Xu^{2p+1} \homg^n,
\end{equation*}
where $C_1, C_2$ only depend on $p, A, B, C_0, \bar{s}$.
Hence, using the fact that $\|\wF\|_{L^\infty}+ \|\varphi\|_{L^\infty}\leq C_0$ and $\homg^n= e^{\wF}\wom^n $, we infer that  $\|G\|_{L^\infty}\leq C(a, b, C_0)$ and so
\begin{equation*}
0\geqslant C_3   \int_X( \tr_{\homg}\omega)^{2p+2} \wom^n -C_4 \int_X( \tr_{\homg}\omega)^{2p+1} \wom^n. 
\end{equation*}
where $C_3, C_4$ only depend on $p, A,B, C_0$.
Then H\"older's inequality implies that 
\[
    \|\tr_{\homg} \omega \|_{L^{2p+2}(X, \wom^n)}\leq C(n,p, A,  B, C_0).
\]
Finally, from \eqref{eq:strict_positive}, as one has a uniform constant $c>0$ such that $\wom_t \geq c \om_t$ for all $t \in \BD$, the above estimate implies  
\[
    \|\tr_{\homg_t} \om_t\|_{L^{2p+2}(X_t,\om_t)} \leq C'(n,p,A,B,C_0,c)
\]
and this completes the proof.
\end{proof}

Now, we use the local estimate of Chen--Cheng to get the $\CC^l$-estimates for $\varphi_t$ away from $\CZ$.

\begin{proof}[Proof of Theorem \ref{thm_C_2}]
Let $v_t$ be the local potential of $\omega_t$ in a neighborhood of $K$. 
Denote by $w_t:=v_t+\varphi_t$, and $H_t:= F_t+ \log\det (\omega_{t,i\bar j})$. 
The cscK-equation \eqref{eq_cscK_t} becomes
\begin{equation*}
    \det(w_{i\bar j})=e^{H} 
        \quad \text{and}\quad
        \Delta_{w} H= -\bar s.
\end{equation*}
By Theorem~\ref{thm_L_inf} and Proposition~\ref{C_2_int}, for any fixed $p>3n(n-1)$,  there exists a constant $C_1>0$ such that for all $t$ sufficiently close to 0, we have
\[
    \|w_t\|_{L^\infty(K)} + \| \Delta w_t\|_{L^p(K)} + \|\sum_{j=1}^n\frac{1}{(w_{t}){}_{j\bar j}}\|_{L^p(K)} \leq C_1.
\]  
Then it follows from the local estimate due to Chen--Cheng (Prop. 6.1 arXiv version of \cite{Chen_Cheng_2021_1}) that  
$\|w\|_{\CC^2(K)}\leq C(p,K, C_1)$ and $\|\nabla H\|_{\CC^0(K)}\leq C(p,K, C_1)$. 
Then we use Evans--Krylov's theorem to get the estimates for all orders $\|w\|_{\CC^l(K)}\leq C(p,l,K, C_1)$ as desired.
\end{proof}

\subsection{Constructing a singular cscK metric from the convergence}

From the uniform $L^\infty$ and local $\CC^l$ estimates above, the Arzelà--Ascoli theorem implies that there exists a sequence $(\varphi_{t_k})_{t_k}$ converging smoothly in families to $\varphi_0 \in \PSH(X_0,\om_0) \cap L^\infty(X_0)$ as $t_k \to 0$, and $\om_{0,\varphi_0}$ is a singular cscK metric on $X_0$ in the sense of Definition \ref{defn:cscK}.

\begin{proof}[Proof of Theorem \ref{thm:Main Theorem}]
    We already proved that $(X_0,\om_0)$ admits a cscK potential $\vph_0$, and we also know that any sequence of cscK potentials $\vph_{t_k}$ in $(X_{t_k},\om_{t_k})$ strongly subconverges to $\vph_0$ as $t_k\to 0$. 
    It remains to show that $\vph_0$ is a minimizer for $\M_0$.
    
    Let $u\in \CE^1(X_0,\om_0)$. 
    We need to show that $\M_0(\vph_0) \leq \M_0(u)$. 
    Without loss of generality, one may assume $\H_0(u)<+\infty$. 
    Then by Lemma \ref{lem:BDL17}, we construct a sequence $u_j\in \CE^1(X_0,\om_0)\cap L^\infty(X_0)$ such that $\M_0(u_j)\to \M_0(u)$. Moreover, $u_j$ is given by Lemma~\ref{lem:regularization_fini_entropy}, and Lemma \ref{lem:cptness_appox_E_conv} gives a sequence $u_{j,k}\in \CE^1(X_k,\om_k)$ such that $u_{j,k}$ converges strongly in family to $u_j$ and such that $\H_k(u_{j,k})\to \H_0(u_j)$ as $k\to +\infty$. 
    Indeed, the last assertion follows from \eqref{eq:fcn_beta_parameter}. In particular $\M_k(u_{j,k})\to \M_0(u_j)$ as $k\to +\infty$ and we gain
    $$
    \M_0(\vph_0)\leq \liminf_{k\to+\infty} \M_k(\vph_k)\leq \liminf_{k\to+\infty}\M_k(u_{j,k})=\M_0(u_j)
    $$
    where the first inequality is given by Proposition \ref{prop:Lower Semicontinuity Mabuchi families} and where we used that $\vph_k$ is a minimizer for $\M_k$. 
    Letting $j\to +\infty$, we obtain $\M_0(\vph_0)\leq \M_0(u)$, which concludes the proof.
\end{proof}

\subsection{Remark on examples}
In this section, we provide a way to build some examples of singular varieties that admit singular cscK metrics with our results.
We first review some general results on the deformation of K\"ahler spaces due to Bingener \cite{Bingener_1983}. 
Denote by $A_X$ the sheaf of real analytic functions on $X$. 
Similar to Section~\ref{ssec:Proposition B}, we have a short exact sequence of sheaves
\[
    0 \to \PH_X \to A_X \to A_X/\PH_X \to 0.
\]
Recall that a smooth K\"ahler metric is canonically attached as an element in $H^0(X,\CC^\infty_X/\PH_X)$. 
Then a real analytic K\"ahler metric is defined as a smooth K\"ahler metric belonging to $H^0(X, A_X/\PH_X)$.
The following is a result extracted from the proofs of \cite[Thm.~4.7 \& Cor~4.8]{Bingener_1983}:

\begin{lem}\label{lem:Bingener_cor4.8}
On a compact K\"ahler variety $X$, for any class $\af$ in the K\"ahler cone $\CK_X \subset H^1(X,\PH_X)$, there exists a real analytic K\"ahler metric inside $\af$.  
\end{lem}

\begin{proof}
As in \cite[bottom of p.~522]{Bingener_1983}, we have the following commutative diagram
\[
\begin{tikzcd}
    0 \ar[r]& H^0(X,\PH_X) \ar[r]\ar[d, "\Id"]& H^0(X,A_X) \ar[r] \ar[d, "f_1"]& H^0(X, A_X/\PH_X) \ar[r] \ar[d, "f_2"]& H^1(X,\PH_X) \ar[r]\ar[d, "\Id"]& 0\\
    0 \ar[r]& H^0(X,\PH_X) \ar[r]& H^0(X,\CC^\infty_X) \ar[r]& H^0(X,\CC^\infty_X/\PH_X) \ar[r]& H^1(X,\PH_X) \ar[r]& 0
\end{tikzcd}
\]
where each row is exact and $f_1, f_2$ are natural inclusions.
Indeed, from \cite{Acquistapace_Broglia_Tognoli_1979}, $X$ can be embedded real analytically into $\BR^N$ for a sufficiently large $N$; then the vanishing of $H^1(X, A_X)$ comes from a real analytic version of Cartan's theorem B \cite[Thm.~3]{Cartan_1957} (cf. \cite[top of p.~523]{Bingener_1983}). 
Let $\om \in \af$ be a smooth K\"ahler metric. 
From the first row, there is a real analytic representative $\gm \in \af$ and thus, $[\om - \gm] = 0 \in H^1(X,\PH_X)$.
Therefore, from the second row, $\om = \gm + \ddc h$ for some $h \in \CC^\infty(X)$. 
Since $A_X(X)$ is dense in $\CC^\infty(X)$, there exists a sequence of real analytic functions $(h_m)_m$ converges smoothly to $h$.
Then, by the positivity of $\om$, one can obtain a real analytic K\"ahler metric $\om' = \gm + \ddc h_m$ for some $m$ sufficiently large, and it shows a real analytic K\"ahler metric in $\af$.
\end{proof}

The following theorem is a special case of \cite[Thm.~6.3]{Bingener_1983}: 

\begin{thm}\label{thm:Bingener_thm6.3}
Under Setting~\ref{sett:klt}, let $\om_0$ be a real analytic K\"ahler metric on $X_0$.
Up to shrink $\BD$, there exists a (real analytic) hermitian metric $\om$ on $\CX$ such that $\om_{|X_0} = \om_0$ and $\om_t := \om_{|X_t}$ is K\"ahler for all $t \in \BD$.
\end{thm}

The cohomological condition "$f: H^2(X_0,\BR) \to H^2(X_0,\CO_{X_0})$ is surjective" in \cite[Thm.~6.3]{Bingener_1983} is valid for klt singularities where $f$ is the morphism induced by the short exact sequence of sheaves $0 \to \underline{\BR} \to \CO_X \xrightarrow[]{\Im(\cdot)} \PH_X \to 0.$ 
Indeed, in \cite[Rmk.~3.2~(2)]{Graf_Kirschner_2020}, the surjectivity of $f$ holds for varieties with rational singularities and, in particular, klt singularities are rational. 

\smallskip
We now provide a combination of Theorem~\ref{bigthm:openness_classes} and Theroem~\ref{bigthm:smoothable_variety} which will help to provide some examples:

\begin{cor}\label{cor:fano_cscK}
Suppose that $\pi: \CX \to \BD$ is a $\BQ$-Gorenstein smoothing of $X_0$ a K-stable Fano variety.
Then there exist an open subset $U \subset \CK_X$ containing $c_1(-K_{X_0})$ such that each class $\af \in U$ containing a singular cscK metric.
\end{cor}

\begin{proof}
Let $\om_0$ be a K\"ahler metric in $c_1(-K_{X_0})$. 
By Lemma~\ref{lem:Bingener_cor4.8} and Theorem~\ref{thm:Bingener_thm6.3}, up to shrinking $\BD$, there exists a real analytic hermitian metric $\om'$ such that $\om'_{|X_0} \in [\om_0]$ and $\om_t = \om'_{|X_t}$ is K\"ahler on $X_t$ for each $t \in \BD$.
Since $X_0$ is K-stable, by \cite{Li_Tian_Wang_2022, Liu_Xu_Zhuang_2022}, there is a unique singular K\"ahler--Einstein metric on $X_0$.
From \cite[Prop.~4.11]{BBEGZ_2019}, \cite[Thm.~5.5]{Dinezza_Guedj_2018}, $\M_{\om'_0}$ is coercive. 
Theorem~\ref{bigthm:openness_classes} shows that there exits an open subset $U$ in $\CK_{X_0}$ containing $c_1(-K_{X_0})$ such that $\M_{\gm_0}$ is coercive for any $\af \in U$ where $\gm_0$ is a smooth K\"ahler metric in $\af$.
Again, by Lemma~\ref{lem:Bingener_cor4.8} and Theorem~\ref{thm:Bingener_thm6.3}, up to shrinking $\BD$, there exists a real analytic hermitian metric $\gm'$ such that $\gm'_{|X_0} \in [\gm_0] = \af$ and $\gm'_t = \gm'_{|X_t}$ is K\"ahler on $X_t$ for each $t \in \BD$. 
Since $\M_{\gm'_0}$ is also coercive, Theorem~\ref{bigthm:smoothable_variety} shows the existence of a singular cscK metric inside $[\gm'_0] = \af$ for all $\af \in U$. 
\end{proof}

Finally, we extract an example from \cite{Odaka_Spotti_Sun_2016}. 
Consider $X_0$ a cubic surfaces in $\BP^3$ with $A_1$-singularities.
Note that $X_0$ has a unique K\"ahler--Einstein metric (cf. \cite[Example~1.16]{Cheltsov_Kosta_2014}, see also \cite[middle of p.~165]{Odaka_Spotti_Sun_2016} for other examples). 
It follows from \cite{Kollar_Shepherd_1988, Hacking_Prokhorov_2010} that a normal complex surface with a T-singularity (i.e. either Du Val (A-D-E type), or quotient of Du Val singularity $A_{dn-1}$ by $\BZ_n$) admits a $\BQ$-Gorenstein smoothing, so does $X_0$.  
Since $X_0$ has a Picard number greater than $1$, by Corollary~\ref{cor:fano_cscK} we have a singular cscK metric in classes near $c_1(-K_{X_0})$ in the K\"ahler cone and most of them are not scaling of singular K\"ahler--Einstein metrics. Corollary~\ref{cor:fano_cscK} can provide more examples of singular cscK metrics also in higher dimensions: it is enough to pick a K-stable Fano variety of Picard number greater than $1$ that admits a $\BQ$-Gorenstein smoothing.

\bibliographystyle{smfalpha_new}
\bibliography{biblio}
\end{document}